\newcommand{\SIE}{\text{\rm SIE}}
\newcommand{\DIE}{\text{\rm DIE}}
\newcommand{\wcI}{\widehat\cI}
\newcommand{\qqand}{\qquad\text{and}\qquad}
\newcommand{\cKI}{\cK((\cI_q)_{q\ge0})}
\newcommand{\cR}{{\cal R}}
\newcommand{\cI}{{\cal I}}
\newcommand{\ka}{\kappa}
\newcommand{\ra}{R_{\alpha}}
\newcommand{\De}{\Delta}
\newcommand{\ma}{\beta_\alpha}
\renewcommand{\r}{\rho}
\newcommand{\La}{\Lambda}
\newcommand{\g}{\gamma}
\newcommand{\cM}{\mathcal{M}}
\newcommand{\cC}{\mathcal{C}}
\newcommand{\cV}{\mathcal{V}}
\newcommand{\vol}{\operatorname{vol}}
\newcommand{\vv}[1]{\mathbf{#1}}
\newcommand{\Z}{\mathbb{Z}}
\newcommand{\Q}{\mathbb{Q}}
\newcommand{\R}{\mathbb{R}}
\newcommand{\N}{\mathbb{N}}
\newcommand{\ZZ}{\mathbb{Z}}
\newcommand{\QQ}{\mathbb{Q}}
\newcommand{\RR}{\mathbb{R}}
\newcommand{\NN}{\mathbb{N}}
\newcommand{\I}{{\rm I}}
\newcommand{\Rp}{\R^+}
\newcommand{\vx}{\mathbf{x}}
\newcommand{\vy}{\mathbf{y}}
\newcommand{\cA}{\mathbf{A}}
\newcommand{\A}{\mathcal{A}}
\newcommand{\cB}{\mathbf{B}}
\newcommand{\p}{\psi}
\newcommand{\cH}{{\cal H}}
\theoremstyle{plain}
\newtheorem{thm}{Theorem}[section]
\newtheorem{cor}{Corollary}[section]
\newtheorem{lem}{Lemma}[section]
\newtheorem{prop}{Proposition}[section]
\newtheorem{thlittle}{Littlewood's Conjecture \!\!\!}
\newtheorem{thschmidt}{Schmidt's Conjecture \!\!\!}
\newtheorem{conjecture}{Conjecture}[section]
\theoremstyle{remark}
\newtheorem{rem}{Remark}[section]
\theoremstyle{definition}
\newtheorem{eg}{Example}[section]
\newtheorem{df}{Definition}[section]
\newcommand{\ds}{\displaystyle}
\def\a{\alpha}
\def\b{\beta}
\def\q{\mathbf{q}}
\renewcommand{\[}{ \left[ }
\newcommand{\ve}{\varepsilon}
\newcommand{\Bad}{\mathbf{Bad}}
\newcommand{\bad}{\mathbf{Bad}}
\newcommand{\mad}{\mathbf{Mad}}
\def\Jarnik{Jarn\'\i k}
 \numberwithin{equation}{section}
\newlength{\aaa}
\begin{document}


%
\centerline{\textbf{\huge\sf Metric  Diophantine Approximation:  }}
\centerline{\textbf{\huge\sf aspects of recent work 
}}

\vspace*{10ex}

\centerline{{\large\sf Victor Beresnevich, Felipe Ram\'{\i}rez and Sanju Velani   }}

\hfill\ {University of York}\hfill\

\vspace*{5ex}

\begin{abstract}
In these notes, we begin by recalling aspects of the classical theory of metric Diophantine approximation; such as theorems of Khintchine, \Jarnik, Duffin-Schaeffer  and Gallagher.  We then describe  recent strengthening of  various classical statements as well as recent developments in the area of Diophantine approximation on manifolds.  The latter includes the well approximable, the  badly approximable and the inhomogeneous aspects.
\end{abstract}

\vspace*{5ex}

{\footnotesize\sf
\noindent This Chapter is to be published by Cambridge University Press as part of a multi-volume work edited by Badziahin, D., Gorodnik, A., Peyerimhoff, N.

\noindent$\copyright$ in the Chapter, Victor Beresnevich, Felipe Ram\'{\i}rez, Sanju Velani, 2016

\noindent$\copyright$ in the Volume, Cambridge University Press, 201x

\noindent Cambridge University Press's catalogue entry for the Volume can be found at
{\tt www.cambridge.org}

\noindent NB: The copy of the Chapter, as displayed on this website, is a draft, pre-publication copy only. The final, published version of the Chapter shall be available for purchase from Cambridge University Press and other standard distribution channels as part of the wider, edited Volume. This draft copy is made available for personal use only.}

\

\noindent{\footnotesize FR is supported by EPSRC Programme Grant: EP/J018260/1.} \\
{\footnotesize VB and SV are supported in part by EPSRC Programme Grant: EP/J018260/1.}

\date{}

\vspace*{5ex}

\newpage
{\footnotesize
\tableofcontents
}

\thispagestyle{empty}\setcounter{page}{0}\newpage



\section{Background: Dirichlet and $\Bad$ \label{intro}}

\subsection{Dirichlet's Theorem and two important consequences}
\label{sec:dirichl-theor-two}

Diophantine approximation is a branch of number theory that can
loosely be described as a quantitative analysis of the density of the
rationals $\QQ $ in the reals $\RR$. Recall that to say that $\QQ$ is
dense in $\RR$ is to say that
\begin{quote}
  for any real number $x$ and $\epsilon > 0$ there exists a rational
  number $p/q$ ($q>0$) such that $\left| x - p/q \right| < \epsilon$.
\end{quote}
In other words, any real number can be approximated by a rational
number with any assigned degree of accuracy.  But how ``rapidly'' can
we approximate a given $x\in\RR$?
\begin{quote}
  Given $x \in \RR$ and $q \in \NN$, how small can we make $\epsilon$?
  Trivially we can take any $\epsilon > 1/2q$.
  Can we do better than $1/2q$?
\end{quote}
The following rational numbers all lie within
$1/({\rm denominator})^2$ of the circle constant $\pi=3.141\dots$:
\begin{equation}\label{eq:piapprox}
  \frac{3}{1}, \frac{22}{7}, \frac{333}{106}, \frac{355}{113}, \frac{103993}{33102}.
\end{equation}
This shows that, at least sometimes, the answer to the last question
is ``yes.'' A more complete answer is given by \emph{Dirichlet's theorem}, which is itself a simple consequence of the following
powerful fact.

\theoremstyle{plain} \newtheorem*{php}{Pigeonhole Principle\index{Pigeonhole Principle}}
\begin{php}
  If $n$ objects are placed in $m$ boxes and $ n > m $, then some box
  will contain at least two objects.
\end{php}

\begin{thm}[Dirichlet\index{Dirichlet's theorem}, 1842]\label{Dir}
  For any $x\in\RR$ and $N \in \mathbb{N}$, there exist
  $p,q \in \mathbb{Z}$ such that
  \begin{equation}\label{7.0}
    \left| x - \frac{p}{q} \right| < \frac{1}{qN}  \qquad \textrm{ and }  \qquad  1 \leq  q \leq N \, .
  \end{equation}
\end{thm}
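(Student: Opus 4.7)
The plan is to derive Theorem~\ref{Dir} directly from the Pigeonhole Principle applied to fractional parts of multiples of $x$. Write $\{y\} := y - \lfloor y \rfloor \in [0,1)$ for the fractional part of a real number $y$. I would consider the $N+1$ numbers
\begin{equation*}
  \{0\cdot x\},\ \{1\cdot x\},\ \{2\cdot x\},\ \ldots,\ \{N\cdot x\}
\end{equation*}
and distribute them among the $N$ ``boxes'' $[i/N,(i+1)/N)$ for $i=0,1,\ldots,N-1$, which partition $[0,1)$. Since we have $N+1$ objects placed in $N$ boxes, the Pigeonhole Principle supplies two indices $0 \le q_1 < q_2 \le N$ whose associated fractional parts lie in the same box.

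From this coincidence I would extract the desired rational. Because $\{q_1 x\}$ and $\{q_2 x\}$ lie in a common subinterval of length $1/N$,
\begin{equation*}
  \bigl|\{q_2 x\} - \{q_1 x\}\bigr| < \frac{1}{N}.
\end{equation*}
Setting $q := q_2 - q_1$ and $p := \lfloor q_2 x\rfloor - \lfloor q_1 x \rfloor$, the left-hand side equals $|qx - p|$, while $1 \le q \le N$ by construction. Dividing through by $q>0$ yields $|x - p/q| < 1/(qN)$, which is precisely \eqref{7.0}.

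There is no serious obstacle to this argument; the only mild subtlety is the insistence on half-open boxes, so that each fractional part belongs to exactly one of them and the strict inequality of Dirichlet follows from strict containment in a box of length $1/N$. In passing I would observe that taking $N\to\infty$ and using that, for irrational $x$, the rationals $p/q$ obtained cannot eventually stabilise (since $|x-p/q|$ can be made arbitrarily small) recovers the familiar corollary that every irrational $x \in \R$ admits infinitely many $p/q \in \Q$ with $|x - p/q| < 1/q^2$, the first of the two ``important consequences'' flagged in the section heading.
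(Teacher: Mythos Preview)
Your proof is correct and follows essentially the same argument as the paper: both apply the Pigeonhole Principle to the $N+1$ fractional parts $\{0\cdot x\},\ldots,\{N\cdot x\}$ placed in the $N$ half-open intervals $[i/N,(i+1)/N)$, and then set $q=q_2-q_1$, $p=\lfloor q_2 x\rfloor-\lfloor q_1 x\rfloor$ to obtain $|qx-p|<1/N$. Your closing observation about passing to the corollary for irrational $x$ is also exactly what the paper does next.
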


The proof can be found in most elementary number theory books.
However, given the important consequences of the theorem and its
various hybrids, we have decided to include the proof.

\begin{proof}
  As usual, let $[x]:= \max\{n \in \Z : n \le x \}$ denote the integer
  part of the real number $x$ and let $\{x\} = x - [x]$ denote the
  fractional part of $x$. Note that for any $x\in\R$ we have that
  $0\le \{x\}<1$.

  Consider the $N+1$ numbers
  \begin{equation}\label{7.1}
    \{0x\}, \{x\}, \{2x\}, \dots, \{Nx\}
  \end{equation}
  in the unit interval $[0,1)$. Divide $[0,1)$ into $N$ equal
  semi-open subintervals as follows:
  \begin{equation}\label{7.1+}
    [0,1) = \bigcup_{u=0}^{N-1} I_u\quad\textrm{where}\quad I_u:=\left[\frac{u}{N},\frac{u+1}{N}\right), \quad u=0,1, \dots,
    N-1.
  \end{equation}
  Since the $N+1$ points \eqref{7.1} are situated in the $N$
  subintervals \eqref{7.1+}, the Pigeonhole principle guarantees that
  some subinterval contains at least two points, say
  $\{q_2x\},\{q_1x\}\in I_u$, where $0\le u\le N-1$ and $q_1,q_2\in\Z$
  with $0\le q_1 < q_2 \le N$. Since the length of $I_u$ is $N^{-1}$
  and $I_u$ is semi-open we have that
  \begin{equation}\label{7.2}
    |\{q_2x\}-\{q_1x\}|<\frac{1}{N}.
  \end{equation}
  We have that $q_ix=p_i + \{q_ix\}$ where $p_i=[q_ix]\in\mathbb{Z}$
  for $i=1,2$. Returning to \eqref{7.2} we get
  \begin{equation}\label{7vb4}
    |\{q_2x\}-\{q_1x\}| = |q_2x - p_2 - (q_1x - p_1)|   = |(q_2-q_1)x -
    (p_2-p_1)|.
  \end{equation}
  Now define $q=q_2-q_1\in\Z$ and $p=p_2-p_1\in\Z$. Since
  $0\le q_1,q_2\le N$ and $q_1<q_2$ we have that $1\le q\le N$. By
  (\ref{7.2}) and (\ref{7vb4}), we get
  \begin{gather*}
    |qx - p| < \frac{1}{N}
  \end{gather*}
  whence \eqref{7.0} readily follows.
\end{proof}

The following statement is an important consequence of Dirichlet's
Theorem.

\begin{thm}[Dirichlet, 1842]\label{thm:7.4}
  Let $x \in \mathbb{R}\setminus\Q$. Then there exist infinitely many
  integers $q,p$ such that $\gcd(p,q)=1$, $q>0$ and
  \begin{equation}\label{7.3}
    \left|x-\frac{p}{q}\right| < \frac{1}{q^2}.
  \end{equation}
\end{thm}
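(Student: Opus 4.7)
My plan is to derive Theorem \ref{thm:7.4} from Theorem \ref{Dir} by iteratively applying it with larger and larger $N$, and using the irrationality of $x$ to ensure each application yields a genuinely new solution.

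First, I would observe that for any $N \in \NN$, Theorem \ref{Dir} furnishes integers $p,q$ with $1 \le q \le N$ and $|x - p/q| < 1/(qN)$. Since $q \le N$, this immediately gives $|x - p/q| < 1/q^2$, so the existence of at least one rational approximation of the required quality is free. The coprimality condition is also easy to arrange: if $d = \gcd(p,q) > 1$, then writing $p = dp'$, $q = dq'$ we have $|x - p'/q'| = |x - p/q| < 1/q^2 \le 1/(q')^2$, so replacing $(p,q)$ by $(p',q')$ preserves the inequality.

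The main step is to establish \emph{infinitely many} such pairs, and this is where irrationality of $x$ enters. The plan is to argue by contradiction: suppose only finitely many coprime pairs $(p_1,q_1),\dots,(p_k,q_k)$ with $q_i>0$ satisfy \eqref{7.3}. Since $x$ is irrational, each quantity $|q_i x - p_i|$ is strictly positive, so
\begin{equation*}
    \delta := \min_{1 \le i \le k} |q_i x - p_i| > 0.
\end{equation*}
Now choose $N \in \NN$ with $N > 1/\delta$ and apply Theorem \ref{Dir} to produce integers $p,q$ with $1 \le q \le N$ and $|qx - p| < 1/N < \delta$. After reducing by $\gcd(p,q)$ as above, we still have $|qx - p| < \delta$, so $(p,q)$ cannot equal any $(p_i,q_i)$ from our finite list. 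This contradicts maximality of the list, completing the proof.

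The only subtlety I anticipate is being careful that the reduction to coprime form does not destroy the inequality $|qx-p|<\delta$; this is immediate because dividing by $d$ only scales $|qx-p|$ down by $d$. Overall, this is a short argument, and the substance lies entirely in exploiting irrationality to convert the single-$N$ statement of Theorem \ref{Dir} into an infinitary one.
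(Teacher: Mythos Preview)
Your proposal is correct and follows essentially the same approach as the paper: both argue by contradiction, use irrationality to bound the finitely many assumed solutions away from $x$, and then invoke Theorem~\ref{Dir} with $N$ large enough to force a new solution. The only cosmetic difference is that the paper works with $|x-p_i/q_i|$ and chooses $N$ so that $|x-p_i/q_i|>1/N$, whereas you work with $|q_ix-p_i|$ and set $\delta=\min_i|q_ix-p_i|$; these are equivalent formulations.
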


\medskip

\begin{rem}
  Theorem~\ref{thm:7.4} is true for all $x\in\RR$ if we remove the
  condition that $p$ and $q$ are coprime, that is, if we allow
  approximations by non-reduced rational fractions.
\end{rem}

\begin{proof}
  Observe that Theorem \ref{Dir} is valid with
  $\gcd(p,q)=1$. Otherwise $ p/q=p'/q'$ with $\gcd(p',q')=1$ and
  $0<q'<q\le N$ and $|x-p/q|=|x-p'/q'|<1/(qN)<1/(q'N)$.

  Suppose $x$ is irrational and that there are only finitely many
  rationals
  \begin{equation*}
    \frac{p_1}{q_1}, \frac{p_2}{q_2}, \dots, \frac{p_n}{q_n},
  \end{equation*}
  where $\gcd(p_i,q_i)=1$, $q_i>0$ and
  \begin{equation*}
    \left|x-\frac{p_i}{q_i}\right|<\frac{1}{q_i^2}
  \end{equation*}
  for all $i=1,2,\dots,n$. Since $x$ is irrational,
  $x-\frac{p_i}{q_i}\not=0$ for $i=1,\dots,n$. Then there exists
  $N \in \mathbb{N}$ such that
  \begin{equation*}
    \left| x - \frac{p_i}{q_i} \right| > \frac{1}{N} \qquad \text{for
      all } 1 \le i \le n.
  \end{equation*}
  By Theorem~\ref{Dir}, there exists a reduced fraction $\dfrac{p}{q}$
  such that
  \begin{equation*}
    \left| x - \frac{p}{q} \right| < \frac{1}{qN} \le \frac{1}{N}
    \qquad (1\le q \le N).
  \end{equation*}
  Therefore, $\frac{p}{q}\neq\frac{p_i}{q_i}$ for any $i$ but
  satisfies \eqref{7.3}. A contradiction.
\end{proof}

Theorem~\ref{thm:7.4} tells us in particular that the
list~(\ref{eq:piapprox}) of good rational approximations to $\pi$ is
not just a fluke. This list can be extended to an infinite sequence,
and furthermore, such a sequence of good approximations exists for
\emph{every} irrational number. (See \S\ref{sec:basics-cont-fract}.)

Another important consequence of Theorem~\ref{Dir} is
Theorem~\ref{thm:slv1}, below.  Unlike Theorem~\ref{thm:7.4}, the
significance of it is not so immediately clear.  However, it will
become apparent during the course of these notes that it is the key to
the two fundamental theorems of classical metric Diophantine
approximation; namely the theorems of Khintchine and \Jarnik.

First, some notational matters. Unless stated otherwise, given a set
$X \subset \R$, we will denote by $m(X)$ the $1$-dimensional Lebesgue
measure of $X$. And we will use $B(x,r)$ to denote
$(x-r,x+r)\subset\RR$, the ball around $x\in\RR$ of radius $r>0$.

\begin{thm}\label{thm:slv1} Let $[a,b]\subset \R$ be an interval and
  $k \geq 6 $ be an integer. Then
  \begin{equation*}
    m\left( [a,b] \cap  \!\! \bigcup_{k^{n-1} < q \le k^n } \bigcup_{p\in\ZZ}
      \ \textstyle{B\left(\frac{p}{q}, \frac{k}{k^{2n}} \right) } \right)
    \ \geq \ \mbox{\large $\frac{1}{2} $ } (b-a).
  \end{equation*}
  for all sufficiently large $n\in\NN$.
\end{thm}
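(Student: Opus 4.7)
The plan is to apply Theorem \ref{Dir} (Dirichlet) with $N = k^n$ to each $x \in [a,b]$, then argue that the rationals with ``small'' denominators $q \le k^{n-1}$ cannot account for too much of $[a,b]$, so that most points $x$ are covered by rationals with $q$ in the desired dyadic-style range $(k^{n-1},k^n]$.

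First, by Theorem \ref{Dir}, for every $x \in [a,b]$ there exist integers $p,q$ with $1 \le q \le k^n$ and $|x - p/q| < 1/(qk^n)$. Split the set of such $x$ into two cases according to which range $q$ falls into. If $q > k^{n-1}$, then
\[
\left|x - \frac{p}{q}\right| < \frac{1}{qk^n} < \frac{1}{k^{n-1}\cdot k^n} = \frac{k}{k^{2n}},
\]
so $x$ lies in the union appearing in the statement. Thus it suffices to show that the ``bad'' set
\[
E_n := [a,b] \cap \bigcup_{1\le q \le k^{n-1}} \bigcup_{p \in \Z} B\!\left(\frac{p}{q},\frac{1}{qk^n}\right)
\]
satisfies $m(E_n) \le \tfrac{1}{2}(b-a)$ for all sufficiently large $n$.

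Next, I would estimate $m(E_n)$ by a crude union bound. For each $q \le k^{n-1}$, the number of integers $p$ for which $B(p/q,1/(qk^n))$ meets $[a,b]$ is at most $q(b-a) + 2$, and each such ball has measure $2/(qk^n)$. Summing,
\[
m(E_n) \;\le\; \sum_{q=1}^{k^{n-1}} \bigl(q(b-a)+2\bigr)\cdot \frac{2}{qk^n}
\;=\; \frac{2(b-a)\,k^{n-1}}{k^n} + \frac{4}{k^n}\sum_{q=1}^{k^{n-1}} \frac{1}{q}
\;=\; \frac{2(b-a)}{k} + O\!\left(\frac{n \log k}{k^n}\right).
\]
For $k \ge 6$ we have $2/k \le 1/3$, so $m(E_n) \le (b-a)/3 + o(1) < (b-a)/2$ once $n$ is sufficiently large. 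Consequently $m([a,b]\setminus E_n) \ge (b-a)/2$, and by the first paragraph every such $x$ lies in the union over $k^{n-1} < q \le k^n$, which yields the claim.

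The argument is essentially routine once this decomposition is set up; the only thing to watch is that the constant $k/k^{2n}$ in the balls is just big enough that Dirichlet's approximations with $q$ in the upper half of the range $1 \le q \le k^n$ are automatically captured, while the constant $k \ge 6$ is precisely what makes the geometric factor $2/k$ strictly less than $1/2$. No essential obstacle arises; the only minor care needed is the boundary term ``$+2$'' in counting valid $p$ for each $q$, which contributes the harmless harmonic sum absorbed in $o(1)$.
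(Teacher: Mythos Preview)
Your proof is correct and follows essentially the same approach as the paper's: apply Dirichlet with $N=k^n$, split according to whether $q\le k^{n-1}$ or $q>k^{n-1}$, and bound the small-$q$ contribution by a union estimate with the same $q(b-a)+O(1)$ count of relevant numerators. The only cosmetic difference is that the paper absorbs the harmonic-sum error into a bound $\tfrac{3}{k}m(I)$ (so that $k\ge6$ gives exactly $\tfrac12$), whereas you keep it as $\tfrac{2(b-a)}{k}+o(1)$; both yield the same conclusion.
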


\begin{proof}
  By Dirichlet's theorem, for any $x\in I:=[a,b]$ there are coprime
  integers $p,q$ with $1\le q\le k^n$ satisfying
  $|x-p/q|<(qk^n)^{-1}$.  We therefore have that
  \begin{multline*}
    m(I) = m\left(I \cap \bigcup_{q\leq k^n}\bigcup_{p\in\ZZ}B \Big(
      \frac{p}{q},
      \frac{1}{qk^n} \Big) \right) \\
    \leq m\left(I \cap \bigcup_{q\leq
        k^{n-1}}\bigcup_{p\in\ZZ}B\Big(\frac{p}{q},
      \frac{1}{qk^n}\Big) \right) + m\left(I \cap \bigcup_{k^{n-1}<
        q\leq k^n}\bigcup_{p\in\ZZ}B \Big( \frac{p}{q},
      \frac{k}{k^{2n}}\Big) \right).
  \end{multline*}
  Also, notice that
  \begin{multline*}
    m\left(I \cap \bigcup_{q\leq
        k^{n-1}}\bigcup_{p\in\ZZ}B\Big(\frac{p}{q},
      \frac{1}{qk^n}\Big)\right) = m\left(I \cap \bigcup_{q \le
        k^{n-1} } \bigcup_{p=aq-1}^{bq+1}
      B\Big(\frac{p}{q},\frac{1}{q k^n} \Big) \right)\\
    \leq 2 \sum_{q \leq k^{n-1}} \frac{1}{qk^{n}} \Big(m(I)q+3\Big)
    \leq \frac{3}{k}m(I)
  \end{multline*}
  for large $n$. It follows that for $k\geq 6$,
  \begin{equation*}
    m\left( I \cap  \!\! \bigcup_{k^{n-1} < q \le k^n } \bigcup_{p\in\ZZ}
      \ \textstyle{B\left(\frac{p}{q}, \frac{k}{k^{2n}} \right) } \right)
    \ \geq \ m(I) - \mbox{\large $\frac{3}{k} $ }m(I) \ \geq \
    \mbox{\large $\frac{1}{2} $ } m(I)
  \end{equation*}
  for large $n$.
\end{proof}

\subsection{Basics of continued fractions}
\label{sec:basics-cont-fract}

From Dirichlet's theorem we know that for any real number $x$ there
are infinitely many `good' rational approximates $p/q$, but how can we
find these? The theory of continued fraction provides a simple
mechanism for generating them. We collect some basic facts about
continued fractions in this section. For proofs and a more
comprehensive account see for example \cite{HW, Khintchine, RS}.

Let $x$ be an irrational number and let $ [a_0;a_1,a_2,a_3, \ldots] $
denote its continued fraction expansion. Denote its $n$-th convergent
by
\begin{equation*}
  \frac{p_n}{q_n}  :=   [a_0;a_1,a_2,a_3, \ldots,a_n].
\end{equation*}
Recall that the convergents can be obtained by the following recursion
\begin{align*}
  p_0 &= a_0,  &q_0 &= 1, \\
  p_1 &= a_1a_0 + 1, &q_1 &= a_1,\\
  p_k &= a_kp_{k-1} + p_{k-2}, &q_k &= a_kq_{k-1} + q_{k-2}
                                      \qquad\text{for $k\geq 2$,}
\end{align*}
and that they satisfy the inequalities
\begin{equation}\label{vbc1}
  \dfrac{1}{q_n(q_{n+1}+q_n)} \ \leq  \    \Big|  x- \frac{p_n}{q_n}  \Big|    \ < \ \dfrac{1}{q_nq_{n+1}}  \, .
\end{equation}
From this it is clear that the convergents provide explicit solutions
to the inequality in Theorem~\ref{thm:7.4} (Dirichlet); that is,
\begin{equation*}
  \left|x- \frac{p_n}{q_n} \right| \leq \frac{1}{q_n^2}
  \qquad \forall n \in \N.
\end{equation*}
In fact, it turns out that for irrational $x$ the convergents are
\emph{best approximates} in the sense that if $1 \le q < q_n$ then any
rational $\frac{p}{q}$ satisfies
\begin{equation*}
  \left|  x - \frac{p_n}{q_n} \right| <  \left| x - \frac{p}{q} \right|.
\end{equation*}
Regarding $\pi= 3.141\dots$, the rationals~(\ref{eq:piapprox}) are the
first 5 convergents.

\subsection{Competing with Dirichlet and losing badly}
\label{sec:beating-dirichlet}

We have presented Dirichlet's theorem as an answer to whether the
trivial inequality $|x-p/q|\leq 1/2q$ can be beaten. Naturally, one
may also ask if we can do any better than Dirichlet's theorem.  Let us
formulate this a little more precisely.  For $ x \in \RR$, let
\begin{equation*}
  \|x\| := \min \{ | x-m | : m \in \Z \}
\end{equation*}
denote the distance from $x$ to the nearest integer.  Dirichlet's
theorem (Theorem \ref{thm:7.4}) can be restated as follows: {\em for
  any $ x \in \R $, there exist infinitely many integers $q >0$ such
  that
  \begin{equation} \label{dir} q \, \| q x \| \leq 1 \, .
  \end{equation}  }
Can we replace right-hand side of~(\ref{dir}) by
arbitrary $\epsilon > 0$? In other words, is it true that
$\liminf_{q\to\infty}q\|qx\| = 0 $ for every $x$? One might notice
that~(\ref{vbc1}) implies that there certainly do exist $x$ for which
this is true. (One can write down a continued fraction whose
partial quotients grow as fast as one pleases.) Still, the answer to
the question is \textbf{No.} It was proved by Hurwitz (1891) that for
every $x\in\RR$, we have $q \, \|q x\| <\epsilon= 1/\sqrt5$ for
infinitely many $q>0$, and that this is best
possible in the sense that the statement becomes false if $\epsilon <
1/\sqrt{5}$.

The fact that $1/\sqrt{5}$ is best possible is relatively easy to see.
Assume that it can be replaced by
\begin{equation*}
  \frac{1}{\sqrt{5} + \epsilon} \qquad (\epsilon > 0,\text{ arbitrary}).
\end{equation*}
Consider the Golden Ratio $x_1 = \frac{\sqrt{5}+1}{2}$, root of the
polynomial
\begin{equation*}
  f(t) = t^2 - t - 1 = (t-x_1)(t-x_2)
\end{equation*}
where $x_2 = \frac{1-\sqrt{5}}{2}$. Assume there exists a sequence of
rationals $\frac{p_i}{q_i}$ satisfying
\begin{equation*}
  \left| x_1 - \frac{p_i}{q_i} \right| < \frac{1}{(\sqrt{5}+\epsilon)q_i^2}.
\end{equation*}
Then, for $i$ sufficiently large, the right-hand side of the above
inequality is less than $\epsilon$ and so
\begin{equation*}
  \left| x_2 - \frac{p_i}{q_i} \right| \leq |x_2 - x_1| + \left| x_1 - \frac{p_i}{q_i} \right| < \sqrt{5} + \epsilon \, .
\end{equation*}
It follows that
\begin{align*}
  0 \ \neq \  \left| f\left( \frac{p_i}{q_i} \right) \right| &< \frac{1}{(\sqrt{5}+\epsilon)q_i^2} \cdot (\sqrt{5}+\epsilon) \\[2ex]
  \Longrightarrow \quad \left| q_i^2 f\left(\frac{p_i}{q_i}\right)
  \right| &< 1.
\end{align*}
However the left-hand side is a strictly positive integer. This is a
contradiction, for there are no integers in $(0,1)$---an extremely useful
fact.

The above argument shows that if $ x= \frac{\sqrt{5}+1}{2} $ then
there are at most finitely many rationals $p/q$ such that
$$
\left|x-\frac{p}{q}\right| < \frac{1}{(\sqrt{5}+\epsilon)q^2}.
$$
Therefore, there exists a constant $c(x)> 0$ such that
$$
\left|x-\frac{p}{q}\right| > \frac{c(x)}{q^2} \qquad \forall \ p/q \in
\Q \, .
$$
All of this shows that there exist numbers for which we can not
improve Dirichlet's theorem arbitrarily. These are called \emph{badly
approximable numbers}\index{badly approximable numbers} and are defined by
\begin{align*}
  \bad &:= \{x \in \RR: \inf_{q\in\N}q
         \|qx\|> 0\} \\[2ex]
       &= \{x \in \RR: c(x):=\liminf_{q\to\infty}q\|qx\| > 0\}.
\end{align*}
Note that if $x$ is badly approximable then for the associated badly
approximable constant $c(x)$ we have that
$$
0 < c(x) \le \frac{1}{\sqrt{5}}.
$$
Clearly, $ \bad \neq \varnothing$ since the golden ratio is badly
approximable.  Indeed, if $ x \in \bad$ then $ tx \in \bad$ for any
$ t \in \Z \setminus \{0\}$ and so $\bad$ is at least countable.

$\bad$ has a beautiful characterisation via continued fractions.

\begin{thm}\label{Bad}
  Let $x= [a_0;a_1,a_2,a_3, \ldots] $ be irrational.  Then
$$x \in \bad     \  \Longleftrightarrow  \   \exists  \ M=M(x) \ge 1  \ such \  that  \   a_i \leq M  \  \  \forall  \, i \ .$$
That is, $\bad$ consists exactly of the real numbers whose continued
fractions have bounded partial quotients.
\end{thm}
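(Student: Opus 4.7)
The strategy is to exploit the two-sided bound \eqref{vbc1} together with the fact (mentioned in Section \ref{sec:basics-cont-fract}) that the convergents are best approximates. The recursion $q_{n+1}=a_{n+1}q_n+q_{n-1}$ will convert bounds on the partial quotients into bounds on the ratio $q_n/q_{n+1}$, which is essentially what $q_n\|q_nx\|$ measures.

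First I would rewrite \eqref{vbc1} in the ``best approximate'' form. Since $\|q_nx\|=|q_nx-p_n|=q_n|x-p_n/q_n|$ (for $n$ large enough that $p_n/q_n$ is the nearest integer to $q_nx/1$, which follows from the bound $<1/q_nq_{n+1}<1/(2q_n)$), the inequality \eqref{vbc1} becomes
\begin{equation*}
  \frac{q_n}{q_{n+1}+q_n} \;\leq\; q_n\|q_nx\| \;<\; \frac{q_n}{q_{n+1}}.
\end{equation*}
Using $q_{n+1}=a_{n+1}q_n+q_{n-1}$ together with $q_{n-1}<q_n$, the right-hand side is bounded above by $1/a_{n+1}$, and the left-hand side is bounded below by $1/(a_{n+1}+2)$. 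Therefore
\begin{equation*}
  \frac{1}{a_{n+1}+2} \;\leq\; q_n\|q_nx\| \;<\; \frac{1}{a_{n+1}}.
\end{equation*}

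For the implication $(\Leftarrow)$: assume $a_i\leq M$ for all $i$. The displayed inequality gives $q_n\|q_nx\|\geq 1/(M+2)$ for every $n$. To pass to arbitrary $q\geq 1$, I would invoke the best-approximation property: for any integer $q$ with $q_n\leq q<q_{n+1}$ one has $\|qx\|\geq\|q_nx\|$, hence
\begin{equation*}
  q\|qx\| \;\geq\; q_n\|q_nx\| \;\geq\; \frac{1}{M+2}.
\end{equation*}
Taking the infimum over $q$ shows $x\in\bad$ with $c(x)\geq 1/(M+2)$.

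For the implication $(\Rightarrow)$: assume the partial quotients are unbounded, so we can pick a subsequence $a_{n_k+1}\to\infty$. Then the upper bound above gives $q_{n_k}\|q_{n_k}x\|<1/a_{n_k+1}\to 0$, so $\liminf_{q\to\infty}q\|qx\|=0$ and hence $x\notin\bad$.

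The only step that requires care is the passage from convergents to all denominators $q$ in the $(\Leftarrow)$ direction; this is exactly where the best-approximation property of continued fraction convergents is essential, and without it the argument only controls $q\|qx\|$ along the subsequence $q=q_n$. Everything else is direct manipulation of \eqref{vbc1} and the recursion for $q_n$.
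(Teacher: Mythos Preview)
Your proof is correct and follows essentially the same strategy as the paper: derive the two-sided bound $\frac{1}{a_{n+1}+2}\le q_n\|q_nx\|<\frac{1}{a_{n+1}}$ from \eqref{vbc1}, then use the best-approximation property to extend from convergent denominators to all $q$. The one noteworthy difference is that you invoke best approximation of the \emph{second} kind ($\|qx\|\ge\|q_nx\|$ for $q_n\le q<q_{n+1}$), which the paper does not state explicitly (it only records the first-kind version $|x-p/q|>|x-p_n/q_n|$ for $q<q_n$); this lets you conclude directly with the sharper constant $c(x)\ge 1/(M+2)$, whereas the paper's route via the first-kind inequality incurs an extra factor $(q/q_n)^2\ge 1/(M+1)^2$ and ends with $c(x)\ge 1/((M+2)(M+1)^2)$.
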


\begin{proof} It follows from \eqref{vbc1} that
  \begin{equation}\label{vbcslv1}
    \dfrac{1}{q_n^2(a_{n+1} +2)} \ \leq  \    \Big|  x- \frac{p_n}{q_n}  \Big|    \ < \ \dfrac{1}{a_{n+1}q_n^2},
  \end{equation}
  and from this it immediately follows that if $ x \in \bad$, then
  $a_n \leq \max \{|a_o|, 1/c(x) \} $.

  Conversely, suppose the partial quotients of $x$ are bounded, and
  take any $q\in\N$.  Then there is $n \geq 1$ such that
  $q_{n-1}\le q <q_n$. On using the fact that convergents are best
  approximates, it follows that
  \begin{equation*}
    \left|x-\frac pq\right| \ \ge \ \left|x-\frac{p_n}{q_n}\right| \ \ge \
    \frac{1}{q_n^2(M + 2) } \ = \ \frac{1}{q^2(M + 2) } \,
    \frac{q^2}{q_n^2}.
  \end{equation*}
  It is easily seen that
  \begin{equation*}
    \frac{q}{q_n}   \ge  \frac{q_{n-1}}{q_n}   \ge \frac{1}{M+1},
  \end{equation*}
  which proves that
  \begin{equation*}
c(x) \geq \frac{1}{(M+2)(M+1)^2}>0,
\end{equation*}
hence $x\in\bad$.
\end{proof}

Recall that a continued fraction of the form
$x=[a_0;\dots,a_n,\overline{a_{n+1},\dots,a_{n+m}}]$ is said to be
\emph{periodic}.  Also, recall that an irrational number $\alpha$ is
called a \emph{quadratic irrational} if $\alpha$ is a solution to a
quadratic equation with integer coefficients:
\begin{equation*}
  ax^2 + bx + c=0 \qquad\text{($a,b,c\in\mathbb{Z}$, $a \neq 0$)}.
\end{equation*}
It is a well-known fact that an irrational number $x$ has periodic
continued fraction expansion if and only if $x$ is a quadratic
irrational. This and Theorem~\ref{Bad} imply the following corollary.

\begin{cor}
  Every quadratic irrational is badly approximable.
\end{cor}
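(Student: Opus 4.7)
The plan is to chain together the two facts stated just before the corollary: the Lagrange-style characterisation ``$x$ is a quadratic irrational $\iff$ the continued fraction expansion of $x$ is eventually periodic'' and Theorem~\ref{Bad}, which says that $x \in \bad$ exactly when the sequence of partial quotients $(a_i)$ is bounded.

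Concretely, I would start with an arbitrary quadratic irrational $\alpha$ and invoke the quoted periodicity result to write its continued fraction expansion in the form
\begin{equation*}
  \alpha = [a_0; a_1, \ldots, a_n, \overline{a_{n+1}, \ldots, a_{n+m}}].
\end{equation*}
Then I would simply observe that the sequence of partial quotients $(a_i)_{i \ge 0}$ takes only finitely many distinct values, namely those appearing in the finite prefix $a_0, \ldots, a_n$ together with those in the repeating block $a_{n+1}, \ldots, a_{n+m}$. Consequently
\begin{equation*}
  M := \max\{a_i : i \ge 0\} = \max\{a_i : 0 \le i \le n+m\} < \infty.
\end{equation*}
In particular $a_i \le M$ for every $i$, so the hypothesis of the right-to-left implication in Theorem~\ref{Bad} is satisfied, and therefore $\alpha \in \bad$.

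There is no real obstacle here: the entire content of the corollary is packaged into the two results it quotes, and the only ``work'' is the elementary remark that an eventually periodic sequence of positive integers is bounded. I would keep the proof to essentially one short paragraph, making explicit only the finiteness observation above so that the reader sees precisely how the periodicity theorem feeds into Theorem~\ref{Bad}.
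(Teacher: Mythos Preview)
Your proposal is correct and follows exactly the approach the paper intends: the paper does not even write out a separate proof, but simply states that the corollary is implied by Theorem~\ref{Bad} together with the well-known fact that an irrational has (eventually) periodic continued fraction expansion if and only if it is a quadratic irrational. Your write-up just makes explicit the trivial observation that an eventually periodic sequence of partial quotients is bounded, which is precisely the link between those two facts.
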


The simplest instance of this is the golden ratio, a root of
$x^2-x-1$, whose continued fraction is
\begin{equation*}
  \frac{\sqrt{5}+1}{2} = [1;1,1,1,\dots] := [\, \overline{1} \, ],
\end{equation*}
with partial quotients clearly bounded.

Indeed, much is known about the badly approximable numbers, yet
several simple questions remain unanswered. For example:

\theoremstyle{plain} \newtheorem*{fc}{Folklore Conjecture}
\begin{fc}
  The only algebraic irrationals that are in $\bad$ are the quadratic
  irrationals.
\end{fc}

\begin{rem}
  Though this conjecture is widely believed to be true, there is no
  direct evidence for it. That is, there is no single algebraic
  irrational of degree greater than two whose membership (or
  non-membership) in $\bad$ has been verified.
\end{rem}

A particular goal of these notes is to investigate the `size' of
$\bad$.
We will show:\\[3ex]
\hspace*{10ex}(a) ~~~$m(\bad)=0$\\[1ex]
\hspace*{10ex}(b) ~~~$\dim \bad= 1,$

\noindent where $\dim$ refers to the Hausdorff dimension (see \S\ref{DB}). In other words, we will see that $ \bad $ is a small set in
that it has measure zero in $\R$, but it is a large set in that it has
the same (Hausdorff) dimension as $\R$.

\paragraph{}Let us now return to Dirichlet's theorem (Theorem
\ref{thm:7.4}).  Every $x \in \R $ can be approximated by rationals
$p/q$ with `rate of approximation' given by $q^{-2}$---the right-hand
side of inequality \eqref{7.3} determines the `rate' or `error' of
approximation by rationals. The above discussion shows that this rate
of approximation cannot be improved by an arbitrary constant for every
real number---$\bad$ is non-empty. On the other hand, we have stated
above that $\bad$ is a $0$-measure set, meaning that the set of points
for which we \emph{can} improve Dirichlet's theorem by an arbitrary
constant is full. In fact, we will see that if we exclude a set of
real numbers of measure zero, then from a measure theoretic point of
view the rate of approximation can be improved not just by an
arbitrary constant but by a logarithm (see Remark~\ref{rem:log}).

\section{Metric Diophantine approximation: the classical Lebesgue
  theory \label{clt} }

In the previous section, we have been dealing with variations of
Dirichlet's theorem in which the right-hand side or rate of
approximation is of the form $ \epsilon q^{-2}$. It is natural to
broaden the discussion to general approximating functions. More
precisely, for a function $\psi:\N\to \Rp=[0,\infty)$, a real number
$x$ is said to be \emph{$\psi$--approximable} if there are infinitely
many $q\in\N$ such that
\begin{equation}\label{e:002}
  \|qx\|<\psi(q)  \ .
\end{equation}
The function $\psi$ governs the `rate' at which the rationals
approximate the reals and will be referred to as an
\emph{approximating function}\index{approximating function}.

One can readily verify that the set of $\p$-approximable numbers is
invariant under translations by integer vectors. Therefore without any
loss of generality, and to ease the `metrical' discussion which
follows, we shall restrict our attention to $\psi$--approximable
numbers in the unit interval $\I:=[0,1)$.  The set of such numbers is
clearly a subset of $\I$ and will be denoted by $W(\psi)$; i.e.
$$
W(\psi): = \{x\in \I \colon \|qx\|<\psi(q) \text{ for infinitely many
} q\in\N \} \ .
$$
Notice that in this notation we have that
\begin{equation*}
  \textrm{Dirichlet's Theorem (Theorem \ref{thm:7.4})} \quad \Longrightarrow \quad W (\psi) = \I \ \textrm{ if } \ \psi(q)=q^{-1} .
\end{equation*}
Yet, the existence of badly approximable numbers implies that there
exist approximating functions $\psi$ for which $W(\psi)\neq I$.
Furthermore, the fact that $m(\bad)=0$ implies that we can have
$W(\psi)\neq I$ while $m(W(\psi))=1$.

A key aspect of the classical theory of Diophantine approximation is
to determine the `size' of $W(\psi)$ in terms of
\begin{itemize}
\item[(a)] Lebesgue measure,
\item[(b)]Hausdorff dimension, and
\item[(c)] Hausdorff measure.
\end{itemize}
From a measure theoretic point of view, as we move from (a) to (c) in
the above list, the notion of size becomes subtler. In
this section we investigate the `size' of $W(\psi)$ in terms of $1$-
dimensional Lebesgue measure $m$.

\vspace*{2ex}

We start with the important observation that $W(\psi)$ is a $\limsup$
set of balls.  For a fixed $q \in \N$, let
\begin{eqnarray} \label{slv101}
  A_q (\psi)  &  :=   &  \{  x \in \I : \|qx\| \, < \, \psi(q)  \}   \nonumber \\[2ex]
              & := & \bigcup_{p=0}^q B \Big(\frac{p}{q},
                     \frac{\psi(q)}{q} \Big) \ \cap \I
\end{eqnarray}
Note that
\begin{equation} \label{slv103} m\big( A_q(\psi) \big) \leqslant
  2\psi(q)
\end{equation}
with equality when $ \psi(q) < 1/2 $ since then the intervals in
\eqref{slv101} are disjoint.

The set $W(\psi)$ is simply the set of real numbers in $\I$ which lie
in infinitely many sets $A_q(\psi)$ with $q=1,2,\dots$ i.e.\

\begin{equation*}
  W(\psi) =  \limsup_{q \to \infty} A_q(\psi) := \bigcap_{t=1}^{\infty} \bigcup_{q=t}^{\infty} A_q(\psi)
\end{equation*}
is a $\limsup$ set.  Now notice that for each $t\in \N$

\begin{equation*}
  W(\psi) \subset \bigcup_{q=t}^{\infty} A_q(\psi) \
\end{equation*}
i.e.\ for each $t$, the collection of balls $B(p/q, \psi(q)/q)$
associated with the sets $A_q(\psi): q=t, t+1, \dots $ form a cover
for $W(\psi)$. Thus, it follows via \eqref{slv103} that
\begin{eqnarray} \label{slv102}
  m \big( W(\psi) \big) &\le  &  m \left( \bigcup_{q=t}^{\infty} A_q(\psi) \right) \nonumber  \\
                        &\le &  \sum_{q=t}^{\infty} m\big( A_q(\psi) \big)  \nonumber  \\
                        &\le & 2 \sum_{q=t}^{\infty} \psi(q) \, .
\end{eqnarray}
Now suppose
\begin{equation*}
  \sum_{q=1}^{\infty} \psi(q) < \infty.
\end{equation*}
Then given any $\epsilon > 0$, there exists $t_0$ such that for all
$t \ge t_0$
\begin{equation*}
  \sum_{q=t}^{\infty} \psi(q) < \frac{\epsilon}{2}.
\end{equation*}
It follows from \eqref{slv102}, that
\begin{equation*}
  m\big( W(\psi) \big) < \epsilon.
\end{equation*}
But $\epsilon > 0$ is arbitrary, whence
\begin{equation*}
  m\big( W(\psi) \big) = 0 \,
\end{equation*}
and we have established the following statement.
\begin{thm} \label{convkt} Let $\psi:\N\to \Rp$ be a function such
  that
  \begin{equation*}
    \sum_{q=1}^{\infty} \psi(q) < \infty.
  \end{equation*}
  Then
  \begin{equation*}
    m ( W(\psi) ) =0.
  \end{equation*}
\end{thm}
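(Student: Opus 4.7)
The plan is to exploit the $\limsup$ structure of $W(\psi)$ and invoke what is essentially the convergence half of the Borel--Cantelli lemma, the key ingredients for which have already been assembled in the narrative preceding the statement. I would first record that
\[
W(\psi) \;=\; \limsup_{q\to\infty} A_q(\psi) \;=\; \bigcap_{t=1}^{\infty}\bigcup_{q=t}^{\infty} A_q(\psi),
\]
so that for every $t\in\N$ the family $\{A_q(\psi):q\ge t\}$ is a cover of $W(\psi)$.

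Next, I would bound the Lebesgue measure of each $A_q(\psi)$. By its definition as a union of at most $q+1$ intervals of radius $\psi(q)/q$ intersected with $\I$, one immediately has $m(A_q(\psi))\le 2\psi(q)$, as recorded in \eqref{slv103}. Countable subadditivity of Lebesgue measure then gives, for every $t$,
\[
m(W(\psi)) \;\le\; \sum_{q=t}^{\infty} m(A_q(\psi)) \;\le\; 2\sum_{q=t}^{\infty}\psi(q).
\]

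Finally, I would use the hypothesis $\sum_q \psi(q)<\infty$: the tail sum on the right tends to $0$ as $t\to\infty$, and since $m(W(\psi))$ is a fixed non-negative number bounded above by this tail for every $t$, it must be zero. Fix $\ve>0$, choose $t_0$ so that $\sum_{q\ge t_0}\psi(q)<\ve/2$, and conclude $m(W(\psi))<\ve$; letting $\ve\to0$ finishes the argument.

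There is really no significant obstacle here: the proof is a two-line application of measure subadditivity once one sees $W(\psi)$ as a $\limsup$ set. The only place requiring any care is the elementary measure bound $m(A_q(\psi))\le 2\psi(q)$, which needs the straightforward observation that the balls centred at $p/q$ with $0\le p\le q$ cover the rationals in $\I$ with denominator $q$, giving at most $q+1$ intervals of length $2\psi(q)/q$; the factor $q$ cancels to produce the desired $2\psi(q)$ bound (with the trivial $+2\psi(q)/q$ boundary contribution absorbed in the inequality, or handled by restricting to disjointness when $\psi(q)<1/2$).
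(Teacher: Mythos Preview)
Your proposal is correct and follows exactly the same approach as the paper: express $W(\psi)$ as a $\limsup$ of the sets $A_q(\psi)$, use the measure bound \eqref{slv103}, apply countable subadditivity to bound $m(W(\psi))$ by the tail $2\sum_{q\ge t}\psi(q)$, and let the tail go to zero. The paper's argument is essentially line-for-line the same, including the $\varepsilon/2$ step.
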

This theorem is in fact a simple consequence of a general result in
probability theory.

\subsection{The Borel-Cantelli Lemma}

Let $(\Omega,\A,\mu)$ be a measure space with $\mu(\Omega)<\infty$ and
let $E_q$ ($q \in \mathbb{N}$) be a family of measurable sets in
$\Omega$.  Also, let
$$
E_{\infty}:=\limsup_{q \to \infty} E_q := \bigcap_{t=1}^{\infty}
\bigcup_{q=t}^{\infty} E_q \ ;
$$
i.e. $ E_{\infty} $ is the set of $x \in \Omega$ such that
$ x \in E_i $ for infinitely many $i \in \N$.

The proof of the Theorem \ref{convkt} mimics the proof of the
following fundamental statement from probability theory.

\begin{lem}[Convergence Borel-Cantelli] Suppose that
  $ \sum_{q=1}^{\infty} \mu(E_q) <~\infty$.  Then,
  $$\mu(E_{\infty}) = 0 \, . $$

\end{lem}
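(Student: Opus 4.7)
The plan is to imitate exactly the argument already carried out for Theorem \ref{convkt}, but now in the abstract measure-theoretic setting. The whole point of the lemma is that nothing in that earlier argument actually used special features of Lebesgue measure on $\I$: it used only countable subadditivity and the fact that the tail of a convergent series can be made arbitrarily small.

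First I would unpack the definition of the limsup set. By construction
\begin{equation*}
E_\infty \;=\; \bigcap_{t=1}^{\infty}\bigcup_{q=t}^{\infty} E_q,
\end{equation*}
so for every fixed $t\in\N$ the inclusion $E_\infty \subseteq \bigcup_{q=t}^{\infty} E_q$ holds. Monotonicity and countable subadditivity of the measure $\mu$ then give
\begin{equation*}
\mu(E_\infty) \;\le\; \mu\!\left(\bigcup_{q=t}^{\infty} E_q\right) \;\le\; \sum_{q=t}^{\infty}\mu(E_q).
\end{equation*}
Since $\sum_{q=1}^{\infty}\mu(E_q)<\infty$, the tail $\sum_{q=t}^{\infty}\mu(E_q)$ tends to $0$ as $t\to\infty$. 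Given $\varepsilon>0$ one can therefore choose $t_0$ so that the tail is below $\varepsilon$ for all $t\ge t_0$, whence $\mu(E_\infty)<\varepsilon$. As $\varepsilon$ was arbitrary, $\mu(E_\infty)=0$.

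There is essentially no obstacle here; the only point to be slightly careful about is that countable subadditivity of $\mu$ is used in the second displayed inequality (this works for any measure, and does not require the assumption $\mu(\Omega)<\infty$, which is only included in the hypotheses for uniformity with the divergence half of Borel--Cantelli that typically follows). No properties of the ambient space $\Omega$ or the $\sigma$-algebra $\A$ beyond measurability of each $E_q$ enter the argument.
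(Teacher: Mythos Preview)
Your proof is correct and is precisely the argument the paper has in mind: the paper leaves this lemma as an exercise, having just carried out exactly this tail-of-a-convergent-series argument in the proof of Theorem~\ref{convkt} and remarking that the latter ``mimics'' the proof of the lemma. There is nothing to add.
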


\begin{proof} Exercise.
\end{proof}

To see that Theorem \ref{convkt} is a trivial consequence of the above
lemma, simply put $\Omega = \I = [0,1]$, $\mu = m$ and
$E_q = A_q(\psi)$ and use \eqref{slv103}.

Now suppose we are in a situation where the sum of the measures
diverges.  Unfortunately, as the following example demonstrates, it is
not the case that if $\sum \mu(E_q) = \infty$ then
$\mu(E_{\infty}) = \mu(\Omega)$ or indeed that $\mu(E_{\infty}) > 0$.

\noindent \textbf{Example:} Let $E_q=(0,\frac1q)$. Then
$\sum_{q=1}^\infty m(E_q)=\sum_{q=1}^\infty\frac{1}{q}=\infty$.
However, for any $t\in\N$ we have that
$$
\bigcup_{q=t}^\infty E_q=E_t\,,
$$
and thus
$$
E_{\infty}=\bigcap_{t=1}^\infty E_t=\bigcap_{t=1}^\infty
(0,\tfrac1t)=\varnothing
$$
implying that $m(E_{\infty})=0$.

The problem in the above example is that the sets $E_q$ overlap `too
much'---in fact they are nested.  The upshot is that in order to have
$\mu(E_{\infty}) > 0$, we not only need the sum of the measures to
diverge but also that the sets $E_q$ ($q \in \mathbb{N}$) are in some
sense independent. Indeed, it is well-known that if we had pairwise
independence in the standard sense; \emph{i.e.} if
$$
\mu ( E_s \cap E_t ) = \mu(E_s) \mu(E_t) \qquad \forall s \neq t,
$$
then we would have $\mu( E_{\infty}) = \mu (\Omega)$.  However, we
very rarely have this strong form of independence in our applications.
What is much more useful to us is the following statement, whose proof
can be found in \cite{HarmanMNT, Sprindzuk}.

\begin{lem}[Divergence  Borel-Cantelli]
  Suppose that $\sum_{q=1}^{\infty} \mu(E_q) = \infty$ and that there
  exists a constant $C>0$ such that
  \begin{equation}\label{vbx1x}
    \sum_{s,t=1}^Q  \mu(E_s\cap E_t)\le C\left(\sum_{s=1}^Q  \mu(E_s)\right)^2
  \end{equation}
  holds for infinitely many $Q\in\N$.  Then
$$
\mu(E_{\infty}) \ge 1/C\,.
$$
\end{lem}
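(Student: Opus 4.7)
The plan is to run the classical second-moment method. For each $Q\in\N$, set
$f_Q(x):=\sum_{q=1}^Q \mathbf{1}_{E_q}(x)$, noting that the support of $f_Q$ is exactly $U_Q:=\bigcup_{q=1}^Q E_q$. Then $\int f_Q\,d\mu=\sum_{q=1}^Q\mu(E_q)$ while $\int f_Q^2\,d\mu=\sum_{s,t=1}^Q\mu(E_s\cap E_t)$. The Cauchy--Schwarz inequality applied to $f_Q\cdot\mathbf{1}_{U_Q}$ gives
\begin{equation*}
\Big(\sum_{q=1}^Q\mu(E_q)\Big)^{2}\le \mu(U_Q)\cdot\sum_{s,t=1}^Q\mu(E_s\cap E_t),
\end{equation*}
so that the quasi-independence hypothesis \eqref{vbx1x}, valid for infinitely many $Q$, immediately yields $\mu(U_Q)\ge 1/C$ along that subsequence. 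Letting $Q\to\infty$ along this subsequence gives $\mu\big(\bigcup_{q=1}^\infty E_q\big)\ge 1/C$.

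The main obstacle is that the statement concerns $E_\infty=\limsup E_q$, not the full union $\bigcup_{q\ge 1}E_q$. Since $\mu(\Omega)<\infty$ and the sets $V_N:=\bigcup_{q\ge N}E_q$ decrease to $E_\infty$, continuity of $\mu$ from above reduces the task to showing $\mu(V_N)\ge 1/C$ for \emph{every} fixed $N$. I therefore have to re-run the Cauchy--Schwarz argument starting from index $N$ instead of $1$, which requires controlling the second moment of $g_{N,Q}:=\sum_{q=N}^Q\mathbf{1}_{E_q}$ by (essentially) $C\big(\sum_{q=N}^Q\mu(E_q)\big)^2$.

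To handle this, I exploit the divergence $\sum_{q=1}^\infty\mu(E_q)=\infty$: for any fixed $N$ and any $\varepsilon>0$, among the infinitely many $Q$ for which \eqref{vbx1x} holds I can choose $Q$ so large that $\sum_{q=N}^Q\mu(E_q)\ge(1-\varepsilon)\sum_{q=1}^Q\mu(E_q)$. Since $\sum_{s,t=N}^Q\mu(E_s\cap E_t)\le\sum_{s,t=1}^Q\mu(E_s\cap E_t)\le C\big(\sum_{q=1}^Q\mu(E_q)\big)^2$, Cauchy--Schwarz applied to $g_{N,Q}$ then yields
\begin{equation*}
\mu\Big(\bigcup_{q=N}^Q E_q\Big)\ \ge\ \frac{\big(\sum_{q=N}^Q\mu(E_q)\big)^2}{\sum_{s,t=N}^Q\mu(E_s\cap E_t)}\ \ge\ \frac{(1-\varepsilon)^2}{C}.
\end{equation*}
Letting $Q\to\infty$ along the subsequence and then $\varepsilon\to 0$ gives $\mu(V_N)\ge 1/C$ for each $N$, and continuity from above delivers $\mu(E_\infty)\ge 1/C$, completing the proof. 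The only subtlety is this swallowing of the $(1-\varepsilon)^{-2}$ factor, which is why the divergence hypothesis is essential beyond just giving us an "interesting" limsup.
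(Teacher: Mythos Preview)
Your proof is correct and is precisely the standard second-moment (Cauchy--Schwarz/Paley--Zygmund) argument. Note, however, that the paper does not actually supply a proof of this lemma: it merely cites \cite{HarmanMNT, Sprindzuk}, where the same argument can be found, so there is nothing to compare against beyond confirming that your approach is the expected one.
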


The independence condition \eqref{vbx1x} is often referred to as
\emph{quasi-independence on average}, and, together with the divergent
sum condition, it guarantees that the associated $\limsup$ set has
positive measure. It does not guarantee \emph{full} measure
(\emph{i.e.}~that $\mu(E_{\infty}) = \mu(\Omega)$), which is what we
are trying to prove, for example, in Khintchine's Theorem. But this is
not an issue if we already know (by some other means) that $E_\infty$
satisfies a zero-full law (which is also often called a zero-one law) with respect to the measure $\mu$, meaning a
statement guaranteeing that
\begin{equation*}
  \mu(E_{\infty}) =   \   0 \quad\text{or} \quad \mu(\Omega).
\end{equation*}
Happily, this is the case with the $\limsup $ set $ W(\psi)$ of
$\psi$-well approximable numbers \cite{Cassels-50:MR0036787,
  Casselshort, HarmanMNT}.

Alternatively, assuming $\Omega$ is equipped with a metric such that
$\mu$ becomes a doubling Borel measure, we can guarantee that
$\mu(E_{\infty}) = \mu(\Omega)$ if we can establish \emph{local
  quasi-independence on average} \cite[\S8]{BDV06}; \emph{i.e.}~we replace
\eqref{vbx1x} in the above lemma by the condition that
\begin{equation}\label{vbx1xslv}
  \sum_{s,t=1}^Q  \mu\big((B \cap E_s) \cap (B \cap E_t) \big)\le \frac{C}{\mu(B)} \left(\sum_{s=1}^Q  \mu(B \cap E_s)\right)^2 \,
\end{equation}
for any sufficiently small ball $B$ with center in $\Omega$ and
$\mu(B) > 0$. The constant $C$ is independent of the ball $B$. Recall
that $\mu$ is doubling if $\mu(2B)\ll \mu(B)$ for balls $B$ centred in
$\Omega$. In some literature such measures are also referred to
as Federer measures.

\paragraph{} The Divergence Borel-Cantelli Lemma is key to determining
$m( W(\psi)) $ in the case where $ \sum_{q=1}^{\infty} \psi(q)$
diverges---the subject of the next section and the main substance of
Khintchine's Theorem.  Before turning to this, let us ask ourselves
one final question regarding quasi-independence on average and
positive measure of $\limsup$ sets.

\noindent \textbf{Question.}  Is the converse to Divergence
Borel-Cantelli true?  More precisely, if $\mu(E_{\infty}) > 0$ then is
it true that the sets $E_t$ are quasi-independent on
average?

The following theorem is a consequence of a more general result established in
\cite{BCvbsv}.

\begin{thm}\label{BCSV}
  Let $(\Omega,d)$ be a compact metric space equipped with a Borel
  probability measure $\mu$.  Let $E_q$ $(q \in \mathbb{N})$ be a
  sequence of balls in $\Omega$ such that $\mu(E_{\infty}) > 0$.
  Then, there exists a strictly increasing sequence of integers
  $(q_k)_{k\in\N}$ such that
  $ \sum_{k=1}^{\infty} \mu(E_{q_k}) = \infty \, $ and the balls
  $E_{q_k}$ $(k\in\N)$ are quasi-independent on average.
\end{thm}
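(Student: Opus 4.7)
The first step is to reformulate quasi-independence on average as a reverse Cauchy--Schwarz inequality. Writing $S_Q := \sum_{k=1}^Q \mathbf{1}_{E_{q_k}}$, we have $\sum_{k \le Q}\mu(E_{q_k}) = \int S_Q\,d\mu$ and $\sum_{s,t \le Q}\mu(E_{q_s}\cap E_{q_t}) = \int S_Q^2\,d\mu$, so the required conclusion becomes
\begin{equation*}
    \int S_Q^2\,d\mu \;\le\; C\Bigl(\int S_Q\,d\mu\Bigr)^{\!2}
    \qquad\text{for infinitely many }Q,
\end{equation*}
together with $\int S_Q\,d\mu \to \infty$. The hypothesis $\mu(E_\infty) > 0$ already forces $\sum_q\mu(E_q) = \infty$, for otherwise the Convergence Borel--Cantelli lemma would give $\mu(E_\infty) = 0$. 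Moreover $\sum_q \mathbf{1}_{E_q}(x) = +\infty$ for every $x \in E_\infty$, so Fubini yields the sharper $\sum_q\mu(E_q \cap E_\infty) = \infty$, meaning the balls accumulate divergent mass near $E_\infty$ in every tail.

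The plan is to split into two regimes. If $\mu(E_q) \not\to 0$, then for some $\delta > 0$ there is an infinite subsequence $(q_k)$ with $\mu(E_{q_k}) \ge \delta$, and this subsequence is already quasi-independent: the trivial estimate $\mu(E_{q_s}\cap E_{q_t}) \le \mu(E_{q_s})$ gives $\int S_Q^2 \le Q\sum_{k \le Q}\mu(E_{q_k})$, which combined with $\sum_{k \le Q}\mu(E_{q_k}) \ge Q\delta$ yields the reverse Cauchy--Schwarz with $C = 1/\delta$. The real work lies in the opposite regime $\mu(E_q) \to 0$. Here I would invoke Egorov: since $\sum_q \mathbf{1}_{E_q} = +\infty$ on $E_\infty$ and $\mu(E_\infty) < \infty$, for every $\epsilon > 0$ there exists $A \subset E_\infty$ with $\mu(A) \ge \mu(E_\infty) - \epsilon$ on which the partial sums $N(x,Q):=\sum_{q \le Q}\mathbf{1}_{E_q}(x)$ diverge \emph{uniformly}. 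I would then pick integers $Q_k \uparrow \infty$ with $N(x, Q_k) \ge k$ for every $x \in A$ and, inside each window $(Q_{k-1}, Q_k]$, select those indices whose ball $E_q$ has substantial intersection with $A$. An averaging argument on the identity $\sum_{Q_{k-1} < q \le Q_k}\mu(E_q \cap A) \ge \mu(A)$ then guarantees that the selected indices produce a subsequence with divergent sum of measures.

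The principal obstacle is to bound the overlaps $\mu(E_{q_s}\cap E_{q_t})$, since pairwise they can be as large as $\min(\mu(E_{q_s}), \mu(E_{q_t}))$. The key mechanism is that every selected ball sits, by construction, essentially inside a thickening of $A$, so the union $\bigcup_k E_{q_k}$ has $\mu$-measure at most a constant multiple of $\mu(E_\infty)$. Combined with the Cauchy--Schwarz \emph{lower} bound $\int S_Q^2 \ge (\int S_Q)^2/\mu(\bigcup_k E_{q_k})$, this pins down $\int S_Q^2/(\int S_Q)^2$ to be of order $1/\mu(E_\infty)$, and what remains is to match this with an upper bound of the same order by discarding the small fraction of selected balls that produce disproportionate correlations. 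The matching upper bound is the crux: it requires a Vitali-type covering argument using the compactness of $(\Omega, d)$ and the fact that the $E_q$ are balls -- not arbitrary measurable sets -- so that at every dyadic scale only boundedly many pairwise overlaps can be large. Carrying out this geometric step scale by scale and summing is the technical heart of the argument.
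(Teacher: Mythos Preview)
The paper does not prove this theorem. Immediately before the statement it says ``The following theorem is a consequence of a more general result established in \cite{BCvbsv}'', and that reference is listed as \emph{In preparation}. So there is no in-paper argument to compare your proposal against; all one can do is assess your outline on its own merits.

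Your reformulation via $S_Q$ is correct, and the ``easy'' regime $\mu(E_q)\not\to 0$ is handled cleanly: $\|S_Q\|_\infty\le Q$ together with $\int S_Q\ge Q\delta$ gives the reverse Cauchy--Schwarz with $C=1/\delta$. The Egorov step producing a set $A\subset E_\infty$ of positive measure on which the partial counts diverge uniformly is also the natural opening move.

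The gap is in the ``hard'' regime. Two specific points:
\begin{itemize}
\item Your assertion that ``every selected ball sits, by construction, essentially inside a thickening of $A$'' is not justified under the stated hypotheses. You are implicitly using that $\mu(E_q)\to 0$ forces $\text{diam}(E_q)\to 0$, so that balls meeting $A$ are eventually contained in any fixed neighbourhood of $A$. But the theorem assumes only a Borel probability measure on a compact metric space --- no doubling, no full support, no comparability to Hausdorff measure --- and in that generality small-measure balls can have large diameter (take $\mu$ supported on a proper closed subset). So the bound $\mu\!\left(\bigcup_k E_{q_k}\right)\ll\mu(E_\infty)$ is unproven.
\item The Cauchy--Schwarz inequality you quote, $\int S_Q^2\ge (\int S_Q)^2/\mu(\bigcup E_{q_k})$, only tells you what $C$ must be \emph{at least}; it contributes nothing toward the upper bound you need. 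You acknowledge this (``the matching upper bound is the crux''), but then the proposed Vitali-type covering is a label rather than an argument: you have not said which covering lemma you mean, at what scales it is applied, or how it converts bounded overlap at each scale into control of $\sum_{s,t}\mu(E_{q_s}\cap E_{q_t})$ across all pairs. That conversion is exactly the content of the theorem.
\end{itemize}
In short, the proposal correctly isolates the difficulty but does not resolve it; the geometric step you gesture at would need both a stronger hypothesis on $\mu$ than is available and a genuine overlap estimate that is nowhere supplied.
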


\subsection{Khintchine's Theorem}

The following fundamental statement in metric Diophantine
approximation (of which Theorem \ref{convkt} is the ``easy case'')
provides an elegant criterion for the `size' of the set $W(\psi)$
expressed in terms of Lebesgue measure.

\begin{thm}[Khintchine\index{Khintchine's Theorem}, 1924] \label{kg} Let $\psi:\N\to \Rp$ be a
  monotonic function.  Then
$$ m(W(\psi)) =\left\{
\begin{array}{ll}
  0 & {\rm if} \;\;\; \sum_{q=1}^{\infty} \;   \psi(q)  <\infty\;
      ,\\[4ex]
  1 & {\rm if} \;\;\; \sum_{q=1}^{\infty} \;   \psi(q)
      =\infty \; .
\end{array}\right.$$
\end{thm}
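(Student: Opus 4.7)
The convergence case is already Theorem~\ref{convkt}, so only the divergence case requires work. The plan is to apply the Divergence Borel--Cantelli Lemma to the $\limsup$ decomposition $W(\psi)=\limsup_{q\to\infty} A_q(\psi)$ inside $\Omega=\I$ with $\mu=m$, and then to upgrade the resulting lower bound $m(W(\psi))>0$ to full measure by invoking the zero--full law for $W(\psi)$ alluded to in the text preceding the statement.

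After a harmless reduction I may assume $\psi(q)<1/2$ for all sufficiently large $q$: otherwise monotonicity forces $\psi$ to remain bounded below by a positive constant, and $W(\psi)=\I$ follows straight from Dirichlet's theorem. Under this assumption \eqref{slv103} holds with equality, so $\sum_{q=1}^{\infty} m(A_q(\psi)) = 2\sum_{q}\psi(q)=\infty$, which supplies the divergence hypothesis of the Borel--Cantelli lemma.

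The core technical step is to verify the quasi-independence on average condition \eqref{vbx1x} for the family $\{A_q(\psi)\}$. For $1\le s<t$ I would estimate $m(A_s(\psi)\cap A_t(\psi))$ by classifying pairs of fractions $p/s$ and $p'/t$: those with $p/s=p'/t$ (which requires $s\mid t$) contribute at most $2\psi(t)/t$ per fraction, while those with $p/s\ne p'/t$ satisfy the spacing bound $|p/s-p'/t|\ge 1/(st)$, so each interval $B(p/s,\psi(s)/s)$ meets at most $O\!\bigl(1+st(\psi(s)/s+\psi(t)/t)\bigr)$ intervals $B(p'/t,\psi(t)/t)$, each contributing length at most $2\psi(t)/t$. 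Summing over the $O(s)$ admissible values of $p$ and simplifying yields an estimate of the shape
\begin{equation*}
m\bigl(A_s(\psi)\cap A_t(\psi)\bigr) \ \ll \ \psi(s)\psi(t) \ + \ \frac{\psi(t)}{t} \ + \ \frac{\psi(s)}{s}.
\end{equation*}
Monotonicity of $\psi$ then allows (via Cauchy condensation or Abel summation) the error sums $\sum_{t\le Q}\psi(t)/t$ to be absorbed into $\bigl(\sum_{q\le Q}\psi(q)\bigr)^2$ for all sufficiently large $Q$, whence \eqref{vbx1x} holds with an absolute constant $C$. The Divergence Borel--Cantelli Lemma then delivers $m(W(\psi))\ge 1/C>0$, and the zero--full law promotes this to $m(W(\psi))=1$.

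The main obstacle is the overlap estimate itself: monotonicity is used decisively both in the absorption step and implicitly when controlling the nested pairs with $s\mid t$, and boundary effects (fractions landing near $0$ or $1$) must be handled when passing between $A_q(\psi)$ on $\I$ and its lift to $\R$. Indeed, the Duffin--Schaeffer phenomenon mentioned in the text shows that the analogous divergence statement is genuinely subtle without monotonicity, which explains why a purely combinatorial argument cannot be expected.
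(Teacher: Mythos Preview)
Your overall architecture is the right one --- divergence Borel--Cantelli plus Cassels' zero--full law --- and it matches the spirit of the paper's sketch. But the core overlap estimate is wrong as written, and this is not a cosmetic issue.

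First, the claim that coincident fractions $p/s=p'/t$ force $s\mid t$ is false (e.g.\ $2/4=3/6$); the number of coincidences in $[0,1]$ is $\gcd(s,t)+1$, not controlled by a divisibility condition. More seriously, your own count gives, for each of the $O(s)$ centres $p/s$, at most $O(1+t\psi(s)/s+s\psi(t)/t)$ overlapping $t$--intervals, each of length $O(\psi(t)/t)$. Multiplying and summing over $p$ yields an error of order $s\,\psi(t)/t$, not $\psi(t)/t+\psi(s)/s$. A concrete check: for $\psi\equiv c$ small and $t=2s$, one has $m(A_s\cap A_t)\asymp c$ while your bound gives $\asymp c/s$. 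With the correct error $s\psi(t)/t$, the double sum $\sum_{s<t\le Q}s\psi(t)/t\asymp\sum_{t\le Q}t\psi(t)$ is \emph{not} dominated by $(\sum_{q\le Q}\psi(q))^2$ in general (take $\psi(q)=1/q$), so quasi-independence fails for the family $\{A_q\}$ as stated. The genuine overlap bound carries a factor $\gcd(s,t)$, and controlling $\sum_{s,t\le Q}\gcd(s,t)\psi(t)/t$ requires arithmetic input well beyond ``Cauchy condensation or Abel summation''.

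This is precisely why the paper does \emph{not} work with the individual $A_q$. The remark following the theorem passes instead to the dyadically blocked sets
\[
A^*_s \ = \bigcup_{2^{s-1}\le q<2^s}\ \bigcup_{p=0}^{q} B\!\left(\frac{p}{q},\frac{\psi(2^s)}{2^s}\right)\cap\I,
\]
so that (i) under monotonicity $W(\psi)\supset\limsup_s A^*_s$, (ii) for $\psi(q)<q^{-1}$ the constituent balls are disjoint and $m(A^*_s)\asymp 2^s\psi(2^s)$, and (iii) there are only logarithmically many blocks, which tames the diagonal and makes quasi-independence on average tractable. Alternatively, the paper derives the divergence case in full via ubiquity: Theorem~\ref{thm:slv1} (an easy consequence of Dirichlet) gives local $m$-ubiquity of the rationals (Proposition~\ref{52}), and Theorem~\ref{UL} then yields $m(W(\psi))=1$ directly, without any pairwise overlap computation. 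Either route bypasses the $\gcd$ obstruction that your sketch runs into.
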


\begin{rem}
  It is worth mentioning that Khintchine's original statement
  \cite{Kh24} made the stronger assumption that $q \p(q)$ is
  monotonic.
\end{rem}

\begin{rem}
  The assumption that $\psi$ is monotonic is only required in the
  divergent case.  It cannot in general be removed---see
  \S\ref{dsconj} below.
\end{rem}

\begin{rem}\label{rem:log}
  Khintchine's Theorem implies that
  $$ m (W(\psi))= 1 \quad {\rm if } \quad \psi(q) = 1/q \log q \, . $$
  Thus, from a measure theoretic point of view the `rate' of
  approximation given by Dirichlet's theorem can be improved by a
  logarithm.
\end{rem}

\begin{rem}
  As mentioned in the previous section, in view of Cassels' zero-full
  law~\cite{Cassels-50:MR0036787} we know that $ m (W(\psi))= 0 $ or
  $1$ regardless of whether or not $\psi$ is monotonic.
\end{rem}

\begin{rem}
  A key ingredient to directly establishing the divergent part is to
  show that the sets
 $$A^*_s = A^*_s(\psi) := \bigcup_{2^{s-1} \le q < 2^s}    \bigcup_{p=0}^q B \Big(\frac{p}{q}, \frac{\psi(2^s)}{2^s} \Big)   \ \cap  \I \, .$$
 are quasi-independent on average. Notice that
 \begin{itemize}
 \item For $\psi$ monotonic,
   $W(\psi) \supset W^*(\psi):= \limsup_{s \to \infty} A^*_s(\psi)$.
 \item If $\psi(q) < q^{-1} $, the balls in $A^*_s(\psi)$ are disjoint
   and so
$$
m(A^*_s(\psi)) \ \asymp \ 2^s \psi(2^s) \, .
$$
\item For $\psi$ monotonic,
  $ \sum \psi(q) \asymp \sum 2^s \psi(2^s) $.
\end{itemize}
\end{rem}

\noindent {\em Notation.} Throughout, the Vinogradov symbols $\ll$ and
$\gg$ will be used to indicate an inequality with an unspecified
positive multiplicative constant.  If $a \ll b $ and $ a \gg b $, we
write $a \asymp b $ and say that the two quantities $a$ and $b$ are
comparable.

The following is a simple consequence of Khintchine's Theorem.

\begin{cor} Let $ \bad$ be the set of badly approximable numbers. Then
$$m(\bad) = 0 \, .$$
\end{cor}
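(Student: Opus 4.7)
The plan is to deduce the corollary directly from the divergence part of Khintchine's Theorem by exhibiting a single monotonic approximating function $\psi$ for which the set $W(\psi)$ of $\psi$-approximable numbers has full Lebesgue measure in $\I$ but is disjoint from $\bad$. Since $\bad$ is invariant under integer translations, it suffices to show that $m(\bad\cap \I)=0$.

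First, I would choose $\psi:\N\to\Rp$ defined by $\psi(q):=1/(q\log q)$ for $q\ge 2$ (and $\psi(1)$ arbitrary). This $\psi$ is eventually monotonically decreasing, and by the integral test
\begin{equation*}
\sum_{q\ge 2}\psi(q) \ = \ \sum_{q\ge 2}\frac{1}{q\log q} \ = \ \infty.
\end{equation*}
Hence Khintchine's Theorem (divergence part) applies and yields $m(W(\psi))=1$.

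The heart of the argument is the inclusion $W(\psi)\subseteq \I\setminus \bad$. Indeed, if $x\in W(\psi)$ then by definition there exist infinitely many positive integers $q$ such that
\begin{equation*}
\|qx\| \ < \ \frac{1}{q\log q}, \qquad \text{equivalently} \qquad q\|qx\| \ < \ \frac{1}{\log q}.
\end{equation*}
Since $1/\log q\to 0$ as $q\to\infty$ along this subsequence, we obtain $\liminf_{q\to\infty} q\|qx\|=0$, which by the definition of badly approximable numbers means that $x\notin\bad$.

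Combining the two ingredients, $\bad\cap\I \subseteq \I\setminus W(\psi)$, and therefore $m(\bad\cap\I)\le m(\I)-m(W(\psi))=0$. Translation invariance under $\Z$ then yields $m(\bad)=0$. There is no substantial obstacle here: once Khintchine's Theorem is available, the entire argument reduces to choosing an approximating function that decays slightly faster than $1/q$ (so that $q\psi(q)\to 0$) while still having divergent sum.
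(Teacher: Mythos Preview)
Your proof is correct and follows essentially the same approach as the paper: both take $\psi(q)=1/(q\log q)$, apply the divergence case of Khintchine's Theorem to get $m(W(\psi))=1$, and observe that $\bad\cap\I\subseteq\I\setminus W(\psi)$. You have simply made the verification of this inclusion and the passage from $\I$ to $\R$ more explicit than the paper does.
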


\begin{proof}
  Consider the function $\psi(q) = 1/(q\log q)$ and observe that
$$
\bad \cap \I \subset \bad(\psi):= \I\setminus W(\psi) \, .
$$
By Khintchine's Theorem, $m(W(\psi)) = 1 $. Thus $m(\bad(\psi)) = 0$
and so $m (\bad \cap \I) = 0$.
\end{proof}

\subsubsection{The Duffin-Schaeffer Conjecture \label{dsconj}}
The main substance of Khintchine's Theorem is the divergent case and
it is where the assumption that $\psi$ is monotonic is necessary. In
1941, Duffin $\&$ Schaeffer \cite{DuffinSchaeffer} constructed a
non-monotonic approximating function $\vartheta$ for which the sum
$\sum_q \vartheta(q)$ diverges but $m(W(\vartheta))=0$.  We now
discuss the construction.  We start by recalling two well-known facts:
for any $N\in\N$, $p$ prime, and $s>0$,
\begin{itemize}
\item[] Fact 1.  $ \quad \sum_{q|N} q = \prod_{p|N} (1+p) $
\item[] Fact 2.  $ \quad \prod_{p} (1+p^{-s}) = \zeta(s)/\zeta(2s) $.
\end{itemize}
In view of Fact 2, we have that
$$
\prod_{p} (1+p^{-1}) = \infty.
$$
Thus, we can find a sequence of square free positive integers $N_i$
($i=1,2,\ldots$) such that $(N_i,N_j) = 1 $ ($i\neq j$) and
\begin{equation}\label{e:003sv}
  \prod_{p\mid N_i} (1+p^{-1})  > 2^i + 1 \, .
\end{equation}
Now let
\begin{equation}\label{e:003sv3}
  \vartheta(q)  =\left\{
    \begin{array}{ll}
      2^{-i-1} q/N_i  & {\rm if} \;\;\;  q > 1\quad\text{and}\quad q|N_i\quad\text{for some }i  \;
                        ,\\[4ex]
      0 & {\rm otherwise} \, .

    \end{array}\right.
\end{equation}

\noindent As usual let
$$
A_q:=A_q (\vartheta) = \bigcup_{p=0}^q B \Big(\frac{p}{q},
\frac{\vartheta(q)}{q} \Big) \ \cap \I
$$
and observe that if $ q|N_i$ ($q> 1$) then $A_q \subseteq A_{N_i} $
and so
$$
\bigcup_{q|N_i} A_q = A_{N_i} \, .
$$
In particular
$$
m\big(\bigcup_{q|N_i} A_q \big) = m( A_{N_i} ) = 2\vartheta(N_i) =
2^{-i} \, .
$$
By definition
$$
W(\vartheta) = \limsup_{q \to \infty} A_q = \limsup_{i \to \infty}
A_{N_i} \, .
$$
Now
$$
\sum_{i=1}^{\infty} m( A_{N_i} ) = 1
$$
and so the convergence Borel-Cantelli Lemma implies that
$$
m(W(\vartheta)) = 0 \, .
$$
However, it can be verified (\emph{exercise}) on using Fact 1 together
with \eqref{e:003sv} that
$$
\sum_{q=1}^{\infty} \vartheta(q) = \sum_{i=1}^{\infty} 2^{-i-1}
\frac{1}{N_i} \sum_{q> 1 \, : \, q|N_i} \!\! q \ \ = \infty \, .
$$

In the same paper \cite{DuffinSchaeffer}, Duffin and Schaeffer
provided an appropriate statement for arbitrary $\psi$ that we now
discuss.  The now famous Duffin-Schaeffer Conjecture represents a key
open problem in number theory.  The integer $p$ implicit in the
inequality (\ref{e:002}) satisfies
\begin{equation}\label{e:003}
  \left|x-\frac pq\right|<\frac{\psi(q)}q\,.
\end{equation}
To relate the rational $p/q$ with the error of approximation
$\psi(q)/q$ uniquely, we impose the coprimeness condition
$(p,q)=1$. In this case, let $W'(\psi)$ denote the set of $x$ in $\I$
for which the inequality (\ref{e:003}) holds for infinitely many
$(p,q)\in\Z\times\N$ with $(p,q)=1$.  Clearly,
$W'(\psi)\subset W(\psi)$.  For any approximating function
$\psi:\N\to \Rp$ one easily deduces that
$$
m(W'(\psi))= 0 \quad {\rm if} \quad \sum_{q=1}^\infty \, \varphi(q) \
\dfrac{\psi(q)}q \ < \ \infty \ .
$$
Here, and throughout, $\varphi$ is the Euler function.

\begin{conjecture}[Duffin-Schaeffer\index{Duffin-Schaeffer Conjecture}, 1941] \label{dsstate} For any
  function $\psi\colon \N\to \Rp$
$$
m(W'(\psi)) = 1 \quad if \quad \displaystyle\sum_{q=1}^\infty \,
\varphi(q) \ \dfrac{\psi(q)}q \ = \ \infty \ .
$$
\end{conjecture}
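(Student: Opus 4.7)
The plan is to combine a zero--one law with a divergence Borel--Cantelli argument based on quasi-independence on average. A classical zero--one law (due to Gallagher, in the spirit of Cassels' for $W(\psi)$) yields $m(W'(\psi))\in\{0,1\}$ for every $\psi$, so the task reduces entirely to showing $m(W'(\psi))>0$. Set
$$A'_q := \bigcup_{\substack{0\le p\le q\\(p,q)=1}} B\!\left(\frac{p}{q},\frac{\psi(q)}{q}\right)\cap\I,$$
so $W'(\psi)=\limsup_q A'_q$; whenever $\psi(q)<1/2$ the intervals defining $A'_q$ are disjoint and $m(A'_q)=2\varphi(q)\psi(q)/q$, so the divergence hypothesis reads precisely $\sum_q m(A'_q)=\infty$.

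The core task, by the divergence Borel--Cantelli lemma of \S2.1, is the quasi-independence inequality
$$\sum_{s,t=1}^{Q} m(A'_s\cap A'_t)\ \ll\ \left(\sum_{q=1}^{Q} m(A'_q)\right)^{2}$$
for infinitely many $Q$. A direct count of reduced fractions $p_1/s$ and $p_2/t$ whose balls meet---which forces $|p_1 t-p_2 s|\le \psi(s)t+\psi(t)s$---organised by $d=\gcd(s,t)$, gives heuristically
$$m(A'_s\cap A'_t)\ \ll\ m(A'_s)\,m(A'_t)\cdot\frac{d^{2}}{\varphi(d)^{2}}\cdot(\text{correction}).$$
Summing via a Mertens-type estimate for $\sum_{d\le Q} d/\varphi(d)$ then yields the required bound in the ``generic'' regime where $s$ and $t$ do not share an unusual divisor structure.

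The hard part---indeed the entire substance of the conjecture---is controlling the anomalous pairs $(s,t)$ whose common divisor structure is unusually rich. Precisely such pairs power the Duffin--Schaeffer counterexample to Khintchine for non-monotone $\psi$: denominators built from many distinct small primes simultaneously inflate $d/\varphi(d)$ and permit many coincidences $p_1 t\equiv p_2 s \pmod{d}$. My approach would be to partition each dyadic block $[Q,2Q]$ into a generic part, on which the heuristic above is essentially sharp, and an anomalous part modelled as a graph $G$ whose vertices are the problematic denominators and whose edges record over-sized overlaps, and then to establish a structural dichotomy: either $G$ is sparse, or its vertex set carries only a small fraction of the $\varphi$-weighted mass $\sum m(A'_q)$. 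Producing such a dichotomy is the real obstacle; it requires fine information about the anatomy of the integers---the joint distribution of divisors and the fluctuations of $\varphi$ on related integers---and it is precisely the step at which every partial attack (Erd\H{o}s, Vaaler, Pollington--Vaughan, and subsequent refinements) has fallen short of the full conjecture. Any complete proof must, in effect, rule out a conspiracy whereby a positive-measure subset of $\I$ is covered exclusively by balls centred at such anomalous denominators, the generic balls contributing negligibly.
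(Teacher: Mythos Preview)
There is nothing to compare against: in the paper this statement is a \emph{conjecture}, not a theorem, and no proof is offered. The authors explicitly describe it as ``one of the most difficult and profound unsolved problems in metric number theory'' and only record partial progress via references.

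Your write-up is not a proof either, and you say so yourself. The scaffolding you describe---Gallagher's zero--full law reducing the problem to positivity, the divergence Borel--Cantelli lemma, and the overlap estimate
\[
m(A'_s\cap A'_t)\ \ll\ m(A'_s)\,m(A'_t)\cdot\frac{d^{2}}{\varphi(d)^{2}}\cdot(\text{correction}),\qquad d=\gcd(s,t),
\]
is exactly the standard line (this overlap bound is essentially the Erd\H{o}s--Vaaler / Pollington--Vaughan lemma). But the ``structural dichotomy'' you propose for the anomalous pairs is a wish, not an argument: you have not said what property of the integers forces the graph $G$ to be sparse or its vertex set to carry little $\varphi$-mass, and the Duffin--Schaeffer counterexample shows precisely that without extra input such conspiracies \emph{do} occur at the level of $W(\psi)$. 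The whole content of the conjecture is to rule them out for $W'(\psi)$, and your proposal does not supply a mechanism for doing so. In short, you have accurately located the gap but not bridged it; relative to the paper, you are in the same position as the authors, who leave the statement open.
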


\begin{rem}  {\rm  Let $\vartheta$ be given by  \eqref{e:003sv3}.   On using the fact that $ \sum_{d|n} \varphi(d)  = n$, it is relatively easy to show (\emph{exercise}) that
$$
\displaystyle\sum_{q=1}^\infty \, \varphi(q) \ \dfrac{\vartheta(q)}q \
< \ \infty \, .
$$
Thus, although $\vartheta$ provides a counterexample to Khintchine's
Theorem without monotonicity, it is not a counterexample to the
Duffin-Schaeffer Conjecture.  }
\end{rem}

\begin{rem} {\rm It is known that $ m(W'(\psi)) = 0 $ or $ 1 $.  This
    is Gallagher's zero-full law \cite{Gallagher-61:MR0133297} and is
    the natural analogue of Cassels' zero-full law for $W(\psi)$.  }
\end{rem}

\noindent Although various partial results have been established
(see~\cite{HarmanMNT,Sprindzuk}), the full conjecture is one
of the most difficult and profound unsolved problems in metric number
theory.  In the case where $\psi$ is monotonic it is relatively
straightforward to show that Khintchine's Theorem and the
Duffin-Schaeffer Conjecture are equivalent statements
(\emph{exercise}).

\subsection{A limitation of the Lebesgue theory}
Let $ \tau > 0$ and write $ W(\tau)$ for $ W(\psi:q \to q^{-\tau})$.
The set $W(\tau)$ is usually referred to as the set of
\emph{$\tau$-well approximable numbers}.  Note that in view of
Dirichlet (Theorem \ref{thm:7.4}) we have that $ W (\tau) = \I $ if
$ \ \tau \leq 1 $ and so trivially $ m( W (\tau) ) = 1$ if
$ \tau \leq 1 $. On the other hand, if $\tau > 1$
$$
\textstyle{\sum_{q=1}^{\infty} \; q^{-\tau} <\infty }
$$
and Khintchine's Theorem implies that $ m( W (\tau) ) = 0$.  So for
any $ \tau > 1$, the set of $\tau$-well approximable numbers is of
measure zero. We cannot obtain any further information regarding the
`size' of $W(\tau)$ in terms of Lebesgue measure --- it is always
zero. Intuitively, the `size' of $W(\tau)$ should decrease as rate of
approximation governed by $\tau$ increases. For example we would
expect that $W(2015)$ is ``smaller'' than $W(2)$ -- clearly
$W(2015) \subset W(2) $ but Lebesgue measure is unable to distinguish
between them.  In short, we require a more delicate notion of `size'
than simply Lebesgue measure. The appropriate notion of `size' best
suited for describing the finer measure theoretic structures of
$W(\tau)$ and indeed $W(\psi )$ is that of Hausdorff measures.

\section{Metric Diophantine approximation: the classical Hausdorff
  theory}

\subsection{Hausdorff measure and dimension} \label{DB}

In what follows, a {\em dimension function\,} $f:\Rp \to \Rp $ is a left
continuous, monotonic function such that $f(0)=0$.  Suppose $F$ is a
subset of $\R^n$. Given a ball $B$ in $\R^n$, let $r(B)$ denote the
radius of $B$. For $\rho > 0$, a countable collection
$ \left\{B_{i} \right\} $ of balls in $\R^n$ with $r(B_i) \leq \rho $
for each $i$ such that $F \subset \bigcup_{i} B_{i} $ is called a {\em
  $ \rho $-cover for $F$}. Define
$$
{\cal H}^{f}_{\rho} (F) \, := \, \inf \ \sum_{i} f(r(B_i)),
$$
where the infimum is taken over all $\rho$-covers of $F$. Observe that
as $\rho$ decreases the class of allowed $\rho$-covers of $F$ is
reduced and so $\mathcal{H}_{\rho}^f(F)$ increases.  Therefore, the
following (finite or infinite) limit exists
$$ {\cal H}^{f} (F) := \lim_{ \rho \rightarrow 0+} {\cal
  H}^{f}_{\rho} (F) \; = \; \sup_{\rho > 0 } {\cal H}^{f}_{\rho} (F)
\; ,
$$
and is referred to as the {\it Hausdorff $f$--measure of $F$}\index{Hausdorff measure}.  In the
case that $$f(r) = r^s \ (s \ge 0),$$ the measure $ \cH^f $ is the
more common {\em$s$-dimensional Hausdorff measure} $\cH^s $, the
measure $\cH^0$ being the cardinality of $F$.  Note that when $s$ is a
positive integer, $\cH^s$ is a constant multiple of Lebesgue measure
in $\R^s$. (The constant is explicitly known!)
Thus if the $s$-dimensional Hausdorff measure of a set is
known for each $s>0$, then so is its $n$-dimensional Lebesgue measure
for each $n\ge1$. The following easy property
$$
\cH^{s}(F)<\infty\quad \Longrightarrow\quad
\cH^{s'}(F)=0\qquad\text{if }s'>s
$$
implies that there is a unique real point $s$ at which the Hausdorff
$s$-measure drops from infinity to zero (unless the set $F$ is finite
so that $\cH^s(F)$ is never infinite). This point is called the
\emph{Hausdorff dimension}\index{Hausdorff dimension}\/ of $F$ and is formally defined as
$$
\dim F := \inf \left\{ s>0 : \cH^{s} (F) =0 \right\} \, .
$$

\begin{itemize}
\item By the definition of $\dim F$ we have that

	\begin{equation*}
          \mathcal{H}^s(F) = \begin{cases}
            0 &\text{if }s > \dim F \\[2ex]
            \infty &\text{if }s < \dim F.
          \end{cases}
	\end{equation*}
	
      \item If $s = \dim F$, then $\mathcal{H}^s(F)$ may be zero or
        infinite or may satisfy
        \begin{equation*}
          0 < \mathcal{H}^s(F) < \infty ;
	\end{equation*}
        in this case $F$ is said to be an $s$-set.
	
      \item Let $\I = [0,1]$. Then $\dim \I = 1$ and
	
	\begin{equation*}
          2\mathcal{H}^s(\I) = \begin{cases}
            0 &\text{if }s > 1\\[1ex]
            1 &\text{if }s = 1\\[1ex]
            \infty &\text{if }s < 1.\\
          \end{cases}
	\end{equation*}
	
	\noindent Thus, $2\cH^1(\I) = m (\I) $ and $\I$ is an example
        of a $s$-set with $s=1$. Note that the present of the factor `2' here is because in the definition of Hausdorff measure we have used the radii of balls rather than diameters.
      \end{itemize}

      \noindent The Hausdorff dimension has been established for many
      number theoretic sets, e.g.  $W(\tau)$ (this is the
      \Jarnik-Besicovitch Theorem discussed below), and is easier than
      determining the Hausdorff measure. Further details regarding
      Hausdorff measure and dimension can be found
      in~\cite{FalcGFS,MattilaGS}.

      \vspace{2ex}

      To calculate $\dim F$ (say $\dim F = \alpha$), it is usually the
      case that we establish the upper bound $\dim F \le \alpha $ and
      lower bound $\dim F \ge \alpha $ separately.  If we can exploit
      a `natural' cover of $F$, then upper bounds are usually easier.

\begin{eg}\label{eg382}\rm
  Consider the middle third Cantor set\index{Middle third Cantor set} $K$ defined as follows:
  starting with $I_0=[0,1]$ remove the open middle thirds part of the
  interval. This gives the union of two intervals $[0,\tfrac13]$ and
  $[\tfrac23,1]$. Then repeat the procedure of removing the middle
  third part from each of the intervals in your given
  collection. Thus, at `level' $n$ of the construction we will have
  the union $E_n$ of $2^n$ closed intervals, each of length
  $3^{-n}$. The middle third Cantor set is defined by
$$
K=\bigcap_{n=0}^\infty E_n\,.
$$
This set consists exactly of all real numbers such that their
expansion to the base 3 does not contain the `digit' $1$.

Let $\{I_{n,j}\}$ be the collection of intervals in $E_n$. This is a
collection of $2^n$ intervals, each of length $3^{-n}$. Naturally,
$\{I_{n,j}\}$ is a cover of $K$. Furthermore, for any $\rho>0$ there
is a sufficiently large $n$ such that $\{I_{n,j}\}$ is a $\rho$-cover
of $K$. It follows that,

\begin{equation*}
  \mathcal{H}_{\rho}^s (K) \le  \sum_j  r(I_{n,j})^s \asymp  2^{n} 2^{-s} 3^{-ns}   \ll    \left(\frac{2}{3^s}\right)^n \to 0
\end{equation*}
as $n \to \infty$ (i.e.\ $\rho \to 0$) if
\begin{equation*}
  \frac{2}{3^s}<1 \Rightarrow s > \frac{\log 2}{\log 3}.
\end{equation*}
In other words
\begin{equation*}
  \mathcal{H}^s(K) = 0 \text{ if }s>\frac{\log 2}{\log 3}.
\end{equation*}
It follows from the definition of Hausdorff dimension
\begin{equation*}
  \dim K = \inf \{s :  \mathcal{H}^s(K) = 0\}
\end{equation*}
that $\dim K \leqslant \frac{\log 2}{\log 3}$.
\end{eg}

In fact, $\dim K= \frac{\log 2}{\log 3}$.  To prove that
\begin{equation*}
  \dim K \geqslant \frac{\log 2}{\log 3}
\end{equation*}
we need to work with arbitrary covers of $K$ and this is much
harder. Let $\{B_i\}$ be an arbitrary $\rho$-cover with $\rho < 1$.
$K$ is bounded and closed (intersection of closed intervals), i.e.\
$K$ is compact. Hence without loss of generality we can assume that
$\{B_i\}$ is finite. For each $B_i$, let $r_i$ and $d_i$ denote its
radius and diameter respectively, and let $k$ be the unique integer
such that
\begin{equation}\label{eq:6.4}
  3^{-(k+1)}\leqslant d_i < 3^{-k}.
\end{equation}
Then $B_i$ intersects at most one interval of $E_k$ as the intervals
in $E_k$ are separated by at least $3^{-k}$.

If $j \geqslant k$, then $B_i$ intersects at most
\begin{equation}\label{eq:6.5}
  2^{j-k} = 2^j3^{-sk} \leqslant 2^j 3^s d_i^s
\end{equation}
intervals of $E_j$, where $s := \frac{\log 2}{\log 3}$ and the final
inequality makes use of \eqref{eq:6.4}. These are the intervals that
are contained in the unique interval of $E_k$ that intersects $B_i$.

Now choose $j$ large enough so that
\begin{equation*}
  3^{-(j+1)} \leqslant d_i \quad \forall B_i \in  \{B_i\}  \, .
\end{equation*}
This is possible because the collection $\{B_i\}$ is finite. Then
$j \geqslant k$ for each $B_i$ and \eqref{eq:6.5} is valid. Furthermore,
since $\{B_i\}$ is a cover of $K$, it must intersect every interval of
$E_j$. There are $2^j$ intervals in $E_j$.  Thus
\begin{align*}
  2^j &= \#\{I \in E_j : \cup B_i \cap I \neq \varnothing\} \\
      &\le \sum_i \#\{I \in E_j : B_i \cap I \neq \varnothing\} \\
      &\le \sum_i 2^j3^sd_i^s  \, .
\end{align*}
The upshot is that for any arbitrary cover $\{B_i\}$, we have that
\begin{equation*}
  2^s \sum r_i^s   \, \asymp \,  \sum d_i^s    \ge 3^{-s} = \frac{1}{2}  \, .
\end{equation*}
By definition, this implies that implies
$\mathcal{H}^s(K) \ge 2^{-(1+s)}$ and so
$\dim K \ge \frac{\log 2}{\log 3}$.

Even for this simple Cantor set example, the lower bound for $\dim K$ is
much more involved than the upper bound. This is usually the case and
the number theoretic sets $W(\psi)$ and $W(\tau)$ are no exception.

\subsection{The \Jarnik-Besicovitch
Theorem}
Recall, the limsup nature of $W(\psi)$; namely that
\begin{equation*}
  W(\psi) =  \limsup_{q \to \infty} A_q(\psi) := \bigcap_{t=1}^{\infty} \bigcup_{q=t}^{\infty} A_q(\psi)
\end{equation*}
where
\begin{equation*}
  A_q(\psi)=\bigcup_{p=0}^q B \Big(\frac{p}{q}, \frac{\psi(q)}{q} \Big)   \ \cap  \I \, .
\end{equation*}
By definition, for each $t$, the collection of balls
$B(p/q, \psi(q)/q)$ associated with the sets
$A_q(\psi): q=t, t+1, \dots $ form a cover for $W(\psi)$.  Suppose for
the moment that \emph{$\psi$ is monotonic} and $\psi(q) < 1 $ for $q$
large. Now for any $\rho > 0 $, choose $t$ large enough so that
$\rho > \psi(t)/t$. Then the balls in $\{A_q(\psi)\}_{q \geqslant t}$
form a $\rho$ cover of $W(\psi)$. Thus,
\begin{equation*}
  \mathcal{H}_{\rho}^s\big(W(\psi)\big) \le \sum_{q = t}^{\infty} q\big(\psi(q)/q\big)^s   \to 0
\end{equation*}
as $t \to \infty$ (i.e.\ $\rho \to 0$) if
\begin{equation*}
  \sum_{q=1}^{\infty} q^{1-s}\psi^s(q) < \infty \, ;
\end{equation*}
i.e.\ $\mathcal{H}^s\big(W(\psi)\big)=0$ if the above $s$-volume sum
converges. Actually, monotonicity on $\psi$ can be removed
(\emph{exercise}) and we have proved the following Hausdorff measure
analogue of Theorem \ref{convkt}.  Recall, that $\mathcal{H}^1$ and
one-dimensional Lebesgue measure $m$ are comparable.
\begin{thm} \label{convja} Let $\psi:\N\to \Rp$ be a function and
  $ s \geq 0$ such that
  \begin{equation*}
    \sum_{q=1}^{\infty}q^{1-s} \psi^s(q) < \infty.
  \end{equation*}
  Then
  \begin{equation*}
    \mathcal{H}^s\big( W(\psi) \big) = 0\,.
  \end{equation*}
\end{thm}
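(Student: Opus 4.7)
The plan is to mimic the monotonic argument sketched in the paragraph preceding the theorem. The key point that needs replacing is that, in the monotonic case, one can for any $\rho>0$ choose $t$ with $\psi(t)/t<\rho$ and thereby force every ball in $\bigcup_{q\geq t}A_{q}(\psi)$ to have radius less than $\rho$; without monotonicity this demands a short extra argument.

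The case $s=0$ is trivial, so I assume $s>0$. First I would observe that convergence of $\sum_{q}q^{1-s}\psi(q)^{s}$ forces its general term to zero; hence $q^{1-s}\psi(q)^{s}\leq 1$ for all sufficiently large $q$, and raising to the power $1/s$ gives $\psi(q)\leq q^{1-1/s}$, so $\psi(q)/q\leq q^{-1/s}\to 0$. In particular $\psi(q)/q\to 0$ as $q\to\infty$ without any monotonicity assumption on $\psi$.

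With this in hand, fix $\rho>0$ and choose $t=t(\rho)$ large enough that $\psi(q)/q<\rho$ for all $q\geq t$. Since $W(\psi)=\limsup_{q\to\infty}A_{q}(\psi)\subset\bigcup_{q\geq t}A_{q}(\psi)$, the family
\[
\Big\{B\big(p/q,\psi(q)/q\big):\,q\geq t,\ 0\leq p\leq q\Big\}
\]
is a $\rho$-cover of $W(\psi)$. Summing the $s$-volumes,
\[
\mathcal{H}^{s}_{\rho}\big(W(\psi)\big)\ \leq\ \sum_{q=t}^{\infty}(q+1)\Big(\frac{\psi(q)}{q}\Big)^{s}\ \ll\ \sum_{q=t}^{\infty}q^{1-s}\psi(q)^{s}.
\]
Because the series on the right converges, its tail tends to zero as $t\to\infty$; letting $\rho\to 0^{+}$ (so $t\to\infty$) yields $\mathcal{H}^{s}(W(\psi))=0$.

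The only real obstacle, compared with the monotonic case, is the passage from convergence of $\sum q^{1-s}\psi(q)^{s}$ to $\psi(q)/q\to 0$; once that observation is in place, the natural cover coming from the $\limsup$ structure of $W(\psi)$ together with the convergence-type $s$-volume estimate closes the argument exactly as in the monotonic sketch.
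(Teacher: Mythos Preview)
Your proof is correct and follows exactly the approach the paper intends: the paragraph preceding the theorem carries out the monotonic case and then states that monotonicity can be removed, leaving this as an exercise. You have correctly completed that exercise by observing that convergence of $\sum_q q^{1-s}\psi(q)^s = \sum_q q(\psi(q)/q)^s$ forces $\psi(q)/q\to 0$, which is precisely the point at which monotonicity was invoked in the sketch.
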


Now put $\psi(q) = q^{-\tau} $ $(\tau \ge 1)$ and notice that for
$s > \frac{2}{\tau+1}$
\begin{equation*}
  \sum_{q=1}^{\infty}q^{1-s} \psi^s(q)  = \sum_{q=1}^{\infty}q^{-(\tau s  +s -1) } < \infty\,.
\end{equation*}
Then the following statement is a simple consequence of the above
theorem and the definition of Hausdorff dimension.

\begin{cor}\label{cor:6.2}
  For $\tau \ge 2$, we have that $\dim W(\tau) \le \frac{2}{\tau+1}$.
\end{cor}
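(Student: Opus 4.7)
The plan is to deduce this immediately from Theorem \ref{convja}, which is the Hausdorff measure analogue of the convergence case of Khintchine's theorem already established in the excerpt. Since $W(\tau) = W(\psi)$ with $\psi(q) = q^{-\tau}$, the $s$-volume sum appearing in Theorem \ref{convja} becomes
\begin{equation*}
  \sum_{q=1}^{\infty} q^{1-s}\psi(q)^s \ = \ \sum_{q=1}^{\infty} q^{1-s-s\tau} \, ,
\end{equation*}
which is a standard $p$-series and converges precisely when $1 - s - s\tau < -1$, equivalently $s > \frac{2}{\tau+1}$. The paragraph immediately preceding the corollary already records this convergence for such $s$.

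Given any $s > \frac{2}{\tau+1}$, Theorem \ref{convja} therefore yields $\mathcal{H}^s(W(\tau)) = 0$. By the very definition
\begin{equation*}
  \dim W(\tau) \ = \ \inf\{ s > 0 : \mathcal{H}^s(W(\tau)) = 0 \} \, ,
\end{equation*}
this forces $\dim W(\tau) \le s$. Letting $s$ decrease to $\frac{2}{\tau+1}$ gives the desired bound $\dim W(\tau) \le \frac{2}{\tau+1}$.

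There is no real obstacle here: the entire content of the corollary is packaged in Theorem \ref{convja}, and the only step is to identify the correct threshold exponent $s$ at which the resulting $p$-series converges. The assumption $\tau \ge 2$ is mild, used implicitly only to ensure that $\psi(q) = q^{-\tau} < 1$ for large $q$ (so that the cover argument underlying Theorem \ref{convja} applies to genuine balls of shrinking radius). Note that the matching lower bound $\dim W(\tau) \ge \frac{2}{\tau+1}$, completing the Jarník--Besicovitch theorem, would require constructing suitable mass distributions on $W(\tau)$ and is substantially harder; but it is not part of the current statement.
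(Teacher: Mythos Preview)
Your proof is correct and follows exactly the approach taken in the paper: apply Theorem~\ref{convja} with $\psi(q)=q^{-\tau}$, observe that the $s$-volume sum $\sum q^{1-s-s\tau}$ converges for $s>\tfrac{2}{\tau+1}$, and invoke the definition of Hausdorff dimension. The only minor quibble is your remark about the role of $\tau\ge 2$: in fact $\psi(q)=q^{-\tau}<1$ for all $q\ge 2$ and any $\tau>0$, so the hypothesis is not needed for the cover argument itself but rather ensures the bound $\tfrac{2}{\tau+1}\le 1$ is non-trivial.
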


Note that the above convergence result and thus the upper bound
dimension result, simply exploit the natural cover associated with the
limsup set under consideration.  The corollary constitutes the easy
part of the famous \Jarnik-Besicovitch Theorem.

\begin{thm}[The \Jarnik-Besicovitch Theorem\index{Jarnik-Besicovitch Theorem}] \label{jb} Let
  $\tau > 1$.  Then
  \begin{equation*}
    \dim \big(W(\tau)\big) = 2/(\tau +1) \, .
  \end{equation*}
\end{thm}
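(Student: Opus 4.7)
The upper bound $\dim W(\tau) \leq 2/(\tau+1)$ is immediate from Theorem~\ref{convja}: applied with $\psi(q) = q^{-\tau}$, it yields $\mathcal{H}^s(W(\tau)) = 0$ whenever $s > 2/(\tau+1)$. The plan is therefore to establish the matching lower bound $\dim W(\tau) \geq 2/(\tau+1)$ by fixing an arbitrary $s < 2/(\tau+1)$ and producing a compact Cantor-like subset $K \subset W(\tau) \cap [0,1]$ with $\mathcal{H}^s(K) > 0$; then $\dim K \geq s$, and letting $s \uparrow 2/(\tau+1)$ will finish the proof.

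\textbf{Construction.} I will pick a large integer $M = M(s,\tau)$ and an integer $Q \geq 2$, and set $Q_n := Q^{M^n}$. Starting from $E_0 := [0,1]$, at step $n$ I declare $E_n$ to be the union, over intervals $J \subset E_{n-1}$, of all closed balls $B(p/q, q^{-(\tau+1)}) \subset J$ with $q \in [Q_n, 2Q_n]$ and $\gcd(p,q)=1$. Then $K := \bigcap_n E_n$ is a compact subset of $W(\tau)$, since every $x \in K$ satisfies $|x - p/q| < q^{-(\tau+1)}$ for infinitely many $(p,q)$. The elementary estimate $\sum_{q \in [Q_n, 2Q_n]} \varphi(q) \asymp Q_n^2$ implies that each parent $J \in E_{n-1}$ of length $\ell_{n-1} \asymp Q_{n-1}^{-(\tau+1)}$ contains on the order of $c_n \asymp \ell_{n-1} Q_n^2$ admissible reduced fractions (the $O(Q_n)$ fractions lost to boundary effects are negligible as soon as $M \geq \tau+1$). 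Distinct such fractions are separated by at least $(4Q_n^2)^{-1}$, which for $\tau > 1$ exceeds the radius $Q_n^{-(\tau+1)}$, so the stage-$n$ balls inside $J$ are pairwise disjoint. Consequently $E_n$ is a disjoint union of $N_n := c_1 c_2 \cdots c_n$ intervals of length $\ell_n \asymp Q_n^{-(\tau+1)}$.

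\textbf{Dimension via mass distribution.} I will place mass $1/N_n$ on each stage-$n$ interval; the resulting probability measures converge weakly to a Borel probability measure $\mu$ supported on $K$. By the mass distribution principle, $\mathcal{H}^s(K) \geq 1/C$ provided $\mu(B(x,r)) \leq C r^s$ for all balls $B(x,r)$, with an absolute constant $C$. I will verify this scale by scale: for $r$ up to the inner separation $\delta_n \asymp Q_n^{-2}$, the ball $B(x,r)$ meets $O(1)$ stage-$n$ intervals, hence $\mu(B(x,r)) \ll 1/N_n$; for $r \in (\delta_n, \ell_{n-1}]$ the ball lies in a single stage-$(n-1)$ parent but can meet $O(rQ_n^2)$ of its children, giving $\mu(B(x,r)) \ll rQ_n^2/N_n$. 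Substituting $Q_n = Q^{M^n}$, the worst-case version of $\mu(B(x,r)) \ll r^s$ will reduce to the single inequality $(\tau+1)(1-s) \geq (\tau-1)M/(M-1)$, which is satisfied once $M$ is chosen large enough in terms of $2/(\tau+1) - s$.

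\textbf{The hard part.} The technical crux is the intermediate range $\delta_n < r \leq \ell_{n-1}$: the ratio $\delta_n/\ell_n \asymp Q_n^{\tau-1}$ is large, so the endpoint bound $\mu(B) \ll 1/N_n$ at $r = \delta_n$ and the trivial bound at $r = \ell_{n-1}$ are each insufficient on their own, and I have to work uniformly across this range by exploiting the linear-in-$r$ count of children of a single parent that $B(x,r)$ can meet. Calibrating the double-exponential growth $Q_n = Q^{M^n}$ through the choice of $M$ is precisely what yields the required estimate. Once that is in hand, the mass distribution principle delivers $\mathcal{H}^s(K) > 0$, and letting $s \uparrow 2/(\tau+1)$ completes the argument.
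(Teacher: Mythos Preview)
Your argument is sound: the direct Cantor-set construction with double-exponential growth $Q_n = Q^{M^n}$ and the mass distribution principle is the classical route to the lower bound, and your identification of the binding constraint $(\tau+1)(1-s) > (\tau-1)M/(M-1)$ is correct (it is satisfiable for any $s < 2/(\tau+1)$ since the right-hand side tends to $\tau-1$ as $M\to\infty$, and $(\tau+1)(1-s) > \tau-1$ is exactly $s < 2/(\tau+1)$). Two minor clean-ups: you need $M > \tau+1$ strictly for the child count $c_n \asymp \ell_{n-1}Q_n^2$ to hold, and since $c_n$ is not literally constant across parents you should pass to a uniform sub-collection before defining $\mu$; neither point affects the outcome. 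For the intermediate-range count ``$O(rQ_n^2)$ children'', note that the clean justification is the separation bound $|p/q - p'/q'| \geq (4Q_n^2)^{-1}$ itself (so any interval of length $2r$ contains at most $8rQ_n^2 + 1$ such fractions), not the cruder per-$q$ count $\sum_q(2rq+1)$, which would introduce an unwanted $+Q_n$ term.

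The paper takes a quite different route. Rather than building a Cantor subset directly, it deduces the lower bound from the Mass Transference Principle (Theorem~\ref{MTP}): Dirichlet's theorem gives $W(1) = \I$, and applying the Mass Transference Principle with $f(r) = r^{2/(\tau+1)}$ yields $\mathcal{H}^{2/(\tau+1)}(W(\tau)) = \infty$ at once (equivalently, one invokes the divergence half of Jarn\'\i k's Theorem~\ref{Jthm}). This is both shorter and strictly stronger, since it delivers the Hausdorff measure at the critical exponent and not merely the dimension. Your approach, by contrast, is self-contained and elementary---it needs neither Khintchine's theorem nor the Mass Transference machinery---and is much closer in spirit to the original proofs of Jarn\'\i k and Besicovitch; the trade-off is that it only gives $\mathcal{H}^s > 0$ for $s$ strictly below $2/(\tau+1)$, so the critical-exponent statement would require extra work.
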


\Jarnik \ proved the result in 1928.  Besicovitch proved the same
result in 1932 by completely different methods.  The
\Jarnik-Besicovitch Theorem implies that
$$   \dim W(2) = 2/3   \quad    {\rm and }   \quad  \dim W(2015) = 2/2016   $$
and so $W(2015)$ is ``smaller'' than $W(2)$ as expected.  In view of
Corollary~\ref{cor:6.2}, we need to establish the lower bound result
$\dim \big(W(\tau)\big) \geq 2/(\tau +1)$ in order to complete the
proof of Theorem \ref{jb}.  We will see that this is a consequence of
\Jarnik's measure result discussed in the next section.

The dimension theorem is clearly an excellent result but it gives no
information regarding $\mathcal{H}^s$ at the critical exponent
$d:= 2/(\tau +1) $. By definition
\begin{equation*}
  \mathcal{H}^s(W(\tau)) = \begin{cases}
    0 &\text{if }s > d\\[1ex]
    \infty &\text{if }s < d\\
  \end{cases}
\end{equation*}
but
$$
\mathcal{H}^s(W(\tau)) = \textbf{?}  \quad \text{ if }s = d \,.
$$
In short, it would be highly desirable to have a Hausdorff measure
analogue of Khintchine's Theorem.

\subsection{\Jarnik's Theorem}

Theorem \ref{convja} is the easy case of the following fundamental
statement in metric Diophantine approximation.  It provides an elegant
criterion for the `size' of the set $W(\psi) $ expressed in terms of
Hausdorff measure.

\begin{thm}[\Jarnik's Theorem\index{Jarnik's Theorem}, 1931] \label{Jthm} Let $\psi:\N\to \Rp$
  be a monotonic function and $s \in (0,1)$. Then
  \begin{equation*}
    \mathcal{H}^s\big(W(\psi)\big) = \begin{cases}
      0 &{\rm if} \;\;\;  \sum_{q=1}^{\infty}q^{1-s} \psi^s(q)<\infty \\[2ex]
      \infty & {\rm if} \;\;\; \sum_{q=1}^{\infty}q^{1-s} \psi^s(q)=\infty
    \end{cases}
  \end{equation*}
\end{thm}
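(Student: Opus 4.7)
The convergence case was established in Theorem~\ref{convja}, so the task is to prove $\mathcal{H}^s(W(\psi))=\infty$ whenever $\sum_q q^{1-s}\psi(q)^s=\infty$, with $\psi$ monotonic and $s\in(0,1)$. The overall plan is to construct, inside an arbitrarily chosen subinterval of $\I$, a Cantor-like subset $K\subset W(\psi)$ together with a probability measure $\mu$ on $K$ satisfying the local mass estimate $\mu(B(x,\rho))\ll \rho^s$ for all sufficiently small $\rho$; the Mass Distribution Principle (see \cite{FalcGFS}) then forces $\mathcal{H}^s(K)>0$. Since $W(\psi)$ is translation-invariant modulo integers, performing the construction on each element of a countable partition of $\I$ into subintervals upgrades this to $\mathcal{H}^s(W(\psi))=\infty$.

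\textbf{Reduction.} If $\psi(q)\ge 1/q$ for infinitely many $q$, then $W(\psi)\supseteq W(q\mapsto 1/q)$, which has full Lebesgue measure by Khintchine's Theorem; since $s<1$, any set of positive one-dimensional Lebesgue measure has infinite $\mathcal{H}^s$-measure, so the claim holds. I may therefore assume $\psi(q)<1/q$ for all large $q$, which guarantees that for each fixed $q$ the balls $B(p/q,\psi(q)/q)$ are pairwise disjoint. Monotonicity of $\psi$ makes the divergence hypothesis equivalent, after dyadic grouping with any ratio $k\ge 2$, to $\sum_n k^{n(2-s)}\psi(k^n)^s=\infty$.

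\textbf{Cantor construction.} Fix a large integer $k\ge 6$. I build a sequence of finite collections $\mathcal{J}_0,\mathcal{J}_1,\ldots$ of closed intervals, with $\mathcal{J}_0$ a single subinterval of $\I$, such that every $J\in\mathcal{J}_j$ is contained in a unique parent in $\mathcal{J}_{j-1}$. The elements of $\mathcal{J}_j$ will be "target" balls $B(p/q,\psi(q)/q)$ with $q$ in a dyadic block $k^{n_j-1}<q\le k^{n_j}$, for a rapidly increasing sequence $(n_j)$ to be chosen inductively. Given a parent $J\in\mathcal{J}_{j-1}$, Theorem~\ref{thm:slv1} applied inside $J$ with $n=n_j$ shows that for $n_j$ sufficiently large the union of the "covering" balls $B(p/q,k/k^{2n_j})$ with $k^{n_j-1}<q\le k^{n_j}$ occupies at least half of $J$. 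A Vitali-type selection extracts a disjoint sub-family of these covering balls; replacing them by the concentric target balls (and shrinking $\psi$ at the outset if necessary so that $\psi(q)/q\ll k/k^{2n_j}$ throughout the block, which does not affect the divergence hypothesis for $s<1$) produces $\mathcal{J}_j$. The compact Cantor limit $K=\bigcap_j\bigcup_{J\in\mathcal{J}_j}J$ is contained in $W(\psi)$, since every $x\in K$ lies in the target ball $B(p/q,\psi(q)/q)$ for some $q$ drawn from each block.

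\textbf{Mass distribution and main obstacle.} Define $\mu$ on $K$ by granting every ball of generation $j$ an equal fraction of its parent's mass. For a small $\rho>0$, locate the unique $j$ with $\psi(k^{n_j})/k^{n_j}\le\rho<\psi(k^{n_{j-1}})/k^{n_{j-1}}$, and bound $\mu(B(x,\rho))$ by the sum over generation-$j$ balls meeting $B(x,\rho)$ of their assigned masses; the number of such balls is controlled by the mutual separation $k/k^{2n_j}$ of their centres. The central obstacle is to choreograph the choice of $k$, the gap sequence $(n_j)$, and the number of children retained per parent in the Vitali step so that the resulting count, discounted by the mass allocation, matches the required bound $\rho^s$. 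This is where the divergence $\sum_n k^{n(2-s)}\psi(k^n)^s=\infty$ is used decisively: by spacing $(n_j)$ sparsely enough, one absorbs successive tails of this series, which forces each parent to admit enough disjoint target children at scale $\psi(k^{n_j})/k^{n_j}$ to balance the child-count against $\rho^s$. Once the local estimate $\mu(B(x,\rho))\ll\rho^s$ is verified, the Mass Distribution Principle concludes the proof.
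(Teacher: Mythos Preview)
Your route is genuinely different from the paper's. In \S\ref{KimpliesJ} the divergence case is obtained in a few lines from Khintchine's Theorem via the Mass Transference Principle (Theorem~\ref{MTP}): one sets $\theta(q):=q^{1-s}\psi(q)^s$, so that $W(\theta)$ has full Lebesgue measure by Khintchine, and then the MTP transports this to $\mathcal{H}^s(W(\psi))=\mathcal{H}^s(\I)=\infty$. Your direct Cantor-plus-mass-distribution strategy is the historical approach (essentially \Jarnik's own, and the philosophy underlying the ubiquity machinery of \S\ref{ubi}); it is more self-contained but demands substantially more bookkeeping. Either route is legitimate, and the comparison is instructive: the MTP packages all of the delicate counting you are proposing to do by hand.

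That said, there is a real gap in your mass-distribution step. With a \emph{single} $k$-adic block per level and the uniform measure, a parent $J_{j-1}$ of length $r_{j-1}$ receives $N_j\asymp r_{j-1}k^{2n_j}$ disjoint children of length $r_j=\psi(k^{n_j})/k^{n_j}$, and one checks that the binding constraint for $\mu(B(x,\rho))\ll\rho^s$ is $\mu(J_j)\ll r_j^s$. For $\psi(q)=q^{-\tau}$ at the critical exponent $s=2/(1+\tau)$ this fails outright: a short telescoping gives $\mu(J_j)/r_j^s=k^{(\tau-1)\sum_{i<j}n_i}\to\infty$, no matter how the $n_j$ are spaced. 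Divergence of $\sum_n k^{(2-s)n}\psi(k^n)^s$ does not force any \emph{single} term to be large, so ``spacing $(n_j)$ sparsely enough'' and ``absorbing successive tails'' do not rescue the construction as written. The standard fix is to let the children at level $j$ come from a \emph{range} of blocks (so that the child-count is governed by a partial sum of the divergent series, which \emph{can} be made large), or equivalently to work within the local ubiquity framework of \S\ref{thesystems}; you should say explicitly which mechanism you intend. A minor separate point: your reduction is not quite right, since $\psi(q)\ge 1/q$ for infinitely many $q$ does not imply $W(\psi)\supseteq W(q\mapsto 1/q)$; either replace $\psi$ by $\min(\psi,1/q)$ and check the $s$-volume sum still diverges, or dispose instead of the case $\psi(q)/q\not\to0$ as in \S\ref{KimpliesJ}.
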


\begin{rem}  {\rm With $\psi(q) = q^{-\tau}  \  \ (\tau > 1)$, not only does the above  theorem imply  that  $\dim W(\tau) =  2/(1+ \tau) $ but it tells us that the Hausdorff measure at the critical exponent is infinite; i.e.
$$
\mathcal{H}^s\big(W(\tau)\big) =  \infty     \quad  {\rm at \ }  \ s = 2/(1+ \tau)  \, .
$$
}
\end{rem}

\begin{rem}  {\rm As in Khintchine's Theorem, the assumption that $\psi$ is monotonic is only required in the divergent case. In \Jarnik's original statement, apart from assuming stronger monotonicity conditions,   various technical conditions on $\psi$ and indirectly $s$ were imposed, which prevented $s=1$.   Note that even as stated, it is natural to exclude the case $s=1$ since
$$
\mathcal{H}^1\big(W(\psi)\big) \asymp m\big(W(\psi)\big) = 1 \, .
$$
The clear cut statement without the technical conditions was
established in \cite{BDV06} and it allows us to combine the theorems
of Khintchine and \Jarnik\ into a unifying statement. }
\end{rem}

\begin{thm}[Khintchine-\Jarnik \ 2006]
  \label{fulllmhm}
  Let $\psi:\N\to \Rp$ be a monotonic function and $s \in (0,1]$. Then
  \begin{equation*}
    \mathcal{H}^s\big(W(\psi)\big) = \begin{cases}
      0 &{\rm if} \;\;\;  \sum_{q=1}^{\infty}q^{1-s} \psi^s(q)<\infty\,, \\[2ex]
      \mathcal{H}^s(\I) & {\rm if} \;\;\; \sum_{q=1}^{\infty}q^{1-s} \psi^s(q)=\infty\,.
    \end{cases}
  \end{equation*}
\end{thm}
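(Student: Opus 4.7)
The plan is to handle the convergence half using the natural cover already constructed, then reduce the divergence half at $s<1$ to the case $s=1$ (Khintchine's Theorem \ref{kg}) by invoking the Mass Transference Principle of Beresnevich--Velani.

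For convergence, Theorem \ref{convja} delivers $\mathcal{H}^s(W(\psi))=0$ as soon as $\sum_q q^{1-s}\psi^s(q)<\infty$; since $\mathcal{H}^1$ is a constant multiple of $m$, this handles both $s<1$ and $s=1$ uniformly. For the divergence case at $s=1$, Khintchine's Theorem gives $m(W(\psi))=m(\I)=1$, and the proportionality of $\mathcal{H}^1$ with $m$ rewrites this as $\mathcal{H}^1(W(\psi))=\mathcal{H}^1(\I)$.

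For $s\in(0,1)$ we have $\mathcal{H}^s(\I)=\infty$, so the target is $\mathcal{H}^s(W(\psi))=\infty$ (this is Jarn\'\i k's Theorem \ref{Jthm}, but with the technical conditions of the original removed). The main engine is the Mass Transference Principle, which in the form we need asserts: given a sequence of balls $\{B_i=B(x_i,r_i)\}$ in $\I$ with $r_i\to 0$, if
\[
m\Big(B\cap\limsup_{i\to\infty}B(x_i,r_i^{s})\Big)=m(B)\qquad\text{for every ball }B\subset\I,
\]
then $\mathcal{H}^s\big(B\cap\limsup_i B_i\big)=\mathcal{H}^s(B)$ for every such $B$. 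I would apply this with $\{B_i\}$ equal to the family of balls $B(p/q,\psi(q)/q)$ defining $A_q(\psi)$. Setting $\widetilde\psi(q):=q^{1-s}\psi(q)^s$, the enlarged limsup set is exactly $W(\widetilde\psi)$, and MTP reduces the problem to showing $m(W(\widetilde\psi))=m(B)$ on every sub-ball $B\subset\I$.

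The main obstacle is this local full-measure statement, because $\widetilde\psi$ need not be monotonic even when $\psi$ is, so Khintchine's Theorem cannot be cited verbatim. My plan is to verify the local quasi-independence condition \eqref{vbx1xslv} for the sets $A_q(\widetilde\psi)$ directly: after a harmless truncation ensuring $\widetilde\psi(q)/q<\tfrac12$, the balls at a fixed level $q$ are disjoint, so $m(A_q(\widetilde\psi))\asymp \widetilde\psi(q)$, whence the divergence of $\sum_q \widetilde\psi(q)=\sum_q q^{1-s}\psi^s(q)$ gives divergence of the measures. Pairwise overlaps at distinct levels $q_1<q_2$ are estimated by the standard count of pairs $(p_1/q_1,p_2/q_2)$ with $|p_1/q_1-p_2/q_2|$ small, via the elementary identity $|p_1q_2-p_2q_1|\in\Z$ and a gcd/Farey argument (here the monotonicity of the original $\psi$ is what keeps the count in line, even though $\widetilde\psi$ itself is not monotonic). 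This yields \eqref{vbx1xslv}, and the doubling Borel--Cantelli lemma then delivers $m(B\cap W(\widetilde\psi))=m(B)$ on every sub-ball. Feeding this into MTP concludes $\mathcal{H}^s(W(\psi))=\mathcal{H}^s(\I)$ and closes the proof.
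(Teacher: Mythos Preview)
Your route is exactly the paper's (\S\ref{KimpliesJ}): convergence via Theorem~\ref{convja}, the case $s=1$ via Khintchine's Theorem, and for $s\in(0,1)$ the Mass Transference Principle applied with $\widetilde\psi(q)=q^{1-s}\psi(q)^s$. The only difference is that the paper simply cites Khintchine's Theorem for $\widetilde\psi$ without pausing over the monotonicity hypothesis, whereas you flag this and propose to rebuild local quasi-independence for the sets $A_q(\widetilde\psi)$ from scratch via a gcd/Farey overlap count.

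That detour is correct in principle but much heavier than required. Although $\widetilde\psi$ itself need not be monotonic, the \emph{radius} function $\widetilde\psi(q)/q=(\psi(q)/q)^s$ certainly is (it is the $s$-th power of a product of two positive decreasing functions), and since $\psi$ is monotonic one still has the Cauchy-condensation comparability $\sum_q\widetilde\psi(q)\asymp\sum_t 2^t\widetilde\psi(2^t)$. Inspecting the sketch following Theorem~\ref{kg}, these are the only two places where monotonicity of the approximating function is actually invoked; the quasi-independence of the dyadic blocks $A^*_t$ is a geometric count on the radii $\widetilde\psi(2^t)/2^t$ and needs nothing more. So the standard dyadic proof of Khintchine's divergence case applies to $\widetilde\psi$ verbatim, and the paper's terse appeal to Khintchine is justified without the extra machinery you outline.
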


\noindent Obviously, the Khintchine-\Jarnik \ Theorem implies
Khintchine's Theorem.

In view of the Mass Transference Principle established in~\cite{BV06}
one actually has that
\begin{center} Khintchine's Theorem
  $ \hspace{4mm} \Longrightarrow \hspace{4mm} $ Jarn\'{\i}k's
  Theorem.  \end{center} Thus, the Lebesgue theory of $W(\psi)$
underpins the general Hausdorff theory. At first glance this is rather
surprising because the Hausdorff theory had previously been thought to
be a subtle refinement of the Lebesgue theory. Nevertheless, the Mass
Transference Principle allows us to transfer Lebesgue measure
theoretic statements for limsup sets to Hausdorff statements and
naturally obtain a complete metric theory.

\subsection{The Mass Transference Principle \label{mtpballs}}

Let $(\Omega,d)$ be a locally compact metric space and suppose there
exist constants $ \delta > 0$, $0<c_1<1<c_2<\infty$ and $r_0 > 0$ such
that
\begin{equation}\label{g}
  c_1\ r^\delta \le \cH^\delta(B)\le c_2\ r^\delta \ ,
\end{equation}
for any ball $B=B(x,r)$ with $x\in \Omega$ and radius $r\le r_0$. For
the sake of simplicity, the definition of Hausdorff measure and
dimension given in \S\ref{DB} is restricted to $\R^n$.  Clearly, it
can easily be adapted to the setting of arbitrary metric spaces --
see~\cite{FalcGFS,MattilaGS}.  A consequence of \eqref{g} is that
\begin{equation*}
  0 < \mathcal{H}^{\delta}(\Omega) < \infty    \quad {\rm and \ } \quad  \dim \Omega = \delta \, .
\end{equation*}
Next, given a dimension function $f$ and a ball $B=B(x,r)$ we define
the scaled ball
$$
B^f:=B\big(x,f(r)^{\frac{1}{\delta}}\big)\,.
$$
When $f(r) = r^s$ for some $s > 0 $, we adopt the notation $B^s$, i.e.
\begin{equation*}
  B^s := B\big(x, r^{\frac{s}{\delta}}\big)
\end{equation*}
and so by definition $B^{\delta}=B$.

The Mass Transference Principle \cite{BV06} allows us to transfer
$\cH^\delta$-measure theoretic statements for limsup subsets of
$\Omega$ to general $\cH^f$-measure theoretic statements. Note that in
the case $ \delta = k \in \N$, the measure $\cH^\delta$ coincides with
$k$-dimensional Lebesgue measure and the Mass Transference Principle
allows us to transfer Lebesgue measure theoretic statements for limsup
subsets of $\R^k$ to Hausdorff measure theoretic statements.

\begin{thm}\label{MTP}\index{Mass Transference Principle}
  Let $\{B_i\}_{i\in\N}$ be a sequence of balls in $\Omega$ with
  $r(B_i)\to 0$ as $i\to\infty$. Let $f$ be a dimension function such
  that $x^{-\delta}f(x)$ is monotonic.  For any ball $B \in \Omega$
  with $\cH^\delta(B) > 0$, if
$$
\cH^\delta\big(\/B\cap\limsup_{i\to\infty}B^f_i{}\,\big)=\cH^\delta(B)
\
$$
then
$$
\cH^f\big(\/B\cap\limsup_{i\to\infty}B^\delta_i\,\big)=\cH^f(B) \ .
$$
\end{thm}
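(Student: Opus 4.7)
The approach I would take is to apply the mass distribution principle: I would construct a closed subset $K_\infty\subset B\cap\limsup_{i\to\infty}B_i^\delta$ carrying a probability measure $\mu$ such that $\mu(A)\ll f(r(A))$ for every sufficiently small ball $A$; this forces $\cH^f(K_\infty)>0$, and then a localisation argument applied inside arbitrary sub-balls of $B$ will upgrade the conclusion to $\cH^f(B\cap\limsup_iB_i^\delta)=\cH^f(B)$. At the outset one may assume $f(r)/r^\delta\to\infty$ as $r\to 0$, since otherwise $f(r)\asymp r^\delta$ and the conclusion follows directly from the hypothesis together with the scaling~\eqref{g}. In the remaining regime $f(r)^{1/\delta}>r$ for all small $r$, so $B_i^\delta\subset B_i^f$, which is the geometric fact driving the construction.

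The construction of $K_\infty$ proceeds level by level. Set $K_0=\{B\}$. Given a generation $K_n$ consisting of finitely many pairwise disjoint closed balls $B_*$, I refine as follows. Passing to indices $i$ so large that every $B_i^f$ has radius bounded by a preselected $\rho_n\downarrow 0$ and $B_i^f\subset B_*$, the hypothesis still gives $\cH^\delta(B_*\cap\limsup_i B_i^f)=\cH^\delta(B_*)$. A Vitali-type extraction then provides a finite pairwise disjoint sub-collection $\{B_{i_j}^f\}_j\subset B_*$ with
\[
\sum_j\cH^\delta(B_{i_j}^f)\;\ge\;c\,\cH^\delta(B_*),
\]
for an absolute constant $c>0$ coming from~\eqref{g}. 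Since $B_{i_j}^\delta\subset B_{i_j}^f$, the balls $B_{i_j}^\delta$ form the descendants of $B_*$ in $K_{n+1}$. The measure $\mu$ is then defined inductively by assigning to each child the mass
\[
\mu(B_{i_j}^\delta)\;=\;\mu(B_*)\cdot\frac{\cH^\delta(B_{i_j}^f)}{\sum_k\cH^\delta(B_{i_k}^f)},
\]
and extending consistently to a Borel probability measure supported on $K_\infty=\bigcap_n\bigcup_{B'\in K_n}B'$.

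The key and hardest step is to verify $\mu(A)\ll f(r(A))$ for every ball $A$ centred in $\Omega$ of sufficiently small radius. Given $A$, I locate the generation $n$ at which $r(A)$ is comparable to the radii of the balls in $K_n$ that $A$ meets. If $A$ sits inside a single level-$n$ ball and is smaller than the level-$(n+1)$ children it intersects, the bound reduces to controlling the mass of one child by $f$ of its radius, which follows from the defining formula combined with the monotonicity of $x^{-\delta}f(x)$. If instead $A$ straddles several children at level $n+1$, one bounds $\mu(A)$ by a sum over affected children proportional to $\sum_{j}\cH^\delta(B_{i_j}^f)$; disjointness and~\eqref{g} control this by $\cH^\delta(A)\asymp r(A)^\delta$, and the monotonicity of $x^{-\delta}f(x)$ then converts $r(A)^\delta$ into $f(r(A))$ at the relevant scale. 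The main obstacle is choosing the thresholds $\rho_n$ to decrease rapidly enough between generations so that the two cases match with uniform constants; this interlocking of scales is the core technical difficulty.

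Once the estimate on $\mu$ is established, the mass distribution principle gives $\cH^f(K_\infty)\gg 1$, whence $\cH^f(B\cap\limsup_iB_i^\delta)>0$. The hypothesis on $\cH^\delta$ is inherited by any sub-ball $B'\subset B$ of positive $\cH^\delta$-measure, so the identical construction yields $\cH^f(B'\cap\limsup_iB_i^\delta)>0$ for every such $B'$. This rules out an $\cH^f$-positive subset of $B$ disjoint from $\limsup_iB_i^\delta$, and combined with the trivial upper bound $\cH^f(B\cap\limsup_iB_i^\delta)\le\cH^f(B)$ this gives equality, completing the proof.
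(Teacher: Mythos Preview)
The paper does not contain a proof of this theorem: the Mass Transference Principle is stated here as a tool and attributed to \cite{BV06}, with no argument supplied in the present text. There is therefore nothing in the paper against which to compare your proposal directly.

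That said, your outline is broadly the correct strategy and matches the architecture of the original proof in \cite{BV06}: a Cantor-type construction of a compact set $K_\infty\subset B\cap\limsup_iB_i^\delta$, a probability measure $\mu$ built by repeated subdivision with weights proportional to $\cH^\delta(B_{i_j}^f)$, and an appeal to the mass distribution principle. The reduction to the case $f(r)/r^\delta\to\infty$ and the localisation step at the end are also standard and correct.

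Where your sketch is genuinely incomplete is exactly where you flag it: the verification that $\mu(A)\ll f(r(A))$ for arbitrary small balls $A$. In the actual proof this step is not a two-case split at a single generation but requires a careful ``$K_{G,B}$-lemma'' controlling, for a ball $A$ at an intermediate scale, how many level-$(n+1)$ balls it can meet in terms of both $\cH^\delta(A)$ and the individual $f(r(B_{i_j}))$. Your passage from $\sum_j\cH^\delta(B_{i_j}^f)\ll r(A)^\delta$ to $f(r(A))$ via monotonicity of $x^{-\delta}f(x)$ is too quick: monotonicity could go either way, and one needs to use it in the correct direction together with the fact that the children were chosen with radii below a threshold tied to $r(A)$. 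Getting the constants uniform across generations is the substantive technical work, and your sketch does not yet supply it.
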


\begin{rem} {\rm There is one point that is well worth making. The
    Mass Transference Principle is purely a statement concerning
    limsup sets arising from a sequence of balls. There is absolutely
    no monotonicity assumption on the radii of the balls.  Even the
    imposed condition that $r(B_i)\to0$ as $i\to\infty$ is redundant
    but is included to avoid unnecessary tedious discussion.  }
\end{rem}

\subsubsection{Khintchine's Theorem implies  \Jarnik's Theorem  \label{KimpliesJ}}
First of all let us dispose of the case that
$\psi(r)/r \nrightarrow 0 $ as $r \to \infty$. Then trivially,
$W(\psi)= \I$ and the result is obvious. Without loss of generality,
assume that $\psi(r)/r \to 0 $ as $r \to \infty$. With respect to the
Mass Transference Principle, let $\Omega = \I$, $ d $ be the supremum
norm, $\delta = 1$ and $f(r) = r^s $ with $s \in (0,1)$.  We are given
that $ \sum q^{1-s} \psi(q)^s = \infty $. Let
$ \theta(r) := q^{1-s} \psi(q)^s$. Then $\theta$ is an approximating
function and $ \sum \theta(q) = \infty $. Thus, Khintchine's Theorem
implies that $ \cH^1(B \cap W(\theta)) = \cH^1(B \cap \I)$ for any
ball $B$ in $\R$. It now follows via the Mass Transference Principle
that $ \cH^s(W(\psi)) = \cH^s(\I)= \infty $ and this completes the
proof of the divergence part of \Jarnik's Theorem.  As we have
already seen, the convergence part is straightforward.

\subsubsection{Dirichlet's Theorem implies  the \Jarnik-Besicovitch Theorem }
Dirichlet's theorem (Theorem \ref{thm:7.4}) states that for any
irrational $x \in \R$, there exist infinitely many reduced rationals
$p/q$ ($q>0$) such that $|x - p/q| \leq q^{-2}$; i.e. $W(1)=\I$. Thus,
with $f(r) := r^{d} \ (d:= 2/(1+\tau)) $ the Mass Transference
Principle implies that $\cH^{d} (W(\tau) ) = \infty $. Hence
$\dim W(\tau) \geq d $. The upper bound is trivial. Note that we have
actually proved a lot more than the \Jarnik-Besicovitch
theorem. We have proved that the $s$--dimensional Hausdorff measure
$\cH^{s}$ of $W(\tau)$ at the critical exponent $s=d$ is infinite.

\subsection{The Generalised Duffin-Schaeffer Conjecture}

As with Khintchine's Theorem, it is natural to seek an appropriate
statement in which one removes the monotonicity condition in \Jarnik's
Theorem.  In the case of Khintchine's Theorem, the appropriate
statement is the Duffin-Schaeffer Conjecture -- see \S\ref{dsconj}.
With this in mind, we work with the set $W'(\psi)$ in which the
coprimeness condition $(p,q)=1$ is imposed on the rational
approximates $p/q$.  For any function $\psi:\N\to \Rp$ and
$s \in (0,1]$ it is easily verified that
$$
\mathcal{H}^s\big(W(\psi)\big) = 0 \quad {\rm if} \quad
\sum_{q=1}^\infty \, \varphi(q) \ \Big(\dfrac{\psi(q)}q\Big)^s \ < \
\infty \ .
$$
In the case the above $s$-volume sum diverges it is reasonable to
believe in the truth of the following Hausdorff measure version of the
Duffin-Schaeffer Conjecture \cite{BV06}.

\begin{conjecture}[Generalised Duffin-Schaeffer Conjecture,
  2006] \label{gdsstate} For any function $\psi\colon \N\to \Rp$ and
  $s \in (0,1]$
$$
\mathcal{H}^s\big(W'(\psi)\big) = \mathcal{H}^s\big(\I\big) \quad {\rm
  if} \quad \sum_{q=1}^\infty \, \varphi(q) \
\Big(\dfrac{\psi(q)}q\Big)^s \ = \ \infty \ .
$$
\end{conjecture}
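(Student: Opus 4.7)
The plan is to deduce Conjecture \ref{gdsstate} from the standard Duffin-Schaeffer Conjecture \ref{dsstate} by invoking the Mass Transference Principle (Theorem \ref{MTP}), in complete analogy with the reduction of \S\ref{KimpliesJ} which derives \Jarnik's Theorem from Khintchine's. The convergence half is unconditional and routine: the natural cover of $W'(\psi)$ by the coprime balls $B(p/q,\psi(q)/q)$ gives $\cH^s(W'(\psi))\ll\sum_{q\ge t}\varphi(q)(\psi(q)/q)^s\to 0$, so only the divergence half needs attention.

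Fix $s\in(0,1]$ and $\psi$ with $\sum_q\varphi(q)(\psi(q)/q)^s=\infty$. We may assume $\psi(q)/q<1/2$ for all sufficiently large $q$, since otherwise the coprime balls already cover $\I$ for arbitrarily large $q$ and the conclusion is immediate. Introduce the auxiliary approximating function
\[
\theta(q):=q^{1-s}\psi(q)^s,
\]
designed so that $\theta(q)/q=(\psi(q)/q)^s$. Then
\[
\sum_{q=1}^{\infty}\varphi(q)\,\frac{\theta(q)}{q} \ = \ \sum_{q=1}^{\infty}\varphi(q)\Big(\frac{\psi(q)}{q}\Big)^s \ = \ \infty,
\]
which is precisely the divergence hypothesis of Conjecture \ref{dsstate} for $\theta$. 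Granting that conjecture, we obtain $m(W'(\theta))=1$, and since $W'(\theta)\subset\I$ this upgrades trivially to $\cH^1(B\cap W'(\theta))=\cH^1(B)$ for every ball $B\subset\I$.

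Apply Theorem \ref{MTP} with $\Omega=\I$, $\delta=1$ and $f(r)=r^s$; the required monotonicity of $x^{-\delta}f(x)=x^{s-1}$ is automatic, and condition \eqref{g} holds on $\I$ with $c_1=c_2=2$. Enumerate the balls $\{B_i\}$ as $B(p/q,\psi(q)/q)$ over all coprime pairs $(p,q)$ with $0\le p\le q$, so that $B_i^\delta=B_i$ while $B_i^f=B(p/q,(\psi(q)/q)^s)$. Then $\limsup B_i^\delta=W'(\psi)$ and $\limsup B_i^f=W'(\theta)$, and the hypothesis of the Mass Transference Principle is exactly what was established in the previous paragraph. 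The conclusion is $\cH^s(B\cap W'(\psi))=\cH^s(B)$ for every ball $B\subset\I$, hence $\cH^s(W'(\psi))=\cH^s(\I)$, as required.

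The essential obstacle is that the entire reduction is conditional: it shows only that Conjecture \ref{gdsstate} \emph{follows from} Conjecture \ref{dsstate}, and the two are in fact equivalent since the case $s=1$ recovers the classical statement. An unconditional attack would therefore have to either resolve Duffin-Schaeffer itself or develop a direct \emph{local} quasi-independence-on-average estimate in the sense of \eqref{vbx1xslv} with $\mu=\cH^s$ on the coprime $\limsup$ set. The latter is genuinely delicate because for $s<1$ the measure $\cH^s$ is not locally finite on $\I$, so the arithmetic overlap bounds that drive partial progress on Duffin-Schaeffer must be re-engineered in a context where Federer-type regularity fails.
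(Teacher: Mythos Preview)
Your proposal is correct and matches the paper's treatment exactly. The paper does not prove the conjecture (it remains open), but remarks that Conjecture~\ref{dsstate} implies Conjecture~\ref{gdsstate} via the Mass Transference Principle and leaves the verification as an exercise; your argument is precisely that exercise, mirroring the reduction of \S\ref{KimpliesJ} with the auxiliary function $\theta(q)=q^{1-s}\psi(q)^s$, and you rightly emphasise that the conclusion is conditional on the classical Duffin--Schaeffer Conjecture.
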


\begin{rem} {\rm If $s=1$, then $\cH^1(\I) = m (\I) $ and Conjecture
    \ref{gdsstate} reduces to the Lebesgue measure conjecture of
    Duffin $\&$ Schaeffer (Conjecture \ref{dsstate}).  }
\end{rem}
\begin{rem} {\rm In view of the Mass Transference Principle, it
    follows that
    $$ \text{ Conjecture \ref{dsstate} } \quad \Longrightarrow \quad
    \text{ Conjecture \ref{gdsstate}} $$ }
\end{rem}

\noindent \emph{Exercise:} Prove the above implication.


\section{The higher dimensional theory}

We start with a generalisation of Theorem \ref{Dir} to simultaneous
approximation in $\R^n$.

\begin{thm}[Dirichlet in $\R^n$]\label{nDir}
  Let $ (i_1, \ldots,i_n) $ be any $n$-tuple of numbers satisfying
  \begin{equation} \label{propi} 0 < i_1, \ldots,i_n < 1 \quad {\rm
      and } \quad \sum_{t=1}^{n} i_t = 1 \, .
  \end{equation}
  Then, for any $\vx=(x_1, \ldots,x_n) \in \RR^n$ and
  $N \in \mathbb{N}$, there exists $q \in \mathbb{Z}$ such that
  \begin{equation}\label{h7.0}
    \max \{ \|qx_1\|^{1/i_1} , \ldots , \ \|qx_n\|^{1/i_n} \,  \}  < N^{-1} \qquad { and }  \qquad  1 \leq  q \leq N \, .
  \end{equation}
\end{thm}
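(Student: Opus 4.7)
My plan is to deduce Theorem~\ref{nDir} from Minkowski's theorem on linear forms, which is the higher-dimensional analogue of the pigeonhole argument used in the proof of Theorem~\ref{Dir}. Explicitly, I would work in $\R^{n+1}$ with coordinates $(q,p_1,\ldots,p_n)$ and introduce the $n+1$ linear forms
\begin{equation*}
L_0(q,p_1,\ldots,p_n) = q, \qquad L_t(q,p_1,\ldots,p_n) = x_t q - p_t \qquad (t=1,\ldots,n).
\end{equation*}
The coefficient matrix of this system is lower triangular with ones on the diagonal, so its determinant equals $1$.

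The key choice is the one dictated by the shape of the target inequality~\eqref{h7.0}: take $c_0 := N$ and $c_t := N^{-i_t}$ for $t=1,\ldots,n$. The normalisation $\sum_t i_t = 1$ from~\eqref{propi} is precisely what makes
\begin{equation*}
c_0 \prod_{t=1}^{n} c_t = N \cdot N^{-\sum_t i_t} = 1,
\end{equation*}
matching the critical product condition in Minkowski's linear forms theorem. Applying that theorem then produces a nonzero integer vector $(q,p_1,\ldots,p_n) \in \Z^{n+1}$ satisfying $|q| \le N$ and $|x_t q - p_t| \le N^{-i_t}$ for every $t$.

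To tidy the conclusion, I would first rule out $q = 0$: otherwise each $|p_t| \le N^{-i_t} < 1$ (assuming $N \ge 2$, since $0<i_t<1$) would force $p_t = 0$, contradicting non-triviality; the case $N = 1$ is trivial by taking $q = 1$, as $\|x_t\| \le 1/2 < 1 = N^{-i_t}$. Negating $(q,p_1,\ldots,p_n)$ if necessary, I may assume $1 \le q \le N$, whence $\|qx_t\| \le |x_t q - p_t| \le N^{-i_t}$ for every $t$, which is~\eqref{h7.0} with $\le$ in place of $<$.

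The last step, and the one I expect to be the most delicate, is upgrading these weak inequalities to the strict inequalities demanded by~\eqref{h7.0}; this is the higher-dimensional analogue of the ``semi-open interval'' device used at the end of the proof of Theorem~\ref{Dir}. I would handle it by applying Minkowski's theorem with slightly enlarged bounds $c_t' := \lambda N^{-i_t}$ for $\lambda > 1$, noting that $c_0 \prod_t c_t' = \lambda^n > 1$ forces the strict conclusion, and then letting $\lambda \to 1^+$; compactness is immediate since only finitely many candidate values $q \in \{1, \ldots, N\}$ are in play. Equivalently, one can invoke the strict form of Minkowski's linear forms theorem directly. Either route is standard, though it is the one genuinely non-trivial refinement needed to match the sharp statement.
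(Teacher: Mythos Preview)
Your proposal is correct and matches the paper's approach exactly: the paper states that Theorem~\ref{nDir} is an immediate consequence of Minkowski's theorem for systems of linear forms (Theorem~\ref{SLF}) applied with $k=n+1$, $C_t=N^{-i_t}$ for $1\le t\le n$, $C_k=N$, and precisely the coefficient matrix corresponding to your forms $L_0,\ldots,L_n$. The only cosmetic difference is that the paper's formulation of Minkowski's theorem (Theorem~\ref{SLF}) already yields \emph{strict} inequalities on the first $k-1$ forms and a non-strict inequality on the last, so the limiting argument you outline in your final paragraph is already absorbed into the proof of Theorem~\ref{SLF} itself and need not be repeated here; with that version in hand, the $q=0$ case is also ruled out directly for every $N\ge1$ since $|p_t|<N^{-i_t}\le1$ forces $p_t=0$.
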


\begin{rem} {\rm The symmetric case corresponding to  $i_1= \ldots=i_n= 1/n$ is the more familiar form of the theorem. In this symmetric case, when $N$ is an $n$'th power, the  one-dimensional proof using the pigeon-hole principle can be easily adapted to prove the associated statement (\emph{exercise}). The above general form is a neat consequence of a fundamental theorem in the geometry of numbers; namely  Minkowski's theorem for systems of linear forms -- see \S\ref{MinT} below.  At this point simply observe that for a fixed  $q$ the first inequality in \eqref{h7.0} corresponds to considering rectangles centered at rational points $$ \Big( \frac{p_1}{q}, \ldots, \frac{p_n}{q} \Big) \quad {\rm of \ sidelength }  \quad \frac{2}{q N^{i_1}} ,\ldots ,\frac{2}{q N^{i_n}}\quad\text{respectively}  \, .
$$   Now the shape of the rectangles are clearly governed by $(i_1, \ldots,i_n)$. However the  volume is not. Indeed, for any  $(i_1, \ldots,i_n)$ satisfying \eqref{propi}, the $n$-dimensional Lebesgue measure $m_n$ of any rectangle centered at a rational point with denominator $q$  is $2^nq^{-n} N^{-1}$.
}
\end{rem}

\subsection{Minkowski's Linear Forms Theorem } \label{MinT}

We begin by introducing various terminology and establishing
Minkowski's theorem for convex bodies.

\begin{df}\rm
  A subset $B$ of $\R^n$ is said to be \emph{convex}\/ if for any two
  points $\vx,\vy\in B$
$$
\big\{\lambda\vx+(1-\lambda)\vy:0\le\lambda\le1\big\}\subset B\,,
$$
that is the line segment joining $\vx$ and $\vy$ is contained in $B$.
A \emph{convex body}\/ in $\R^n$ is a bounded convex set.
\end{df}

\begin{df}\rm
  A subset $B$ in $\R^n$ is said to be \emph{symmetric about the
    origin}\/ if for every $\vx\in B$ we have that $-\vx\in B$.
\end{df}

The following is a simple but nevertheless powerful observation
concerning symmetric convex bodies.

\begin{thm}[Minkowski's Convex Body Theorem]\label{min}
  Let $B$ be a convex body in $\R^n$ symmetric about the origin. If
  $\vol(B)>2^n$ then $B$ contains a non-zero integer point.
\end{thm}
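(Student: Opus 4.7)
The plan is to use a pigeonhole (Blichfeldt-type) argument applied to the scaled body $\tfrac{1}{2}B$, followed by a one-line application of symmetry and convexity to produce the required lattice point.

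First I would pass to the half body and note that $\vol(\tfrac{1}{2}B) = 2^{-n}\vol(B) > 1$. The crux is the following measure-theoretic pigeonhole claim: any Lebesgue measurable set $S \subset \R^n$ with $\vol(S) > 1$ contains two distinct points whose difference lies in $\Z^n$. To prove it, decompose $S = \bigcup_{\vv{m} \in \Z^n} S \cap ([0,1)^n + \vv{m})$ and translate each piece back to the unit cube by subtracting $\vv{m}$. The translated pieces are all contained in $[0,1)^n$, which has measure $1$, yet their measures sum to $\vol(S) > 1$; hence at least two of them overlap on a set of positive measure. A point in the overlap lifts to two distinct points $\vx_1, \vx_2 \in S$ with $\vx_1 - \vx_2 \in \Z^n \setminus \{\vv{0}\}$. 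Since $B$ is bounded and convex, it is Lebesgue measurable, so we may apply this to $S = \tfrac{1}{2}B$ and obtain distinct $\vx_1, \vx_2 \in \tfrac{1}{2}B$ with $\vv{m} := \vx_1 - \vx_2$ a non-zero integer vector.

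Finally I would place $\vv{m}$ inside $B$. From $\vx_1, \vx_2 \in \tfrac{1}{2}B$ we have $2\vx_1, 2\vx_2 \in B$; the symmetry hypothesis gives $-2\vx_2 \in B$; and then convexity of $B$ yields the midpoint of $2\vx_1$ and $-2\vx_2$, namely
$$
\tfrac{1}{2}(2\vx_1) + \tfrac{1}{2}(-2\vx_2) \;=\; \vx_1 - \vx_2 \;=\; \vv{m},
$$
as a point of $B$. This is the required non-zero integer point.

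The main obstacle is the pigeonhole step; once it is in place, the conclusion is immediate from symmetry and convexity. No measurability subtleties arise, because a bounded convex set is automatically Lebesgue measurable. It is worth noting that the strict inequality $\vol(B) > 2^n$ is genuinely used: one cannot relax it to $\vol(B) = 2^n$, as shown by the open cube $(-1,1)^n$, which has volume $2^n$ yet contains no non-zero integer point.
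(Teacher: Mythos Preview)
Your proof is correct and complete. The overall architecture---a pigeonhole step producing two points of $B$ whose suitably scaled difference is a non-zero integer vector, followed by the symmetry-plus-convexity midpoint trick---matches the paper's, but the pigeonhole step itself is handled differently. The paper gives Mordell's discrete argument: it approximates $B$ by the grid points $\vv a/m$ for large $m$, observes that eventually there are more than $(2m)^n$ such points, and then pigeonholes on residue classes modulo $2m$ to find two of them congruent in every coordinate; the difference divided by $2m$ is then a non-zero integer vector and lies in $B$ by the same symmetry-and-convexity averaging you use. Your route is Blichfeldt's measure-theoretic version: translate the pieces of $\tfrac12 B$ into the unit cube and force an overlap because the total measure exceeds~$1$. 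Your approach avoids the limiting approximation and is arguably slicker; Mordell's argument, on the other hand, is purely combinatorial and sidesteps any appeal to Lebesgue measurability. Either is a perfectly good proof of the theorem.
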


\begin{proof} The following proof is attributed to Mordell.  For
  $m\in\N$ let $ A(m,B)=\{\vv a\in\Z^m: \vv a/m\in B\}\,.  $ Then we
  have that
$$
\lim_{m\to\infty}m^{-n}\#A(m,B)=\vol(B)\, .
$$
Since $\vol(B)>2^n$, there is a sufficiently large $m$ such that
$m^{-n}\#A(m,B)>2^n$, that is $\#A(m,B)>(2m)^n$. Since there are $2m$
different residue classes modulo $2m$ and each point in $A(Q,m)$ has
$n$ coordinates, there are two distinct points in $A(Q,m)$, say
$\vv a=(a_1,\dots,a_n)$ and $\vv b=(b_1,\dots,b_n)$ such that
$$
a_i\equiv b_i\pmod{2m}\quad\text{for each }i=1,\dots,n\,.
$$
Hence
$$
\vv z=\frac12\,\frac{\vv a}{m}+\frac12\left(-\frac{\vv
    b}{m}\right)=\frac{\vv a-\vv b}{2m}\in\Z^n\setminus\{\vv 0\}\,.
$$
Since $B$ is symmetric about the origin, $-\vv b/m\in B$ and since $B$
is convex $\vv z\in B$. The proof is complete.
\end{proof}

The above convex body result enables us to prove the following
extremely useful statement.

\begin{thm}[Minkowski's theorem for systems of linear forms\index{Minkowski's theorem}]\label{SLF}
  Let $\beta_{i,j}\in\R$, where $1\le i,j\le k$, and let
  $C_1,\dots,C_k>0$. If
  \begin{equation}\label{vbx1}
    |\det(\beta_{i,j})_{1\le i,j\le k}|\le \prod_{i=1}^kC_i,
  \end{equation}
  then there exists a non-zero integer point $\vv x=(x_1,\dots,x_k)$
  such that
  \begin{equation}\label{vbx2}
    \left\{\begin{array}{lll}
             |x_1\beta_{i,1}+\dots+x_k\beta_{i,k}|<C_i && (1\le i\le k-1)\\[1ex]
             |x_1\beta_{k,1}+\dots+x_n\beta_{k,k}|\le C_k
           \end{array}\right.
       \end{equation}
     \end{thm}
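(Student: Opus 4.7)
The plan is to translate the system of linear inequalities into the geometry of a convex symmetric body, and then apply Minkowski's Convex Body Theorem (Theorem \ref{min}). Write $L_i(\vv x)=\sum_{j=1}^{k}x_j\beta_{i,j}$ for the $i$-th linear form, and let $T:\R^k\to\R^k$ be the linear map $\vv x\mapsto(L_1(\vv x),\dots,L_k(\vv x))$, so that $|\det T|=|\det(\beta_{i,j})|$. The inequalities \eqref{vbx2} then read $|L_i(\vv x)|<C_i$ for $i<k$ and $|L_k(\vv x)|\le C_k$.

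I would first treat the non-singular case $\det(\beta_{i,j})\neq 0$, where $T$ is invertible. For each $\varepsilon>0$, introduce the open set
$$
S_\varepsilon=\{\vv x\in\R^k : |L_i(\vv x)|<C_i \text{ for } 1\le i\le k-1,\ |L_k(\vv x)|<C_k+\varepsilon\}.
$$
Since $S_\varepsilon$ is the $T$-preimage of an open box that is convex and symmetric about the origin, so is $S_\varepsilon$. The change of variables $\vv y=T\vv x$ gives
$$
\vol(S_\varepsilon)=\frac{2^{k}(C_k+\varepsilon)\prod_{i=1}^{k-1}C_i}{|\det(\beta_{i,j})|}\;\ge\;2^{k}\frac{C_k+\varepsilon}{C_k}\;>\;2^{k},
$$
using the hypothesis \eqref{vbx1}. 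Hence Theorem \ref{min} supplies a non-zero integer point $\vv x_\varepsilon\in S_\varepsilon\cap\Z^k$.

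Next I would pass to the limit $\varepsilon\to 0^{+}$. Because $T$ is invertible, $S_\varepsilon$ lies inside the bounded set $T^{-1}\bigl(\prod_{i=1}^{k}[-C_i-1,C_i+1]\bigr)$ uniformly for $\varepsilon\le 1$. Hence only finitely many integer vectors can arise as $\vv x_\varepsilon$, and by pigeonhole some fixed $\vv x_0\in\Z^k\setminus\{0\}$ equals $\vv x_{\varepsilon_m}$ along a sequence $\varepsilon_m\to 0$. This $\vv x_0$ automatically satisfies the strict inequalities $|L_i(\vv x_0)|<C_i$ for $i<k$, and letting $\varepsilon_m\to 0$ yields $|L_k(\vv x_0)|\le C_k$, which is precisely \eqref{vbx2}.

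The main obstacle is the singular case $\det(\beta_{i,j})=0$, where $T$ is not invertible and the preimage argument fails to give boundedness. I would handle this by a perturbation: set $\beta_{i,j}^{(\delta)}=\beta_{i,j}+\delta\gamma_{i,j}$ for a generic matrix $(\gamma_{i,j})$ (for instance $\gamma_{i,j}=\delta_{i,j}$ works off a discrete set of $\delta$), so that $\det(\beta^{(\delta)}_{i,j})$ is a non-constant polynomial in $\delta$, is non-zero for all small $\delta>0$ outside a discrete set, and satisfies $|\det(\beta^{(\delta)}_{i,j})|\le \prod_i(C_i+\delta)$ by continuity. Applying the non-singular case to this perturbed system with constants $C_i+\delta$ yields integer vectors $\vv x^{(\delta)}\neq 0$ with $|L_i^{(\delta)}(\vv x^{(\delta)})|<C_i+\delta$ for $i<k$ and $|L_k^{(\delta)}(\vv x^{(\delta)})|\le C_k+\delta$. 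Using a crude a priori bound on $\|\vv x^{(\delta)}\|$ coming from the minimal non-zero eigenvalue of the perturbed map (uniform on a set of $\delta$ bounded away from zero, then extracted along a suitable sequence), together with integrality, one extracts a constant subsequence $\vv x^{(\delta_m)}=\vv x_0$ and passes to the limit to obtain the required integer point for the original system.
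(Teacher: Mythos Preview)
Your treatment of the non-singular case is correct and essentially coincides with the paper's argument: you introduce the enlarged body $S_\varepsilon$, check that its volume exceeds $2^k$ via \eqref{vbx1}, apply Theorem~\ref{min}, and then use the finiteness of integer points in a bounded region to pass to the limit $\varepsilon\to 0$. The paper splits this into two sub-cases (strict inequality in \eqref{vbx1} first, then equality via an $\varepsilon$-argument), but the content is the same.

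The gap is in the singular case. Your perturbation approach fails at the compactness step: as $\delta\to 0$, the linear map associated with $(\beta_{i,j}^{(\delta)})$ degenerates to the singular $T$, and the body $\{\vv x: |L_i^{(\delta)}(\vv x)|\le C_i+\delta\ \text{for all }i\}$ becomes unbounded along $\ker T$. There is thus no uniform bound on $\|\vv x^{(\delta)}\|$, and the integer points supplied by Minkowski may well escape to infinity; the parenthetical remark about a ``minimal non-zero eigenvalue \ldots\ uniform on a set of $\delta$ bounded away from zero'' does not help, since you ultimately need $\delta\to 0$. A secondary issue: because you also inflate the constants to $C_i+\delta$, even a successful limit would only deliver $|L_i(\vv x_0)|\le C_i$ for $i<k$, losing the required strict inequality (this part is easily repaired by not perturbing the $C_i$, since $|\det\beta^{(\delta)}|\to 0<\prod_i C_i$).

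The paper's argument for the singular case is both simpler and avoids these difficulties entirely. When $\det(\beta_{i,j})=0$ the body $B$ defined by \eqref{vbx2} is genuinely unbounded (it contains the whole subspace $\ker T$) and hence has infinite volume. One then intersects $B$ with a large cube: $B_m:=B\cap[-m,m]^k$ is convex, symmetric about the origin, bounded, and satisfies $\vol(B_m)>2^k$ once $m$ is large enough, so Theorem~\ref{min} applies directly to $B_m\subset B$. No perturbation and no limiting argument are needed.
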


\begin{proof}
  The set of $(x_1,\dots,x_k)\in\R^k$ satisfying \eqref{vbx2} is a
  convex body symmetric about the origin. First consider the case when
  $\det(\beta_{i,j})_{1\le i,j\le k}\neq0$ and \eqref{vbx1} is strict.
  Then
$$
\vol(B)=\frac{\prod_{i=1}^k(2C_i)}{|\det(\beta_{i,j})_{1\le i,j\le
    k}|}>2^n\,.
$$
Then, by Theorem~\ref{min}, the body contains a non-zero integer point
$(x_1,\dots,x_k)$ as required.

If $\det(\beta_{i,j})_{1\le i,j\le k}=0$ then $B$ is unbounded and has
infinite volume. Then there exists a sufficiently large $m\in\N$ such
that $B_m=B\cap[-m,m]$ has volume $\vol(B_m)>2^n$. Next, $B_m$ is
convex and symmetric about the origin, since it is the intersection of
2 sets with these properties. Again, by Theorem~\ref{min}, $B_m$
contains a non-zero integer point $(x_1,\dots,x_k)$. Since
$B_m\subset B$ we again get the required statement.

Finally, consider the situation when \eqref{vbx1} is an equation. In
this case $\det(\beta_{i,j})_{1\le i,j\le k}\neq0$. Define
$C^\ve_k=C_k+\ve$ for some $\ve>0$. Then
\begin{equation}\label{vbx1++}
  |\det(\beta_{i,j})_{1\le i,j\le k}|< \prod_{i=1}^{k-1}C_i\times C^{\ve}_k
\end{equation}
and by what we have already shown there exists a non-zero integer
solution $\vv x_\ve=(x_1,\dots,x_k)$ to the system
\begin{equation}\label{vbx2++}
  \left\{\begin{array}{lll}
           |x_1\beta_{i,1}+\dots+x_k\beta_{i,k}|<C_i && (1\le i\le k-1)\\[1ex]
           |x_1\beta_{k,1}+\dots+x_n\beta_{k,k}|\le C^{\ve}_k\,.
         \end{array}\right.
     \end{equation}
     For $\ve\le1$ all the points $\vv x_\ve$ satisfy \eqref{vbx2++}
     with $\ve=1$. That is they lie in a bounded body. Hence, there
     are only finitely many of them. Therefore there is a sequence
     $\ve_i$ tending to $0$ such that $\vv x_{\ve_i}$ are all the
     same, say $\vv x_0$. On letting $i\to\infty$ within
     \eqref{vbx2++} we get that \eqref{vbx2} holds with
     $\vv x=\vv x_0$.
   \end{proof}

   \medskip

   It is easily verified that Theorem \ref{nDir} (Dirichlet in $\R^n$)
   is an immediate consequence of Theorem~\ref{SLF} with $k=n+1$ and
$$ C_t =  N^{-i_{t}}  \quad (1\le t\le k-1)    \quad {\rm and }   \qquad C_k = N $$
and
$$
(\beta_{i,j})=\left(\begin{array}{cccccc}
                      -1 & 0 & 0 &  \dots &\alpha_1 \\
                      0 & -1 & 0  & \dots & \alpha_2\\
                      0 & 0 & -1  & \dots & \\
                      \vdots &  &  &   \ddots &\alpha_n \\
                      0 & 0 & 0  & \dots & 1\\
                    \end{array}
                  \right).
$$

Another elegant application of Theorem~\ref{SLF} is the following
statement.

\begin{cor} \label{cordualdirichlet} For any
  $(\alpha_1,\dots,\alpha_n)\in\R^n$ and any real $N>1$, there exist
  $q_1,\dots,q_n,p \in \mathbb{Z}$ such that
$$
| q_1\alpha_1+\dots+q_n\alpha-p| < N^{-n}\qquad\text{and}\qquad 1 \le
\max_{1\le i\le n}|q_i| \le N \,.
$$
In particular, there exist infinitely many
$((q_1,\dots,q_n),p) \in \mathbb{Z}^n\setminus\{0\} \times \mathbb{Z}$
such that
$$
| q_1\alpha_1+\dots+q_n\alpha-p| < \Big(\max_{1\le i\le n}|q_i|
\Big)^{-n} \,.
$$
\end{cor}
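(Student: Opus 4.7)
My plan is to derive the corollary as a direct application of Theorem~\ref{SLF} (Minkowski for systems of linear forms) with $k=n+1$. I take as coordinates $\vv x=(q_1,\dots,q_n,p)\in\Z^{n+1}$ and consider the $k$ linear forms
\[
L_1(\vv x)=q_1\alpha_1+\cdots+q_n\alpha_n-p,\qquad L_{i+1}(\vv x)=q_i\ \ (1\le i\le n),
\]
with constants $C_1=N^{-n}$ and $C_2=\cdots=C_{n+1}=N$. Expanding the associated coefficient matrix along the column corresponding to $p$ (which has a single nonzero entry, namely $-1$ in the first row) leaves an $n\times n$ identity minor, so the determinant equals $\pm1$. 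Meanwhile $\prod_{i=1}^{n+1}C_i=N^{-n}\cdot N^{n}=1$, so hypothesis \eqref{vbx1} holds with equality.

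Theorem~\ref{SLF} then produces a nonzero integer vector $\vv x=(q_1,\dots,q_n,p)$ with
\[
|q_1\alpha_1+\cdots+q_n\alpha_n-p|<N^{-n},\qquad |q_i|<N\ (1\le i\le n-1),\qquad |q_n|\le N.
\]
This gives the desired inequality with $\max_i|q_i|\le N$. To obtain the lower bound $\max_i|q_i|\ge 1$, I argue by contradiction: if $q_1=\cdots=q_n=0$, then the first inequality becomes $|p|<N^{-n}<1$ (since $N>1$), forcing $p=0$; but then $\vv x=\vv 0$, contradicting that Theorem~\ref{SLF} produces a nonzero integer solution. This proves the first assertion.

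For the ``infinitely many'' part, I reduce to the first assertion by letting $N\to\infty$ and handling two cases. If there exists some nonzero integer solution $\vv x^{*}=(q_1^{*},\dots,q_n^{*},p^{*})$ with $(q_1^{*},\dots,q_n^{*})\neq\vv 0$ and $L_1(\vv x^{*})=0$, then for every $t\in\Z\setminus\{0\}$ the integer multiple $t\vv x^{*}$ satisfies $|L_1(t\vv x^{*})|=0<(\max_i|tq_i^{*}|)^{-n}$, giving infinitely many solutions at once. Otherwise, suppose for contradiction that only finitely many solutions $\vv x^{(1)},\dots,\vv x^{(M)}$ exist; each has $L_1(\vv x^{(j)})\neq 0$, so $\varepsilon:=\min_j|L_1(\vv x^{(j)})|>0$. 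Choose $N$ with $N^{-n}<\varepsilon$ and $N>\max_{j,i}|q_i^{(j)}|$, and apply the first part to obtain a new solution $\vv x$ with $\max_i|q_i|\le N$ and $|L_1(\vv x)|<N^{-n}$; then $|L_1(\vv x)|<(\max_i|q_i|)^{-n}$, so $\vv x$ must coincide with one of the $\vv x^{(j)}$, yet $|L_1(\vv x^{(j)})|<\varepsilon$ contradicts the definition of $\varepsilon$.

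The main thing to get right is the bookkeeping in Theorem~\ref{SLF}: the approximation inequality is strict and so must occupy one of the first $k-1$ slots, while the single non-strict slot is harmlessly used up by $|q_n|\le N$. The rest is routine: the determinant--volume equality is immediate, the non-vanishing step uses $N>1$, and the passage to infinitely many solutions is standard once one separates the degenerate case $L_1\equiv 0$ on a rational line through the origin.
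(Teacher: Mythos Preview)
Your proof is correct and is exactly the approach the paper has in mind: the corollary is stated as ``another elegant application of Theorem~\ref{SLF}'', and the paper's own proof consists solely of the word \emph{Exercise}. Your choice of linear forms, constants $C_i$, the determinant computation, the exclusion of the spurious solution $q_1=\cdots=q_n=0$ via $N>1$, and the standard passage to infinitely many solutions are all as intended.
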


\begin{proof} Exercise
\end{proof}

\subsection{$\bad$ in $\R^n$} \label{BadinR^n}

An important consequence of Dirichlet's theorem (Theorem \ref{nDir})
is the following higher dimensional analogue of Theorem \ref{thm:7.4}.

\begin{thm}\label{nthm:7.4}
  Let $ (i_1, \ldots,i_n) $ be any $n$-tuple of real numbers
  satisfying \eqref{propi}. Let
  $\vx=(x_1, \ldots,x_n) \in \mathbb{R}^n$. Then there exist
  infinitely many integers $q>0$ such that
  \begin{equation}\label{h7.3}
    \max \{ \|qx_1\|^{1/i_1} , \ldots , \ \|qx_n\|^{1/i_n} \,  \}  < q^{-1} \, .
  \end{equation}
\end{thm}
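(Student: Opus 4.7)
The plan is to mimic the one-dimensional argument used in the proof of Theorem~\ref{thm:7.4}, with Theorem~\ref{nDir} playing the role of Theorem~\ref{Dir}. The key observation is that in \eqref{h7.0} the integer $q$ satisfies $q\le N$, hence $N^{-1}\le q^{-1}$, so the quantity
$$
M(q):=\max\{\|qx_1\|^{1/i_1},\dots,\|qx_n\|^{1/i_n}\}
$$
obtained from Theorem~\ref{nDir} automatically satisfies $M(q)<q^{-1}$. Thus every application of Theorem~\ref{nDir} produces \emph{at least one} $q$ satisfying \eqref{h7.3}; the task is to show that we can keep producing new ones.

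I would split into two cases. First, if all $x_j$ are rational, write each as $x_j=a_j/b$ with a common denominator $b\in\N$; then every $q=mb$ ($m\in\N$) gives $\|qx_j\|=0$, so $M(q)=0<q^{-1}$, producing trivially infinitely many solutions. Second, suppose at least one coordinate, say $x_{j_0}$, is irrational. Then for every $q\in\N$ we have $\|qx_{j_0}\|>0$, and consequently $M(q)>0$. Assume for contradiction that only finitely many positive integers $q_1,\dots,q_k$ satisfy \eqref{h7.3}. Since each $M(q_i)>0$, we may choose
$$
N\in\N\qquad\text{with}\qquad N^{-1}<\min_{1\le i\le k} M(q_i).
$$
Applying Theorem~\ref{nDir} with this $N$ yields some $q$ with $1\le q\le N$ and $M(q)<N^{-1}$. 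By the choice of $N$, this $q$ cannot coincide with any $q_i$, yet $M(q)<N^{-1}\le q^{-1}$ shows that $q$ satisfies \eqref{h7.3}, contradicting the maximality of the list $\{q_1,\dots,q_k\}$.

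The main (and really only) obstacle is ensuring the newly produced $q$ is genuinely new; this is handled exactly as in the proof of Theorem~\ref{thm:7.4}, by exploiting the strict positivity of $M(q_i)$ coming from the presence of an irrational coordinate. No additional number-theoretic input is needed beyond Theorem~\ref{nDir} itself, which is why this is essentially a formal corollary.
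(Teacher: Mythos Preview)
Your argument is correct. The paper does not actually write out a proof of Theorem~\ref{nthm:7.4}; it merely introduces the statement as ``an important consequence of Dirichlet's theorem (Theorem~\ref{nDir})'' and moves on. Your proof is precisely the natural way to make this implication explicit, and it follows the same template the paper uses in dimension one (the proof of Theorem~\ref{thm:7.4}): apply the finite-$N$ Dirichlet statement, use $q\le N$ to pass from $N^{-1}$ to $q^{-1}$, and rule out the possibility of only finitely many solutions by choosing $N$ large enough that $N^{-1}$ undercuts the finitely many values $M(q_i)$. Your case split (all coordinates rational versus at least one irrational) is the correct way to handle the fact that Theorem~\ref{nthm:7.4}, unlike Theorem~\ref{thm:7.4}, is stated for \emph{all} $\vv x\in\R^n$ rather than just irrationals; this is consistent with Remark~1.1 in the paper.
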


\noindent Now just as in the one-dimensional setup we can ask the
following natural question.

\noindent \textbf{Question.} Can we replace the right-hand side of
(\ref{h7.3}) by $ \epsilon q^{-1} $ where $\epsilon > 0$ is arbitrary?

\vspace*{1ex}

\begin{itemize}
\item[] \textbf{No.}  For any $ (i_1, \ldots,i_n) $ satisfying
  \eqref{propi}, there exists \emph{$(i_1, \ldots,i_n)$-badly
    approximable points}.
\end{itemize}

\noindent Denote by $\bad(i_1, \ldots,i_n)$ the set of
$(i_1, \ldots,i_n)$-badly approximable points; that is the set of
$(x_1, \ldots,x_n) \in \RR^n$ such that there exists a positive
constant $ c(x_1, \ldots,x_n) > 0 $ so that \vspace*{-1ex}
\begin{equation*}
  \max \{ \|qx_1\|^{1/i_1} , \ldots , \ \|qx_n\|^{1/i_n} \,  \} >
  c(x_1, \ldots,x_n) \ q^{-1} \quad \forall   q \in \NN   \ .
\end{equation*}

\begin{rem}\label{4.2} {\rm  Let $n=2$ and note that if  $(x,y)\in \bad(i,j)$ for some pair $(i,j)$, then it would imply that
$$
~ \qquad \liminf_{q\to\infty}q\|qx\|\|qy\|=0 .
$$
Hence $\cap_{i+j=1}\bad(i,j)=\varnothing$ would imply that Littlewood's
Conjecture is true. We will return to this famous conjecture in \S\ref{multsec}.  }
\end{rem}

\begin{rem} {\rm Geometrically speaking, $\bad(i_1, \ldots,i_n)$
    consists of points $\vx \in \R^n$ that avoid all rectangles of
    size
    $ c^{i_1} q^{-(1+i_1)} \times \ldots \times c^{i_n} q^{-(1+i_n) }
    $
    centred at rational points $(p_1/q, \ldots, p_n/q)$ with
    $c=c(\vx)$ sufficiently small.  Note that in the symmetric case
    $i_1= \ldots=i_n= 1/n$, the rectangles are squares (or essentially
    balls) and this makes a profound difference when investigating the
    `size' of $\bad(i_1, \ldots,i_n)$ -- it makes life significantly
    easier! }
\end{rem}

Perron \cite{per} in 1921 observed that
$ (x,y) \in \bad(\frac12,\frac12) $ whenever $x$ and $y$ are linearly
independent numbers in a cubic field; e.g
$x=\cos \frac{2\pi}{7}, y= \cos \frac{4\pi}{7}$.  Thus, certainly
$\bad(\frac12,\frac12)$ is not the empty set. It was shown by
Davenport in 1954 that $\Bad(\frac12,\frac12)$ is uncountable and
later in \cite{dav} he gave a simple and more illuminating proof of
this fact.  Furthermore, the ideas in his 1964 paper show that
$\bad(i_1, \ldots,i_n)$ is uncountable.  In 1966, Schmidt
\cite{Schmidt-1966} showed that in the symmetric case the
corresponding set $\bad(\frac1n, \ldots,\frac1n)$ is of full Hausdorff
dimension. In fact, Schmidt proved the significantly stronger
statement that the symmetric set is winning in the sense of his now
famous $(\alpha,\beta)$-games (see \S\ref{sgrt} below). Almost forty
years later it was proved in \cite{Pollington-Velani-02:MR1911218}
that
 $$\dim \bad(i_1, \ldots,i_n)=n  \, . $$

 Now let us return to the symmetric case of Theorem
 \ref{nthm:7.4}.  It implies that every point
 $\vx=(x_1, \ldots,x_n) \in \mathbb{R}^n$ can be approximated by
 rational points $(p_1/q, \ldots p_n/q)$ with rate of approximation
 given by $q^{-{(1+\frac1n})}$. The above discussion shows that this
 rate of approximation cannot in general be improved by an arbitrary
 constant---$ \bad(\frac1n, \ldots,\frac1n) $ is non-empty. However,
 if we exclude a set of real numbers of measure zero, then from a
 measure theoretic point of view the rate of approximation can be
 improved, just as in the one-dimensional setup.

 \subsection{Higher dimensional Khintchine}

 Let $\I^n:=[0,1)^n$ denote the unit cube in $\R^n$ and for
 $ \vx=(x_1, \ldots,x_n) \in \R^n$ let
$$
\|q \vx\| := \max_{1 \le i \le n} \|q x_i\| \, .
$$
Given $\psi:\N\to \Rp$, let
$$
W(n, \psi): = \{\vx\in \I^n \colon \|q \vx\|<\psi(q) \text{ for
  infinitely many } q\in\N \} \
$$
denote the set of \emph{simultaneously $\psi$-well approximable}
points $\vx\in \I^n$.  Thus, a point $\vx\in \I^n$ is $\psi$-well
approximable if there exist infinitely many rational points
$$ \Big( \frac{p_1}{q}, \ldots, \frac{p_n}{q} \Big) $$
with $q >0$, such that the inequalities
$$
\Big|x_i - \frac{p_i}{q} \Big| \, < \, \frac{\psi(q)}{q} \,
$$
are simultaneously satisfied for $ 1 \le i \le n$.  For the same
reason as in the $n=1$ case there is no loss of generality in
restricting our attention to the unit cube.  In the case
$ \psi : q \to q^{-\tau} $ with $\tau > 0 $, we write $ W(n, \tau)$
for $ W(n, \psi)$.  The set $W(n, \tau)$ is the set of
\emph{simultaneously $\tau$-well approximable numbers}.  Note that in
view of Theorem \ref{nthm:7.4} we have that
\begin{equation} \label{simexp+} W (n,\tau) = \I^n \ \ {\rm if \ } \ \
  \tau \leq \frac1n .
\end{equation}

The following is the higher dimensional generalisation of Theorem
\ref{kg} to simultaneous approximation.  Throughout, $m_n$ will denote
$n$-dimensional Lebesgue measure.

\begin{thm}[Khintchine's Theorem in $\R^n$] \label{nkh} Let $\psi:\N\to \Rp$ be
  a monotonic function.  Then
$$ m_n(W(n,\psi)) =\left\{
\begin{array}{ll}
  0 & {\rm if} \;\;\; \sum_{q=1}^{\infty} \;   \psi^n(q)  <\infty\;
      ,\\[4ex]
  1 & {\rm if} \;\;\; \sum_{q=1}^{\infty} \;   \psi^n(q)
      =\infty \; .
\end{array}\right.$$
\end{thm}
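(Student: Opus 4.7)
Plan: Follow the same two-step template as the one-dimensional Khintchine theorem (Theorem~\ref{kg}). Set $A_q = A_q(\psi) := \{\vx \in \I^n : \|q\vx\| < \psi(q)\}$ so that $W(n,\psi) = \limsup_q A_q$. We may assume $\psi(q) < 1/2$ for all $q$, since otherwise $A_q = \I^n$ for infinitely many $q$ and $W(n,\psi) = \I^n$ trivially (which is incompatible with the convergence hypothesis). Under this assumption, $A_q$ is a disjoint union of $q^n$ cubes in $\I^n$ of sidelength $2\psi(q)/q$, so $m_n(A_q) = (2\psi(q))^n$. The convergence part is then immediate from the convergence Borel--Cantelli lemma, since $\sum_q m_n(A_q) = 2^n \sum_q \psi^n(q) < \infty$.

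For divergence, the plan is to combine the local divergence Borel--Cantelli lemma with Gallagher's zero-full law, the natural higher-dimensional analogue of Cassels' zero-full law, which forces $m_n(W(n,\psi)) \in \{0, 1\}$. It thus suffices to verify local quasi-independence on average, i.e.\ inequality \eqref{vbx1xslv} with $\mu = m_n$ and $E_q = A_q$. The decisive observation is that $A_q$ factors as a Cartesian product: since $\|q\vx\| = \max_i \|qx_i\|$, one has $A_q = (A_q^{(1)})^n$ where $A_q^{(1)} := \{x \in \I : \|qx\| < \psi(q)\}$, whence
$$A_s \cap A_t = \big(A_s^{(1)} \cap A_t^{(1)}\big)^n \qand m_n(A_s \cap A_t) = \big(m_1(A_s^{(1)} \cap A_t^{(1)})\big)^n.$$

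This reduces the estimate to a one-dimensional intersection count. For $s \ne t$, an interval around $p/s$ intersects one around $p'/t$ in one of two ways: (a) $p/s = p'/t = r/\gcd(s,t)$, giving $\gcd(s,t)$ common rationals in $\I$ each contributing an interval of length $2\min(\psi(s)/s, \psi(t)/t)$; or (b) $p/s \ne p'/t$, in which case $|p/s - p'/t| \ge 1/(st)$, forcing $t\psi(s) + s\psi(t) \ge 1$ for any intersection to occur. Summing these contributions over $s, t \le Q$ and raising to the $n$-th power, the monotonicity of $\psi$ is used to tame the resulting $\gcd(s,t)^n$-weighted sums and verify the quasi-independence condition. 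The local divergence Borel--Cantelli lemma then gives $m_n(W(n,\psi)) > 0$ on every small ball $B \subset \I^n$, and Gallagher's zero-full law upgrades this to full measure.

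Main obstacle: The delicate step is the arithmetic bookkeeping in the quasi-independence estimate. Raising the one-dimensional intersection count to the $n$-th power magnifies the $\gcd(s,t)^n$ terms from case (a) and the pair counts from case (b), and careful use of monotonicity is required to show that $\sum_{s,t \le Q} m_n\big((B \cap A_s) \cap (B \cap A_t)\big) \ll m_n(B)^{-1}\big(\sum_{q \le Q} m_n(B \cap A_q)\big)^2$ uniformly over small balls $B \subset \I^n$.
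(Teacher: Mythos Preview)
Your approach is correct in outline and is the classical route to the $n$-dimensional Khintchine theorem (essentially the argument in Sprind\v{z}uk's monograph \cite{Sprindzuk} and Harman's book \cite{HarmanMNT}). The paper itself does not give a direct proof of Theorem~\ref{nkh}. The convergence case is dismissed as a Borel--Cantelli consequence (as you do), and for divergence the paper instead invokes the ubiquity machinery of \S\ref{ubi}: Proposition~\ref{52} (the higher-dimensional analogue of Theorem~\ref{thm:slv1}, proved via Dirichlet/Minkowski) shows that rational points form a locally $m_n$-ubiquitous system with $\rho(r)\asymp r^{-(n+1)/n}$, and Theorem~\ref{UL} (or~\ref{UL2}) then delivers full measure directly --- see \S\ref{ubskj}. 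Your product-structure observation $A_s\cap A_t=(A_s^{(1)}\cap A_t^{(1)})^n$ is precisely what makes the direct quasi-independence calculation tractable in higher dimensions, and it leads to the standard $\gcd(s,t)^n$-weighted sums whose control via monotonicity you correctly identify as the crux. The ubiquity route hides this arithmetic inside a general black box; the payoff is that it yields the Hausdorff statement (Theorem~\ref{ohgod} with $m=1$) at the same stroke, whereas your direct argument is self-contained for the Lebesgue statement alone.

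Two small corrections. First, once you verify the \emph{local} condition~\eqref{vbx1xslv} on every small ball, the discussion following the Divergence Borel--Cantelli Lemma already gives $m_n(W(n,\psi))=m_n(\I^n)$ directly, since Lebesgue measure is doubling; a separate zero-full law is only needed if you stop at global quasi-independence and positive measure. Second, the relevant zero-full law here is the higher-dimensional Cassels-type law for $W(n,\psi)$, not Gallagher's (which in the paper's usage refers to the law for the coprime set $W'(\psi)$). Neither point affects the validity of your plan.
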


\begin{rem} {\rm The convergent case is a straightforward consequence
    of the Convergence Borel-Cantelli Lemma and does not require
    monotonicity.  }
\end{rem}

\begin{rem} \label{GallRemark} {\rm The divergent case is the main
    substance of the theorem.  When $n\ge2$, a consequence of a
    theorem of Gallagher \cite{Gallagher65} is that the monotonicity
    condition can be dropped.  Recall, that in view of the
    Duffin-Schaeffer counterexample (see \S\ref{dsconj}) the
    monotonicity condition is crucial when $n=1$.}
\end{rem}

\begin{rem}  {\rm Theorem \ref{nkh} implies that $$ m_n (W(n, \psi))= 1   \quad   {\rm if }  \quad  \psi(q) =  1/(q \log q)^{\frac1n}  \, . $$   Thus, from a measure theoretic point of view the `rate' of approximation given by Theorem \ref{nthm:7.4} can be improved  by  (logarithm$)^{\frac1n}$.

  }
\end{rem}

\begin{rem} {\rm Theorem \ref{nkh} implies that
    $ m_n(\bad(\frac1n, \ldots,\frac1n)) = 0$.  }
\end{rem}

\begin{rem} \label{4ell} {\rm For a generalisation of Theorem
    \ref{nkh} to Hausdorff measures---that is, the higher dimension
    analogue of Theorem \ref{fulllmhm} (Khintchine-\Jarnik{}
    Theorem))---see Theorem \ref{ohgod} with $m=1$ in \S\ref{DAKT}.
    Also, see \S\ref{ubskj}.  }
\end{rem}

In view of Remark \ref{GallRemark}, one may think that there is
nothing more to say regarding the Lebesgue theory of $\psi$-well
approximable points in $\R^n$.  After all, for $n \ge 2$ we do not
even require monotonicity in Theorem \ref{nkh}. For ease of discussion
let us restrict our attention to the plane $\R^2$ and assume that the
$n$-volume sum in Theorem \ref{nkh} diverges.  So we know that almost
all points $(x_1, x_2)$ are $\psi$-well approximable but it tells us
nothing for a given fixed $x_1$.  For example, are there any points
$(\sqrt2, x_2) \in \R^2$ that are $\psi$-well approximable?  This will
be discussed in \S\ref{KonFibers} and the more general question of
approximating points on a manifold will be the subject of
\S\ref{manifolds}.

\subsection{Multiplicative approximation: Littlewood's
  Conjecture \label{multsec}}

For any pair of real numbers $(\a, \b) \in \I^2$, there exist
infinitely many $q \in \N$ such that
$$\|q\a\| \, \|q \b\| \le q^{-1} \, . $$ This is a simple consequence
of Theorem \ref{nthm:7.4} or indeed the one-dimensional Dirichlet
theorem and the trivial fact that $\|x\| < 1 $ for any $x$.  For any
arbitrary $\epsilon > 0$, the problem of whether or not the statement
remains true by replacing the right-hand side of the inequality by
$ \epsilon \, q^{-1} $ now arises. This is precisely the content of
Littlewood's conjecture.

\vskip 10pt

\begin{thlittle} For any pair $(\a, \b) \in \I^2$,
$$
\liminf_{q \to \infty} q \, ||q\a|| \, ||q\b|| = 0 \; .
$$
\end{thlittle}

\noindent Equivalently, for any pair $(\a, \b) \in \I^2$ there exist
infinitely many rational points $(p_1/q,p_2/q)$ such that
\begin{equation*}
  \Big| \alpha - \frac{p_1}{q}  \Big|  \, \Big|   \beta - \frac{p_2}{q} \Big|  \ < \ \frac{\epsilon}{q^3}   \quad (\epsilon > 0  \ \ {\rm arbitrary})  \, .
\end{equation*}

\medskip

\noindent Thus geometrically, the conjecture states that every point
in the $(x,y)$-plane lies in infinitely many hyperbolic regions given
by $|x| \cdot |y| < \epsilon/q^3$ centred at rational points.

The analogous conjecture in the one-dimensional setting is
false---Hurwitz's theorem tells us that the set $\bad$ is
nonempty. However, in the multiplicative situation the problem is
still open.

\vspace{2ex}

We make various simple observations: \vskip 3pt

\noindent (i) {\em The conjecture is true for pairs $(\a,\b)$ when
  either $\a$ or $\b$ are not in $\Bad$}. Suppose $\b \notin \Bad$ and
consider its convergents $p_n/q_n$.  It follows from the right-hand
side of inequality \eqref{vbc1} that
$ q_n ||q_n \a || \, ||q_n \b || \leq 1/a_{n+1} $ for all $n$. Since
$\b$ is not badly approximable the partial quotients $a_i$ are
unbounded and the conjecture follows. Alternatively, by definition if
$\b \notin \Bad$, then $ \liminf_{q \to \infty} q \, ||q\b|| = 0 $ and
we are done.  See also Remark~\ref{4.2}.

\vskip 3pt

\noindent (ii) {\em The conjecture is true for pairs $(\a,\b)$ when
  either $\a$ or $\b$ lie in a set of full Lebesgue measure}. This
follows at once from Khintchine's theorem.  In fact, one has that for
all $\a$ and almost all $\b \in \I$,
\begin{equation} \label{littlelog} q \, \log q \, \|q\a\| \, \|q\b\|
  \leq 1 \quad{\rm for \ infinitely \ many \ } q \in \N \,
\end{equation}
or even
$$
\liminf_{q \to \infty} q \,\log q \, ||q\a|| \, ||q\b|| = 0 \; .
$$

\vskip 4pt We now turn our attention to `deeper' results regarding
Littlewood.

\vskip 4pt

\noindent \textbf{Theorem (Cassels $\!\! \& \!\!$ Swinnerton-Dyer,
  1955).} {\em If $\a, \b$ are both cubic irrationals in the same
  cubic field then Littlewood's Conjecture is true.}

\medskip

This was subsequently strengthened by Peck \cite{Pe}.

\vskip 4pt

\noindent \textbf{Theorem (Peck, 1961).} {\em If $\a, \b$ are both
  cubic irrationals in the same cubic field then $(\a, \b)$ satisfy
  \eqref{littlelog} with the constant $1$ on the right hand side
  replaced by a positive constant dependent on $\a$ and $\b$.  }

\medskip

\noindent In view of (ii) above, when dealing with Littlewood we can
assume without loss of generality that both $\a$ and $\b$ are in
$\Bad$.  As mentioned in Chapter \ref{intro}, it is conjectured (the
Folklore Conjecture) that the only algebraic irrationals which are
badly approximable are the quadratic irrationals. Of course, if this
conjecture is true then the Cassels \& Swinnerton--Dyer result follows
immediately. On restricting our attention to just badly approximable
pairs we have the following statement \cite{PV2000}.

\vskip 4pt

\noindent \textbf{Theorem PV (2000).} {\em Given $\a \in \bad$ we have
  that }
$$
\dim \big( \, \{ \b \in \bad: (\a,\b) \ {\rm satisfy \ }
\eqref{littlelog} \} \, \big) = 1 \, .
$$

\medskip

Regarding, potential counterexamples to Littlewood we have the
following elegant statement \cite{EKL}.

\vskip 4pt

\noindent \textbf{Theorem EKL (2006).}
$ \quad \dim \big( \{ (\a,\b) \in \I^2: \displaystyle{\liminf_{q \to
    \infty}} \ q \, ||q\a|| \, ||q\b|| > 0 \} \big) = 0 .  $

Now let us turn our attention to non-trivial, purely metrical
statements regarding Littlewood.  The following result due to
Gallagher \cite{Ga} is the analogue of Khintchine's simultaneous
approximation theorem (Theorem \ref{nkh}) within the multiplicative
setup.  Given $\psi:\N\to \Rp$ let
\begin{equation} \label{multdef} W^{\times}(n,\psi): = \{ \vx \in {\rm
    I}^n \colon \| q x_1 \| \, \ldots \, \| q x_n \| <\psi(q) \text{
    for infinitely many } q\in\N \} \
\end{equation}
denote the set of multiplicative $\psi$-well approximable points
$\vx\in \I^n$.

\begin{thm}[Gallagher, 1962]\label{gallmult}
  Let $\psi:\N\to \Rp$ be a monotonic function.  Then
$$ m_n(W^{\times}(n,\psi)) =\left\{
\begin{array}{ll}
  0 & {\rm if} \;\;\; \sum_{q=1}^{\infty} \;   \psi(q) \log^{n-1} q  <\infty\;
      ,\\[4ex]
  1 & {\rm if} \;\;\; \sum_{q=1}^{\infty} \;    \psi(q) \log^{n-1} q
      =\infty \; .
\end{array}\right.$$
\end{thm}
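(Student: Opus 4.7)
The plan is to mirror the proof of the simultaneous Khintchine theorem (Theorem~\ref{nkh}): realize $W^\times(n,\psi)$ as a $\limsup$ set and apply the two Borel--Cantelli lemmas, with the key new ingredient being the correct volume estimate for ``hyperbolic'' cells. Specifically, write
$$
W^\times(n,\psi)=\limsup_{q\to\infty}A_q,\qquad A_q:=\{\vx\in\I^n:\|qx_1\|\cdots\|qx_n\|<\psi(q)\}.
$$
Because Lebesgue measure on $\I^n$ is invariant under $\vx\mapsto (qx_1,\dots,qx_n)\bmod 1$, a change of variables gives $m_n(A_q)=\operatorname{Vol}\{\vy\in[0,1)^n:\|y_1\|\cdots\|y_n\|<\psi(q)\}$. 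A direct induction on $n$ (the Fubini slice in the $y_n$-direction breaks into $\{y_n:\|y_n\|<c\}$ and its complement) yields the standard volume of a hyperbolic cross,
$$
m_n(A_q)\;\asymp\; \psi(q)\big(\log(e/\psi(q))\big)^{n-1}
$$
for $\psi(q)\le1$. The extra factor $\log^{n-1}$ compared with simultaneous approximation is exactly the source of the $\log^{n-1}q$ in the theorem.

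For the convergence case, I would first reduce to the situation $\psi(q)\ge q^{-(n+2)}$ by replacing $\psi$ with $\max(\psi,q^{-(n+2)})$, which only enlarges $W^\times(n,\psi)$ and adds a convergent tail $\sum q^{-(n+2)}\log^{n-1}q$ to the hypothesis. Under this reduction $\log(1/\psi(q))\ll\log q$, so $m_n(A_q)\ll\psi(q)\log^{n-1}q$ and the convergence Borel--Cantelli lemma gives $m_n(W^\times(n,\psi))=0$ directly. This step uses no independence and, as in Khintchine, does not really need monotonicity.

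The divergence case is the main substance, and I expect it to be the hard part. The strategy is: (i) invoke a zero--full law (Gallagher's dichotomy) to reduce to proving $m_n(W^\times(n,\psi))>0$, and then (ii) apply the divergence Borel--Cantelli lemma, for which one has to verify the quasi-independence estimate
$$
\sum_{s,t=1}^{Q}m_n(A_s\cap A_t)\;\ll\;\Big(\sum_{s=1}^{Q}m_n(A_s)\Big)^{2}.
$$
Unlike the ball case of Khintchine, the sets $A_q$ are unions of hyperbolic bands around the rational points $(p_1/q,\dots,p_n/q)$, and near pairs of rationals $p_i/s$ that are abnormally close to $p'_i/t$ these bands can align and correlate. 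My preferred route is an inductive one: integrate the last coordinate $x_n$ out via the one-dimensional Khintchine theorem, which contributes the expected factor $\int_0^1\min(1,\psi(q)/\|qx_n\|\cdot\prod_{i<n}\|qx_i\|^{-1})dx_n\asymp \psi(q)\log q\cdot\prod_{i<n}\|qx_i\|^{-1}$ after localizing, and then feed the remaining $n{-}1$ coordinates into the inductive hypothesis with approximating function $\psi(q)\log q$. The genuine obstacle is that the function produced by integrating $1/\|qx_n\|$ is not monotonic, so a direct appeal to the inductive Khintchine is illegal; one must therefore localize $q$ to dyadic blocks $q\in[2^k,2^{k+1})$, truncate the small-denominator tails via Minkowski-style counting of close rationals, and reassemble the blocks, making sure the monotonicity of $\psi$ is used to replace the dyadic sums $2^k\psi(2^k)k^{n-1}$ by the original $\sum\psi(q)\log^{n-1}q$.
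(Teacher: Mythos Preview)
The paper does not contain a proof of Theorem~\ref{gallmult}; it is stated with attribution to Gallagher and the citation \cite{Ga}, and then used as a black box. So there is nothing to compare your argument against within this text.

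On the merits of your sketch: the convergence half is sound. The volume asymptotic $m_n(A_q)\asymp\psi(q)\big(\log(e/\psi(q))\big)^{n-1}$ for the hyperbolic cross is correct, and your truncation $\psi(q)\ge q^{-(n+2)}$ legitimately converts this into $m_n(A_q)\ll\psi(q)\log^{n-1}q$, after which the convergence Borel--Cantelli lemma finishes the job. This is indeed the standard argument, and it is exactly what underlies Remark~\ref{nonmonremark} in the paper (which notes that without monotonicity one needs $\sum\psi(q)|\log\psi(q)|^{n-1}<\infty$ instead).

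For divergence, your plan is in the right spirit but the write-up conflates two different strategies. You first announce a quasi-independence attack on the hyperbolic sets $A_q$ directly, then pivot to an induction on $n$ by slicing out one coordinate. Gallagher's actual route in \cite{Ga} is the latter: he works fibrewise and reduces the $n$-dimensional statement to the $(n-1)$-dimensional one, with the extra $\log$ factor coming from integrating $\min(1,\psi(q)/\|qx_n\|)$. You have correctly identified the genuine obstacle---the induced approximating function in the remaining coordinates is not monotonic---and your proposed fix (dyadic grouping plus control of close rationals) is the right kind of move, but as written it is a heuristic rather than a proof. The details of making that reduction rigorous are the entire content of Gallagher's paper, and your sketch does not yet supply them; in particular, ``Minkowski-style counting of close rationals'' needs to be made precise enough to show that the bad fibres where $\|qx_n\|$ is too small contribute negligibly. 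If you intend to reconstruct the argument, you should either follow Gallagher directly or, alternatively, cast the problem into the ubiquity framework of \S\ref{ubi}, where the divergence Borel--Cantelli verification has been systematised.
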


\vspace*{2ex}

\begin{rem}\label{nonmonremark} {\rm In the case of convergence, we
    can remove the condition that $\psi$ is monotonic if we replace
    the above convergence condition by
    $\sum \psi(q) \, |\log \psi(q)|^{n-1} <\infty \, $; see \cite{BHV01}
    for more details. }
\end{rem}

\vspace*{1ex}

An immediate consequence of Gallagher's Theorem is that almost all
$(\alpha,\beta) $ beat Littlewood's Conjecture by `log squared';
equivalently, almost surely Littlewood's Conjecture is true with a
`log squared' factor to spare.

\begin{cor}\label{corgall}
  For almost all $ (\a,\b) \in \R^2$
  \begin{equation} \label{littlelog2} \liminf_{q \to \infty} q
    \,\log^2 q \, ||q\a|| \, ||q\b|| = 0 \; .
  \end{equation}
\end{cor}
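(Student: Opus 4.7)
The plan is to derive the corollary from Gallagher's theorem (Theorem \ref{gallmult}) with $n=2$ applied to a carefully chosen family of approximating functions, and then take a countable intersection.

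Concretely, for each $c > 0$ I would set
\[
\psi_c(q) \;=\; \frac{c}{q\,\log^2 q} \qquad (q\ge 2),
\]
and $\psi_c(1)$ defined arbitrarily (this does not affect the limsup set). This function is monotonically decreasing for $q \ge 3$, so Gallagher's theorem applies. The key computation is
\[
\sum_{q=2}^{\infty} \psi_c(q)\,\log q \;=\; c\sum_{q=2}^{\infty} \frac{1}{q\,\log q} \;=\; \infty,
\]
by the standard integral test. Hence by the divergence part of Theorem \ref{gallmult},
\[
m_2\bigl(W^{\times}(2,\psi_c)\bigr) \;=\; 1 \qquad \text{for every } c>0.
\]
By the definition of $W^{\times}(2,\psi_c)$ in \eqref{multdef}, membership in this set means precisely that
\[
q\,\log^2 q\,\|q\alpha\|\,\|q\beta\| \;<\; c \qquad \text{for infinitely many } q\in\N.
\]

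To turn the ``for infinitely many'' statement into the required $\liminf = 0$, I would intersect the full-measure sets over a countable sequence $c = 1/k$, $k\in\N$. Since
\[
\mathcal{F} \;:=\; \bigcap_{k=1}^{\infty} W^{\times}(2,\psi_{1/k})
\]
is a countable intersection of full-measure sets, $m_2(\mathcal{F}) = 1$. For any $(\alpha,\beta)\in\mathcal{F}$ and every $k\in\N$ there exist infinitely many $q$ with $q\log^2 q\,\|q\alpha\|\,\|q\beta\| < 1/k$, which forces $\liminf_{q\to\infty} q\log^2 q\,\|q\alpha\|\,\|q\beta\| = 0$. Since $\mathcal{F}$ has full Lebesgue measure in $\R^2$ (by periodicity, the statement on $\I^2$ extends at once to $\R^2$), this proves \eqref{littlelog2} for almost all $(\alpha,\beta)\in\R^2$.

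There is no real obstacle here once Gallagher's theorem is in hand: the only moving parts are verifying the divergence of $\sum 1/(q\log q)$ and the bookkeeping of the countable intersection. The mild point to check is monotonicity of $\psi_c$, which is automatic for large $q$ and irrelevant for the finitely many small $q$.
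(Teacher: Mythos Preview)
Your proof is correct and is exactly the intended argument: the paper simply states the corollary as ``an immediate consequence of Gallagher's Theorem'' without giving details, and your choice $\psi_c(q)=c/(q\log^2 q)$ together with the divergence of $\sum 1/(q\log q)$ and the countable-intersection step is precisely how one fills this in.
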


Recall, that this is beyond the scope of what Khintchine's theorem can
tell us; namely that
\begin{equation}\label{log}
  \liminf_{q \to \infty} q \,\log  q \, ||q\a|| \, ||q\b|| = 0  \quad \forall   \  \alpha \in \R  \ \quad  \mbox{and}   \quad \  \mbox{for almost all } \beta \in \R  \, .
\end{equation}

\noindent However the extra $\log$ factor in the corollary comes at a
cost of having to sacrifice a set of measure zero on the $\alpha$
side. As a consequence, unlike with (\ref{log}) which is valid for any
$\alpha$, we are unable to claim that the stronger `$\log$ squared'
statement (\ref{littlelog2}) is true for say when $\alpha = \sqrt2$.
Obviously, the role of $\alpha$ and $\beta$ in (\ref{log}) can be
reversed. This raises the natural question of whether
(\ref{littlelog2}) holds for every $\alpha$.  If true, it would mean
that for any $\alpha$ we still beat Littlewood's Conjecture by `log
squared' for almost all $\beta$.

\subsubsection{Gallagher on fibers \label{Gonfibres} }

The following result is established in \cite{bhv}.

\begin{thm}\label{t8}
  Let $\alpha \in \I$ and $\psi:\N\to\Rp$ be a monotonic function such
  that
  \begin{equation}\label{yy}
    \sum_{q=1}^{\infty} \,  \psi (q)\,\log q   \, = \, \infty
  \end{equation}
  and such that
  \begin{equation}\label{yy++}
    \exists\ \delta>0\qquad \liminf_{n\to\infty}q_n^{3-\delta}\psi(q_n)\ge 1\, ,
  \end{equation}
  where $q_n$ denotes the denominators of the convergents of $\alpha$.
  Then for almost every $\beta\in\I$, there exists infinitely many
  $q\in\N$ such that
  \begin{equation}\label{ineq}
    \|q\alpha\| \,  \|q\beta\| <\psi(q) \, .
  \end{equation}
\end{thm}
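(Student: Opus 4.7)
The plan is to realise the set
$$W_\alpha(\psi) := \{\beta \in \I : \|q\beta\| \|q\alpha\| < \psi(q) \text{ for infinitely many } q \in \N\}$$
as a $\limsup$ set of ``fattened rationals'' on the $\beta$--axis and then to apply a divergence Borel--Cantelli / Gallagher-type argument combined with a zero-full law. Assume $\alpha$ irrational so that $\|q\alpha\| > 0$ for every $q$, and for each such $q$ put
$$E_q := \{\beta \in \I : \|q\beta\| < \varphi(q)\}, \qquad \varphi(q) := \psi(q)/\|q\alpha\|.$$
Then $E_q$ is a disjoint union of $q$ intervals of common length $2\varphi(q)/q$ (when $\varphi(q) < 1/2$), so $m(E_q) = \min(1, 2\varphi(q))$, and $W_\alpha(\psi) = \limsup_{q\to\infty} E_q$.

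The first main step is to show $\sum_q m(E_q) = \infty$. The natural way to harvest the $\log$ factor from hypothesis \eqref{yy} on the $\beta$-side is to restrict attention to $q$ of the form $q = k q_n$, where $q_n$ is a convergent denominator of $\alpha$ and $1 \le k < q_{n+1}/q_n$. For such $q$ one has $\|k q_n \alpha\| = k \|q_n \alpha\|$, hence $\varphi(k q_n) = \psi(k q_n)/(k \|q_n\alpha\|)$, and summing $1/k$ over this range produces a factor of order $\log(q_{n+1}/q_n)$. Using monotonicity of $\psi$ together with the upper bound $\|q_n\alpha\| < 1/q_{n+1}$ from \eqref{vbc1} and a standard Abel summation comparison, this reproduces the full sum $\sum_q \psi(q)\log q$ up to multiplicative constants. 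In the pathological regime where $q_{n+1}/q_n$ is astronomically large so that the above estimate degrades, hypothesis \eqref{yy++} takes over: combined with $\|q_n\alpha\| < 1/q_{n+1}$ it forces $\varphi(q_n) > q_{n+1} q_n^{-3+\delta} \ge 1$ for all large $n$ in such blocks, so $m(E_{q_n}) = 1$ and the contribution of that block is already trivially full.

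The second main step is to pass from $\sum m(E_q) = \infty$ to $m(W_\alpha(\psi)) > 0$. For this I would establish quasi-independence on average of the $E_q$ and then apply the divergence Borel--Cantelli lemma. Since each $E_q$ is $1/q$-periodic, estimating the pairwise overlap $m(E_q \cap E_{q'})$ reduces to a divisor-type counting argument in the spirit of Gallagher's proof of Theorem \ref{gallmult} (with monotonicity of $\psi$ playing its usual role). This produces positive Lebesgue measure for $W_\alpha(\psi)$. Finally, to upgrade to full measure, note that $W_\alpha(\psi)$ is invariant under integer translations and, more importantly, is a tail event of a $\limsup$ of periodic sets: a Cassels/Gallagher-type zero-full law in the style of \cite{Cassels-50:MR0036787, Gallagher-61:MR0133297} then yields $m(W_\alpha(\psi)) = 1$.

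The main obstacle is the interplay between the two hypotheses in the first step: \eqref{yy} supplies the global divergence budget, while \eqref{yy++} is specifically designed to cope with indices $n$ at which $q_{n+1}$ is enormously larger than $q_n$---a behaviour that monotonicity of $\psi$ alone cannot rule out and that would otherwise annihilate the contribution of an entire block. Verifying that no block is lost and that the lower bound for $\sum_q m(E_q)$ genuinely inherits the divergence in \eqref{yy} is the technical heart of the argument; once this is done the quasi-independence and zero-full law steps are comparatively routine.
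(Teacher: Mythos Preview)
Your overall strategy --- rewrite the condition as $\|q\beta\|<\Psi_\alpha(q)$ with $\Psi_\alpha(q)=\psi(q)/\|q\alpha\|$, show $\sum_q\Psi_\alpha(q)=\infty$, and then upgrade to full measure --- is exactly the route the paper takes. The problem is in how you carry out the first step.

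Restricting to $q=kq_n$ with $1\le k<q_{n+1}/q_n$ and hoping the harmonic sum $\sum_k 1/k$ supplies the missing $\log$ does not work. Take $\alpha$ badly approximable (all partial quotients $\le M$) and $\psi(q)=1/(q\log^2 q)$. Both \eqref{yy} and \eqref{yy++} hold. But now $q_{n+1}/q_n<M+1$, so your $k$ ranges over a bounded set and the harmonic sum is $O(1)$. Since $\|q_n\alpha\|\asymp 1/q_n$ one gets $\varphi(kq_n)\ll q_n\psi(q_n)$, and with $q_n$ growing geometrically,
\[
\sum_n\sum_{k=1}^{K_n} m(E_{kq_n}) \ \ll \ \sum_n q_n\psi(q_n) \ \asymp \ \sum_n \frac{1}{\log^2 q_n} \ \asymp \ \sum_n \frac{1}{n^2} \ < \ \infty\,.
\]
So your restricted sum converges and the argument collapses. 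The $\log$ factor you are after does not live in the harmonic sum over $k$; it lives in the distribution of $\|q\alpha\|$ over \emph{all} $q\le Q$.

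The paper's route is to invoke the estimate \eqref{rubish3},
\[
\sum_{q=1}^{Q}\frac{1}{\|q\alpha\|} \ \ge \ 2\,Q\log Q\,,
\]
valid for every irrational $\alpha$, and then Abel--sum against the monotone $\psi$ over the full range of $q$ to obtain $\sum_q\Psi_\alpha(q)\gg\sum_q\psi(q)\log q=\infty$ directly. This is the missing ingredient. Condition \eqref{yy++} is not used to rescue bad blocks in the divergence sum; it enters later, in the part the paper defers to \cite{bhv}, where one has to cope with the non-monotonicity of $\Psi_\alpha$ (the obstruction to simply quoting Khintchine). On that latter point your quasi-independence sketch and the paper's deferral are at a comparable level of detail, so the real gap is the divergence step.
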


\begin{rem} {\rm Condition \eqref{yy++} is not particularly
    restrictive. It holds for all $\alpha$ with Diophantine exponent $\tau(\alpha)<3$.
    By definition,
    $$
    \tau(x)=\sup\{\tau>0:\|q\alpha\|<q^{-\tau}\quad\text{for infinitely many }q\in\N\}\,.
    $$
    Recall
    that by the \Jarnik-Besicovitch theorem (Theorem~\ref{jb}), the
    complement is of relatively small dimension; namely
    $ \dim\{\alpha\in\R:\tau(\alpha)\ge 3\}=\frac{1}{2}\,.  $ }
\end{rem}

\noindent The theorem can be equivalently formulated as
follows. Working within the $(x,y)$-plane, let $ {\rm L}_{x} $ denote
the line parallel to the $ y$-axis passing through the point
$(x,0)$. Then, given $\alpha \in \I$, Theorem \ref{t8} simply states
that
$$
m_1(W^{\times}(2,\psi) \, \cap \, {\rm L}_{\a} ) = 1 \quad {\rm if \ \
  \ \psi \ statisfies \ \eqref{yy}\text{ and }\eqref{yy++}. }
$$

An immediate consequence of the theorem is that (\ref{littlelog2})
holds for every $\alpha$ as desired.

\begin{cor}\label{corgall2}
  For every $\alpha\in \R$ one has that
$$
\liminf_{q \to \infty} q \,\log^2 q \, ||q\a|| \, ||q\b|| = 0 \quad \
\mbox{for almost all } \beta \in \R \, .
$$
\end{cor}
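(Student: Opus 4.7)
The plan is to deduce Corollary \ref{corgall2} from Theorem \ref{t8} by applying it to a sequence of approximating functions shrinking to zero. Fix $\alpha \in \R$. Since $\|q\alpha\|$ is invariant under integer shifts, I may assume $\alpha \in \I$. If $\alpha = p/q_0$ is rational, then $\|q\alpha\|=0$ whenever $q_0 \mid q$, so the liminf is trivially $0$ for every $\beta$; so assume $\alpha$ is irrational, and let $(q_n)_{n\ge1}$ denote the denominators of its convergents.

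For each $k\in\N$, I will use the approximating function
\begin{equation*}
  \psi_k(q)  \;=\;  \frac{1}{k\, q \, \log^2 q} \qquad (q\ge 3),
\end{equation*}
suitably extended to $q \in \{1,2\}$ so that $\psi_k$ is positive. First, $\psi_k$ is monotonically decreasing for large $q$. Second, by Cauchy condensation (or direct comparison to $\int dq/(q\log q)$),
\begin{equation*}
  \sum_{q=1}^{\infty} \psi_k(q)\, \log q \;=\; \frac{1}{k}\sum_{q\ge 3}\frac{1}{q\,\log q} \;=\; \infty,
\end{equation*}
so condition \eqref{yy} of Theorem \ref{t8} is satisfied. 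Third, taking $\delta=1$,
\begin{equation*}
  q_n^{\,3-\delta}\,\psi_k(q_n) \;=\; \frac{q_n}{k\,\log^2 q_n} \;\longrightarrow\; \infty \qquad(n\to\infty),
\end{equation*}
since $q_n \to \infty$ for every irrational $\alpha$. Hence \eqref{yy++} holds automatically, for every $\alpha$ (this is the point at which the choice of $\psi_k$ on the order of $1/(q\log^2 q)$, rather than something like $1/q^3$, matters).

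With all three hypotheses verified, Theorem \ref{t8} provides, for each $k$, a full-measure set $E_k(\alpha)\subset \I$ such that for every $\beta\in E_k(\alpha)$ there are infinitely many $q$ with
\begin{equation*}
  \|q\alpha\|\,\|q\beta\| \;<\; \psi_k(q) \;=\; \frac{1}{k\,q\log^2 q},
  \qquad\text{i.e.}\qquad q\,\log^2 q\,\|q\alpha\|\,\|q\beta\| \;<\; \frac{1}{k}.
\end{equation*}
The countable intersection $E(\alpha):=\bigcap_{k\in\N} E_k(\alpha)$ still has full one-dimensional Lebesgue measure, and for every $\beta\in E(\alpha)$,
\begin{equation*}
  \liminf_{q\to\infty} q\,\log^2 q\,\|q\alpha\|\,\|q\beta\| \;\le\; \tfrac{1}{k} \qquad\forall\,k\in\N,
\end{equation*}
forcing the liminf to be $0$. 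Extending from $\beta\in\I$ to $\beta\in\R$ is immediate by periodicity of $\|q\,\cdot\,\|$.

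There is no real obstacle: Theorem \ref{t8} is used as a black box, and the entire substance of the argument is choosing $\psi_k$ on the correct side of both the divergence condition \eqref{yy} and the lower decay bound \eqref{yy++}. The logarithmic scale $1/(q\log^2 q)$ is essentially forced: it is the slowest decay for which the logarithmically weighted sum in \eqref{yy} still diverges (so the hypothesis of Theorem \ref{t8} is met), while it decays fast enough that $\psi_k(q) \to 0$ delivers liminf $=0$ after the countable-intersection step. Any harder analytic work is already contained in the proof of Theorem \ref{t8}.
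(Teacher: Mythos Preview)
Your proof is correct and follows exactly the route the paper intends: the paper simply declares Corollary~\ref{corgall2} to be ``an immediate consequence'' of Theorem~\ref{t8}, and you have supplied the natural details---choosing $\psi_k(q)=1/(k\,q\log^2 q)$, checking \eqref{yy} and \eqref{yy++}, and intersecting over $k$. One cosmetic point: Theorem~\ref{t8} asks for $\psi$ monotonic, so when you ``suitably extend'' $\psi_k$ to $q\in\{1,2\}$ you should say you do so preserving monotonicity (e.g.\ set $\psi_k(1)\ge\psi_k(2)$), not merely positivity; this is trivial but worth stating.
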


\noindent \textsc{Pseudo sketch proof of Theorem \ref{t8}. }  Given
$\alpha$ and $\psi$, rewrite \eqref{ineq} as follows:
\begin{equation}\label{ineqslv}
  \|q\beta\| <\Psi_{\alpha}(q) \, \quad {where}  \quad   \Psi_{\alpha}(q):= \frac{ \psi(q)} {\|q\alpha\|}   \, .
\end{equation}
We are given \eqref{yy} rather than the above divergent sum
condition. So we need to show that
\begin{equation}\label{rubish2}
  \sum_{q=1}^{\infty} \,  \psi (q)\,\log q   \, = \, \infty \, \quad  \Longrightarrow  \quad   \sum_{q=1}^{\infty} \Psi_{\alpha}(q)  \, = \, \infty \, \, .
\end{equation}
This follows (\emph{exercise}) on using partial summation together
with the following fact established in \cite{bhv}. For any irrational
$\alpha$ and $Q \ge 2 $
\begin{equation}\label{rubish3}
  \sum_{q=1}^{Q} \,  \frac{ 1} {\|q\alpha\|}  \, \ge  \,  2 \, Q \log Q \, \, .
\end{equation}
This lower bound estimate strengthens a result of Schmidt
\cite{Schmidt64} -- his result is for almost all $\alpha$ rather than
all irrationals.  Now, if $\Psi_{\alpha}(q)$ was a monotonic function
of $q$ we could have used Khintchine's Theorem, which would then imply
that
\begin{equation}\label{rubish}
  m_1(W(\Psi_\alpha) )   = 1  \quad {\rm if \ \ \ }   \sum_{q=1}^{\infty} \Psi_{\alpha}(q)  \, = \, \infty \, \, .
\end{equation}
Unfortunately, $\Psi_{\alpha}$ is not monotonic. Nevertheless, the
argument given in \cite{bhv} overcomes this difficulty.
\hfill\raisebox{-1ex}{$\boxtimes$}

\medskip

It is worth mentioning that Corollary \ref{corgall2} together with
Peck's theorem and Theorem PV adds weight to the argument made in
\cite{Badziahin-Velani-MAD} for the following strengthening of
Littlewood's Conjecture.

\begin{conjecture} For any pair $(\a, \b) \in \I^2$,
$$
\liminf_{q \to \infty} q \,\log q \, ||q\a|| \, ||q\b|| < + \infty \;
.
$$
\end{conjecture}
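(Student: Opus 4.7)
The statement is the Badziahin--Velani conjecture, which I would approach as an open problem and sketch a plausible line of attack rather than claim a complete proof. The conjecture is a universal ($\forall\alpha,\beta$) counterpart to Corollary~\ref{corgall2}, which yields the stronger bound $\liminf q\log^2 q\,\|q\alpha\|\|q\beta\|=0$ for Lebesgue-typical $\beta$ and every $\alpha$. Losing one logarithm in exchange for dropping the null exceptional set in $\beta$ is the natural cost.

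The plan is to split the argument according to observation (i) in \S\ref{multsec}. First I would dispose of the case where at least one of $\alpha,\beta$ is not badly approximable. If $\beta\notin\Bad$, then along the convergents $p_n/q_n$ of $\beta$ we have $q_n\|q_n\beta\|\le 1/a_{n+1}(\beta)$ with $a_{n+1}(\beta)$ unbounded, and it suffices to extract a subsequence $n_k$ along which additionally $\|q_{n_k}\alpha\|\ll 1/\log q_{n_k}$. This reduces to a pigeon-holing problem for the sequence $\{q_n(\beta)\alpha\}\bmod 1$ inside the shrinking target $(-1/\log q_n,1/\log q_n)$, which one attacks using the three-distance theorem applied to the rotation by $\alpha$; the case $\alpha\in\Bad$ is the delicate one, because then $\|q\alpha\|\gtrsim 1/q$ forces any small value of $\|q_n(\beta)\alpha\|$ to come from genuine resonance between the convergents of $\alpha$ and those of $\beta$.

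The substantive case is $\alpha,\beta\in\Bad$ with partial quotients bounded by some common $M$. Fixing $\alpha$ and looking at $q=q_n(\alpha)$, we have $q_n\|q_n\alpha\|\asymp_M 1$, and the conjecture reduces to producing infinitely many $n$ with
\[
\|q_n(\alpha)\beta\|\;\ll_{\alpha,\beta}\;\frac{1}{\log q_n}.
\]
I would enlarge the search set from pure convergents to Ostrowski-type combinations $q=c q_n(\alpha)+r q_{n-1}(\alpha)$ with $c,r$ of size up to $\log q_n$, noting that $q\asymp q_n$ and $q\|q\alpha\|=O(1)$ remains preserved; this gives $\asymp\log^2 q_n$ candidate denominators in each dyadic block. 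For each block one then hopes, via the three-distance theorem applied to the rotation by $\beta$, that one of these candidates lands in a $(1/\log q_n)$-neighbourhood of an integer. A second-moment or quasi-independence-on-average estimate in the spirit of \eqref{vbx1xslv} would, by the Divergence Borel--Cantelli lemma, deliver the required infinitude for $\mu$-almost every $\beta$.

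The hard part — and the reason the statement remains a conjecture — is precisely the upgrade from ``almost every $\beta$'' to ``every $\beta$''. There is no known mechanism that forbids a single pair $(\alpha,\beta)\in\Bad^2$ from being jointly arranged so that $\|q_n(\alpha)\beta\|\gg 1/\log q_n$ for all $n$, and the Ostrowski expansions of $\beta$ in bases $\alpha$ could in principle conspire against every enlarged search set one writes down. Any successful proof will therefore almost certainly need either (a) a homogeneous-dynamics input ruling out such exceptional $(\alpha,\beta)$, in the spirit of Theorem EKL but now quantitative with a $\log$-scale gain, or (b) a constructive joint-continued-fraction argument producing, from the partial quotients of both $\alpha$ and $\beta$, an explicit infinite sequence of $q$'s witnessing the bound. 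Peck's theorem and Theorem~PV provide evidence that rich families of good $q$'s do exist in structured cases, but extending this to all $(\alpha,\beta)$ is the genuine obstruction.
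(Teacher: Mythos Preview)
You have correctly identified that this is an open conjecture, not a proven theorem, and the paper treats it exactly the same way: it states the conjecture without proof, merely remarking that Corollary~\ref{corgall2}, Peck's theorem, and Theorem~PV ``add weight to the argument'' for it. Your discussion of plausible lines of attack and of the essential obstruction---upgrading from an almost-every-$\beta$ statement to an every-$\beta$ statement---is appropriate and goes somewhat beyond what the paper itself offers, since the paper does not sketch any strategy at all.
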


\noindent Furthermore, it is argued in \cite{Badziahin-Velani-MAD} that the
natural analogue of $\bad$ within the multiplicative setup is the set:
$$
\mad:=\{(\alpha,\beta)\in \R^2\;:\; \liminf_{q\to\infty} q \cdot \log
q \cdot ||q\alpha||\cdot ||q\beta||>0\}.
$$

\medskip

Regarding the convergence counterpart to Theorem~\ref{t8}, the following statement is established in \cite{bhv}.

\begin{thm}\label{t12}
Let $\alpha \in \R$ be any irrational real number and let $\psi:\N\to\Rp$ be such that
$$
\sum_{q=1}^{\infty} \,  \psi (q)\,\log q   \, < \, \infty
$$
Furthermore,  assume either of the following two conditions\,{\rm:}\\[-5ex]
\begin{itemize}
  \item[{\rm(i)}] $n\mapsto n\psi(n)$ is decreasing and
\begin{equation}\label{cond}
\sum_{n=1}^N\frac{1}{n\|n\alpha\|} \ll(\log N)^2\qquad\text{for all  $N \ge 2 $}\,;
\end{equation}
  \item[{\rm(ii)}] $n\mapsto \psi(n)$ is decreasing and
  \begin{equation}\label{cond+}
\sum_{n=1}^N\frac{1}{\|n\alpha\|} \ll N\log N \qquad\text{for all  $N \ge 2 $}\,.
\end{equation}
\end{itemize}
Then for almost all $\beta\in\R$, there exist only finitely many $n\in\N$ such that
\begin{equation}\label{ineq+}
\|n\alpha\| \,  \|n\beta\| <\psi(n) \, .
\end{equation}
\end{thm}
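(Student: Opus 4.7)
The plan is to apply the Convergence Borel-Cantelli Lemma to the sets
$$E_n = \big\{\beta\in\I : \|n\beta\| < \Psi_\alpha(n)\big\}, \quad \Psi_\alpha(n) := \frac{\psi(n)}{\|n\alpha\|} \qquad (n\in\NN).$$
The set of $\beta\in\I$ for which \eqref{ineq+} holds for infinitely many $n$ is precisely $\limsup_{n\to\infty}E_n$, and since $E_n$ is a union of at most $n+1$ intervals of radii $\Psi_\alpha(n)/n$ we have $m_1(E_n)\le 2\Psi_\alpha(n)$. Thus the whole theorem reduces to proving
$$\sum_{n=1}^\infty \Psi_\alpha(n) \;=\; \sum_{n=1}^\infty \frac{\psi(n)}{\|n\alpha\|} \;<\; \infty. \qquad (\star)$$
This is the reverse of the implication \eqref{rubish2} used in the divergent case, and the hypotheses (i) or (ii) are precisely what is needed to carry the convergence of $\sum\psi(n)\log n$ across to $(\star)$.

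For case (i), write $\Psi_\alpha(n) = b_n\,a_n$ with $b_n := n\psi(n)$ and $a_n := 1/(n\|n\alpha\|)$. Then $b_n$ is decreasing by hypothesis and $S_N:=\sum_{n\le N}a_n\ll(\log N)^2$ by \eqref{cond}. Abel summation yields
$$\sum_{n=1}^N b_n a_n \;=\; \sum_{n=1}^{N-1}(b_n-b_{n+1})\,S_n \;+\; b_N S_N.$$
A second Abel summation (using $(\log n)^2-(\log(n-1))^2\ll \log n/n$) bounds the first term by
$$C\sum_{n=1}^\infty (b_n-b_{n+1})(\log n)^2 \;\le\; C'\sum_{n=1}^\infty b_n\,\frac{\log n}{n} \;=\; C'\sum_{n=1}^\infty \psi(n)\log n \;<\;\infty.$$
For the boundary term, $b_N S_N \ll N\psi(N)(\log N)^2$; this tends to $0$ because a decreasing $b_n$ with $\sum b_n(\log n)/n<\infty$ satisfies $b_N(\log N)^2\to 0$, verified by the tail estimate
$$\sum_{n=N}^{N^2} b_n\,\frac{\log n}{n}\;\ge\; b_{N^2}\int_N^{N^2}\frac{\log x}{x}\,dx\;=\;\tfrac{3}{2}\,b_{N^2}(\log N)^2$$
and monotonicity to interpolate between $N^2$ and $(N+1)^2$. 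Hence $(\star)$ holds.

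Case (ii) is handled identically with the roles of the two factors reversed: set $T_N=\sum_{n\le N}1/\|n\alpha\|\ll N\log N$ (from \eqref{cond+}) and apply Abel summation to $\sum\psi(n)\cdot(1/\|n\alpha\|)$, using that $\psi$ is decreasing. The main sum becomes bounded by $\sum(\psi(n)-\psi(n+1))n\log n\ll \sum\psi(n)\log n$, and the boundary term $\psi(N)N\log N\to 0$ by the same tail comparison argument applied to the decreasing sequence $\psi$. In both cases Borel-Cantelli then yields $m_1(\limsup E_n)=0$, completing the proof.

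The main obstacle is purely technical: the two Abel-summation steps and, in particular, the verification that the boundary terms $b_N S_N$ and $\psi(N)T_N$ vanish in the limit. This requires the small lemma that $\sum c_n\log n/n<\infty$ together with monotonicity of $c_n$ forces $c_N(\log N)^2\to 0$ (for (i)), and $\sum c_n\log n<\infty$ with $c_n$ decreasing forces $c_N N\log N\to 0$ (for (ii)). Once these decay estimates are in place, the rest is a bookkeeping exercise in partial summation, and the convergent Borel-Cantelli Lemma finishes the job.
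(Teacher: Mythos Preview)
Your argument is correct, and it is precisely the natural convergence counterpart to the partial-summation scheme the paper sketches for the divergent Theorem~\ref{t8} (see \eqref{rubish2}); the paper itself does not give a proof of Theorem~\ref{t12} but defers to \cite{bhv}, where the same Borel--Cantelli/Abel-summation strategy is used. Your treatment of the boundary terms via the tail estimates is the one point requiring care, and you handle it correctly.
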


The behaviour of the sums \eqref{cond} and \eqref{cond+} is explicitly studied in term of the continued fraction expansion of $\alpha$. In particular, it is shown in \cite{bhv} that \eqref{cond} holds for almost all real numbers  $\alpha$ while \eqref{cond+} fails for almost all real numbers $\alpha$. An intriguing question formulated in \cite{bhv} concerns the behaviour of the above sums for algebraic $\alpha$ of degree $\ge3$. In particular, it is conjectured that \eqref{cond} is true for  any real algebraic number  $\alpha$ of degree $\ge3$. As is shown in \cite{bhv},  this is equivalent to the following  statement.

\begin{conjecture}\label{conj2}
For any algebraic $\alpha=[a_0;a_1,a_2,\dots]  \in \R\setminus\Q$, we have that
$$
\sum_{k=1}^n a_k \ll  n^2\,.
$$
\end{conjecture}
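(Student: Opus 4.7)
}

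The plan is to attack the conjecture through Roth's theorem (and its refinements), since the partial quotients are controlled directly by the rate of rational approximation. Recall from \eqref{vbcslv1} that
\begin{equation*}
\tfrac{1}{a_{n+1}+2}\cdot q_n^{-2} \ \le\ \Big|\alpha-\tfrac{p_n}{q_n}\Big| \ <\ \tfrac{1}{a_{n+1}}\cdot q_n^{-2},
\end{equation*}
so that $a_{n+1}\asymp 1/(q_n\|q_n\alpha\|)$. Since $\alpha$ is algebraic of degree $\ge 2$, Roth's theorem supplies, for every $\varepsilon>0$, a constant $c_\varepsilon>0$ with $\|q\alpha\|\ge c_\varepsilon q^{-1-\varepsilon}$ for all $q\ge 1$. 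Consequently
\begin{equation*}
a_{n+1}\ \ll_\varepsilon\ q_n^{\varepsilon}.
\end{equation*}
This is the starting point, and Step~1 of the proposal.

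Step~2 is to convert this pointwise bound on $a_{n+1}$ into a bound on $\sum_{k\le n}a_k$. Using the standard inequality $q_n\le \prod_{k=1}^{n}(a_k+1)$ together with $\log q_n\asymp \sum_{k=1}^{n}\log(1+a_k)$, one would try a bootstrap argument: an upper bound $\sum_{k\le n}a_k\le R(n)$ yields, via AM--GM, $q_n\le (1+R(n)/n)^n$, whence Roth gives $a_{n+1}\ll_\varepsilon (1+R(n)/n)^{n\varepsilon}$, which one hopes to insert back to improve $R(n)$ towards $Cn^2$. The desired estimate $\sum a_k\ll n^2$ would in fact force $q_n\le (Cn)^n$ and hence $a_{n+1}\ll n^{\varepsilon n}$, which is comfortably consistent with the expected polynomial growth of the partial quotients.

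The main obstacle is precisely that Roth's theorem is qualitatively of the wrong shape for this bootstrap: the factor $q_n^\varepsilon$ cannot be removed, and iterating the na\"ive recursion $q_{n+1}\le 2q_n^{1+\varepsilon}$ only yields $q_n\le C^{(1+\varepsilon)^n}$, a double-exponential bound which destroys any attempt to close the loop. To break this, one would try to replace Roth by the quantitative Schmidt subspace theorem (in the Evertse--Schlickewei--Schmidt form), which controls the \emph{number} of exceptionally good approximations rather than bounding each individually; the goal would be to show that among the first $n$ convergents, only $o(n)$ indices $k$ can have $a_{k+1}$ exceeding a polynomial threshold in $k$, and that the contribution of the exceptional indices is dominated by $n^2$. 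Making this quantitative enough to yield the conjectured $n^2$ is where every known approach currently stalls; this is what keeps Conjecture~\ref{conj2} (and the Littlewood-flavoured statements equivalent to it) out of reach, and it is where any serious attack would have to introduce a genuinely new ingredient.
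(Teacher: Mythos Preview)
The statement you are addressing is labelled and treated in the paper as a \emph{conjecture}, not a theorem: the authors explicitly present it as an open problem (equivalent, as shown in \cite{bhv}, to the condition \eqref{cond} holding for all real algebraic $\alpha$ of degree $\ge 3$), and the only supporting evidence they cite is computational (Remark following the conjecture, referencing \cite{Sky}). There is therefore no proof in the paper to compare your proposal against.

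Your write-up is in fact not a proof but an honest diagnosis of why the problem is hard, and on that level it is accurate. The link $a_{n+1}\asymp 1/(q_n\|q_n\alpha\|)$ is correct, Roth gives $a_{n+1}\ll_\varepsilon q_n^\varepsilon$, and you correctly identify that this is far too weak: iterating $q_{n+1}\ll q_n^{1+\varepsilon}$ blows up doubly exponentially, and the AM--GM bootstrap you sketch cannot close because the input from Roth is qualitative rather than quantitative in the needed sense. Your suggestion to replace Roth by quantitative subspace-type results (Evertse--Schlickewei--Schmidt) to control the \emph{number} of large partial quotients is a natural line of thought, but as you yourself concede, no one knows how to push it through to the $n^2$ bound. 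So your proposal is a reasonable survey of the obstruction, not a proof --- and since the paper offers no proof either, that is the appropriate status.
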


\medskip

\begin{rem}
Computational evidence for specific algebraic numbers does support this conjecture \cite{Sky}.
\end{rem}

\subsection{Khintchine on fibers \label{KonFibers} }

In this section we look for a strengthening of Khintchine simultaneous
theorem (Theorem \ref{nkh}) akin to the strengthening of Gallagher's
multiplicative theorem described above in \S\ref{Gonfibres}. For ease
of discussion, we begin with the case that $n=2$ and whether or not
Theorem \ref{nkh} remains true if we fix $ \alpha \in \I$.  In other
words, if $ {\rm L}_{\alpha} $ is the line parallel to the $ y$-axis
passing through the point $(\alpha,0)$ and $\psi$ is monotonic, then
is it true that
$$ m_1(W(2,\psi)  \cap {\rm L}_{\alpha} ) =\left\{
\begin{array}{ll}
  0 & {\rm if} \;\;\; \sum_{q=1}^{\infty} \;   \psi^2(q)  <\infty\;
  \\[4ex]
  1 & {\rm if} \;\;\; \sum_{q=1}^{\infty} \;   \psi^2(q)
      =\infty \;
\end{array}\right.   \ \  \textbf{????} $$
The question marks are deliberate. They emphasize that the above
statement is a question and not a fact or a claim.  Indeed, it is easy
to see that the convergent statement is false.  Simply take $\alpha $
to be rational, say, $\alpha = \frac{a}{b}$.  Then, by Dirichlet's
theorem, for any $\beta $ there exist infinitely many $ q \in \N$ such
that $\| q \beta \| < q^{-1}$ and so it follows that
$$
\| bq \beta \| < \frac{b}{q} = \frac{b^2}{bq} \quad {\rm and } \quad
\| bq \alpha \| = 0 < \frac{b^2}{bq} \, .
$$
This shows that every point on the rational vertical line
${\rm L}_{\alpha} $ is $\psi(q) = b^2q^{-1} $ -~approximable and so
$$ m_1(W(2,\psi) \cap {\rm L}_{\alpha} ) = 1 \quad {\rm but } \quad
\sum_{q=1}^{\infty} \; \psi^2(q) = \sum_{q=1}^{\infty} b^4q^{-2} <
\infty \ .
$$

Now, concerning the divergent statement, we claim it is true.

\begin{conjecture}Let $\psi:\N\to \Rp$ be a monotonic
  function and $\alpha \in \I$. Then
  \begin{equation}\label{fibconj}
    m_1(W(2,\psi)  \cap {\rm L}_{\alpha} ) =
    1    \quad  {\rm if}    \quad  \sum_{q=1}^{\infty} \;   \psi^2(q)
    =\infty   \, .
  \end{equation}
\end{conjecture}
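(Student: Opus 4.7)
The plan is to realise $W(2,\psi)\cap{\rm L}_\alpha$, viewed as a subset of $\I$ in the $\beta$-coordinate, as a $\limsup$ set and apply a divergence Borel-Cantelli argument together with a zero-full law. Set
\[
Q_\alpha:=\{q\in\N:\|q\alpha\|<\psi(q)\}\qand E_q:=\{\beta\in\I:\|q\beta\|<\psi(q)\}\,,
\]
so that $\limsup_{q\in Q_\alpha}E_q\subset W(2,\psi)\cap{\rm L}_\alpha$. The rational case $\alpha=a/b$ is immediate: $b\N\subset Q_\alpha$, and since $\psi\le 1$ and $\sum_q\psi^2(q)=\infty$ we have $\sum_{k\ge1}\psi(bk)\asymp b^{-1}\sum_q\psi(q)\ge b^{-1}\sum_q\psi^2(q)=\infty$, so Theorem \ref{kg} applied to the measure-preserving change of variable $\gamma:=b\beta$ with approximating function $k\mapsto\psi(bk)$ finishes the job. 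We therefore assume $\alpha$ is irrational from now on.

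The first substantive step is to verify the divergence of measures $\sum_{q\in Q_\alpha}m_1(E_q)\asymp\sum_{q\in Q_\alpha}\psi(q)=\infty$. Splitting dyadically and using monotonicity of $\psi$,
\[
\sum_{q\in Q_\alpha\cap[2^n,2^{n+1})}\psi(q)\;\gg\;\psi(2^{n+1})\cdot\#\{q\in[2^n,2^{n+1}):\|q\alpha\|<\psi(2^{n+1})\}\,.
\]
When $\psi(2^n)$ dominates the discrepancy of $(q\alpha)\bmod 1$ at scale $2^n$, equidistribution of the Kronecker sequence gives a count of order $2^n\psi(2^{n+1})$, so the block contributes $\gg 2^n\psi(2^{n+1})^2$ and summing recovers $\sum_q\psi^2(q)=\infty$. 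When $\psi$ falls below the discrepancy---the \vwa\ regime of $\alpha$---one exploits \eqref{vbc1}: the convergent denominators $q_n$ of $\alpha$ satisfy $\|q_n\alpha\|\le 1/q_{n+1}$, so $q_n\in Q_\alpha$ whenever $\psi(q_n)\ge 1/q_{n+1}$, and the three-distance theorem propagates each such $q_n$ to a positive proportion of nearby integers also in $Q_\alpha$. A case split on the continued-fraction growth of $\alpha$ is used to stitch these two regimes together.

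The second step is to promote the divergent sum to $m_1(\limsup_{q\in Q_\alpha}E_q)>0$ via quasi-independence on average in the sense of \eqref{vbx1x}, applied to the family $\{E_q:q\in Q_\alpha\}$. The required estimate
\[
\sum_{\substack{s,t\le Q\\ s,t\in Q_\alpha}} m_1(E_s\cap E_t)\;\ll\;\Big(\sum_{\substack{q\le Q\\ q\in Q_\alpha}}m_1(E_q)\Big)^2
\]
for infinitely many $Q$ would follow from the standard Farey overlap bound $m_1(E_s\cap E_t)\ll\gcd(s,t)^2\psi(s)\psi(t)/(st)$ combined with the classical GCD summation (as in approaches to the Duffin-Schaeffer problem), provided these survive the restriction $s,t\in Q_\alpha$. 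Once positive measure is obtained, Cassels' zero-full law upgrades it to full measure on $\I$.

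The hard part is precisely this quasi-independence step. The restriction to $Q_\alpha$ introduces joint constraints that can amplify the usual GCD correlations between the $E_s$'s, and one must control the Farey overlaps and the coupled conditions $\|s\alpha\|<\psi(s),\|t\alpha\|<\psi(t)$ simultaneously. Because the effective approximating function $q\mapsto\psi(q)\mathbf 1_{Q_\alpha}(q)$ is manifestly non-monotonic, this cannot be reduced to the one-dimensional Khintchine theorem, and the difficulty appears comparable in depth to the Duffin-Schaeffer conjecture. A potentially more tractable route is to mimic the strategy used for Theorem \ref{t8}: first establish a lower bound for $\sum_{q\le Q}\mathbf 1_{Q_\alpha}(q)$ analogous to \eqref{rubish3} (replacing $1/\|q\alpha\|$ by the indicator of $\|q\alpha\|<\psi(q)$), and then adapt the non-monotonic divergence argument of \cite{bhv} to handle the resulting approximating function on the fiber ${\rm L}_\alpha$.
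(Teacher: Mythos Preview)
This statement is presented in the paper as an open \emph{conjecture}; the paper does not prove it. What the paper establishes is the partial result Theorem~\ref{tkfib2}: the conjecture holds when $\tau(\alpha)<2$ (Part~A, via ubiquity and the counting estimate~\eqref{count}) and, under an additional growth hypothesis on $\psi$, when $\tau(\alpha)>2$ (Part~B, via Khintchine's transference principle). The case $\tau(\alpha)=2$ is explicitly flagged as inaccessible to the paper's methods.

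Your proposal is not a proof either, and you say as much: the quasi-independence step is left open and you compare its difficulty to the Duffin--Schaeffer conjecture. Your first step is also only a sketch. The equidistribution bound you invoke requires $\psi(2^n)$ to dominate the discrepancy of $(q\alpha)$, which fails on arbitrarily long stretches when $\alpha$ is Liouville; the fallback via convergent denominators and the three-distance theorem does not obviously produce a divergent sum, since the $q_n$ can be extremely sparse and nothing forces $\sum_n\psi(q_n)=\infty$. By contrast, the paper's route to Part~A avoids overlap estimates entirely: it establishes global ubiquity (Proposition~\ref{thm:slv45}) by combining Minkowski's linear forms theorem with~\eqref{count}, then appeals to Theorem~\ref{UL2} and Cassels' zero-full law. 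The counting bound~\eqref{count} is exactly what breaks down once $\tau(\alpha)\ge 2$, and that is the obstruction the paper identifies. Your direct Borel--Cantelli approach would in principle treat all $\alpha$ uniformly, but as written both steps contain genuine gaps precisely for $\alpha$ of large Diophantine exponent --- the regime the paper also cannot reach.
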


In order to state the current results, we need the notion of the
Diophantine exponent of a real number.  For $\vv x \in \R^n$, we let
\begin{equation}\label{dioexp}
  \tau(\vv x ) :=  \sup \{ \tau :  \vv x \in W(n, \tau) \}
\end{equation}
denote the \emph{Diophantine exponent of $\vv x $}. A word of warning,
this notion of Diophantine exponent should not be confused with the
Diophantine exponents introduced later in \S\ref{patel}.  Note that in
view of \eqref{simexp+}, we always have that $\tau(\vv x ) \geq 1/n $.
In particular, for $\alpha \in \R$ we have that
$\tau(\alpha) \geq 1 $.  The following result is established
in~\cite{crss}.

\begin{thm}[F. Ram\'{\i}rez, D. Simmons, F. S\"{u}ess]\label{tkfib2}
  Let $\psi:\N\to \Rp$ be a monotonic function and $\alpha \in \I$.
  \begin{enumerate}
  \item[\rm \textbf{A.}] If $\tau(\alpha) < 2$, then \eqref{fibconj}
    is true.
  \item[\rm \textbf{B.}] If $\tau(\alpha) > 2$ and for $\epsilon > 0$,
    $\psi(q) > q^{-\frac{1}{2}-\epsilon} $ for $q$ large enough, then
    $ W(2,\psi) \cap {\rm L}_{\alpha} = \I^2 \cap {\rm L}_{\alpha} $.
    In particular, $m_1(W(2,\psi) \cap {\rm L}_{\alpha} ) = 1 $.
  \end{enumerate}
\end{thm}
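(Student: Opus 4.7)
Part B is the simpler, constructive half, so I will sketch it first. Since $\tau:=\tau(\alpha)>2$, along an infinite subsequence $(q_{n_k})$ of convergent denominators of $\alpha$ we have $q_{n_k+1}\geq q_{n_k}^{2+\delta}$ for some fixed $\delta>0$, and hence $\|q_{n_k}\alpha\|\leq q_{n_k}^{-(2+\delta/2)}$ for $k$ large. Fix any $\beta\in\I$, and for each such $k$ the plan is to apply Dirichlet (Theorem \ref{Dir}) to $q_{n_k}\beta$ with parameter $N=\lfloor q_{n_k}^a\rfloor$, producing an integer $1\leq m_k\leq q_{n_k}^a$ with $\|m_k q_{n_k}\beta\|\leq q_{n_k}^{-a}$. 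Setting $Q_k:=m_k q_{n_k}\leq q_{n_k}^{1+a}$, we would have $\|Q_k\alpha\|\leq m_k\|q_{n_k}\alpha\|\leq q_{n_k}^{a-2-\delta/2}$, while the hypothesis $\psi(Q)>Q^{-1/2-\epsilon}$ gives $\psi(Q_k)>q_{n_k}^{-(1+a)(1/2+\epsilon)}$. A short exponent calculation then shows that for $\epsilon$ sufficiently small in terms of $\delta$ and $a$ chosen slightly greater than $1$, both $\|Q_k\alpha\|<\psi(Q_k)$ and $\|Q_k\beta\|<\psi(Q_k)$ will hold for all large $k$; since $\beta$ was arbitrary, the whole fiber $L_\alpha\cap\I^2$ lies in $W(2,\psi)$.

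For Part A, the natural reformulation is as a restricted $\limsup$ set in the $\beta$-coordinate. I would set
\begin{equation*}
\cJ_\alpha:=\{q\in\N:\|q\alpha\|<\psi(q)\},\qquad E_q:=\{\beta\in[0,1):\|q\beta\|<\psi(q)\},
\end{equation*}
so that $m_1(E_q)=2\psi(q)$ and, up to identifying $L_\alpha$ with $\I$ in the $\beta$-coordinate, $W(2,\psi)\cap L_\alpha=\limsup_{q\in\cJ_\alpha}E_q$. The plan is a divergence Borel--Cantelli argument along the sequence $\cJ_\alpha$. The first step is to verify the summation condition $\sum_{q\in\cJ_\alpha}\psi(q)=\infty$: the assumption $\tau(\alpha)<2$ forces polynomially small discrepancy for the sequence $(\{q\alpha\})_{q\leq Q}$ (via Erd\H{o}s--Tur\'an together with the continued fraction control on $\alpha$), which yields the asymptotic count $\#\{q\leq Q:\|q\alpha\|<\eta\}\asymp 2Q\eta$ whenever $\eta\gg Q^{-1+\kappa}$ for some fixed $\kappa>0$. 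Since $\sum\psi^2(q)=\infty$ combined with monotonicity forces $\psi(q)\gg q^{-1/2}$ in a dyadic-average sense, partial summation will give $\sum_{q\in\cJ_\alpha,\,q\leq Q}\psi(q)\gg\sum_{q\leq Q}\psi(q)^2\to\infty$.

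The second step is to establish local quasi-independence on average for the family $(E_q)_{q\in\cJ_\alpha}$, namely
\begin{equation*}
\sum_{\substack{q,q'\in\cJ_\alpha\\ q,q'\leq Q}} m_1(E_q\cap E_{q'})\ \ll\ \Big(\sum_{q\in\cJ_\alpha,\,q\leq Q} m_1(E_q)\Big)^2.
\end{equation*}
The Divergence Borel--Cantelli Lemma then gives $m_1(W(2,\psi)\cap L_\alpha)>0$, and the Cassels--Gallagher zero-one law upgrades positivity to full measure. The overlap $m_1(E_q\cap E_{q'})$ will be bounded in terms of $\psi(q)\psi(q')$ and arithmetic factors involving $\gcd(q,q')$ and the rational approximants to $\alpha$, and it is precisely the Diophantine hypothesis $\tau(\alpha)<2$ that rules out atypical concentrations of mass from pairs $(q,q')$ for which $q\alpha$ and $q'\alpha$ are simultaneously close to integers. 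This step is the main technical obstacle: while Step 1 reduces to equidistribution bookkeeping, Step 2 demands a delicate second-moment analysis fully exploiting the interplay between the continued fraction expansion of $\alpha$ and the arithmetic structure of $\cJ_\alpha$.
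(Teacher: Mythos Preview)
Your direct construction is correct and gives a valid proof, but it is a genuinely different route from the paper's. The paper instead invokes Khintchine's Transference Principle twice: from $\tau(\alpha)>2$ one gets $\omega(\alpha)>1$, hence $\omega^*(\alpha)>1$; since for any $\beta$ the dual exponent of $(\alpha,\beta)\in\R^2$ is at least that of $\alpha$, one deduces $\omega^*(\alpha,\beta)>0$, and a second application of transference yields $\omega(\alpha,\beta)>0$, which together with the growth hypothesis on $\psi$ gives the conclusion. Your argument is more elementary and self-contained; the paper's is slicker and immediately generalises to fibres ${\rm L}_{\bm\alpha}\subset\R^n$ with the cut-off at the dual exponent $n$.

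\textbf{Part A.} Your reformulation and your final step (Cassels' zero-full law to upgrade positive to full measure) match the paper exactly. However, the paper does \emph{not} attempt the direct quasi-independence estimate you identify as Step~2. Instead it first makes the reduction (which you omit) that without loss of generality $q^{-1/2}(\log q)^{-1}\le\psi(q)\le q^{-1/2}$, and then establishes \emph{global ubiquity} for the restricted system of rationals $p/q$ with $q\in\cJ_\alpha$: the key input is Minkowski's theorem for linear forms, which for any $\beta$ and any $N$ simultaneously produces $q\le N$ with $\|q\alpha\|\le\psi(N)$ and $\|q\beta\|\le (N\psi(N))^{-1}$. Combined with the counting upper bound $\#\{q\le k^{n-1}:\|q\alpha\|\le\psi(k^n)\}\ll\psi(k^n)k^{n-1}$ (valid precisely when $\tau(\alpha)<2$), this yields Proposition~\ref{thm:slv45}, and the general ubiquity machinery (Theorem~\ref{UL2}) then delivers positive measure without any explicit second-moment computation. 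Your Step~1 is in effect subsumed by the divergence condition in the ubiquity theorem, and your Step~2 --- which you rightly flag as the main obstacle and leave open --- is entirely sidestepped. The Minkowski idea is the ingredient you are missing: it couples the $\alpha$- and $\beta$-constraints at the outset rather than first isolating $\cJ_\alpha$ and then struggling to control overlaps along that sparse sequence.
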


\medskip

\begin{rem}
  Though we have only stated it for lines in the plane,
  Theorem~\ref{tkfib2} is actually true for lines in $\R^n$. There, we
  fix an $(n-1)$-tuple of coordinates
  $\boldsymbol{\alpha} = (\alpha_1, \dots, \alpha_{n-1})$, and
  consider the line $L_{\boldsymbol{\alpha}}\subset\R^n$. We obtain
  the same result, with a ``cut-off'' at $n$ in the \emph{dual
    Diophantine exponent} of $\boldsymbol{\alpha}\in\R^{n-1}$. The dual
  Diophantine exponent $\tau^*(\vv x )$  of a vector $ \vv x \in \R^n$ is defined similarly to the
  (simultaneous) Diophantine exponent, defined above by \eqref{dioexp}, and in the case of
  numbers (\emph{i.e.}, one-dimensional vectors), the two notions
  coincide -- see \S\ref{patel} for the formal definition of $\tau^*(\vv x )$.
\end{rem}

\medskip

\begin{rem}
  This cut-off in Diophantine exponent, which in Theorem~\ref{tkfib2}
  happens at $\tau(\alpha)=2$, seems quite unnatural: why should real
  numbers with Diophantine exponent $2$ be special? Still, such points are
  inaccessible to our methods. We will see the obstacle in the
  counting estimate~\eqref{count} which is used for the proof of Part
  A and is unavailable for $\tau(\alpha)=2$, and in our application of
  Khintchine's Transference Principle for the proof of Part B.
\end{rem}

\medskip

\begin{rem} {\rm Note that in Part B, the `in particular' full measure
    conclusion is immediate and does not even require the divergent
    sum condition associated with \eqref{fibconj}.  }
\end{rem}

Regarding the natural analogous conjecture for higher-dimensional
subspaces, we have the following statement from~\cite{crss} which
provides a complete solution in the case of affine co-ordinate
subspaces of dimension at least two.

\begin{thm}\label{tkfibn}
  Let $\psi:\N\to \Rp$ be a monotonic function and given
  $ \bm{\alpha} \in \I^{n-d}$ where $ 2 \le d \le n-1$, let
  ${\rm L}_{\bm{\alpha}} := \{\bm{\alpha}\} \times \R^d$.  Then
  \begin{equation}\label{fibconjn}
    m_d(W(n,\psi)  \cap {\rm L}_{\bm{\alpha}} ) =
    1    \quad  {\rm if}    \quad  \sum_{q=1}^{\infty} \;   \psi^n(q)
    =\infty   \, .
  \end{equation}
\end{thm}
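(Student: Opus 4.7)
The strategy is to recast the slice $W(n,\psi)\cap L_{\bm\alpha}$, identified with a subset of $\I^d$ via $\vy \mapsto (\bm\alpha,\vy)$, as a standard simultaneous approximation set to which Gallagher's non-monotonic extension of Khintchine's theorem applies. Define
$$
Q := \{q \in \N : \|q\bm\alpha\|_\infty < \psi(q)\} \qquad\text{and}\qquad \Psi(q) := \psi(q)\,\mathbf{1}_Q(q).
$$
Decomposing the sup-norm condition coordinate-wise shows that $W(n,\psi)\cap L_{\bm\alpha} = W(d,\Psi)$. Since $d \ge 2$, Gallagher's theorem (Remark~\ref{GallRemark}) removes the monotonicity requirement and gives $m_d(W(d,\Psi)) = 1$ as soon as
\begin{equation}\label{eq:sumprop}
\sum_{q \in Q}\psi(q)^d \;=\; \infty.
\end{equation}
The hypothesis $d \ge 2$ is vital here, since for $d=1$ the non-monotonic $\Psi$ would place us in Duffin--Schaeffer territory.

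The task thus reduces to \eqref{eq:sumprop}. Its key input is a uniform Minkowski-type counting estimate:
\begin{equation}\label{eq:countprop}
A(N) := \#\bigl\{q \in \N : q \le N,\; \|q\bm\alpha\|_\infty < \psi(N)\bigr\} \;\gg_{n,d}\; N\psi(N)^{n-d} \qquad \text{whenever } N\psi(N)^{n-d} \ge 1.
\end{equation}
This is obtained by applying the quantitative form of Minkowski's convex body theorem (Theorem~\ref{min} in its strengthened version yielding $\gg \mathrm{vol}(K)$ lattice points in a symmetric convex body $K$) to $K = [-N,N]\times[-\psi(N),\psi(N)]^{n-d}$ and the unimodular lattice
$$
\Lambda_{\bm\alpha} := \bigl\{(q,\,q\alpha_1-p_1,\,\ldots,\,q\alpha_{n-d}-p_{n-d}) : q \in \Z,\,(p_1,\ldots,p_{n-d}) \in \Z^{n-d}\bigr\}\subset\R^{n-d+1}.
$$
Monotonicity of $\psi$ ensures every $q$ counted in \eqref{eq:countprop} lies in $Q$.

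Passing from \eqref{eq:countprop} to \eqref{eq:sumprop} is a summation-by-parts exercise. Abel's identity reads
$$
\sum_{q \in Q,\,q\le N}\psi(q)^d \;=\; A(N)\psi(N)^d + \sum_{q=1}^{N-1}A(q)\bigl[\psi(q)^d-\psi(q+1)^d\bigr].
$$
Split into two cases according to the size of $N\psi(N)^n$: if it is unbounded, then the boundary term $A(N)\psi(N)^d \gg N\psi(N)^n$ already blows up along a subsequence, giving \eqref{eq:sumprop}. If $N\psi(N)^n$ stays bounded, a continuous integration-by-parts of the Abel sum, using $A(q) \gg q\psi(q)^{n-d}$ on `good' scales (those with $q\psi(q)^{n-d} \ge 1$), reduces the sum to $(d/n)\int_1^N\psi(x)^n\,dx - O(1)$, which diverges by the hypothesis $\sum_q\psi(q)^n=\infty$. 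The `bad' scales, where $\psi(q)^n < q^{-n/(n-d)}$, contribute only $O(1)$ since $n/(n-d) > 1$.

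The principal obstacle is \eqref{eq:countprop}: Theorem~\ref{min} in its bare form supplies only a single nontrivial lattice point in $K$, whereas we need a count proportional to the volume, \emph{uniformly} in $\bm\alpha$. This sharper Minkowski--Blichfeldt (or successive minima) refinement is the only place where the specific vector $\bm\alpha$ enters the argument, and it is precisely this lattice-point stability that upgrades the conclusion from `almost every' to `every' $\bm\alpha$.
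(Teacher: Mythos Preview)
Your approach is correct and matches the paper's: the reduction to $W(d,\Psi)$ with $\Psi=\psi\,\mathbf 1_Q$ followed by Gallagher's non-monotonic Khintchine theorem is exactly the mechanism the paper invokes (and is the reason $d\ge2$ is required), and the divergence of $\sum_{q\in Q}\psi(q)^d$ is then obtained from a Minkowski-type count combined with partial summation as you outline. The one refinement you correctly flag --- that Theorem~\ref{min} alone gives only a single lattice point and one needs the van der Corput/Blichfeldt strengthening yielding $\gg\mathrm{vol}(K)$ lattice points uniformly in the unimodular lattice $\Lambda_{\bm\alpha}$ --- is precisely what makes the estimate \eqref{eq:countprop} hold for \emph{every} $\bm\alpha$ rather than almost every.
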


\begin{rem}
  Notice that Theorem~\ref{tkfibn} requires $d\geq 2$, thereby
  excluding lines in $\R^n$. In this case, the obstacle is easy to
  describe: the proof of Theorem~\ref{tkfibn} relies on Gallagher's
  extension of Khintchine's theorem, telling us that the monotonicity
  assumption can be dropped in higher dimensions (see Remark
  \ref{GallRemark}). In the proof of Theorem~\ref{tkfibn} we find a
  natural way to apply this directly to the fibers, therefore, we must
  require $d \geq 2$.

  But this is again only a consequence of the chosen method of proof,
  and not necessarily a reflection of reality.  Indeed,
  Theorem~\ref{tkfib2} (and its more general version for lines in
  $\R^n$) suggests that we should be able to relax
  Theorem~\ref{tkfibn} to include the case where $d=1$.
\end{rem}

\begin{rem}
  The case when $d=n-1$ was first treated in~\cite{ram}. There, a
  number of results are proved in the direction of
  Theorem~\ref{tkfibn}, but with various restrictions on Diophantine
  exponent, or on the approximating function.
\end{rem}

Regarding the proof of Theorem \ref{tkfib2}, Part B makes use of
Khintchine's Transference Principle (see \S\ref{patel} below) while
the key to establishing Part~A is the following measure theoretic
statement (cf. Theorem \ref{thm:slv1}) and ubiquity (see \S\ref{ubi}
below).

\begin{prop}\label{thm:slv45} Let $\psi:\N\to \Rp$ be a monotonic
  function such that for all $\epsilon > 0$ we have
  $\psi(q) > q^{-\frac{1}{2}-\epsilon}$ for all $q$ large enough.  Let
  $\alpha \in \R$ be a number with Diophantine exponent
  $\tau(\alpha) < 2$.  Then for any $0 < \epsilon < 1$ and integer
  $ k \ge k_0(\epsilon)$, we have that
$$m_1\left( \bigcup_{k^{n-1} < q \le k^n: \atop{\| q \alpha \| \le   \psi(k^n)} }  \ \ \ \bigcup_{p=0}^q
  \ \textstyle{B\left(\frac{p}{q}, \frac{k}{k^{2n} \psi(k^{n} )}
    \right) } \right) \ \geq \ 1- \epsilon \, .
$$
\end{prop}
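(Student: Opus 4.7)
The plan is to adapt the proof of Theorem \ref{thm:slv1} by replacing its appeal to the one-dimensional Dirichlet theorem by an appeal to Theorem \ref{nDir} (Dirichlet in $\R^2$) applied to the pair $(x,\alpha)$. This is what will allow us to restrict to denominators $q$ satisfying the additional constraint $\|q\alpha\|\le\psi(k^n)$, while the hypothesis $\tau(\alpha)<2$ will be used to control the contribution of ``small'' denominators.

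Concretely, I would apply Theorem \ref{nDir} to $(x,\alpha)\in\R^2$ with $N=k^n$ and weights $i_2=-\log_{k^n}\psi(k^n)$, $i_1=1-i_2$. The hypothesis $\psi(q)>q^{-1/2-\epsilon}$, applied at $q=k^n$ with a fixed small $\epsilon<1/2$ and $n$ large, guarantees $0<i_2<1/2+\epsilon<1$, so the weights are admissible. The theorem then produces, for every $x\in I=[0,1)$, an integer $q$ with $1\le q\le k^n$ and some $p\in\Z$ satisfying simultaneously
\begin{equation*}
\|q\alpha\|<\psi(k^n) \qquad \text{and} \qquad \Bigl|x-\tfrac{p}{q}\Bigr|\le\frac{1}{qk^n\psi(k^n)}.
\end{equation*}

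Next, I split the range of $q$ at $k^{n-1}$, exactly as in the proof of Theorem \ref{thm:slv1}. For $q>k^{n-1}$, the inequality $1/(qk^n\psi(k^n))<k/(k^{2n}\psi(k^n))$ places $x$ inside the set whose measure the proposition concerns. Writing $S_{\mathrm{large}}$ for that set and $S_{\mathrm{small}}$ for the analogous union over admissible $q\le k^{n-1}$ with the larger radius $1/(qk^n\psi(k^n))$, we have $I\subseteq S_{\mathrm{small}}\cup S_{\mathrm{large}}$, so it suffices to prove $m_1(S_{\mathrm{small}})\le\epsilon$. Each admissible $q\le k^{n-1}$ contributes at most $(q+2)\cdot 2/(qk^n\psi(k^n))\ll 1/(k^n\psi(k^n))$ to $m_1(S_{\mathrm{small}})$, which gives
\begin{equation*}
m_1(S_{\mathrm{small}})\ll\frac{\#\{q\le k^{n-1}:\|q\alpha\|\le\psi(k^n)\}}{k^n\psi(k^n)}.
\end{equation*}
Invoking the counting estimate \eqref{count}, in the expected form $\#\{q\le Q:\|q\alpha\|\le\eta\}\ll Q\eta+1$ (valid because $\tau(\alpha)<2$), with $Q=k^{n-1}$ and $\eta=\psi(k^n)$, this becomes $m_1(S_{\mathrm{small}})\ll 1/k+1/(k^n\psi(k^n))$. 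The first term is $<\epsilon/2$ once $k\ge k_0(\epsilon)$, and the second is $<\epsilon/2$ for $n$ large since $\psi(k^n)\gg k^{-n(1/2+\epsilon)}$.

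The main obstacle is the counting estimate \eqref{count}: it is precisely here that the hypothesis $\tau(\alpha)<2$ is essential, and the bound degrades at $\tau(\alpha)=2$, matching the remark following Theorem \ref{tkfib2}. Apart from this, the proof is a parametric version of the argument for Theorem \ref{thm:slv1}, with two-dimensional Dirichlet doing the work that one-dimensional Dirichlet did there; the admissibility of the weights $(i_1,i_2)$ is exactly what the lower bound $\psi(q)>q^{-1/2-\epsilon}$ is designed to guarantee.
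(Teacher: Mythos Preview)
Your proposal is correct and follows essentially the same approach as the paper's sketch proof. The paper invokes Minkowski's theorem for systems of linear forms directly to obtain, for $N=k^n$, an integer $1\le q\le N$ with $\|q\alpha\|\le\psi(N)$ and $\|q\beta\|\le 1/(N\psi(N))$, and then combines this with the counting estimate~\eqref{count} exactly as you do; your use of the weighted Dirichlet theorem (Theorem~\ref{nDir}) in place of Minkowski is an equivalent formulation, since the paper derives Theorem~\ref{nDir} from Minkowski in \S\ref{MinT}. One minor point: your choice of weights requires $\psi(k^n)<1$ to ensure $i_2>0$, which is not explicitly part of the proposition's hypotheses but is harmless in the intended application (cf.\ Remark~\ref{remfib}, where one reduces to $\psi(q)\le q^{-1/2}$); the paper's direct appeal to Minkowski sidesteps this technicality.
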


\begin{rem} \label{re27}

  Note that within the context of Theorem \ref{tkfib2}, since
  $ \alpha$ is fixed it is natural to consider only those $q \in \N$
  for which $\| q \alpha \| \le \psi(q) $ when considering solutions
  to the inequality $\| q \beta \| \le \psi(q) $.  In other words, if
  we let
$$
{\cal A}_{\alpha}(\psi) := \{ q \in \N: \| q \alpha \| \le \psi(q) \}
$$
then by definition
$$
W(2,\psi) \cap {\rm L}_{\alpha} = \{ (\alpha,\beta) \in {\rm
  L}_{\alpha} \cap \I^2: \| q \beta \| \le \psi(q) \ {\rm for \
  infinitely \ many \ } q \in {\cal A}_{\alpha}(\psi) \} \, .
$$
It is clear that the one-dimensional Lebesgue measure $m_1$ of this set is the same as that of
$$\{ \beta \in \I :  \| q \beta \| \le   \psi(q) \ {\rm for \ infinitely  \  many \ } q \in {\cal A}_{\alpha}(\psi) \} \, .
$$
\end{rem}

\noindent \textsc{Sketch proof of Proposition \ref{thm:slv45}. }  In
view of Minkowski's theorem for systems of linear forms, for any
$(\alpha, \beta) \in \R^2 $ and integer $N \ge 1$, there exists an
integer $q \ge 1$ such that
\begin{eqnarray*} \label{slv1011}
  \| q \alpha \|  & \le  &    \psi(N)    \nonumber \\[1ex]
  \| q \beta \|  & \le  &     \frac{1}{N\, \psi(N)  }   \nonumber \\[1ex]
  q & \le & N \, .
\end{eqnarray*}
The desired statement follows on exploiting this with $N= k^n $
together with the following result which is a consequence of a general
counting result established in~\cite{bhv}: given $\psi$ and $\alpha$
satisfying the conditions imposed in Proposition~\ref{thm:slv45}, then
for $n$ sufficiently large
\begin{equation}\label{count}
  \# \{  q  \le k^{n-1} : \| q \alpha \| \le   \psi(k^n) \}    \; \le \; 31 \,  \psi(k^n) \, k^{n-1}   \, .
\end{equation}
(An analogous count is established in~\cite{crss} for vectors $\boldsymbol{\alpha}\in\R^{n-1}$.)
\noindent \emph{Exercise:} Fill in the details of the above sketch.
\hfill\raisebox{-1ex}{$\boxtimes$}


\subsection{Dual approximation and Khintchine's
  Transference \label{DAKT}}

Instead of simultaneous approximation by rational points as considered
in the previous section, one can consider the closeness of the point
$\vv x=(x_1,\dots,x_m)\in\R^m$ to rational hyperplanes given by the
equations $\vv q\cdot\vv x=p$ with $p\in\Z $ and $\q\in\Z^m$. The
point $\vv x\in\R^n$ will be called \emph{dually $\psi$-well
  approximable}\/ if the inequality
$$
|\vv q\cdot \vv x-p|<\psi(|\vv q|)
$$
\noindent holds for infinitely many $(p,\vv q)\in\Z\times\Z^m$ with
$|\vv q|:=|\vv q|_\infty = \max\{|q_1|,\dots,|q_m|\}>0$. The set of
dually $\psi$-approximable points in $\I^m$ will be denoted by
$W^*(m,\psi)$.  In the case $ \psi : q \to q^{-\tau} $ with
$\tau > 0 $, we write $ W^*(m, \tau)$ for $ W^*(m, \psi)$.  The set
$W^*(n, \tau)$ is the set of \emph{dually $\tau$-well approximable
  numbers}.  Note that in view of Corollary \ref{cordualdirichlet} we
have that
\begin{equation} \label{dualexp+} W^*(m,\tau) = \I^m \ \ {\rm if \ } \
  \ \tau \leq m .
\end{equation}

The simultaneous and dual forms of approximation are special cases of
a system of linear forms, covered by a general extension due to
A.~V.~Groshev (see \cite{Sprindzuk}).  This treats real $m \times n$
matrices $X$, regarded as points in $\R^{mn}$, which are
$\psi$-approximable.  More precisely, $X=(x_{ij}) \in \R^{mn}$ is said
to be $\psi$-approximable if the inequality
$$
\| \q X \| < \psi(|\q|)
$$
\noindent is satisfied for infinitely many $\q \in \Z^m$. Here $\q X$
is the system
$$
q_1x_{1j} + \dots + q_m x_{m,j} \hspace*{6ex} ( 1\leq j \leq n )
$$
of $n$ real linear forms in $m$ variables and
$\| \q X \| := \max_{1\leq j \leq n } \| \q \cdot X^{(j)} \|$, where
$ X^{(j)}$ is the $j$'th column vector of $X$. As the set of
$\psi$-approximable points is translation invariant under integer
vectors, we can restrict attention to the $mn$-dimensional unit cube
$\I^{mn}$. The set of $\psi$-approximable points in $\I^{mn}$ will be
denoted by
$$
W(m,n,\psi) := \{X\in \I^{mn}:\|\q X \| < \psi(|\q|) {\text { for
    infinitely many }} \q \in \Z^m \}.
$$
Thus, $W(n,\psi)=W(1,n,\psi)$ and $W^*(m,\psi)=W(m,1,\psi)$. The
following result naturally extends Khintchine's simultaneous theorem
to the linear forms setup. For obvious reasons, we write $|X |_{mn} $
rather than $m_{mn}(X) $ for $mn$-dimensional Lebesgue measure of a
set $X \subset \R^{mn}$.

\begin{thm}[Khintchine-Groshev, 1938]
  \label{Groshevthm} Let $\psi:\N\to\Rp$. Then
$$
|W(m,n,\psi)|_{mn} = \begin{cases} 0
  & \ \ds\text{if } \  \sum_{r =1}^\infty r^{m-1}\psi(r)^n<\infty, \\[3ex]
  1 & \ \ds\text{if } \ \sum_{r=1}^\infty r^{m-1}\psi(r)^n=\infty \
  \text{ and $\psi$ is monotonic}.
\end{cases}
$$
\end{thm}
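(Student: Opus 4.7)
The plan is to split the proof into the convergence and divergence halves, following the same blueprint as Khintchine's theorem but accommodating the linear-forms structure. For convergence, I would set $A_\q := \{X \in \I^{mn} : \|\q X\| < \psi(|\q|)\}$ for each $\q \in \Z^m\setminus\{0\}$. Writing $\I^{mn} = (\I^m)^n$ via columns, the $j$-th condition $\|\q \cdot X^{(j)}\| < \psi(|\q|)$ involves only the $j$-th column of $X$. Using that for $\q \neq 0$ the pushforward of Lebesgue measure on $\I^m$ under $Y \mapsto \q \cdot Y \bmod 1$ is uniform on $\R/\Z$, Fubini gives $|A_\q|_{mn} = (2\psi(|\q|))^n$ (WLOG $\psi(r) < 1/2$ eventually). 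Grouping by $|\q| = r$ and using $\#\{\q : |\q|=r\} \ll r^{m-1}$, the hypothesis yields $\sum_\q |A_\q|_{mn} \ll \sum_r r^{m-1}\psi(r)^n < \infty$, and the Convergence Borel-Cantelli Lemma applied to $W(m,n,\psi) = \limsup_{|\q|\to\infty} A_\q$ gives $|W(m,n,\psi)|_{mn} = 0$.

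For the divergence part, my strategy is to first invoke a Cassels-type zero-full law for $W(m,n,\psi)$, proved by a Fourier/density argument along the lines of \cite{Cassels-50:MR0036787}, reducing the claim to showing $|W(m,n,\psi)|_{mn} > 0$. For this I would apply the Divergence Borel-Cantelli Lemma to $\{A_\q\}$. Divergence of $\sum_\q |A_\q|_{mn}$ is automatic from the hypothesis. The substance is verifying the quasi-independence inequality $\sum_{|\q|,|\q'| \le Q}|A_\q \cap A_{\q'}|_{mn} \le C\big(\sum_{|\q|\le Q}|A_\q|_{mn}\big)^2$. Column independence reduces this to estimating $f(\q,\q') := |\{Y \in \I^m : \|\q \cdot Y\| < \psi(|\q|),\ \|\q' \cdot Y\| < \psi(|\q'|)\}|_m$, since $|A_\q \cap A_{\q'}|_{mn} = f(\q,\q')^n$. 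When $\q,\q'$ are linearly independent over $\Q$, a lattice-Fourier computation should give $f(\q,\q') \asymp \psi(|\q|)\psi(|\q'|)$, so these pairs contribute $\ll \big(\sum_\q |A_\q|_{mn}\big)^2$, which is acceptable.

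The hard part will be the contribution of rationally dependent pairs, with $\q,\q'$ both integer multiples of some primitive $\q_0$. Here the two defining conditions essentially collapse to one and $f(\q,\q')$ can exceed the product $\psi(|\q|)\psi(|\q'|)$ by a large factor, so the naive product bound fails. The remedy has to be a global summation argument: for each primitive $\q_0$, the multiples $k\q_0$ with $|k\q_0| \le Q$ number $\lfloor Q/|\q_0|\rfloor$, and I would estimate $\sum_{k,k'}f(k\q_0,k'\q_0)^n$ using the monotonicity of $\psi$ (which lets one compare $\psi(k|\q_0|)$ with $\psi(|\q_0|)$ efficiently). The target is a bound that is linear, rather than quadratic, in $\sum_\q |A_\q|_{mn}$, which the divergence hypothesis then absorbs into the right-hand side of the quasi-independence inequality. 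This is exactly the step where monotonicity is indispensable: it is the same obstruction that forces monotonicity in the $m=n=1$ case (the Duffin-Schaeffer counterexample) while, by Gallagher's argument, can be dropped once $n \ge 2$. Once quasi-independence is in hand, the Divergence Borel-Cantelli Lemma delivers positive measure, which the zero-full law upgrades to full measure.
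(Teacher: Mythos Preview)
The paper does not give a proof of the Khintchine--Groshev theorem: it is stated as a classical result, with the reader pointed to Sprind\v{z}uk's monograph \cite{Sprindzuk} (and related references) for the argument. The surrounding text is a discussion of when the monotonicity hypothesis can be removed, not a proof. So there is no ``paper's own proof'' to compare against, only the classical approach.

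Your convergence half is correct and is exactly the standard argument: the key points (column independence giving $|A_{\q}|_{mn}=(2\psi(|\q|))^n$ via the fact that $Y\mapsto \q\cdot Y\bmod 1$ pushes Lebesgue forward to Haar on $\R/\Z$ for $\q\neq 0$, and the count $\#\{\q:|\q|=r\}\asymp r^{m-1}$) are all fine, and Borel--Cantelli finishes.

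For divergence, your blueprint (zero-full law plus Divergence Borel--Cantelli applied to the $A_{\q}$) is one of the standard routes and is in the spirit of Sprind\v{z}uk's proof. Your treatment of linearly independent pairs is actually cleaner than you suggest: if $\q,\q'$ are $\Q$-independent then $Y\mapsto(\q\cdot Y,\q'\cdot Y)\bmod\Z^2$ is a surjective homomorphism of tori, so the pushforward is Lebesgue and $f(\q,\q')=4\psi(|\q|)\psi(|\q'|)$ exactly (no Fourier needed). The genuine gap is the dependent-pair contribution. You correctly locate the difficulty --- for $\q=k\q_0$, $\q'=k'\q_0$ with $\q_0$ primitive, the two conditions live on the single variable $t=\q_0\cdot Y\bmod 1$ and one is back to a one-dimensional overlap estimate --- but ``a global summation argument using monotonicity'' is a description of the goal, not an argument. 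What is actually required is an explicit bound of the type
\[
\big|\{t\in[0,1):\|kt\|<a,\ \|k't\|<b\}\big|\ \ll\ ab+\gcd(k,k')\min\!\Big(\tfrac{a}{k},\tfrac{b}{k'}\Big),
\]
followed by a careful double sum over $k,k'$ and then over primitive $\q_0$, where monotonicity of $\psi$ is used to control the ``error'' term and show it is $\ll\sum_{|\q|\le Q}|A_{\q}|_{mn}$. This is the substantive step (it is the direct analogue of the hard half of Khintchine's theorem), and as written your proposal has not carried it out.
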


The counterexample due to Duffin and Schaeffer mentioned in
\S\ref{dsconj} means that the monotonicity condition cannot be dropped
from Groshev's theorem when $m=n=1$. To avoid this situation, let
$mn> 1$. Then for $m=1$, we have already mentioned (Remark
\ref{GallRemark}) that the monotonicity condition can be removed.
Furthermore, the monotonicity condition can also be removed for
$m > 2$ -- see \cite[Theorem 8]{Beresnevich-Bernik-Dodson-Velani-Roth}
and \cite[Theorem 14]{Sprindzuk}.  The $m=2$ situation was resolved
only recently in \cite{BVkg}, where it was shown that the monotonicity
condition can be safely removed.  The upshot of this discussion is
that we only require the monotonicity condition in the
Khintchine-Groshev theorem in the case when $mn=1$.

Naturally, one can ask for a Hausdorff measure generalisation of the
Khintchine-Groshev theorem.  The following is such a statement and as
one should expect it coincides with Theorem~\ref{fulllmhm} when
$m=n=1$. In the simultaneous case ($m=1$), the result was alluded to
within Remark \ref{4ell} following the simultaneous statement of
Khintchine's theorem.

\begin{thm}
  \label{ohgod} Let $\psi:\N\to\Rp$. Then
$$
\cH^s(W(m,n,\psi))=\left\{\begin{array}{lll} 0 & \ds\text{if } \;\;\;
    \sum_{r=1}^\infty \;r^{m(n+1)-1-s}\psi(r)^{s-n(m-1)} <\infty \, ,
    \\[2ex] & \\ \cH^s(\I^{mn}) & \ds\text{if } \;\;\;
                                  \sum_{r=1}^\infty \; r^{m(n+1)-1-s}\psi(r)^{s-n(m-1)} =\infty \ \\
                                               & ~ \hspace*{18ex}
                                                 \text{ and $\psi$ is
                                                 monotonic}\,. &
                          \end{array}\right.
$$
\end{thm}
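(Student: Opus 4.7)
I would prove the two halves separately: the convergence half by a direct covering argument using the natural $\limsup$ structure of $W(m,n,\psi)$, and the divergence half by bootstrapping from the Lebesgue Khintchine--Groshev theorem (Theorem~\ref{Groshevthm}) via a Mass Transference Principle.

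\textbf{Convergence half.} For $\q\in\Z^m\setminus\{0\}$ with $|\q|_\infty=r$, set
$$
E_\q \;=\; \{X\in \I^{mn} : \|\q X\| < \psi(r)\},
$$
so that $W(m,n,\psi) = \limsup_{|\q|\to\infty} E_\q$ and, for every threshold $t$, the family $\{E_\q : |\q|\ge t\}$ is a cover of $W(m,n,\psi)$. For fixed $\q$ and for each column block $X^{(j)}\in\I^m$, the condition $\|\q\cdot X^{(j)}\|<\psi(r)$ defines $\asymp r$ parallel slabs of codimension $1$ and thickness $\asymp \psi(r)/r$, so $E_\q$ is a union of $\asymp r^n$ parallelepipeds in $\R^{mn}$, each of shape $(\psi(r)/r)^{n}\times 1^{n(m-1)}$. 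Each such parallelepiped is efficiently covered by $\asymp (r/\psi(r))^{n(m-1)}$ balls of radius $\rho=\psi(r)/r$. Summing over the $\asymp r^{m-1}$ vectors $\q$ with $|\q|=r$, the contribution at this scale to $\cH^s_\rho$ is
$$
\ll \; r^{m-1}\cdot r^n\cdot (r/\psi(r))^{n(m-1)}\cdot (\psi(r)/r)^s \;=\; r^{m(n+1)-1-s}\,\psi(r)^{\,s-n(m-1)}.
$$
Under the convergence hypothesis, summing over $r\ge t$ and letting $t\to\infty$ gives $\cH^s(W(m,n,\psi))=0$. No monotonicity is needed for this half.

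\textbf{Divergence half, simultaneous case $m=1$.} Here the sets $E_\q$ are unions of genuine balls $B(\vv p/q,\psi(q)/q)\subset\R^n$, so Theorem~\ref{MTP} applies directly on $\Omega=\I^n$ with $\delta=n$ and $f(r)=r^s$. The scaled balls $B^f_\q$ have radii $(\psi(q)/q)^{s/n}$, which corresponds to the approximating function $\tilde\psi(q) = q\cdot(\psi(q)/q)^{s/n}= q^{1-s/n}\psi(q)^{s/n}$. The Lebesgue hypothesis of MTP is $m_n(W(n,\tilde\psi))=1$, which by Theorem~\ref{nkh} holds precisely when $\sum \tilde\psi(q)^n = \sum q^{n-s}\psi(q)^s = \infty$. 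This is exactly the divergence hypothesis specialised to $m=1$, and MTP then yields $\cH^s(W(n,\psi))=\cH^s(\I^n)$, as required.

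\textbf{The main obstacle: $m\ge 2$.} When $m\ge 2$ the sets $E_\q$ are neighborhoods of families of hyperplanes (products of slabs across the $n$ column blocks) rather than balls, so the balls-to-balls formulation of Theorem~\ref{MTP} does not apply directly. My plan is to invoke a mass transference principle for neighborhoods of affine subspaces, in the Beresnevich--Velani slicing framework, which promotes full-Lebesgue statements for $\delta$-neighborhoods of a family of hyperplanes to full-$\cH^s$ statements for suitably rescaled neighborhoods of the same family. The Lebesgue input comes from Theorem~\ref{Groshevthm} (where, for $mn>1$, monotonicity is not even needed) applied to a modified approximating function, and one would apply the planar MTP once per column block (or appeal to a single rectangle-to-rectangle MTP), the codimension bookkeeping across the $n$ slab-normal directions and the $n(m-1)$ along-hyperplane directions producing precisely the exponents $m(n+1)-1-s$ and $s-n(m-1)$ in the divergence sum. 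The delicate point, and the reason this case is the hardest, is reconciling the different scalings in the thin versus the fat directions across all $n$ column factors simultaneously; a single application of the planar mass transference principle is insufficient, and one genuinely needs either induction on $n$ or a product-of-planes version of MTP.
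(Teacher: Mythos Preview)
Your proposal is correct and matches the paper's approach essentially exactly: the paper derives the Hausdorff statement from the Lebesgue Khintchine--Groshev theorem via the Mass Transference Principle, invoking for $m\ge 2$ precisely the Beresnevich--Velani \emph{slicing} extension of the MTP that handles $\limsup$ sets built from neighbourhoods of affine planes rather than balls. Your convergence covering argument is in fact more explicit than anything the paper writes down.

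One small corrective remark on your final paragraph: the slicing technique of \cite{BV06Slicing} is a single self-contained transference principle for plane-neighbourhood $\limsup$ sets, not an inductive scheme on $n$ or a bespoke product-of-planes MTP; once that result is quoted, the divergence half for general $m$ follows in one step from the Lebesgue input, with the exponent bookkeeping falling out exactly as you computed. The paper also flags a subtlety you did not mention: although Khintchine--Groshev itself needs no monotonicity for $mn>1$, the slicing MTP as currently known still requires monotonicity when $m=2$, so one cannot drop that hypothesis from Theorem~\ref{ohgod} in full generality.
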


This Hausdorff theorem follows from the corresponding Lebesgue
statement in the same way that Khintchine's theorem implies
\Jarnik's theorem via the Mass Transference Principle---see
\S\ref{KimpliesJ}. The Mass Transference Principle introduced in
\S\ref{mtpballs} deals with $\limsup$ sets which are defined by a
sequence of balls. However, the `slicing' technique introduced in
\cite{BV06Slicing} extends the Mass Transference Principle to deal
with $\limsup$ sets defined by a sequence of neighborhoods of
`approximating' planes.  This naturally enables us to generalise the
Lebesgue measure statements for systems of linear forms to Hausdorff
measure statements.  The last sentence should come with a warning.  It
gives the impression that in view of the discussion preceding Theorem
\ref{Groshevthm}, one should be able to establish Theorem \ref{ohgod}
directly, without the monotonicity assumption except when
$m=n=1$. However, as things currently stand we also need to assume
monotonicity when $m=2$. For further details see~\cite[\S8]{Beresnevich-Bernik-Dodson-Velani-Roth}.

Returning to Diophantine approximation in $\R^n$, we consider the
following natural question.

\noindent \textbf{Question.} Is there a connection between the
simultaneous ($m=1$) and dual ($n=1$) forms of approximating points in
$\R^n$?

\subsubsection{Khintchine's Transference \label{patel}}

The simultaneous and dual forms of Diophantine approximation are
related by a `transference' principle in which a solution of one form
is related to a solution of the other.  In order to state the
relationship we introduce the quantities $ \omega^*$ and
$ \omega$.  For $ \vv x=(x_1,\dots,x_n)\in\R^n$, let
\begin{equation*}
  \omega^*(\vv x) \, :=  \, \sup\left\{ \omega \in \R : \vv x \in W^*(n,n + \omega)  \right\}
\end{equation*}
and
\begin{equation*}
  \omega(\vv x) \, :=  \, \sup\left\{ \omega \in \R : \vv x \in W(n,  \textstyle{\frac{1+ \omega}{n}})   \right\}   \, .
\end{equation*}
Note that
 $$
 \tau(\vv x ) = \frac{ 1+ \omega(\vv x)}{n} \
 $$
 where $ \tau (\vv x)$ is the Diophantine exponent of $\vv x$ as defined
 by \eqref{dioexp}. For the sake of completeness we mention that the quantity
  $$
 \tau^*(\vv x ) = n+\omega^*(\vv x)
 $$
 is called the dual Diophantine exponent.
The following statement provides a relationship
 between the dual and simultaneous Diophantine exponents.
 \begin{thm}[Khintchine's Transference Principle] \label{KTPP} For
   $ \vv x \in \R^n$, we have that
   \begin{equation*}
     \frac{ \omega^*(\vv x) }{ n^2 + (n-1) \omega^*(\vv x)} \leq      \omega(\vv x)  \leq \omega^*(\vv x)   \,
   \end{equation*}
   with the left hand side being interpreted as $1/(n-1)$ if $ \omega^*(\vv x)$ is infinite.
 \end{thm}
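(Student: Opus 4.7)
The plan is to derive both inequalities from Minkowski's theorem on systems of linear forms (Theorem~\ref{SLF}) by constructing ``transference'' linear forms that convert a good approximation of one type into a good approximation of the other.

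For the lower bound $\omega(\vv x) \geq \omega^*(\vv x)/(n^2 + (n-1)\omega^*(\vv x))$, fix $\epsilon > 0$; by definition of $\omega^*$ there exist infinitely many $(\vv q, p) \in (\Z^n\setminus\{0\})\times\Z$ with $\eta := |\vv q\cdot \vv x - p| < |\vv q|_\infty^{-(n+\omega^*-\epsilon)}$. Write $Q = |\vv q|_\infty$ and apply Theorem~\ref{SLF} with $k = n+1$ to the system in variables $\vv y = (y_0, y_1, \ldots, y_n)$:
$$M_0(\vv y) := q_1 y_1 + \cdots + q_n y_n - p y_0, \qquad M_i(\vv y) := y_i - x_i y_0 \quad (1 \leq i \leq n).$$
Subtracting $q_i$ times row $M_i$ from row $M_0$ collapses the latter to $(\vv q\cdot \vv x - p, 0, \ldots, 0)$, giving $|\det| = \eta$. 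The bounds $|M_0| < 1$ (which forces $M_0 = 0$ since $M_0 \in \Z$) and $|M_i| \leq \eta^{1/n}$ meet the determinant hypothesis with equality, so Minkowski supplies a nonzero $(u, v_1, \ldots, v_n) \in \Z^{n+1}$ satisfying $\vv q \cdot \vv v = pu$ and $|v_i - x_i u| \leq \eta^{1/n}$ for each $i$. Substituting $v_i = x_i u + O(\eta^{1/n})$ into the identity $\vv q \cdot \vv v = pu$ yields $u(\vv q \cdot \vv x - p) = -\sum q_i (v_i - x_i u)$, whence $|u| \leq n Q \eta^{-(n-1)/n}$. With $\eta = Q^{-(n+\omega^*-\epsilon)}$, the resulting simultaneous exponent $\tau := -\log(\eta^{1/n})/\log|u|$ tends to $(n+\omega^*)/(n^2 + (n-1)\omega^*)$ as $\epsilon\to 0$; the relation $\omega = n\tau - 1$ then delivers the claimed bound.

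For the upper bound $\omega \leq \omega^*$ I would swap the roles. Given a simultaneous approximation $(u, v_1, \ldots, v_n) \in \Z^{n+1}$ with $u>0$ and $|ux_i - v_i| \leq u^{-\tau}$ for $\tau$ approaching $(1+\omega)/n$, consider the sublattice $\Lambda = \{\vv q \in \Z^n : u \mid v_1 q_1 + \cdots + v_n q_n\}$, which has covolume at most $u$ in $\Z^n$. Minkowski applied to the cube $[-u^{1/n}, u^{1/n}]^n$ produces a nonzero $\vv q \in \Lambda$ with $|\vv q|_\infty \leq u^{1/n}$. Setting $p := u^{-1}(v_1 q_1 + \cdots + v_n q_n) \in \Z$,
$$|\vv q \cdot \vv x - p| = \tfrac{1}{u}\bigl|\vv q \cdot (u\vv x - \vv v)\bigr| \leq \tfrac{n}{u} \cdot u^{1/n} \cdot u^{-\tau} = n\, u^{1/n - 1 - \tau},$$
and comparing with $|\vv q|_\infty \leq u^{1/n}$ yields a dual exponent of at least $n + n\tau - 1 - o(1) = n + \omega - o(1)$, whence $\omega^* \geq \omega$.

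The main obstacle I anticipate is ensuring that distinct input approximations produce infinitely many distinct outputs, so that the limsups defining $\omega(\vv x)$ and $\omega^*(\vv x)$ are genuinely witnessed by the constructed families. This needs a nondegeneracy argument: if $1, x_1, \ldots, x_n$ are $\Q$-linearly dependent both exponents collapse to trivial values; otherwise one verifies that $|u|$ in the lower-bound construction, and $|\vv q|$ in the upper-bound construction, both grow without bound as the input approximations do. The boundary case $\omega^*(\vv x) = \infty$ of the lower bound, interpreted as $1/(n-1)$, is recovered by letting the auxiliary dual exponent tend to infinity in the calculation above.
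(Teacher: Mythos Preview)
The paper does not actually prove Theorem~\ref{KTPP}; it quotes Khintchine's Transference Principle as a classical result and immediately uses it (in Proposition~\ref{propCRSS}) without supplying an argument. So there is no ``paper's own proof'' to compare against.

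Your proposal is essentially the standard Mahler--Khintchine transference argument and is correct in outline. Both directions are handled properly: in the lower bound the Minkowski system you set up has determinant exactly $\eta$, and the key estimate $|u|\le nQ\eta^{-(n-1)/n}$ together with $Q\le\eta^{-1/\sigma}$ gives the simultaneous exponent $\tau\to\sigma/(n+(n-1)\sigma)$ and hence $\omega\ge\omega^*/(n^2+(n-1)\omega^*)$; in the upper bound the sublattice $\Lambda$ does have covolume dividing $u$, and since the resulting $\vv q$ satisfies $|\vv q|\le u^{1/n}$ \emph{as an upper bound}, any shortfall of $|\vv q|$ below $u^{1/n}$ only improves the dual exponent you extract. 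One minor technical point: to get a nonzero $\vv q\in\Lambda$ with $|\vv q|_\infty\le u^{1/n}$ you need the version of Minkowski allowing equality (Theorem~\ref{SLF} applied after a change of basis to $\Lambda$), since the strict convex-body form would require covolume $<u$.

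The only place your write-up is slightly loose is the degenerate case. Saying ``both exponents collapse to trivial values'' is not quite right: when $1,x_1,\ldots,x_n$ are $\Q$-linearly dependent one has $\omega^*(\vv x)=\infty$, but $\omega(\vv x)$ need not be infinite (e.g.\ $\vv x=(\sqrt2,\sqrt2)$ has $\omega^*=\infty$ and $\omega=1$). What you must show in this case is $\omega(\vv x)\ge 1/(n-1)$, which follows by using the rational relation to reduce the simultaneous problem to $n-1$ coordinates and invoking Dirichlet there; this takes a short extra argument rather than being automatic. The ``infinitely many distinct outputs'' issue you flag is otherwise handled exactly as you suggest: bounded $u$'s force all $x_i\in\Q$, and bounded $\vv q$'s force a rational linear relation, and in either situation the relevant exponent is $\infty$.
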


 \begin{rem} \label{reKTPP} {\rm The transference principle implies
     that given any $ \epsilon > 0$, if
     $ \vv x \in W(n, \textstyle{\frac{1+ \epsilon}{n}}) $ then
     $ \vv x \in W^*(n, n + \epsilon^*) $ for some $\epsilon^*$
     comparable to $\epsilon$, and vice versa.  }
 \end{rem}

 \noindent \textbf{Proof of Part B of Theorem \ref{tkfib2}}

 Part B of Theorem \ref{tkfib2} follows by plugging $n=2$ and $d=1$
 into the following proposition, which is in turn a simple consequence
 of Khintchine's Transference Principle.

\begin{prop}\label{propCRSS}
  Let $\psi:\N\to \Rp$ be a monotonic function and given
  $ \bm{\alpha} \in \I^{n-d}$ where $ 1 \le d \le n-1$, let
  ${\rm L}_{\bm{\alpha}} := \{\bm{\alpha}\} \times \R^d$.  Assume that
  $\tau({\bm{\alpha}}) > \frac{1+d}{n-d} $ and for $\epsilon > 0$,
  $\psi(q) > q^{-\frac{1}{n}-\epsilon} $ for $q$ large enough.  Then
$$
W(n,\psi) \cap {\rm L}_{\bm{\alpha}} = \I^n \cap {\rm L}_{\bm{\alpha}}
\, . $$
In particular, $ m_d(W(n,\psi) \cap {\rm L}_{\bm{\alpha}} ) = 1 $.
\end{prop}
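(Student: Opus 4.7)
The plan is to derive the proposition from two applications of Khintchine's Transference Principle (Theorem~\ref{KTPP}): first to $\bm\alpha\in\R^{n-d}$, and then to the full vector $\bm x=(\bm\alpha,\bm\beta)\in\R^n$. This is a natural two-step route because the hypothesis speaks about simultaneous approximation of $\bm\alpha$, whereas to handle an arbitrary $\bm\beta$ I want to produce dual approximations of the full vector $\bm x$, which will be achieved by a simple padding with zeros.

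To begin, I would unpack the hypothesis: via $\tau=(1+\omega)/(n-d)$ in dimension $n-d$, $\tau(\bm\alpha)>\tfrac{1+d}{n-d}$ is equivalent to $\omega(\bm\alpha)>d$. The trivial half $\omega\le\omega^*$ of Theorem~\ref{KTPP} applied to $\bm\alpha$ then gives $\omega^*(\bm\alpha)>d$, so by definition there exist $\eta>0$ and infinitely many $(\vv q',p)\in\Z^{n-d}\times\Z$ with $\vv q'\neq\vv 0$ and
$$
|\vv q'\cdot\bm\alpha-p|\;<\;|\vv q'|^{-(n-d)-d-\eta}\;=\;|\vv q'|^{-n-\eta}.
$$
For any fixed $\bm\beta\in\I^d$, setting $\bm x=(\bm\alpha,\bm\beta)\in\R^n$ and $\vv q=(\vv q',\vv 0)\in\Z^n$, one has $|\vv q|=|\vv q'|$ and $\vv q\cdot\bm x=\vv q'\cdot\bm\alpha$, so the inequality above reads $|\vv q\cdot\bm x-p|<|\vv q|^{-n-\eta}$ for infinitely many $\vv q\in\Z^n$. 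Hence $\omega^*(\bm x)\ge\eta>0$, with $\eta$ depending only on $\bm\alpha$ and not on $\bm\beta$.

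Next I would apply the nontrivial half of Theorem~\ref{KTPP} to $\bm x$. Using that $s\mapsto s/(n^2+(n-1)s)$ is increasing on $[0,\infty)$,
$$
\omega(\bm x)\;\ge\;\frac{\omega^*(\bm x)}{n^2+(n-1)\omega^*(\bm x)}\;\ge\;\frac{\eta}{n^2+(n-1)\eta}\;=:\;\eta'\;>\;0.
$$
So $\tau(\bm x)\ge(1+\eta')/n$, meaning that for every $\mu\in(0,\eta'/n)$ infinitely many $q\in\N$ satisfy $\|q\bm x\|<q^{-1/n-\eta'/n+\mu}$. Taking $\mu$ and the hypothesis's $\epsilon$ both less than $\eta'/n$ (the hypothesis ``$\psi(q)>q^{-1/n-\epsilon}$ for $\epsilon>0$'' being read, in the standard way, as permitting $\epsilon$ to be chosen sufficiently small for the given $\bm\alpha$), the assumption gives $\|q\bm x\|<q^{-1/n-\epsilon}<\psi(q)$ for infinitely many $q$, i.e.\ $\bm x\in W(n,\psi)$. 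Since $\bm\beta\in\I^d$ was arbitrary, this yields $\I^n\cap L_{\bm\alpha}\subseteq W(n,\psi)$; the reverse inclusion is trivial, and the $m_d$-measure assertion follows at once.

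The only genuinely delicate point---rather than a true obstacle---is the padding step: one must verify that the gain $\eta=\omega^*(\bm\alpha)-d$ transfers to a positive lower bound on $\omega^*(\bm x)$ that is \emph{uniform in $\bm\beta$}, which is exactly what the choice $\vv q=(\vv q',\vv 0)$ ensures. The two invocations of Khintchine's Transference Principle are then mechanical, and the matching between $\eta'/n$ and $\epsilon$ is a routine consistency check.
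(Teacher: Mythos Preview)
Your proof is correct and follows essentially the same route as the paper's: translate $\tau(\bm\alpha)>\tfrac{1+d}{n-d}$ into $\omega(\bm\alpha)>d$, apply Khintchine's Transference Principle in $\R^{n-d}$ to get $\omega^*(\bm\alpha)>d$, pad with zeros to obtain $\omega^*(\vv x)>0$ uniformly in $\bm\beta$, and then apply the Transference Principle in $\R^n$ to conclude $\omega(\vv x)>0$. You have simply made explicit the padding step and the uniformity in $\bm\beta$ that the paper's proof states in one line (``$\omega^*(\bm\alpha)>d$ and so $\omega^*(\vv x)>0$'').
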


\begin{proof}

  We are given that $\tau({\bm{\alpha}}) > \frac{1+d}{n-d} $ and so by
  definition $\omega({\bm{\alpha}}) > d $.  Thus, by Khintchine's
  Transference Principle, it follows that
  $\omega^*({\bm{\alpha}}) > d $ and so $\omega^*(\vv x) > 0 $ for any
  point $ \vv x = ({\bm{\alpha}}, {\bm{\beta}}) \in \R^n $; i.e.
  $ {\bm{\beta}} \in \R^d$ and $\vv x$ is a point on the
  $d$-dimensional plane $ {\rm L}_{\bm{\alpha}}$. On applying
  Khintchine's Transference Principle again, we deduce that
  $\omega(\vv x) > 0 $ which together with the growth condition
  imposed on $\psi$ implies the desired conclusion.
\end{proof}

\section{Ubiquitous systems of points \label{ubi}}

In \cite{BDV06}, a general framework is developed for establishing
divergent results analogous to those of Khintchine and \Jarnik{} for
a natural class of $\limsup $ sets.  The framework is based on the
notion of `ubiquity', which goes back to \cite{BS70} and \cite{DRV}
and captures the key measure theoretic structure necessary to prove
such measure theoretic laws.  The `ubiquity' introduced below is a
much simplified version of that in \cite{BDV06}.  In particular, we
make no attempt to incorporate the linear forms theory of metric
Diophantine approximation. However this does have the advantage of
making the exposition more transparent and also leads to cleaner
statements which are more than adequate for the application we have in
mind; namely to systems of points.

\subsection{The general framework and fundamental problem
  \label{gfub}} The {\it general framework of ubiquity} considered
within is as follows.

\begin{itemize}
\item $(\Omega,d)$ is a compact metric space.
\item $\mu$ is a Borel probability measure supported on $\Omega$.
\item There exist positive constants $\delta$ and $r_o$ such that for
  any $ x \in \Omega $ and $ r \leq r_0 $,
  \begin{equation}\label{M}
    a \, r ^{\delta}  \ \leq  \  \mu(B(x,r))  \ \leq  \   b \, r
    ^{\delta} .
  \end{equation}
  The constants $a$ and $b$ are independent of the ball
  $B(x,r):= \{y \in \Omega : d(x,y) < r \}$.
\item $\cR=(\ra)_{\alpha \in J }$ a sequence of points $\ra$ in
  $\Omega$ indexed by an infinite countable set $J$. The points
  $R_\alpha$ are commonly referred to as \emph{resonant points}.
\item $\beta: J \to \R^+ : \alpha \mapsto \ma $ is a positive function
  on $J$. It attaches a `weight' $\ma$ to the resonant point $\ra$.
\item To avoid pathological situations:
  \begin{equation}\label{path}
    \#\{\alpha\in J: \ma\le x\}<\infty\quad\text{for any $x\in\R$.}
  \end{equation}
\end{itemize}

\begin{rem} {\rm The measure condition \eqref{M} on the
      \emph{ambient measure} $\mu$ implies that $\mu$ is non-atomic,
      that is $\mu(\{x\})=0$ for any $x\in\Omega$, and that
$$
\mu(\Omega) :=  1 \asymp {\cal H}^{\delta} (\Omega)    \quad {\rm and \ }  \quad \dim \Omega = \delta  \, .
$$
Indeed, $\mu$ is comparable to $\delta$--dimensional Hausdorff
measure ${\cal H}^{\delta}$.  }
\end{rem}

\vspace*{2ex}


Given a decreasing function $\Psi : \R^+ \to \R^+ $ let
$$\La(\Psi)=\{x\in\Omega:x\in B(\ra,\Psi(\ma))\ \mbox{for\
  infinitely\ many\ }\alpha\in J\} \ . $$
The set $\La(\Psi)$ is a `$\limsup$' set; it consists of points in
$\Omega$ which lie in infinitely many of the balls $B(\ra,\Psi(\ma))$
centred at resonant points. As in the classical setting introduced in
\S\ref{clt}, it is natural to refer to the function $\Psi$ as the {\em
  approximating function}. It governs the `rate' at which points in
$\Omega$ must be approximated by resonant points in order to lie in
$\La(\Psi)$. In view of the finiteness condition \eqref{path}, it
follows that for any fixed $k>1$, the number of $ \alpha$ in $J$ with
$ k^{t-1} < \ma \leq k^t $ is finite regardless of the value of
$t\in \N $. Therefore $\La(\Psi)$ can be rewritten as the limsup set
of
$$
\Upsilon(\Psi,k,t) := \!\!\!\!\!  \bigcup_{\alpha\in J \ : \ { k^{t-1}
    < \ma \leq k^t}} \!\!\!\!\!\!\!\!\!\! B(\ra,\Psi(\ma))\,;
$$
that is
$$
\La(\Psi) = \limsup_{t \to \infty} \Upsilon(\Psi,k,t) :=
\bigcap_{m=1}^{\infty} \bigcup_{t=m}^{\infty} \Upsilon(\Psi,k,t) \
. $$

It is reasonably straightforward to determine conditions under which
$\mu(\La(\Psi)) = 0$. In fact, this is implied by the convergence part of
the Borel--Cantelli lemma from probability theory whenever
\begin{equation}\label{*1}
  \textstyle{\sum_{t=1}^\infty \, \mu(\Upsilon(\Psi,k,t)) < \infty  \ .}
\end{equation}

\noindent In view of this it is natural to consider the following
\textbf{fundamental problem}:
\begin{center}
  \emph{Under what conditions is $\mu(\La(\p)) >0 $ and more generally
    $\cH^s(\Lambda(\Psi))~>~0$ ?}
\end{center}

\noindent Ideally, we would like to be able to conclude the full
measure statement $ \cH^s(\Lambda(\Psi)) = \cH^s(\Omega) \, .  $
Recall that when $s=\delta$, the ambient measure $\mu$ coincides with
$\cH^{\delta}$.  Also, if $s < \delta$ then $ \cH^s(\Omega) = \infty$.

\subsubsection{The basic example \label{beub}} In order to illustrate
and clarify the above general setup, we show that the set $W(n, \psi)$
of simultaneously $\psi$-well approximable points
$\vx\in \I^n:=[0,1]^n$ can be expressed in the form of $\La(\Psi)$.
With this in mind, let
\begin{itemize}
\item[$\circ$] $\Omega:= \I^n $ \quad {\rm and } \quad
  $d(\vx,\vy):=\max\limits_{1\le i\le n}|x_i-y_i|$,
\item[$\circ$] $\mu$ be Lebesgue measure restricted to $\I^n$ \quad
  {\rm and } \quad $\delta:=n$,
\item[$\circ$] $J:= \{ (\vv p,q) \in \Z^n \times \N : \vv p/q\in \I^n\} $
  \quad {\rm and } \quad $\alpha := (\vv p,q) \in J$,
\item[$\circ$] $\cR:=(\vv p/q)_{(\vv p,q)\in J}$ \quad {\rm and }
  \quad $\beta_{(\vv p,q)}:=q$.
\end{itemize}

\noindent Thus, the resonant points $\ra$ are simply rational points
$\vv p/q := (p_1/q, \ldots,p_n/q)$ in the unit cube $\I^n$.  It is
readily verified that the measure condition \eqref{M} and the
finiteness condition \eqref{path} are satisfied and moreover that for
any decreasing function $\psi : \N \to \R^+ $,
\begin{equation*}\label{claim1}
  \La(\Psi)=W(n,\psi)\,\quad\text{with}  \quad \Psi(q):=\psi(q)/q\,.
\end{equation*}

\medskip

For this basic example, the solution to the fundamental problem is
given by the simultaneous Khintchine-\Jarnik{} Theorem (see
Theorem \ref{ohgod} with $m=1$ in \S\ref{DAKT}).

\subsection{The notion of ubiquity\label{thesystems}}

The following `system' contains the key measure theoretic structure
necessary for our attack on the fundamental problem.

Let $\r : \R^+ \to \R^+ $ be a function with $\r(r) \to 0 $ as
$r \to \infty $ and let
$$ \De(\r,k,t) := \bigcup_{\alpha\in J\,:\,\beta_\alpha\le k^t} B(\ra,\r(k^t))
\ ,$$
where $k > 1 $ is a fixed real number. Note that when $\rho=\Psi$ the
composition of $\De(\r,k,t)$ is very similar to that of
$ \Upsilon(\Psi,k,t)$.

\medskip


\noindent\textbf{Definition (Ubiquitous system\index{Ubiquitous system})\ } Let $B=B(x,r)$
denote an arbitrary ball with centre $x$ in $\Omega$ and radius
$r \leq r_0$. Suppose there exists a function $\r$ and absolute
constants $\ka>0$ and $ k > 1$ such that for any ball $B$ as above
\begin{equation}\label{ub}
  \mu\left( B \cap \De(\r,k,t) \right) \ \ge \ \ka \ \mu(B) \quad \mbox{for $t \geq t_0(B)$} .
\end{equation}
Then the pair $(\cR,\beta)$ is said to be a {\em local
  $\mu$-ubiquitous system relative to $(\r,k)$}.  If \eqref{ub} does
not hold for arbitrary balls with centre $x$ in $\Omega$ and radius
$r \leq r_0$, but does hold with $B= \Omega$, the pair $(\cR,\beta)$
is said to be a {\em global $\mu$-ubiquitous system relative to
  $(\r,k)$}.

\medskip

Loosely speaking, the definition of local ubiquity says that the set
$\De(\r,k,t)$ locally `approximates' the underlying space $\Omega$ in
terms of the measure $\mu$.  By `locally' we mean balls centred at
points in $\Omega$.  The function $\r$ is referred to as the {\em
  ubiquitous function}. The actual values of the constants $\ka$ and
$k$ in the above definition are irrelevant---it is their existence
that is important.  In practice, the $\mu$-ubiquity of a system can
be established using standard arguments concerning the distribution of
the resonant points in $\Omega$, from which the function $\r$ arises
naturally. To illustrate this, we return to the basic example of
\S\ref{beub}.

\begin{prop} \label{52} There is a constant $k>1$ such that the pair
  $(\cR,\beta)$ defined in \S\ref{beub} is a local $\mu$-ubiquitous
  system relative to $(\r,k)$ where
  $\r : r \mapsto {\rm const}\times r^{-(n+1)/n}$.
\end{prop}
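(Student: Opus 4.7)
The plan is to mimic the proof of Theorem~\ref{thm:slv1} in the higher-dimensional setting, using Theorem~\ref{nDir} (Dirichlet in $\R^n$, symmetric case $i_1=\dots=i_n=1/n$) in place of the one-dimensional version. Fix a ball $B=B(x_0,r)\subset \I^n$ with $r\le r_0$, and write $c=\mathrm{vol}(B)=\mu(B)$.

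First, I would apply Theorem~\ref{nDir} with $N=k^t$: for every $\vx\in B$ there exist $q\in\N$ with $1\le q\le k^t$ and $\vv p\in\ZZ^n$ with $|x_j-p_j/q|<1/(qk^{t/n})$ for all $j$. Hence
\begin{equation*}
B \;\subset\; \bigcup_{q\le k^t}\bigcup_{\vv p\,:\,\vv p/q\in\I^n}
B\Bigl(\tfrac{\vv p}{q},\tfrac{1}{qk^{t/n}}\Bigr).
\end{equation*}
I then split the union into the ranges $q\le k^{t-1}$ and $k^{t-1}<q\le k^t$, exactly as in the proof of Theorem~\ref{thm:slv1}.

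For the small-$q$ contribution, the number of rationals $\vv p/q$ whose ball meets $B$ is at most $(2rq+2)^n=\mu(B)q^n+O(q^{n-1})$, while each such ball has $n$-dimensional measure $2^n/(q^n k^t)$. Summing,
\begin{equation*}
\mu\Bigl(B\cap\!\!\bigcup_{q\le k^{t-1}}\bigcup_{\vv p} B\bigl(\tfrac{\vv p}{q},\tfrac{1}{qk^{t/n}}\bigr)\Bigr)
\;\le\;\sum_{q\le k^{t-1}}\frac{2^n\mu(B)}{k^t}+O\!\Bigl(\sum_{q\le k^{t-1}}\frac{1}{qk^t}\Bigr)
\;\le\;\frac{C_n\,\mu(B)}{k}
\end{equation*}
for all sufficiently large $t\ge t_0(B)$, where $C_n$ depends only on $n$. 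Choosing $k$ large enough that $C_n/k\le 1/2$, the large-$q$ part must satisfy
\begin{equation*}
\mu\Bigl(B\cap\!\!\bigcup_{k^{t-1}<q\le k^t}\bigcup_{\vv p} B\bigl(\tfrac{\vv p}{q},\tfrac{1}{qk^{t/n}}\bigr)\Bigr)\;\ge\;\tfrac{1}{2}\mu(B).
\end{equation*}

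To finish, I observe that in the range $k^{t-1}<q\le k^t$ the Dirichlet radius is bounded above by $1/(k^{t-1}\cdot k^{t/n})=k\cdot k^{-t(n+1)/n}$. Therefore, setting $\rho(r):=k\,r^{-(n+1)/n}$, each Dirichlet ball from the large-$q$ range is contained in the corresponding enlarged ball $B(\vv p/q,\rho(k^t))$ appearing in $\Delta(\rho,k,t)$. Consequently
\begin{equation*}
\mu\bigl(B\cap\Delta(\rho,k,t)\bigr)\;\ge\;\tfrac{1}{2}\mu(B),
\end{equation*}
which is precisely the local ubiquity condition \eqref{ub} with $\kappa=1/2$.

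The only real obstacle is the bookkeeping in the small-$q$ estimate: one must track the boundary term $O(q^{n-1})$ carefully to see that, after dividing by $\mu(B)$, it is absorbed once $t$ is taken large enough depending on $B$; but this is routine and parallels the argument already carried out in Theorem~\ref{thm:slv1}. Everything else is a direct transcription of that proof into $n$ dimensions via Theorem~\ref{nDir}.
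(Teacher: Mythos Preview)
Your proposal is correct and follows precisely the route the paper intends: the paper does not give a full proof of Proposition~\ref{52} but instead remarks that the one-dimensional case is Theorem~\ref{thm:slv1} and leaves the general case as an exercise with the hint to use the multidimensional Dirichlet theorem (Theorem~\ref{nDir}) or Minkowski's theorem. Your argument is exactly that exercise carried out, transcribing the proof of Theorem~\ref{thm:slv1} into $n$ dimensions via Theorem~\ref{nDir}, so there is nothing to add.
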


The one-dimensional case of this proposition follows from
Theorem~\ref{thm:slv1}.

\noindent \emph{Exercise:} Prove the above proposition for arbitrary
$n$. Hint: you will need to use the multidimensional version of
Dirichlet's theorem, or Minkowski's theorem.

\subsection{The ubiquity statements}

Before stating the main results regarding ubiquity we introduce one
last notion. Given a real number $k >1$, a function $h:\R^+\to\R^+$
will be said to be \emph{$k$-regular} if there exists a strictly
positive constant $\lambda < 1$ such that for $t$ sufficiently large
\begin{equation} h(k^{t+1}) \leq \lambda \, h(k^t) \ .  \label{afmh}
\end{equation}
The constant $\lambda$ is independent of $t$ but may depend on $k$. A
consequence of local ubiquity is the following result.

\begin{thm}[Ubiquity - the Hausdorff measure case]\label{UL}
  Let $(\Omega,d)$ be a compact metric space equipped with a
  probability measure $\mu$ satisfying condition \eqref{M} and such
  that any open subset of $\Omega$ is $\mu$-measurable. Suppose that
  $(\cR,\beta)$ is a locally $\mu$-ubiquitous system relative to
  $(\r,k)$ and that $\Psi$ is an approximating function. Furthermore,
  suppose that $s \in (0,\delta]$, that $\rho$ is $k$-regular and that
  \begin{equation} \label{unif} \sum_{t=1}^{\infty} \
    \frac{\Psi(k^t)^s}{\r(k^t)^{\delta} } \ = \ \infty\,.
  \end{equation}
  Then
$${\cal H}^s  \left( \Lambda(\Psi) \right) \ = \ {\cal H}^s
\left( \Omega \right) \, .
$$
\end{thm}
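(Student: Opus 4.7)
The plan is to reduce the full-measure statement to a uniform local estimate: for every ball $B=B(x,r)$ with $x\in\Omega$ and $r\le r_0$,
$$
\cH^s(B\cap\La(\Psi)) \;\ge\; c\,\cH^s(B),
$$
with an absolute constant $c>0$. Coupled with \eqref{M}, which forces $\cH^\delta\asymp\mu$ on small balls, this local bound promotes to the full equality $\cH^s(\La(\Psi))=\cH^s(\Omega)$ by a standard density argument when $s=\delta$, and by a simple covering argument together with the fact that $\cH^s(\Omega)=\infty$ when $s<\delta$.

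\textbf{Reduction to the substantive case.} I would first dispense with the easy regime where $\Psi(k^t)\ge\r(k^t)$ for infinitely many $t$. For such $t$ one has $\Upsilon(\Psi,k,t)\supset\De(\r,k,t)$, so the ubiquity hypothesis \eqref{ub} gives $\mu(B\cap\La(\Psi))\ge\ka\,\mu(B)$, and \eqref{M} yields the local bound at once. Henceforth I assume $\Psi(k^t)<\r(k^t)$ for all large $t$, which is where the construction is needed.

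\textbf{Cantor construction.} Fix $B$ as above. I would build inductively a nested sequence of finite families $\cB_1\supset\cB_2\supset\cdots$ of closed balls, all centred at resonant points and contained in $B$, together with the associated limit set $K_\infty:=\bigcap_n\bigcup_{B'\in\cB_n}B'$, which by construction is contained in $B\cap\La(\Psi)$. To pass from stage $n$ to stage $n+1$ inside a parent ball $B_\alpha=B(\ra,\Psi(k^{t_n}))\in\cB_n$, choose a level $t_{n+1}\gg t_n$, apply local ubiquity \eqref{ub} to $B_\alpha$, and use a $5r$-covering lemma to extract a maximal disjoint family of balls $B(R_{\alpha'},\r(k^{t_{n+1}}))$ whose $5$-dilates cover $B_\alpha\cap\De(\r,k,t_{n+1})$. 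The corresponding hit balls $B(R_{\alpha'},\Psi(k^{t_{n+1}}))$ form the children of $B_\alpha$ in $\cB_{n+1}$. The $k$-regularity \eqref{afmh} of $\r$ lets me pick $t_{n+1}$ so that $\r(k^{t_{n+1}})$ is much smaller than $\Psi(k^{t_n})$ without the level growing uncontrollably, so the construction is well posed, and condition \eqref{M} applied to the disjoint $\r$-balls gives the key counting estimate
$$
\sum_{\alpha'\text{ child of }\alpha}\r(k^{t_{n+1}})^\delta \;\gtrsim\; \Psi(k^{t_n})^\delta.
$$

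\textbf{Mass distribution and the main difficulty.} I would assign to each stage-$n$ ball $B_\alpha\in\cB_n$ a mass proportional to $\Psi(k^{t_n})^s$, normalised so that the total mass on $B$ at stage $1$ equals $r(B)^s$. The divergence hypothesis \eqref{unif}, combined with the above counting estimate, ensures that at each splitting step one can collect enough children to conserve the parent's mass; passing to a weak-$*$ limit then produces a probability-type measure $\nu$ supported on $K_\infty$ with $\nu(K_\infty)\asymp r(B)^s$. The inductive bookkeeping also yields the growth bound $\nu(A)\lesssim r(A)^s$ for every ball $A\subset\Omega$, and the mass distribution principle converts this into $\cH^s(K_\infty)\gtrsim r(B)^s\asymp\cH^s(B)$, which is the required local lower bound. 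The main obstacle is reconciling three competing demands in the choice of the $t_n$ and the mass normalisation: (i) children must fit disjointly inside each parent, (ii) the parent's mass must be recovered by summing the children's masses, and (iii) $\nu$ must obey the $s$-dimensional growth bound uniformly across scales. Because $\Psi$ carries no regularity assumption, one typically has to group the levels into blocks over which the partial sums $\sum\Psi(k^t)^s/\r(k^t)^\delta$ accumulate to a fixed constant, and the $k$-regularity of $\r$ is what makes this block decomposition compatible with steps (i) and (iii).
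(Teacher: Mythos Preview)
The paper does not actually prove Theorem~\ref{UL}; it is stated as one of the main results of the ubiquity framework and attributed to \cite{BDV06} (with the clean formulation in \cite{BVparis}), so there is no in-paper proof to compare against directly.

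That said, your outline is faithful to the argument in those references. The overall architecture --- reduce to the case $\Psi(k^t)<\r(k^t)$, build a Cantor-type subset of $B\cap\La(\Psi)$ level by level using local ubiquity and a $5r$-covering lemma, distribute a mass whose growth is controlled by $r^s$, and invoke the mass distribution principle --- is exactly the machinery of \cite{BDV06}. Your identification of the three competing constraints in choosing the levels $t_n$, and the need to group levels into blocks over which the partial sums of $\Psi(k^t)^s/\r(k^t)^\delta$ accumulate to a fixed constant (precisely because $\Psi$ carries no regularity hypothesis), matches the key technical device in the cited proof. The $k$-regularity of $\r$ is indeed what allows the $\r$-balls at the chosen levels to sit well inside the parent $\Psi$-balls while keeping the counting estimate intact.

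One point to sharpen: the promotion from the uniform local bound $\cH^s(B\cap\La(\Psi))\ge c\,\cH^s(B)$ to the global statement $\cH^s(\La(\Psi))=\cH^s(\Omega)$ is not quite a ``standard density argument'' in the $s<\delta$ case, since $\cH^s$ is then not locally finite. The correct route (as in \cite[\S8]{BDV06}) is to observe that the local bound, valid for arbitrarily small balls, forces $\cH^s(\La(\Psi))=\infty$ directly: one can pack arbitrarily many disjoint small balls into $\Omega$, each contributing at least a fixed positive amount scaled by $r^s$, and the $\cH^s$-content diverges. This is a minor gap in your write-up rather than a structural one.
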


As already mentioned, if $s < \delta$ then $ \cH^s(\Omega) =
\infty$.
On the other hand, if $ s = \delta$, the Hausdorff measure
$\cH^{\delta}$ is comparable to the ambient measure $\mu$ and the
theorem implies that
$$
\mu\left( \Lambda(\Psi) \right)= \mu (\Omega) := 1.
$$
Actually, the notion of global ubiquity has implications in the
ambient measure case.

\begin{thm}[Ubiquity  - the ambient measure case]\label{UL2}
  Let $(\Omega,d)$ be a compact metric space equipped with a measure
  $\mu$ satisfying condition \eqref{M} and such that any open subset
  of $\Omega$ is $\mu$-measurable. Suppose that $(\cR,\beta)$ is a
  globally $\mu$-ubiquitous system relative to $(\r,k)$ and that
  $\Psi$ is an approximating function. Furthermore, suppose that
  either $\rho$ or $\Psi$ is $k$-regular and that
  \begin{equation} \label{unif2} \sum_{t=1}^{\infty} \
    \left(\frac{\Psi(k^t)}{\r(k^t)}\right)^{\delta} \ = \ \infty\,.
  \end{equation}
  Then
$$
\mu\left( \Lambda(\Psi) \right)>0.
$$
If in addition $(\cR,\beta)$ is a locally $\mu$-ubiquitous system
relative to $(\r,k)$, then
$$
\mu\left( \Lambda(\Psi) \right)=1.
$$
\end{thm}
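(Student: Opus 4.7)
The plan is to prove the theorem by combining a per-level mass estimate with the divergence Borel--Cantelli lemma, and then upgrade to full measure using Lebesgue density. First I would note that $\Lambda(\Psi) = \limsup_{t\to\infty} \Upsilon(\Psi,k,t)$, so it suffices to produce a disjoint ``thinned'' subfamily $E_t$ of genuine approximating balls whose $\limsup$ set has positive $\mu$-measure. By global $\mu$-ubiquity, $\mu(\Delta(\rho,k,t)) \geq \kappa$ for all $t \geq t_0$. The condition \eqref{M} makes $\mu$ doubling, so a Vitali $5r$-covering argument extracts from the defining balls $\{B(\ra,\rho(k^t)):\beta_\alpha\leq k^t\}$ a disjoint subfamily of cardinality $N_t \asymp \rho(k^t)^{-\delta}$ whose union has $\mu$-measure $\gtrsim \kappa$. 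Shrinking each chosen ball to radius $\Psi(k^t)$ and using that $\Psi(k^t) \leq \Psi(\beta_\alpha)$ (since $\beta_\alpha \leq k^t$ and $\Psi$ is decreasing), I obtain a disjoint family $E_t$ of approximating balls with $\mu(E_t) \asymp N_t\Psi(k^t)^\delta \asymp (\Psi(k^t)/\rho(k^t))^\delta$, at least when $\Psi(k^t) \leq \rho(k^t)$; the opposite case lets one apply reverse Fatou directly to the inclusion $\Delta(\rho,k,t) \subseteq \Upsilon(\Psi,k,t)$ on the finite space $(\Omega,\mu)$, finishing the positive-measure conclusion immediately.

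Next I would establish pairwise quasi-independence of the $E_t$ after passing to a suitable subsequence. The disjointness inside $E_t$ forces centres to be separated by at least $2\rho(k^t)$, so a volume-packing estimate via \eqref{M} bounds the number of $E_t$-balls inside any ball of radius $\Psi(k^s)$ by $\lesssim (\Psi(k^s)/\rho(k^t))^\delta$, provided $\rho(k^t) \leq \Psi(k^s)$. Summing the resulting bound $\mu(B \cap E_t) \lesssim \mu(B)(\Psi(k^t)/\rho(k^t))^\delta$ over the disjoint balls $B$ comprising $E_s$ yields
$$\mu(E_s \cap E_t) \;\lesssim\; \mu(E_s)\,\mu(E_t).$$
The $k$-regularity of $\rho$ (or $\Psi$, combined with $\rho(r)\to 0$) secures $\rho(k^t)\leq\Psi(k^s)$ whenever $t-s$ is large enough, so one can extract a sparse subsequence $\{t_j\}$ on which pairwise quasi-independence holds. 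By grouping the divergent series $\sum_t(\Psi(k^t)/\rho(k^t))^\delta = \infty$ into blocks and retaining the largest term from each, the subsequence can simultaneously satisfy $\sum_j \mu(E_{t_j}) = \infty$. The divergence Borel--Cantelli lemma then gives $\mu(\limsup_j E_{t_j}) > 0$, and since $\Psi(k^{t_j})\to 0$ forces any $x \notin \cR$ lying in infinitely many $E_{t_j}$ to belong to infinitely many distinct approximating balls, $\limsup_j E_{t_j} \subseteq \Lambda(\Psi) \cup \cR$. As $\cR$ is countable and $\mu$ is non-atomic by \eqref{M}, this proves $\mu(\Lambda(\Psi)) > 0$.

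To upgrade to full measure under local ubiquity, the same three steps are rerun with an arbitrary ball $B\subseteq\Omega$ of positive $\mu$-measure in place of $\Omega$: the local condition \eqref{ub} is precisely the input required to run the Vitali selection inside $B$, and every constant appearing above depends only on $\kappa,a,b,\delta$, hence is independent of $B$. This yields a uniform lower bound $\mu(\Lambda(\Psi)\cap B) \geq c\,\mu(B)$ for every sufficiently small ball $B$. Because $\mu$ is doubling, the Lebesgue differentiation theorem applies: if $\mu(\Lambda(\Psi)^c) > 0$, then for $\mu$-a.e.\ point of $\Lambda(\Psi)^c$ one would have $\mu(\Lambda(\Psi)^c \cap B)/\mu(B) \to 1$ as $r(B)\to 0$, contradicting the uniform density bound. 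Hence $\mu(\Lambda(\Psi)) = 1$.

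The main obstacle will be Step 2: the packing estimate requires well-separated scales $\rho(k^t) \leq \Psi(k^s)$, but a priori the divergence hypothesis could concentrate mass on consecutive levels where those scales are comparable. The $k$-regularity assumption is exactly what reconciles these, forcing geometric decay in one of $\rho,\Psi$ and allowing a sparse subsequence to be extracted while losing only a bounded multiplicative constant in the divergent series. The final technicality---carefully handling the borderline case $\Psi(k^t) > \rho(k^t)$---is dispatched by the reverse-Fatou remark in Step 1.
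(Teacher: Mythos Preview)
The paper does not prove Theorem~\ref{UL2}; it simply records the statement and attributes the proof to \cite{BDV06}. So there is no paper-proof to compare against. Your overall plan---Vitali extraction from the ubiquity cover, divergence Borel--Cantelli for the thinned families $E_t$, then Lebesgue density to pass from positive to full measure under local ubiquity---is exactly the standard route used in \cite{BDV06}. The skeleton is right.

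There is, however, a genuine gap in your quasi-independence step. You assert that $k$-regularity ``secures $\rho(k^t)\le\Psi(k^s)$ whenever $t-s$ is large enough'', and then extract a sparse subsequence on which all cross terms are ``good''. This is false as a uniform statement: if $\rho$ is $k$-regular then $\rho(k^t)\le\lambda^{t-s}\rho(k^s)$, and to force $\rho(k^t)\le\Psi(k^s)$ you need $t-s\gtrsim|\log a_s|$ where $a_s:=(\Psi(k^s)/\rho(k^s))^\delta$. The required gap therefore depends on $s$ and can tend to infinity. Your block-and-select manoeuvre then fails to preserve divergence in general: for instance, if $a_t\asymp 1/(t\log t)$ (so $\sum a_t=\infty$), the spacing constraint forces $t_{j+1}-t_j\gtrsim\log t_j$, hence $t_j\gtrsim j\log j$, and then $\sum_j a_{t_j}\lesssim\sum_j 1/(j(\log j)^2)<\infty$.

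The correct handling (this is what is done in \cite{BDV06}) is not to thin to a subsequence at all, but to bound the ``bad'' cross terms directly. For $s<t$ with $\rho(k^t)>\Psi(k^s)$, each ball of $E_s$ meets at most a bounded number of $E_t$-balls, so $\mu(E_s\cap E_t)\lesssim N_s\,\Psi(k^t)^\delta\asymp\rho(k^s)^{-\delta}\Psi(k^t)^\delta$. Now use regularity to sum one index as a geometric series: if $\rho$ is $k$-regular, fix $t$ and sum over $s<t$ to get $\sum_{s<t}\rho(k^s)^{-\delta}\lesssim\rho(k^t)^{-\delta}$, hence $\sum_{\text{bad }s<t}\mu(E_s\cap E_t)\lesssim\mu(E_t)$; if $\Psi$ is $k$-regular, fix $s$ and sum over $t>s$ to get the analogous bound $\lesssim\mu(E_s)$. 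Either way the total bad contribution is $\lesssim\sum_t\mu(E_t)$, which is absorbed into the diagonal and dominated by $(\sum_t\mu(E_t))^2$ once the latter is large. That is how the regularity hypothesis actually enters.

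One smaller point: in your ``easy case'' you write $\Delta(\rho,k,t)\subseteq\Upsilon(\Psi,k,t)$. That inclusion is false as written, since $\Delta$ ranges over all $\alpha$ with $\beta_\alpha\le k^t$ while $\Upsilon$ restricts to $k^{t-1}<\beta_\alpha\le k^t$. The intended argument---reverse Fatou gives $\mu(\limsup_t\Delta(\rho,k,t))\ge\kappa$, and any $x\notin\cR$ in this limsup must see infinitely many distinct $\alpha$ because $\rho(k^t)\to0$---does work, but you should state it via $\limsup_t\Delta(\rho,k,t)\subseteq\Lambda(\Psi)\cup\cR$ rather than the level-wise inclusion.
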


\begin{rem} {\rm Note that in Theorem \ref{UL2} we can get away with
    either $\rho$ or $\Psi$ being $k$-regular.  In the ambient measure
    case, it is also possible to weaken the measure condition
    \eqref{M} (see Theorem 1 in \cite[\S3]{BDV06}).  }
\end{rem}

\begin{rem} {\rm If we know via some other means that
    $ \Lambda(\Psi) $ satisfies a zero-full law (as indeed is the case
    for the classical set of $W(n,\psi)$ of $\psi$-well approximable
    points), then it is enough to show that
    $\mu\left( \Lambda(\Psi) \right)>0$ in order to conclude full
    measure.  }
\end{rem}

The above results constitute the main theorems appearing in
\cite{BDV06} tailored to the setup considered here.  In fact, Theorem
\ref{UL} as stated appears in \cite{BVparis} for the first time.
Previously, the Hausdorff and ambient measure cases had been thought
of and stated separately.

The concept of ubiquity was originally formulated by Dodson, Rynne
$\&$ Vickers \cite{DRV} to obtain lower bounds for the Hausdorff
dimension of $\limsup$ sets.  Furthermore, the ubiquitous systems of
\cite{DRV} essentially coincide with the regular systems of Baker $\&$
Schmidt \cite{BS70} and both have proved very useful in obtaining
lower bounds for the Hausdorff dimension of $\limsup$ sets. However,
unlike the framework developed in \cite{BDV06}, both \cite{BS70} and
\cite{DRV} fail to shed any light on establishing the more desirable
divergent Khintchine and \Jarnik{} type results. The latter clearly
implies lower bounds for the Hausdorff dimension. For further details
regarding regular systems and the original formulation of ubiquitous
systems see \cite{BDV06,BD99}.

\subsubsection{The basic example and the simultaneous
  Khintchine-\Jarnik{} Theorem \label{ubskj} }

Regarding the basic example of \S\ref{beub}, recall
that \begin{equation*} \La(\Psi)=W(n,\psi)\,\quad\text{with } \quad
  \Psi(q):=\psi(q)/q
\end{equation*}
and that Proposition \ref{52} states that for $k$ large enough, the
pair $(\cR,\beta)$ is a local $\mu$-ubiquitous system relative to
$(\r,k)$ where $$\r : r \mapsto {\rm const}\times r^{-(n+1)/n} \, . $$

\noindent Now, clearly the function $\rho $ is $k$-regular. Also note
that the divergence sum condition \eqref{unif} associated with
Theorem~\ref{UL} becomes
$$
\sum_{t=1}^{\infty} \ k^{t(n+1-s)}\psi(k^t)^s \, = \, \infty \, .
$$
If $\psi$ is monotonic, this is equivalent to
$$
\sum_{q=1}^{\infty} \ q^{n-s}\psi(q)^s \, = \, \infty \, ,
$$
and Theorem~\ref{UL} implies that

$$
\cH^s(W(n,\psi))=\cH^s(\I^n) \, .
$$

\noindent The upshot is that Theorem~\ref{UL} implies the divergent
case of the simultaneous Khintchine-Jarn\'{\i}k Theorem; namely,
Theorem \ref{ohgod} with $m=1$ in \S\ref{DAKT}.

\vspace*{2ex}

\begin{rem} {\rm It is worth standing back a little and thinking about
    what we have actually used in establishing the classical
    results---namely, local ubiquity.  Within the classical setup,
    local ubiquity is a simple measure theoretic statement concerning
    the distribution of rational points with respect to Lebesgue
    measure---the natural measure on the unit interval. From this we
    are able to obtain the divergent parts of both Khintchine's
    Theorem (a Lebesgue measure statement) and Jarn\'{\i}k's Theorem
    (a Hausdorff measure statement).  In other words, the Lebesgue
    measure statement of local ubiquity underpins the general
    Hausdorff measure theory of the $\limsup$ set $W(n, \p)$.  This of
    course is very much in line with the subsequent discovery of the
    Mass Transference Principle discussed in \S\ref{mtpballs}.  }
\end{rem}

The applications of ubiquity are widespread, as demonstrated in
\cite[\S12]{BDV06}. We now consider a more recent application of
ubiquity to the `fibers' strengthening of Khintchine's simultaneous
theorem described in \S\ref{KonFibers}.

\subsubsection{Proof of Theorem \ref{tkfib2}: Part A }

Let $\psi:\N\to \Rp$ be a monotonic function and $\alpha \in \I$ such
that it has Diophantine exponent $\tau(\alpha) < 2$. In view of Remark
\ref{re27} in \S\ref{KonFibers}, establishing Theorem~\ref{tkfib2} is
equivalent to showing that
\begin{equation*}
  m( \Pi (\psi, \alpha) ) =
  1    \quad  {\rm if}    \quad  \sum_{q=1}^{\infty} \;   \psi^2(q)
  =\infty   \,
\end{equation*}
where
$$\Pi (\psi, \alpha) := \{ \beta \in \I :  \| q \beta \| \le   \psi(q) \ {\rm for \ infinitely  \  many \ } q \in {\cal A}_{\alpha}(\psi) \} \, .
$$
\noindent Recall,
$$
{\cal A}_{\alpha}(\psi) := \{ q \in \N: \| q \alpha \| \le \psi(q) \}
\, .
$$

\begin{rem} \label{remfib} {\rm Without loss of generality, we can
    assume that
\begin{equation} \label{ty}
 q^{-\frac12}  (\log q)^{-1}   \ \leq \ \psi(q)  \ \leq \  q^{-\frac12}   \quad \forall \ q \in \N \, . \end{equation}
 \noindent\emph{Exercise:}  Verify that this is indeed the case. For the right-hand side  of \eqref{ty}, consider the auxiliary function
 $$
 \tilde{\psi} : q \to \tilde{\psi}:= \min \{q^{-\frac12} , \psi(q) \}
 $$
 and show that   $\sum_{q=1}^{\infty} \;   \tilde{\psi}^2(q)
 =\infty  $. For the left-hand side of \eqref{ty}, consider the auxiliary function
 $$
  \tilde{\psi}  : q \to  \tilde{\psi} (q) := \max \{\hat{\psi}(q) := q^{-\frac12}  (\log q)^{-1} , \psi(q) \}
 $$
 and show that $m(\Pi (\hat{\psi}, \alpha)) = 0$ by making use of the
 counting estimate \eqref{count} and the convergence Borel-Cantelli
 Lemma.  }
\end{rem}

We now show that the set $\Pi (\psi, \alpha)$ can be expressed in the
form of $\La(\Psi)$.  With this in mind, let
\begin{itemize}
\item[$\circ$] $\Omega:=[0,1]$ \quad {\rm and } \quad
  $d(x,y):= |x-y|$,
\item[$\circ$] $\mu$ be Lebesgue measure restricted to $\I$ \quad {\rm
    and } \quad $\delta:=1$,
\item[$\circ$]
  $J:= \{ (p,q) \in \Z \times {\cal A}_{\alpha}(\psi) : p/q\in \I\} $
  \quad {\rm and } \quad $\alpha := (p,q) \in J$,
\item[$\circ$] $\cR:=(p/q)_{(p,q)\in J}$ \quad {\rm and } \quad
  $\beta_{(p,q)}:=q$.
\end{itemize}

\noindent Thus, the resonant points $\ra$ are simply rational points
$p/q$ in the unit interval $\I$ with denominators $q $ restricted to
the set ${\cal A}_{\alpha}(\psi) $.  It is readily verified that the
measure condition \eqref{M} and the finiteness condition \eqref{path}
are satisfied and moreover that for any decreasing function
$\psi : \N \to \R^+ $,
\begin{equation*}\label{claim33}
  \La(\Psi)=\Pi (\psi, \alpha)\,\quad\text{with}  \quad \Psi(q):=\psi(q)/q\,.
\end{equation*}
Note that since $\psi$ is decreasing, the function $\Psi$ is
$k$-regular. Now, in view of Remark \ref{remfib}, the conditions of
Proposition~\ref{thm:slv45} are satisfied and we conclude that for $k$
large enough, the pair $(\cR,\beta)$ is a global $m$-ubiquitous system
relative to $(\r,k)$ where
$$\r : r \mapsto \frac{k}{r^{2}\psi(r)} \, . $$

\noindent Now, since $\psi$ is monotonic
$$
\sum_{t=1}^\infty\frac{\Psi(k^t)}{\r(k^t)} \ = \ \sum_{t=1}^\infty
k^{t-1}\psi^2(k^t)=\infty \quad \Longleftrightarrow \quad
\sum_{q=1}^\infty\psi^2(q)=\infty
$$
and Theorem~\ref{UL2} implies that
$$
\mu\Big(\Pi (\psi, \alpha)\Big)>0\,.
$$
Now observe that $ \Pi (\psi, \alpha) $ is simply the set
$W(\bar{\psi})$ of $ \bar{\psi} $--well approximable numbers with
$ \bar{\psi}(q) := \psi(q) $ if $ q \in {\cal A}_{\alpha}(\psi) $ and
zero otherwise.  Thus, Cassels' zero-full law
\cite{Cassels-50:MR0036787} implies the desired statement; namely that
$$
\mu\Big(\Pi (\psi, \alpha)\Big)=1 \, .
$$


\section{Diophantine approximation on manifolds\label{manifolds}}

\emph{Diophantine approximation on manifolds} (as coined by Bernik \&
Dodson in their Cambridge Tract \cite{BD99} ) or \emph{Diophantine
  approximation of dependent quantities} (as coined by Sprind\v zuk in
his monograph \cite{Sprindzuk}) refers to the study of Diophantine
properties of points in $\R^n$ whose coordinates are confined by
functional relations or equivalently are restricted to a sub-manifold
$\cM$ of $\R^n$. Thus, in the case of simultaneous Diophantine
approximation one studies sets such as
$$
\cM \cap W(n,\psi) \, .
$$

\noindent To some extent we have already touched upon the theory of
Diophantine approximation on manifolds when we considered Gallagher
multiplicative theorem on fibers in \S\ref{Gonfibres} and Khintchine
simultaneous theorem on fibers in \S\ref{KonFibers}. In these sections
the points of interest are confined to an affine co-ordinate subspace
of $\R^n$; namely the manifold
$$
\text{${\rm L}_{\bm{\alpha}} := \{\bm{\alpha}\} \times \R^d$, where
  $ 1 \le d \le n-1$ and $ \bm{\alpha} \in \I^{n-d}$.}
$$

In general, a manifold $\cM$ can locally be given by a system of
equations, for instance, the unit sphere in $\R^3$ is given by the
equation
$$
x^2+y^2+z^2=1;
$$
or it can be immersed into $\R^n$ by a map $\vv f:\R^d\to\R^n$ (the
actual domain of $\vv f$ can be smaller than $\R^d$), for example, the
Veronese curve is given by the map
$$
x\mapsto(x,x^2,\dots,x^n)\,.
$$
Such a map $\vv f$ is often referred to as a parameterisation and
\emph{without loss of generality we will assume that the domain of
  $\vv f$ is $\I^d$ and that the manifold $\cM \subseteq \I^n$ }.
Locally, a manifold given by a system of equations can be
parameterised by some map $\vv f$ and, conversely, if a manifold is
immersed by a map $\vv f$, it can be written using a system of $n-d$
equations, where $d$ is the dimension of the manifold.

\noindent\emph{Exercise:} Parameterise the upper hemisphere
$x^2+y^2+z^2=1$, $z>0$, and also write the Veronese curve (see above)
by a system of equations.

\vspace*{3ex}

In these notes we will mainly concentrate on the simultaneous (rather
than dual) theory of Diophantine approximation on manifolds. In
particular, we consider the following two natural problems.

\medskip

~\hspace*{4ex} {\em Problem 1.} To develop a Lebesgue theory for
$ \cM \cap W(n,\psi) $.

\medskip

~\hspace*{4ex} {\em Problem 2.} To develop a Hausdorff theory for
$ \cM \cap W(n,\psi) $.

\medskip

\noindent In short, the aim is to establish analogues of the two
fundamental theorems of Khintchine and Jarn\'\i k, and thereby provide
a complete measure theoretic description of the sets
$ \cM \cap W(n,\psi)$. The fact that the points $\vv x \in \R^n$ of
interest are of dependent variables, which reflects the fact that
$\vv x \in {\cal M}$, introduces major difficulties in attempting to
describe the measure theoretic structure of $ \cM \cap W(n,\psi)
$.
This is true even in the specific case that ${\cal M}$ is a planar
curve. More to the point, even for seemingly simple curves such as the
unit circle or the parabola the above problems are fraught with
difficulties.  In these notes we will concentrate mainly on describing
the Lebesgue theory.

{\em Unless stated otherwise, the approximating function
  $\psi:\N\to \Rp$ throughout this section is assumed to be
  monotonic. }

\subsection{The Lebesgue theory for manifolds}

The goal is to obtain a Khintchine type theorem that describes the
Lebesgue measure of the set $\cM \cap W(n,\psi)$ of simultaneously
$\psi$--approximable points lying on ${\cal M}$. First of all notice
that if the dimension $d$ of the manifold ${\cal M}$ is strictly less
than $n$ then $ m_n(\cM \cap W(n,\psi)) = 0 $ irrespective of the
approximating function $\psi$.  Thus, in attempting to develop a
Lebesgue theory for $ \cM \cap W(n,\psi)$ it is natural to use the
induced $d$-dimensional Lebesgue measure on $\cM$. Alternatively, if
$\cM$ is immersed by a map $\vv f:\I^d\to\R^n$ we use the
$d$-dimensional Lebesgue measure $m_d$ on the set of parameters of
$\vv f$; namely $\I^d$.  In either case, the measure under
consideration will be denoted by $ | \ . \ |_{\cM} $.

\vspace*{2ex}

\begin{rem} \label{rem300} Notice that for $\tau \leq 1/n $, we have
  that
  $|\cM \cap W(n,\tau)|_{{\cal M}} = |{\cal M}|_{{\cal M}} :=
  \mbox{F{\scriptsize ULL}} $
  as it should be since, by Dirichlet's theorem, we have that
  $W(n,\tau)= \I^n $.
\end{rem}

The two-dimension fiber problem considered in \S\ref{KonFibers}, in
which the manifold $ \cM $ is a vertical line ${\rm L}_{\alpha} $,
shows that it is not possible to obtain a Khintchine type theorem
(both the convergence and divergence aspects) for all manifolds.
Indeed, the convergent statement fails for vertical lines.  Thus, in a
quest for developing a general Khintchine type theory for manifolds
(cf. Problem 1 above) , it is natural to avoid lines and more
generally hyperplanes.  In short, we insist that the manifold under
consideration is ``sufficiently'' curved.

\subsubsection{Non-degenerate manifolds}\label{rpnm1}

In order to make any reasonable progress with Problems 1 \& 2 above,
we assume that the manifolds $ {\cal M}$ under consideration are {\bf
  non-degenerate} \cite{KM98}. Essentially, these are smooth
sub-manifolds of $\R^n$ which are sufficiently curved so as to deviate
from any hyperplane. Formally, a manifold $\cM$ of dimension $d$
embedded in $\R^n$ is said to be non-degenerate if it arises from a
non--degenerate map $\vv f:U\to \R^n$ where $U$ is an open subset of
$\R^d$ and $\cM:=\vv f(U)$. The map
$\vv f:U\to \R^n:\vv x\mapsto \vv f(\vv x )=(f_1(\vv x),\dots,f_n(\vv
x))$
is said to be \emph{non--degenerate at} $\vv x \in U$ if there exists
some $l\in\N$ such that $\vv f$ is $l$ times continuously
differentiable on some sufficiently small ball centred at $\vv x$ and
the partial derivatives of $\vv f$ at $\vv x$ of orders up to $l$ span
$\R^n$. The map $\vv f$ is \emph{non--degenerate} if it is
non--degenerate at almost every (in terms of $d$--dimensional Lebesgue
measure) point in $U$; in turn the manifold $\cM=\vv f(U)$ is also
said to be non--degenerate.  Any real, connected analytic manifold not
contained in any hyperplane of $\R^n$ is non--degenerate.  Indeed, if
$\cM$ is immersed by an analytic map
$\vv f = (f_1,\dots,f_n):U\to \R^n$ defined on a ball
$U \subset \R^d$, then $\cM$ is non-degenerate if and only if the
functions $ 1, f_1,\dots,f_n$ are linearly independent over $\R$.

\emph{Without loss of generality, we will assume that $U$ is $\I^d$
  and that the manifold $\cM \subseteq \I^n$ }

\noindent Note that in the case the manifold $\cM$ is a planar curve
${\cal C}$, a point on ${\cal C}$ is non-degenerate if the curvature
at that point is non-zero. Thus, ${\cal C}$ is a non-degenerate planar
curve if the set of points on ${\cal C}$ at which the curvature
vanishes is a set of one--dimensional Lebesgue measure zero. Moreover,
it is not difficult to show that the set of points on a planar curve
at which the curvature vanishes but the curve is non-degenerate is at
most countable. In view of this, the curvature completely describes
the non-degeneracy of planar curves. Clearly, a straight line is
degenerate everywhere.

The claim is that the notion of non-degeneracy is the right
description for a manifold $\cM$ to be ``sufficiently'' curved in
order to develop a general Khintchine type theory (both convergent and
divergent cases) for $\cM \cap W(n,\psi)$.  With this in mind, the key
then lies in understanding the distribution of rational points
``close'' to such manifolds.

\subsubsection{Rational points near manifolds: the heuristics }\label{rpnm}
Given a point $\vv x=(x_1,\dots,x_n) \in \R^n $ and a set
$A \subseteq \R^n$, let
$$
{\rm dist}(\vx , A ) := \inf\{d(\vx , \vv a) \, : \, \vv a \in A \}
$$
where as usual $d(\vx,\vv a):=\max\limits_{1\le i\le n}|x_i-a_i|$.
Now let $\vv x \in \cM \cap W(n,\psi) $.  Then by definition there
exist infinitely many $q\in\N$ and $\vv p\in\Z^n$ such that
$$
{\rm dist}\Big(\cM,\frac{\vv p}{q}\Big)\le d \Big(\vv x,\frac{\vv
  p}{q}\Big)<\frac{\psi(q)}{q}\, .
$$
This means that the rational points
$$ \frac{\vv p}{q} := \Big(\frac{p_1}{q},\dots,\frac{p_n}{q} \Big)$$
of interest must lie within the $\frac{\psi(q)}{q}$--neighbourhood of
$\cM$. In particular, assuming that $\psi$ is decreasing, we have that
the points $ \vv p / q $ of interest with $ k^{t-1} < q \le k^{t} $
are contained in the $\frac{\psi(k^{t-1})}{k^{t-1}}$--neighbourhood of
$\cM$. Let us denote this neighbourhood by $\Delta^+_k(t,\psi)$ and by
$N^+_k(t,\psi)$ the set of rational points with
$ k^{t-1} < q \le k^{t} $ contained in $\Delta^+_k(t,\psi)$. In other
words,
\begin{equation}\label{vb1}
  N^+_k(t,\psi) := \left\{ \vv p / q \in \I^n: k^{t-1} <  q \le k^{t}  \ {\rm \  and \ } \  {\rm dist}\big(\cM,\vv p/q \big)\le   \textstyle{\frac{\psi(k^{t-1})}{k^{t-1}} } \right\}   \, .
\end{equation}
Recall, that $\cM \subseteq \I^n$.  Hence, regarding the
$n$-dimensional volume of the neighbourhood $\Delta^+_k(t,\psi)$, it
follows that
$$
m_n\Big( \Delta^+_k(t,\psi) \Big) \ \asymp \
\left(\frac{\psi(k^{t-1})}{k^{t-1}}\right)^{n-d}\,.
$$
Now let $Q _k(t)$ denote the set of rational points with
$ k^{t-1} < q \le k^{t} $ lying in the unit cube $\I^n$. Then,
$$
\# Q _k(t) \ \asymp \ (k^t)^{n+1}
$$
and if we assume that the points in $ Q _k(t) $ are ``fairly''
distributed within $\I^n$, we would expect that
$$
\begin{array}{c}
  \text{the number of  these  points   that fall into $\Delta^+_k(t,\psi) $}\\
  \text{is proportional to the measure of $\Delta^+_k(t,\psi) $}\, .
\end{array}
$$

\noindent In other words and more formally, under the above
distribution assumption, we would expect that
\begin{equation}\label{vb1+}
  \# \{  Q _k(t)  \cap \Delta^+_k(t,\psi)  \}  \     \asymp   \  \#  Q _k(t)  \times    m_n\Big( \Delta^+_k(t,\psi)  \Big)  \,
\end{equation}
and since the left-hand side is $\# N^+_k(t,\psi)$, we would be able
to conclude that
\begin{equation}\label{x1}
  \# N^+_k(t,\psi)  \  \asymp \  (k^t)^{n+1}\left(\frac{\psi(k^{t-1})}{k^{t-1}}\right)^{n-d}  \ \asymp  \
  (k^{t-1})^{d+1}\psi(k^{t-1})^{n-d}\,.
\end{equation}

\noindent For the moment, let us assume that \eqref{vb1+} and hence
\eqref{x1} are fact.  Now
\begin{eqnarray*}
  \cM \cap W(n,\psi)  & =  & \bigcap_{m=1}^\infty\bigcup_{t=m}^{\infty}  \ \ \bigcup_{k^{t-1}<q\le k^t} \ \ \bigcup_{\vv p\in\Z^n :\vv p/q \in \I^n }B\Big( \textstyle{\frac{\vv p}{q},\frac{\psi(q)}{q}}\Big)  \cap \cM
  \\[2ex]
                      & \subset  & \bigcap_{m=1}^\infty\bigcup_{t=m}^{\infty}A^+_k(t,\psi, \cM) \,
\end{eqnarray*}
where
$$
A^+_k(t,\psi, \cM):= \bigcup_{k^{t-1}<q\le k^t} \ \ \bigcup_{\vv
  p\in\Z^n :\vv p/q \in \I^n }B\Big(\textstyle{\frac{\vv
    p}{q},\frac{\psi(k^{t-1})}{k^{t-1}}}\Big) \cap \cM \, .
$$
It is easily verified that

\begin{eqnarray*}
  | A^+_k(t,\psi, \cM)  |_{\cM} & \le  &
                                         \sum_{k^{t-1}<q\le k^t} \ \ \sum_{\vv p\in\Z^n: \vv p/q \in \I^n} \ \ \underbrace{\Big| B\Big(\textstyle{  \frac{\vv p}{q},\frac{\psi(k^{t-1})}{k^{t-1}}}\Big)   \cap \cM  \Big|_{\cM} }_{\ll (\psi(k^{t-1})/k^{t-1})^d} \\[3ex]
                                & \ll & \# N^+_k(t,\psi)\ \  (\psi(k^{t-1})/k^{t-1})^d \\[3ex]
                                &
                                  \stackrel{\eqref{x1}}{\asymp} &(k^{t-1})^{d+1}\psi(k^{t-1})^{n-d} (\psi(k^{t-1})/k^{t-1})^d \\[3ex]
                                &
                                  \asymp  &   k^{t-1}\psi(k^{t-1})^{n}\,.
\end{eqnarray*}
Hence
\begin{equation}\label{vb1++}
  \sum_{t=1}^\infty | A^+_k(t,\psi, \cM)  |_{\cM} \ \ll \
  \sum_{t=1}^\infty k^t\psi(k^t)^{n} \  \asymp \
  \sum_{q=1}^\infty \psi(q)^{n}\,.
\end{equation}

All the steps in the above argument apart from \eqref{vb1+} and hence
\eqref{x1}, can be turned into a rigorous proof.  Indeed, the estimate
\eqref{x1} is not always true.

\noindent\emph{Exercise.} Consider the circle $\cC_{\!\sqrt3}$ in
$\R^2$ given by the equation $x^2+y^2=3$. Prove that $\cC$ does not
contain any rational points. Next let $\psi(q)=q^{-1-\ve}$ for some
$\ve>0$. Prove that
$$
\cC_{\!\sqrt3} \cap W(2,\psi)=\varnothing\,.
$$
The upshot is that even for non-degenerate manifolds, we cannot expect
the heuristic estimate \eqref{x1} to hold for any decreasing $\psi$ --
some restriction on the rate at which $\psi$ decreases to zero is
required.  On the other hand, affine subspaces of $\R^n$ may contain
too many rational points, for instance, if $\cM$ is a linear subspace
of $\R^n$ with a basis of rational vectors. Of course, such manifolds
are not non-degenerate.

However, {\em whenever the upper bound associated with the heuristic
  estimate \eqref{x1} is true}, inequality \eqref{vb1++} together with
the convergence Borel-Cantelli Lemma implies that
$$
| \cM \cap W(n,\psi) |_{\cM} = 0 \quad {\rm if } \quad
\sum_{q=1}^{\infty} \; \psi(q)^n <\infty \, .
$$
This statement represents the convergent case of the `dream' theorem
for manifolds -- see \S\ref{rpnm2} immediately below. Note that the
associated sum $ \sum \psi(q)^n $ coincides with the sum appearing in
Theorem \ref{nkh} (Khintchine in $\R^n$) but the associated measure
$ | \ . \ |_{\cM} $ is $d$-dimensional Lebesgue measure (induced on
$\cM$) rather than $n$-dimensional Lebesgue measure.

\subsubsection{The Dream Theorem and its current status }\label{rpnm2}

\noindent\textbf{The Dream Theorem. \ } Let $\cM$ be a non-degenerate
sub-manifold of $\R^n$.  Let $\psi:\N\to \Rp$ be a monotonic
function. Then
\begin{equation}\label{vb5}
  | \cM\cap W(n,\psi)  |_{\cM}
  =\left\{
    \begin{array}{ll}
      0 & {\rm if} \;\;\; \sum_{q=1}^{\infty} \;   \psi(q)^n  <\infty\;
          ,\\[4ex]
      1 & {\rm if} \;\;\; \sum_{q=1}^{\infty} \;   \psi(q)^n
          =\infty \; .
    \end{array}\right.    
\end{equation}

\noindent 
\emph{We emphasize that the Dream Theorem is a desired statement
  rather than an established fact.}

\vspace*{2ex}

As we have already demonstrated, the convergence case of the Dream
Theorem would follow on establishing the upper bound estimate
\begin{equation}\label{x1ub}
  \# N^+_k(t,\psi)  \  \ll  \
  (k^{t-1})^{d+1}\psi(k^{t-1})^{n-d}\,
\end{equation}
for non-degenerate manifolds.  Recall that the rational points of
interest are given by the set
$$
N_k(t,\psi) := \left\{ \vv p / q \in \I^n: k^{t-1} < q \le k^{t} \
  {\rm and \ } \ {\rm dist}\big(\cM,\vv p/q \big)\le
  \textstyle{\frac{\psi(q)}{q} } \right\} \, ,
$$
and that $ \# N^+_k(t,\psi) $ is an upper bound for
$ \# N_k(t,\psi) $.  Obviously, a lower bound for $\#N_k(t,\psi) $ is
given by $ \# N^-_k(t,\psi)$ where

$$
N^-_k(t,\psi) := \left\{ \vv p / q \in \I^n: k^{t-1} < q \le k^{t} \
  {\rm and \ } \ {\rm dist}\big(\cM,\vv p/q \big)\le
  \textstyle{\frac{\psi(k^{t})}{k^{t}} } \right\} \, ,
$$
and if $\psi$ is $k$-regular (see \eqref{afmh}) then
$ N^+_k(t,\psi) \asymp N^-_k(t,\psi)$. In particular, whenever we are
able to establish the heuristic estimate \eqref{x1} or equivalently
the upper bound estimate \eqref{x1ub} together with the lower bound
estimate
\begin{equation}\label{x1lb}
  \# N^-_k(t,\psi)  \  \gg  \
  (k^{t-1})^{d+1}\psi(k^{t-1})^{n-d}\, ,
\end{equation}
we would have that
\begin{equation}\label{xx1}
  \# N_k(t,\psi)  \  \asymp   \
  (k^{t-1})^{d+1}\psi(k^{t-1})^{n-d}\,.
\end{equation}
It is worth stressing that the lower bound estimate \eqref{x1lb} is by
itself not enough to prove the divergence case of the Dream
Theorem. Loosely speaking, we also need to know that rational points
associated with the set $N^-_k(t,\psi)$ are ``ubiquitous'' within the
$\frac{\psi(k^{t})}{k^{t}}$--neighbourhood of $\cM$.  Indeed, when
establishing the divergence case of Khintchine's Theorem (Theorem
\ref{kg}), we trivially have the right count of $k^{2t}$ for the
number of rational points $p/q \in \I$ with $ k^{t-1} < q \le k^{t} $.
The crux is to establish the associated distribution type result given
by Theorem~\ref{thm:slv1}.  This in turn implies that the rational
points under consideration give rise to a ubiquitous system -- see
\S\ref{ubskj}.

\medskip

We now turn our attention to reality and describe various `general'
contributions towards the Dream Theorem.

\begin{itemize}

\item { \em Extremal manifolds.}  A sub-manifold $\cM$ of $\R^n$ is
  called {\em extremal} if
$$
\left| \cM\cap W(n, \textstyle{\frac{1+\ve}{n}}) \right|_{\cM} =0
\qquad \forall \ \ve>0 \,.
$$
Note that $ \cM\cap W(n, \textstyle{\frac{1}{n}} ) = \cM$ -- see
Remark \ref{rem300}.  In their pioneering work \cite{KM98} published
in 1998, Kleinbock $\& $ Margulis proved that any non-degenerate
sub-manifold $\cM$ of $\R^n$ is extremal.  It is easy to see that this
implies the convergence case of the Dream Theorem for functions of the
shape
$$
\psi_\ve(q) := q^{-\frac{1+\ve}{n}} \, .
$$
Indeed,
$$
\textstyle{\sum_{q=1}^\infty\psi_\ve(q)^n=\sum_{q=1}^\infty
  q^{-(1+\ve)}<\infty\, }
$$
and so whenever the convergent case of \eqref{vb5} is fulfilled, the
corresponding manifold is extremal.

\item {\em Planar curves.}  The Dream Theorem is true when $n=2$; that
  is, when $\cM$ is a non-degenerate planar curve.  The convergence
  case of \eqref{vb5} for planar curves was established in
  \cite{Vaughan-Velani-2007} and subsequently strengthened in
  \cite{BZ}.  The divergence case of \eqref{vb5} for planar curves was
  established in \cite{Beresnevich-Dickinson-Velani-07:MR2373145}.

\item {\em Beyond planar curves.} The divergence case of the Dream
  Theorem is true for analytic non-degenerate sub-manifolds of $\R^n$
  \cite{B12}. Recently, the divergence case of \eqref{vb5} has been
  shown to be true for non-degenerate curves and manifolds that can be
  `fibred' into such curves \cite{BVVZdiv}.  The latter includes
  $C^{\infty}$ non-degenerate sub-manifolds of $\R^n$ which are not
  necessarily analytic.  The convergence case of the Dream Theorem is
  true for a large subclass of $2$-non-degenerate sub-manifolds of
  $\R^n$ with dimension $d$ strictly greater than $(n+1)/2$
  \cite{BVVZcon}. Earlier, manifolds satisfying a geometric
  (curvature) condition were shown to satisfy the convergence case of
  the Dream Theorem \cite{DRV2}.

\end{itemize}

\noindent The upshot of the above is that the Dream Theorem is in
essence fact for a fairly generic class of non-degenerate
sub-manifolds $\cM$ of $\R^n$ apart from the case of convergence when
$n \ge 3 $ and $d \le (n+1)/2$.

\vspace*{2ex}

\begin{rem} \label{remMP} The theory of Diophantine approximation
  stems from Mahler's problem (1932) regarding the {\em extremality}
  of the Veronese curve
  $ \cV := \{ (x,x^2,\dots,x^n) : x \in \R^n \} $.  Following a
  substantial number of partial results (initially for $n=2$, then
  $n=3$ and some for higher $n$), a complete solution to the problem
  was given by Sprind\v zuk in 1965.  For a historical account of the
  manifold theory we refer the reader to the monographs \cite{BD99,
    Sprindzuk} and the introduction given in the paper
  \cite{Beresnevich-Dickinson-Velani-07:MR2373145}.
\end{rem}

\vspace*{2ex}

\begin{rem} \label{extem=} Note that in view of the Khintchine's
  Transference Principle, we could have easily defined
  extremality via the dual form of Diophantine approximation (see
  Remark \ref{reKTPP}); namely, $\cM$ is extremal if
$$
\left| \cM\cap W^*(n, n + \ve) \right|_{\cM} =0 \qquad \forall \ \ve>0
\,.
$$
The point is that both definitions are equivalent. This is not the
case in the inhomogeneous setup considered in \S\ref{ITPtheory}.
\end{rem}

\vspace*{2ex}

\begin{rem} \label{remKM} It is worth mentioning that in \cite{KM98},
  Kleinbock $\& $ Margulis established a stronger (multiplicative)
  form of extremality (see \S\ref{multmanifolds} below) that settled
  the Baker-Sprind\v zuk Conjecture from the eighties.  Not only did
  their work solve a long-standing fundamental problem, but it also
  developed new techniques utilising the link between Diophantine
  approximation and homogeneous dynamics. Without doubt the work of
  Kleinbock $\& $ Margulis has been the catalyst for the subsequent
  contributions towards the Dream Theorem described above.
\end{rem}

\bigskip

\subsection{The Hausdorff theory for manifolds \label{HMtheory}}

The goal is to obtain a Jarn\'\i k type theorem that describes the
Hausdorff measure $\cH^s$ of the set $\cM \cap W(n,\psi)$ of
simultaneously $\psi$--approximable points lying on ${\cal M}$.  In
other words, we wish to obtain a Hausdorff measure version of the
Dream Theorem.  In view of this, by default, we consider approximating
functions $\psi$ which decrease sufficiently rapidly so that the
$d$-dimensional Lebesgue measure of $ \cM \cap W(n,\psi) $ is
zero. Now, as the example in \S\ref{rpnm} demonstrates, in order to
obtain a coherent Hausdorff measure theory we must impose some
restriction on the rate at which $\psi$ decreases.  Indeed, with
reference to that example, the point is that
$\cH^s(\cC_{\!\sqrt3}\cap W(2,1+\ve)) = 0 $ irrespective of $ \ve > 0$
and the measure $\cH^s$.  On the other hand, for the unit circle
$\cC_1$ in $\R^2$ given by the equation $x^2+y^2=1$, it can be shown
\cite[Theorem 19]{BDV06} that for any $\ve > 0$
$$
\cH^s(\cC_{\!1}\cap W(2,1+\ve)) = \infty \quad {\rm with } \quad
\textstyle{s=\frac{1}{2 + \ve} } \, .
$$

Nevertheless, it is believed that if the rate of decrease of $\psi$ is
`close' to the approximating function $q^{-1/n}$ associated with
Dirichlet's Theorem, then the behaviour of
$ \cH^s(\cM\cap W(n,\psi)) $ can be captured by a single, general
criterion.  In the following statement, the condition on $\psi$ is
captured in terms of the deviation of $\cH^s$ from $d$-dimensional
Lebesgue measure.

\vspace*{2ex}

\noindent\textbf{The Hausdorff Dream Theorem. }  Let $\cM$ be a
non-degenerate sub-manifold of $\R^n$, $d:= \dim \cM$ and
$m:= {\rm codim} \, \cM$.  Thus, $d+m=n$. Let $\psi:\N\to \Rp$ be a
monotonic function. Then, for any $s \in (\frac{m}{m+1} d, d \big)$

\begin{equation}\label{vb58}
  \cH^s(\cM\cap W(n,\psi)) =\left\{
    \begin{array}{ll}
      0 & {\rm if} \;\;\; \displaystyle\sum_{q=1}^{\infty} \;   \psi^{s+m}(q)q^{-s+d}  <\infty\
          ,\\[4ex]
      \infty & {\rm if} \;\;\; \displaystyle\sum_{q=1}^{\infty} \;   \psi^{s+m}(q)q^{-s+d}
               =\infty  .
    \end{array}\right.
\end{equation}

\noindent 
\emph{We emphasize that the above is a desired statement rather than
  an established fact.}

\vspace*{2ex}

We now turn our attention to reality and describe various `general'
contributions towards the Hausdorff Dream Theorem.

\begin{itemize}
\item {\em Planar curves.}  As with the Dream Theorem, the convergence
  case of \eqref{vb58} for planar curves ($n=2, d=m=1$) was
  established in \cite{Vaughan-Velani-2007} and subsequently
  strengthened in \cite{BZ}.  The divergence case of \eqref{vb58} for
  planar curves was established in
  \cite{Beresnevich-Dickinson-Velani-07:MR2373145}.

\item {\em Beyond planar curves.} The divergence case of the Hausdorff
  Dream Theorem is true for analytic non-degenerate sub-manifolds of
  $\R^n$ \cite{B12}.  The convergence case is rather fragmented. To
  the best of our knowledge, the partial results obtained in
  \cite[Corollaries 3 $\& $ 5]{BVVZcon} for $2$-non-degenerate
  sub-manifolds of $\R^n$ with dimension $d$ strictly greater than
  $(n+1)/2$, represent the first significant coherent contribution
  towards the convergence case.
\end{itemize}

\vspace*{0ex}

\noindent{\em Exercise.} Prove the convergent case of \eqref{vb58}
assuming the heuristic estimate \eqref{x1} for the number of rational
points near $\cM$ -- see \S\ref{rpnm}.

\vspace*{2ex}

\begin{rem}
  Regarding the divergence case of \eqref{vb58}, it is tempting to
  claim that it follows from the divergence case of the (Lebesgue)
  Dream Theorem via the Mass Transference Principle introduced in
  \S\ref{mtpballs}.  After all, this is true when $\cM = \I^n$; namely
  that Khintchine's Theorem implies Jarn\'{\i}k's Theorem as
  demonstrated in \S\ref{KimpliesJ}. However, this is far from the
  truth within the context of manifolds. The reason for this is
  simple.  With respect to the setup of the Mass Transference
  Principle, the set $\Omega$ that supports the $\cH^\delta$-measure
  (with $\delta = \dim \cM$) is the manifold $\cM$ itself and is
  embedded in $\R^n$.  The set $\cM \cap W(n,\psi) \subset \Omega$ of
  interest can be naturally expressed as the intersection with $\cM$
  of the $\limsup$ set arising from balls
  $B(\frac{\vv p}{q},\frac{\psi(q)}{q}) $ centred at rational points
  $\vv p/q \in \R^n$.  However, the centre of these balls do not
  necessarily lie in the support of the measure $\Omega= \cM$ and this
  is where the problem lies. A prerequisite for the framework of the
  Mass Transference Principle is that $\{B_i\}_{i\in\N}$ is a sequence
  of balls in $\Omega$.
\end{rem}


\subsection{Inhomogeneous Diophantine approximation \label{IHtheory}}

When considering the well approximable sets $W(n, \psi)$ or indeed the
badly approximable sets $\bad(i_1, \ldots,i_n)$, we are in essence
investigating the behaviour of the fractional part of $q\vx$ about the
origin as $q$ runs through $\N$.  Clearly, we could consider the setup
in which we investigate the behaviour of the orbit of $\{q\vx\}$ about
some other point. With this in mind, given $\psi:\N\to \Rp$ and a
fixed point $\bm\g=(\g_1, \dots \g_n) \in \R^n$, let
$$
W_{\bm\g}(n, \psi): = \{\vx\in \I^n \colon \|q \vx - \bm\g \|<\psi(q)
\text{ for infinitely many } q\in\N \} \
$$
denote the \emph{inhomogeneous} set of \emph{simultaneously
  $\psi$-well approximable} points $\vx\in \I^n$.  Thus, a point
$\vx\in W_{\bm\g}(n, \psi)$ if there exist infinitely many `shifted'
rational points
$$ \Big( \frac{p_1- \g_1}{q}, \ldots, \frac{p_n - \g_n}{q} \Big) $$
with $q >0$, such that the inequalities
$$
|x_i - (p_i - \g_i)/q | \, < \, \psi(q)/q \,
$$
are simultaneously satisfied for $ 1 \le i \le n$. The following is
the natural generalisation of the simultaneous Khintchine-Jarn\'{\i}k
theorem to the inhomogeneous setup. For further details, see
\cite{Beresnevich-Bernik-Dodson-Velani-Roth, BDV06} and references
within.

\vspace*{1ex}

\begin{thm}[Inhomogeneous Khintchine-\Jarnik]
  \label{inhomKJ}
  Let $\psi:\N\to \Rp$ be a monotonic function, $\bm\g\in \R^n$ and
  $s \in (0,n]$.  Then
$$
\cH^s( W_{\bm\g}(n,\psi))=\left\{\begin{array}{lll} 0 & \ds\text{if }
    \;\;\; \sum_{r=1}^\infty \;r^{n-s}\psi(r)^{s} <\infty \, , \\[2ex]
    & \\ \cH^s(\I^{n}) & \ds\text{if } \;\;\; \sum_{r=1}^\infty \;
    r^{n-s}\psi(r)^{s} =\infty \ \,. &
                                 \end{array}\right.
$$
\end{thm}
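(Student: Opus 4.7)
The plan is to split into the two cases, handling convergence by a direct covering argument and divergence by invoking the ubiquity framework of \S\ref{ubi} with an inhomogeneous resonant system.

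For the convergent case, observe that $W_{\bm\g}(n,\psi)$ is a limsup set of balls: explicitly,
\begin{equation*}
W_{\bm\g}(n,\psi) \ = \ \bigcap_{t=1}^\infty \bigcup_{q=t}^\infty \bigcup_{\vv p \in \Z^n} B\!\left(\frac{\vv p + \bm\g}{q},\,\frac{\psi(q)}{q}\right) \cap \I^n.
\end{equation*}
For each $q$ there are at most $(q+2)^n$ admissible values of $\vv p$, so for any $T \in \N$ the balls with $q \ge T$ form an admissible cover for $\cH^s$ of arbitrarily small diameter, yielding
\begin{equation*}
\cH^s(W_{\bm\g}(n,\psi)) \ \ll \ \sum_{q=T}^\infty (q+2)^n (\psi(q)/q)^s \ \ll \ \sum_{q=T}^\infty q^{n-s}\psi(q)^s \to 0
\end{equation*}
as $T\to\infty$, under the convergence hypothesis. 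This is a direct analogue of Theorem \ref{convja}.

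For the divergent case, the strategy is to verify ubiquity of the shifted resonant system and then apply Theorem \ref{UL}. Take $\Omega := \I^n$ with supremum metric, $\mu := m_n|_{\I^n}$, $\delta := n$, the index set $J := \{(\vv p,q) \in \Z^n \times \N : (\vv p+\bm\g)/q \in \I^n\}$, resonant points $R_{(\vv p,q)} := (\vv p + \bm\g)/q$ and weights $\beta_{(\vv p,q)} := q$. Setting $\Psi(q) := \psi(q)/q$, one checks that $\Lambda(\Psi) = W_{\bm\g}(n,\psi)$. The ubiquity function I would take is $\rho(r) := c\, r^{-(n+1)/n}$; note both $\Psi$ and $\rho$ are $k$-regular (as $\psi$ is monotonic and $s>0$), and with this choice the hypothesis $\sum_q q^{n-s}\psi(q)^s = \infty$ becomes, via Cauchy condensation,
\begin{equation*}
\sum_{t=1}^\infty \frac{\Psi(k^t)^s}{\rho(k^t)^n} \ \asymp \ \sum_{t=1}^\infty (k^t)^{n+1-s}\psi(k^t)^s \ \asymp \ \sum_{q=1}^\infty q^{n-s}\psi(q)^s \ = \ \infty,
\end{equation*}
which is precisely condition \eqref{unif} of Theorem \ref{UL}.

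The key step, and main obstacle, is establishing local $\mu$-ubiquity of this system with $\rho(r) = c r^{-(n+1)/n}$. For this I would mimic the proof of Theorem \ref{thm:slv1}, replacing Dirichlet's theorem by its inhomogeneous counterpart coming from Theorem \ref{SLF} (Minkowski's linear forms theorem): for any $\vx \in \I^n$ and integer $N \ge 1$ there exist $1\le q \le N$ and $\vv p \in \Z^n$ with $|q\vx - \bm\g - \vv p|_\infty < N^{-1/n}$, i.e.\ $|\vx - (\vv p+\bm\g)/q|_\infty < 1/(qN^{1/n})$. Fixing a ball $B \subset \I^n$, applying this with $N=k^t$, and discarding the contribution of those $\vx$ approximated by a $q$ in the smaller range $q \le k^{t-1}$ (whose total measure is $\ll \sum_{q\le k^{t-1}} (qm_n(B)+O(1))\cdot (2/(qk^{t/n}))^n \ll m_n(B)/k$ by a volume estimate analogous to that in the proof of Theorem \ref{thm:slv1}), one sees that for $k$ sufficiently large every $\vx$ in a set of measure at least $\tfrac12 m_n(B)$ lies within $k \cdot k^{-t(n+1)/n}$ of some resonant point with weight in $(k^{t-1},k^t]$. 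This yields local ubiquity with some $\kappa>0$ and the specified $\rho$; Theorem \ref{UL} then delivers $\cH^s(W_{\bm\g}(n,\psi)) = \cH^s(\I^n)$, completing the divergent case. The only genuinely new input beyond the homogeneous story is the inhomogeneous Minkowski/Dirichlet step, and the argument has the pleasing feature that it works uniformly in $\bm\g$.
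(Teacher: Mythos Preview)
The paper does not actually prove Theorem~\ref{inhomKJ}; it states the result and refers the reader to \cite{Beresnevich-Bernik-Dodson-Velani-Roth, BDV06}. Your convergence argument is correct and standard, and the overall divergence strategy --- set up the shifted resonant system and apply Theorem~\ref{UL} --- is indeed the right one. The gap is in the verification of ubiquity.

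The ``inhomogeneous Dirichlet'' statement you invoke is false as written, and Theorem~\ref{SLF} does not yield it: Minkowski's theorem concerns \emph{homogeneous} linear forms, whereas $qx_i-p_i-\gamma_i$ is affine in the integer unknowns $(p_1,\dots,p_n,q)$. A one-dimensional counterexample is $x=0$ (or any rational) and $\gamma=1/2$, for which $\|qx-\gamma\|=1/2$ for every $q$. More generally, compare Remark~\ref{inhomDir}: even the asymptotic inhomogeneous Hurwitz statement requires $x$ to be irrational, and there is no uniform inhomogeneous analogue of Theorem~\ref{Dir} valid for all $\vx$. Consequently the mimicry of the proof of Theorem~\ref{thm:slv1} breaks down at the first step, since one no longer has that every point of $B$ lies in one of the Dirichlet balls.

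To repair the argument one must establish local ubiquity of the shifted rationals by other means. The cleanest observation is that for each fixed $q$ the points $(\vv p+\bm\gamma)/q$ form a translate of the lattice $q^{-1}\Z^n$, so the first-moment and pairwise-overlap (second-moment) calculations for the sets $\bigcup_{\vv p} B((\vv p+\bm\gamma)/q,\rho)$ are \emph{identical} to the homogeneous case; a quasi-independence argument in the spirit of the Divergence Borel--Cantelli Lemma then delivers the required local ubiquity uniformly in $\bm\gamma$. Alternatively, one can first establish the Lebesgue case ($s=n$) of inhomogeneous Khintchine by this same quasi-independence route and then deduce the general Hausdorff statement via the Mass Transference Principle exactly as in \S\ref{KimpliesJ}.
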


\vspace*{2ex}

\begin{rem} \label{inhomDir} For the sake of completeness we state the
  inhomogeneous analogue of Hurwitz's Theorem due to Khintchine
  \cite[\S10.10]{Hua}: {\em for any irrational $ x \in \R $,
    $\gamma \in \R$ and $\ve >0$, there exist infinitely many integers
    $q >0$ such that}
  \begin{equation*} \label{inhomdir} q \, \| q x - \gamma\| \leq
    (1+\ve)/\sqrt{5} \, .
  \end{equation*}
  Note that presence of the $\ve$ term means that the inhomogeneous
  statement is not quite as sharp as the homogeneous one (i.e. when
  $\gamma=0$).  Also, for obvious reasons, in the inhomogeneous
  situation it is necessary to exclude the case that $x $ is rational.
\end{rem}

\vspace*{2ex}

We now swiftly move on to the inhomogeneous theory for manifolds.  In
short, the heuristics of \S\ref{rpnm}, adapted to the inhomogeneous
setup, gives evidence towards the following natural generalisation of
the Dream Theorem.

\vspace*{2ex}

\noindent\textbf{The Inhomogeneous Dream Theorem. \ } Let $\cM$ be a
non-degenerate sub-manifold of $\R^n$.  Let $\psi:\N\to \Rp$ be a
monotonic function and $\bm\g\in \R^n$. Then
\begin{equation*}\label{vb51}
  | \cM\cap W_{\bm\g}(n,\psi)  |_{\cM}
  =\left\{
    \begin{array}{ll}
      0 & {\rm if} \;\;\; \sum_{q=1}^{\infty} \;   \psi(q)^n  <\infty\;
          ,\\[4ex]
      1 & {\rm if} \;\;\; \sum_{q=1}^{\infty} \;   \psi(q)^n
          =\infty \; .
    \end{array}\right.
\end{equation*}

\vspace*{2ex}

Regarding what is known, the current state of knowledge is absolutely
in line with the homogeneous situation.  The inhomogeneous analogue of
the extremality result of Kleinbock $\& $ Margulis \cite{KM98} is
established in \cite{Beresnevich-Velani-10:MR2734962,
  Beresnevich-Velani-Moscow}.  We will return to this in
\S\ref{ITPtheory} below. For planar curves, the Inhomogeneous Dream
Theorem is established in
\cite{Beresnevich-Vaughan-Velani-11:MR2777039}. Beyond planar curves,
the results in \cite{BVVZcon, BVVZdiv} are obtained within the
inhomogeneous framework.  So in summary, the Inhomogeneous Dream
Theorem is in essence fact for non-degenerate sub-manifolds $\cM$ of
$\R^n$ apart from the case of convergence when $n \ge 3 $ and
$d \le (n+1)/2$.

\subsubsection{Inhomogeneous extremality and a transference
  principle \label{ITPtheory}}

First we need to decide on what precisely we mean by inhomogeneous
extremality.  With this in mind, a manifold $\cM$ is said to be
\emph{simultaneously inhomogeneously extremal}\/ ($\SIE$ for short) if
for every $\bm\gamma \in\R^n$,
\begin{equation}\label{e:004}
  \left|  \cM\cap W_{\bm\gamma}(n,  \textstyle{\frac{1+\ve}{n}}) \right|_{\cM}   =0   \qquad  \forall   \  \ve>0  \,.
\end{equation}
On the other hand, a manifold $\cM$ is said to be \emph{dually
  inhomogeneously extremal}\/ ($\DIE$ for short) if for every
$\gamma \in\R$,
\begin{equation*}
  \left|  \cM\cap W_{\gamma}^*(n,  n + \ve)  \right|_{\cM}   =0    \qquad  \forall   \  \ve>0   \,.
\end{equation*}
Here, given $\tau >0$ and a fixed point $\g \in \R$,
$W_{\gamma}^*(n, \tau) $ is the \emph{inhomogeneous} set of
\emph{dually $\tau$-well approximable} points consisting of points
$\vx\in \I^n$ for which the inequality
$$
\|\vv q\cdot \vv x-\gamma\|<|\vv q|^{-\tau}
$$
holds for infinitely many $\vv q\in\Z^n$.  Moreover, a manifold $\cM$
is simply said to be \emph{inhomogeneously extremal} if it is both
$\SIE$ and $\DIE$.

As mentioned in Remark \ref{extem=}, in the homogeneous case
($\bm\gamma$=0) the simultaneous and dual forms of extremality are
equivalent. Recall that this is a simply consequence of Khintchine's
Transference Principle (Theorem \ref{KTPP}).  However, in the
inhomogeneous case, there is no classical transference principle that
allows us to deduce SIE from DIE and vice versa. The upshot is that
the two forms of inhomogeneous extremality have to be
treated separately. It turns out that establishing the dual form of
inhomogeneous extremality is technically far more complicated than
establishing the simultaneous form
\cite{Beresnevich-Velani-Moscow}. The framework developed in
\cite{Beresnevich-Velani-10:MR2734962} naturally incorporates both
forms of inhomogeneous extremality and indeed other stronger
(multiplicative) notions associated with the inhomogeneous analogue of
the Baker-Sprind\v zuk Conjecture.

\bigskip


\noindent\textbf{Conjecture.  } {\it Let $\cM$ be a non-degenerate
  sub-manifold of $\R^n$. Then $\cM$ is inhomogeneously extremal. }

The proof given in \cite{Beresnevich-Velani-10:MR2734962} of this
inhomogeneous conjecture relies very much on the fact that we know
that the homogeneous statement is true.  In particular, the general
inhomogeneous transference principle of
\cite[\S5]{Beresnevich-Velani-10:MR2734962} enables us to establish
the following transference for non-degenerate manifolds:

\begin{equation} \label{amen} \cM\text{ is extremal }\iff \cM\text{ is
    inhomogeneously extremal}.
\end{equation}
Clearly, this enables us to conclude that:
$$
\cM\text{ is SIE }\iff \cM\text{ is DIE}.
$$
In other words, a transference principle between the two forms of
inhomogeneous extremality does exist at least for the class of
non-degenerate manifolds.

\vspace*{2ex}

Trivially, inhomogeneous extremality implies (homogeneous)
extremality.  Thus, the main substance of \eqref{amen} is the reverse
implication.  This rather surprising fact relies on the fact that the
inhomogeneous $\limsup$ sets
$\cM\cap W_{\bm\gamma}(n, \textstyle{\frac{1+\ve}{n}}) $ and the
induced measure $ | \ . \ |_{\cM} $ on non-degenerate manifolds
satisfy the {\em intersection property} and the {\em contracting
  property} described in \cite[\S5]{Beresnevich-Velani-10:MR2734962}.
These properties are at the heart of the Inhomogeneous Transference
Principle \cite[Theorem 5]{Beresnevich-Velani-10:MR2734962} that
enables us to transfer zero measure statements for homogeneous
$\limsup$ sets to inhomogeneous $\limsup$ sets.  The general setup,
although quite natural, is rather involved and will not be reproduced
in these notes.  Instead, we refer the reader to the papers \cite{
  Beresnevich-Velani-10:MR2734962, Beresnevich-Velani-Moscow}.  We
advise the reader to first look at \cite{Beresnevich-Velani-Moscow} in
which the easier statement \begin{equation} \label{amen2} \cM\text{ is
    extremal } \quad \Longrightarrow \quad \cM\text{ is SIE}
\end{equation}
is established. This has the great advantage of bringing to the
forefront the main ideas of \cite{Beresnevich-Velani-10:MR2734962}
while omitting the abstract and technical notions that come with
describing the inhomogeneous transference principle in all its glory.
In order to illustrate the basic line of thinking involved in
establishing \eqref{amen2} and indeed \eqref{amen} we shall prove the
following statement concerning extremality on $\I=[0,1]$:
\begin{equation} \label{amen3} m(W(1+\ve)) = 0 \quad \Longrightarrow
  \quad m(W_{\gamma}(1+\ve))=0 \ \ \forall \ \ve >0.
\end{equation}
Of course it is easy to show that the inhomogeneous set
$W_{\gamma}(1+\ve)$ is of zero Lebesgue measure $m$ by using the
convergence Borel-Cantelli Lemma.  However, the point here is to
develop an argument that exploits the fact that we know the
homogeneous set $ W_{0}(1+\ve):=W(1+\ve)$ is of zero Lebesgue measure.

\vspace*{2ex}

To prove \eqref{amen3}, we make use of the fact that
$W_{\gamma}(1+\ve)$ is a $\limsup$ set given by
\begin{equation}\label{e:024}
  W_{\gamma}(1+\ve) \  = \  \bigcap_{s=1}^\infty\ \bigcup_{q=s}^{\infty} \bigcup_{
    p\in\Z} \ B^{\gamma}_{p,q}(\ve)\cap \I\, ,
\end{equation}
where, given $q\in \N $, $p\in\Z$, $\gamma \in\R$ and $\ve>0$
$$
B^{\gamma}_{p,q}(\ve):=\{\,y\in \R:|q y+ p+ \gamma|<|q|^{-1-\ve} \, \}
\ .
$$

\noindent As usual, if $B=B( x,r)$ denotes the ball (interval) centred
at $ x$ and of radius $r>0$, then it is easily seen that
$$
B^{\gamma}_{p,q}(\ve)= B\Big( \textstyle{\frac{
    p+\gamma}{q}},|q|^{-2-\ve} \Big) \ .
$$

Now we consider `blown up' balls $B^{\gamma}_{p,q}(\ve/2)$ and observe
that Lebesgue measure $m$ satisfies the following contracting property:
for any choice $q\in \N $, $p\in\Z$, $\gamma \in\R$ and $\ve>0$ we
have that
\begin{equation}\label{e:017}
  m \Big( B^{\gamma}_{p,q}(\ve)  \Big)   \ = \ \frac{2}{q^{2+ \ve} }  \  =   \  q^{-\frac{   \ve}{2} } \frac{2}{q^{2+ (\ve/2)} }     \ =   \  q^{-\frac{   \ve}{2} }   \ \   m \Big( B^{\gamma}_{p,q}(\ve/2)  \Big)   \, .
\end{equation}

Next we separate the balls $B^{\gamma}_{p,q}(\ve) $ into classes of
disjoint and non-disjoint balls. Fix $q\in\N$ and $p \in\Z $. Clearly,
there exists a unique integer $t=t(q)$ such that $2^t\le q <
2^{t+1}$.
The ball $B^{\gamma}_{p,q}(\ve) $ is said to be \emph{disjoint}\/ if
for every $q'\in\N$ with $2^t\le q'<2^{t+1}$ and every $p'\in\Z$
\begin{equation*}
  B^{\gamma}_{p,q}(\ve/2)\cap B^{\gamma}_{p',q'}(\ve/2)  \cap \I =\varnothing\,.
\end{equation*}
Otherwise, the ball $ B^{\gamma}_{p,q}(\ve/2) $ is said to be
\emph{non-disjoint}. This notion of disjoint and non-disjoint balls
enables us to decompose the $ W_{\gamma}(1+\ve)$ into the two limsup
subsets:
\begin{equation*}
  D^{\gamma}(\ve) \ := \ \bigcap_{s=0}^\infty\
  \bigcup_{t= s}^{\infty} \ \bigcup_{2^t\le
    |q|<2^{t+1}}\bigcup_{\stackrel{\scriptstyle
      p\in\Z}{B^{\gamma}_{p,q}(\ve)\text{ is disjoint}}}  \!\!\!\!\!\! B^{\gamma}_{p,q}(\ve)\cap \I \,,
\end{equation*}
and
\begin{equation*}
  N^{\gamma}(\ve) \ := \ \bigcap_{s=0}^\infty\
  \bigcup_{t= s}^{\infty} \ \bigcup_{2^t\le
    |q|<2^{t+1}}\bigcup_{\stackrel{\scriptstyle
      p\in\Z}{B^{\gamma}_{p,q}(\ve)\text{ is non-disjoint}}}  \!\!\!\!\!\! B^{\gamma}_{p,q}(\ve)\cap \I \,.
\end{equation*}

\noindent Formally,
$$
W_{\gamma}(1+\ve) \ = \ \bigcap_{s=1}^\infty\ \bigcup_{q=s}^{\infty}
\bigcup_{ p\in\Z} \ B^{\gamma}_{p,q}(\ve)\cap \I \ = \
D^{\gamma}(\ve) \ \cup \ N^{\gamma}(\ve) \ .
$$

\noindent We now show that
$m (D^{\gamma}(\ve)) = 0 = m( N^{\gamma}(\ve)) $.  This would clearly
imply \eqref{amen3}. Naturally, we deal with the disjoint and
non-disjoint sets separately.

\medskip

\noindent{\em The disjoint case: } By the definition of disjoint
balls, for every fixed $t$ we have that
\begin{equation*}\label{e:028}
  \begin{array}{rcl}
    \displaystyle\sum_{2^t\le q<2^{t+1}}\sum_{\stackrel{\scriptstyle
    p\in\Z}{B^{\gamma}_{ p,q}(\ve)\text{ is disjoint}}} \!\!\!\!  m( B^{\gamma}_{p,q}(\ve/2)\cap \I) & = & \displaystyle m \Big(\bigcup_{2^t\le q<2^{t+1}}\bigcup_{\stackrel{\scriptstyle
                                                                                                           p\in\Z}{B^{\gamma}_{p,q}(\ve)\text{ is disjoint}}} \!\!\!\!\!\!\!\! B^{\gamma}_{p,q}(\ve/2)\cap
                                                                                                           \I\Big) \\[7ex]
                                                                                                     & \le & m(\I)  \,   = \, 1 .
  \end{array}
\end{equation*}
This together with the contracting property \eqref{e:017} of the
measure $m$, implies that

\begin{eqnarray*}
  \displaystyle m \Big(\bigcup_{2^t\le q<2^{t+1}}  \!\! \bigcup_{\stackrel{\scriptstyle
  p\in\Z}{B^{\gamma}_{p,q}(\ve)\text{ is disjoint}}} \!\!\!\!\!\!\!\! B^{\gamma}_{p,q}(\ve)\cap
  \I\Big)  & = & \displaystyle\sum_{2^t\le q<2^{t+1}}\sum_{\stackrel{\scriptstyle
                 p\in\Z}{B^{\gamma}_{ p,q}(\ve)\text{ is disjoint}}} \!\!\!\!  m( B^{\gamma}_{p,q}(\ve)\cap \I)  \\[3ex]
           & \le &  \displaystyle\sum_{2^t\le q<2^{t+1}}\sum_{\stackrel{\scriptstyle
                   p\in\Z}{B^{\gamma}_{ p,q}(\ve)\text{ is disjoint}}} \!\!\!\! q^{-\frac{\ve}{2}} \ \  m( B^{\gamma}_{p,q}(\ve/2)\cap \I) \\[3ex]
           & \le & 2^{- t \frac{\ve}{2}} \displaystyle\sum_{2^t\le q<2^{t+1}}\sum_{\stackrel{\scriptstyle
                   p\in\Z}{B^{\gamma}_{ p,q}(\ve)\text{ is disjoint}}} \!\!\!\!   m( B^{\gamma}_{p,q}(\ve/2)\cap \I) \\[1ex] & \le & 2^{- t \frac{\ve}{2}} \, .
\end{eqnarray*}

\noindent Since
$ \sum_{t=1}^{\infty} 2^{- t \frac{\ve}{2}} < \infty $, the
convergence Borel-Cantelli Lemma implies that
$$
m (D^{\gamma}(\ve)) = 0 \, .
$$

\medskip

\noindent{\em The non-disjoint case: } Let $B^{\gamma}_{ p,q}(\ve)$ be
a non-disjoint ball and let $t=t(q)$ be as above. Clearly
$$
B^{\gamma}_{p,q}(\ve)\subset B^{\gamma}_{ p,q}(\ve/2)\,.
$$
By the definition of non-disjoint balls, there is another ball
$B^{\gamma}_{p',q'}(\ve/2)$ with $2^t\le q<2^{t+1}$ such that

\begin{equation}\label{e:031}
  B^{\gamma}_{p,q}(\ve/2)\cap B^{\gamma}_{\vv
    p',q'}(\ve/2)\cap \I\not=\varnothing\,.
\end{equation}

\noindent It is easily seen that $q'\not=q$, as otherwise we would
have that
$B^{\gamma}_{ p,q}(\ve/2)\cap B^{\gamma}_{
  p',q}(\ve/2)=\varnothing$.
The point here is that rationals with the same denominator $q$ are
separated by $1/q$.  Take any point $ y $ in the non-empty set
appearing in (\ref{e:031}).  By the definition of
$B^{\gamma}_{p,q}(\ve/2)$ and $B^{\gamma}_{ p',q'}(\ve/2)$, it follows
that
\begin{equation*}\label{e:032}
  |q y + p+\gamma| \ < \ q^{-1-\frac\ve2} \ \le \
  2^{t(-1- \frac\ve2)}
\end{equation*}
and
\begin{equation*}\label{e:033}
  |q'y +p'+\gamma| \ < \ (q')^{-1-\frac\ve2} \ \le \
  2^{t(-1-\frac\ve2)}\,.
\end{equation*}
On combining these inequalities in the obvious manner and assuming
without loss of generality that $q > q'$, we deduce that
\begin{equation}\label{e:034}
  |\underbrace{(q-q')}_{q''} y +\underbrace{( p- p')}_{
    p''}| \ < \ 2\cdot 2^{t(-1-\frac\ve2)}  \ < \
  2^{(t+2)(-1-\frac\ve3)}
\end{equation}
for all $t$ sufficiently large. Furthermore, $0<q''\le 2^{t+2}$ which
together with (\ref{e:034}) yields that
\begin{equation*}\label{ee:033}
  |q'' y + p''| \ < \ (q'')^{-1-\frac\ve3}\,.
\end{equation*}
If the latter inequality holds for infinitely many different
$q''\in\N$, then $y \in W(1+ \ve/3) $.  Otherwise, there is a fixed
pair $( p'', q'') \in \Z \times \N$ such that (\ref{e:034}) is
satisfied for infinitely many $t$. Thus, we must have that
$q'' y+p''=0$ and so $y$ is a rational point.  The upshot of the
non-disjoint case is that
$$
N^{\gamma}(\ve) \ \subset \ W(1+ \ve/3) \ \cup \ \Q \ .
$$
However, we are given that the homogeneous set $ W(1+ \ve/3)$ is of
measure zero and since $\Q$ is countable, it follows that
$$
m(N^{\gamma}(\ve)) \ = \ 0 \ .
$$
This completes the proof of \eqref{amen3}.

\vspace*{4ex}

\subsection{The inhomogeneous multiplicative theory}

For completeness, we include a short section surveying recent striking
developments in the theory of inhomogeneous multiplicative Diophantine
approximation. Nevertheless, we start by highlighting the fact that
there remain gapping holes in the theory.

Given $\psi:\N\to \Rp$ and a fixed point
$\bm\g=(\g_1, \dots \g_n) \in \R^n$, let
\begin{equation} \label{inhommultdef} W^{\times}_{\bm\g}(n,\psi): = \{
  \vx \in {\rm I}^n \colon \| q x_1 - \gamma_1 \| \, \ldots \, \| q
  x_n - \gamma_n \| <\psi(q) \text{ for infinitely many } q\in\N \} \
\end{equation}
denote the \emph{inhomogeneous} set of \emph{multiplicatively
  $\psi$-well approximable} points $\vx\in \I^n$.  When
$\bm\g = \{\mathbf{0} \}$, the corresponding set
$ W^{\times}_{\bm\g}(n,\psi)$ naturally coincides with the homogeneous
set $ W^{\times}(n,\psi)$ given by \eqref{multdef} in
\S\ref{multsec}. It is natural to ask for an inhomogeneous
generalisation of Gallagher's Theorem (\S\ref{multsec}, Theorem
\ref{gallmult}). A straightforward `volume' argument making use of the
$\limsup$ nature of $ W^{\times}_{\bm\g}(n,\psi)$, together with the
convergence Borel-Cantelli Lemma implies the following statement.

\begin{lem}[Inhomogeneous Gallagher:
  convergence] \label{inhomconvgall} Let $\psi:\N\to \Rp$ be a
  monotonic function and $\bm\g\in \R^n$.  Then
$$
m_n(W^{\times}_{\bm\g}(n,\psi)) = 0 \quad if \quad \displaystyle
\sum_{q=1}^{\infty} \; \psi(q) \log^{n-1} q <\infty \ .
$$
\end{lem}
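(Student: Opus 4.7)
The plan is to express $W^{\times}_{\bm\g}(n,\psi)$ as a $\limsup$ set whose generating sets have summable $n$-dimensional Lebesgue measure, and then invoke the convergence Borel--Cantelli Lemma. For each $q\in\N$, set
$$
B_q:=\big\{\vx\in\I^n:\|qx_1-\gamma_1\|\cdots\|qx_n-\gamma_n\|<\psi(q)\big\}\,,
$$
so that $W^{\times}_{\bm\g}(n,\psi)=\limsup_{q\to\infty}B_q$. It therefore suffices to show that $\sum_{q=1}^\infty m_n(B_q)<\infty$.

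The first substantial step is the volume estimate for $B_q$. For each coordinate the map $x_i\mapsto\|qx_i-\gamma_i\|$ is a sawtooth of slope $\pm q$ sending $\I$ onto $[0,\tfrac12]$ in a $2q$-to-one fashion, and therefore pushes Lebesgue measure on $\I$ forward to the measure $2\,du_i$ on $[0,\tfrac12]$. Applying this in each of the $n$ coordinates and rescaling $v_i=2u_i$, Fubini gives
$$
m_n(B_q)\ =\ V_n\big(2^n\psi(q)\big)\,,\qquad V_n(\eta):=m_n\big(\{\vv\in[0,1]^n:v_1\cdots v_n<\eta\}\big)\,.
$$
Passing to logarithmic coordinates $w_i=-\log v_i$ turns $\{\prod v_i<\eta\}$ into the half-space $\{w_1+\dots+w_n>\log(1/\eta)\}$ carrying the density $e^{-\sum w_i}$, so the standard incomplete Gamma identity yields
$$
V_n(\eta)\ =\ \eta\sum_{k=0}^{n-1}\frac{(\log(1/\eta))^k}{k!}\qquad(0<\eta\le1)\,,
$$
which is increasing in $\eta$ and satisfies $V_n(\eta)\ll\eta\,\log^{n-1}(1/\eta)$, with implicit constant depending only on $n$.

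To conclude, I would split the sum over $q$ according to the size of $\psi(q)$. Whenever $\psi(q)\ge q^{-n}$ one has $\log(1/\psi(q))\le n\log q$, hence
$$
m_n(B_q)\ \ll\ \psi(q)\,\log^{n-1}q\,,
$$
and such terms are summable by the standing hypothesis. Whenever $\psi(q)<q^{-n}$, monotonicity of $V_n$ gives $m_n(B_q)\le V_n(2^nq^{-n})\ll q^{-n}\log^{n-1}q$, which is summable unconditionally. In either case $\sum_q m_n(B_q)<\infty$, and the convergence Borel--Cantelli Lemma then delivers $m_n(W^{\times}_{\bm\g}(n,\psi))=0$, as required.

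The only non-routine ingredient is the logarithmic factor produced by $V_n$, which is the exact source of the $\log^{n-1}q$ in the hypothesis; the case split is the usual device for handling approximating functions $\psi$ that are too small for $\log(1/\psi(q))$ to be comparable with $\log q$. In fact monotonicity of $\psi$ plays no role in the argument, consistent with Remark~\ref{nonmonremark}.
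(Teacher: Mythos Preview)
Your argument is correct and is precisely the ``straightforward volume argument plus convergence Borel--Cantelli'' that the paper alludes to without giving details: express $W^{\times}_{\bm\g}(n,\psi)$ as $\limsup_q B_q$, compute $m_n(B_q)$ via the pushforward to the hyperbolic region $\{v_1\cdots v_n<2^n\psi(q)\}$, and sum. The explicit incomplete-Gamma evaluation of $V_n(\eta)$ and the case split on $\psi(q)\gtrless q^{-n}$ are clean ways to extract the $\log^{n-1}q$ factor.

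Two small remarks. First, your Case~2 bound $m_n(B_q)\ll q^{-n}\log^{n-1}q$ is only ``summable unconditionally'' when $n\ge2$; for $n=1$ the harmonic series appears. Of course for $n=1$ no case split is needed since $V_1(\eta)=\eta$ gives $m_1(B_q)=2\psi(q)$ directly, so this is a triviality rather than a gap. Second, your closing observation is actually slightly \emph{stronger} than Remark~\ref{nonmonremark} rather than merely consistent with it: that remark replaces the sum by $\sum\psi(q)|\log\psi(q)|^{n-1}$ when monotonicity is dropped, whereas your case split shows that for $n\ge2$ the original condition $\sum\psi(q)\log^{n-1}q<\infty$ already suffices without monotonicity. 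The two convergence conditions are incomparable in general, so you have established a variant rather than recovered the remark verbatim.
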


\vspace*{2ex}

\noindent The context of Remark \ref{nonmonremark} remains valid in
the inhomogeneous setup; namely, we can remove the condition that
$\psi$ is monotonic, if we replace the above convergence sum condition
by $\sum \psi(q) |\log \psi(q)|^{n-1} <\infty$.

\vspace*{1ex}

Surprisingly, the divergence counterpart of Lemma \ref{inhomconvgall}
is not known.

\vspace*{2ex}

\begin{conjecture}[Inhomogeneous Gallagher:
  divergence] \label{inhomdivgall} Let $\psi:\N\to \Rp$ be a monotonic
  function and $\bm\g\in \R^n$.  Then
$$
m_n(W^{\times}_{\bm\g}(n,\psi)) = 1 \quad if \quad \displaystyle
\sum_{q=1}^{\infty} \; \psi(q) \log^{n-1} q = \infty \ .
$$
\end{conjecture}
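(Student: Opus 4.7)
The plan is to express $W^\times_{\bm\gamma}(n,\psi)$ as a $\limsup$ set, verify that the sum of measures diverges, establish quasi-independence on average for the intersections of the constituent sets, apply the Divergence Borel-Cantelli Lemma to obtain positive measure, and finally promote positive measure to full measure via a zero-full law. Concretely, set
\begin{equation*}
A_q := \Big\{\vx\in\I^n : \textstyle\prod_{i=1}^n\|qx_i-\gamma_i\|<\psi(q)\Big\},
\qquad W^\times_{\bm\gamma}(n,\psi)=\limsup_{q\to\infty}A_q.
\end{equation*}
A direct change of variables $y_i=qx_i-\gamma_i\pmod 1$ reduces the computation of $m_n(A_q)$ to the standard volume estimate $m_n\{|y_1\cdots y_n|<\psi(q),\ y_i\in[-\tfrac12,\tfrac12]\}\asymp \psi(q)(1+|\log\psi(q)|)^{n-1}$. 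Since $\psi$ is monotonic, this yields $m_n(A_q)\asymp\psi(q)\log^{n-1}q$ for $\psi(q)\le q^{-1}$ (the case $\psi(q)>q^{-1}$ forces $W^\times_{\bm\gamma}=\I^n$ trivially), so the divergence hypothesis gives $\sum_q m_n(A_q)=\infty$.

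The substance of the proof is to produce a constant $C>0$ so that
\begin{equation*}
\sum_{q,q'\le Q}m_n(A_q\cap A_{q'})\ \le\ C\Big(\sum_{q\le Q}m_n(A_q)\Big)^2
\end{equation*}
for infinitely many $Q$. I would tackle the off-diagonal term by Fubini, fixing $\vx'=(x_1,\dots,x_{n-1})$ and writing the intersection of the fibers in $x_n$ as a union of short intervals centred at points of the form $(p+\gamma_n)/q$ and $(p'+\gamma_n)/q'$. A common-denominator argument (replacing $q,q'$ by $q''=q-q'$ or $q+q'$) then reduces the estimate to controlling the spacing of the shifted rationals; this is the step where, in Gallagher's homogeneous argument, one exploits the Dirichlet-style identity $\sum_{d\mid r}\varphi(d)=r$ to replace the shifted rationals by ordinary Farey fractions. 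I would carry this through inductively on $n$, combining the one-dimensional estimate with the multiplicative decomposition $\|q x_i-\gamma_i\|\cdot\|qx_n-\gamma_n\|<\psi(q)$ via a dyadic decomposition in each factor; the logarithmic loss $\log^{n-1}q$ in $m_n(A_q)$ should be accounted for by $n-1$ dyadic summations over the sizes $\|qx_i-\gamma_i\|\in[2^{-k_i-1},2^{-k_i}]$.

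The main obstacle, and the reason the conjecture has resisted proof, is precisely this pairwise estimate. In the homogeneous case $\bm\gamma=\mathbf 0$, Gallagher exploits the arithmetic rigidity of the rationals $p/q$: two hyperbolic regions centred at $\vv p/q$ and $\vv p'/q'$ intersect in a region whose size is controlled using number-theoretic identities for divisor and Euler sums. When $\bm\gamma\ne\mathbf 0$, the relevant lattice becomes a shifted lattice $(\Z^n+\bm\gamma)/q$, and identities like $\sum_{d\mid q}\varphi(d)=q$ no longer apply; the only tools that survive are analytic (exponential-sum estimates) and these lose precisely a logarithmic factor that one cannot afford when $\psi$ decays like $1/(q\log^{n-1}q)$. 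A viable route is therefore to develop an inhomogeneous analogue of Gallagher's sieve, replacing the classical divisor identity by a Fourier-analytic input that tracks the shift $\bm\gamma$: expand $\mathbf 1_{A_q}$ in a Fourier series and estimate the resulting exponential sums $\sum_{q\sim Q}e((k_1+\dots+k_n)\gamma\cdot\text{stuff})$ using Weyl-type bounds; this is where genuinely new technical work is needed.

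Once positive measure is established, full measure would follow from a zero-full law for $W^\times_{\bm\gamma}(n,\psi)$. Such a law is available via the Kolmogorov $0$-$1$ law applied to a suitable tail $\sigma$-algebra (the event $\vx\in W^\times_{\bm\gamma}(n,\psi)$ is invariant under modification of finitely many coordinates $x_i$ on a full-measure set, or, more robustly, by a Gallagher-style ergodic argument) together with the symmetry of $W^\times_{\bm\gamma}(n,\psi)$ under rational translations. Alternatively, if local quasi-independence on average can be established on every ball $B\subset\I^n$ in place of the global version, then the ambient-measure case of the Ubiquity Theorem (Theorem 6.2) delivers full measure directly, bypassing the need for an independent zero-one law.
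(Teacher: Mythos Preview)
The statement you are addressing is explicitly presented in the paper as an \emph{open conjecture}, not as a theorem. Immediately before stating it, the paper says ``Surprisingly, the divergence counterpart of Lemma~\ref{inhomconvgall} is not known,'' and afterwards it notes that even the case $n=2$ is only known under the restriction that one of the two shifts $\gamma_1,\gamma_2$ vanishes (citing \cite[Theorem~13]{bhv}). There is therefore no proof in the paper against which your proposal can be compared.

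Your write-up is best read as a strategy sketch rather than a proof, and indeed you say as much yourself: you identify the pairwise overlap estimate $\sum_{q,q'\le Q} m_n(A_q\cap A_{q'})\le C\big(\sum_{q\le Q} m_n(A_q)\big)^2$ as the crux, observe that Gallagher's homogeneous argument relies on arithmetic identities (divisor and Euler sums) that have no direct analogue when the rational lattice is shifted by $\bm\gamma$, and concede that ``this is where genuinely new technical work is needed.'' That diagnosis is accurate and matches the state of the art as described in the paper, but it means your proposal does not constitute a proof. The steps you do carry out (the $\limsup$ representation, the volume computation $m_n(A_q)\asymp\psi(q)\log^{n-1}q$, and the observation that a zero-full law or local ubiquity would upgrade positive to full measure) are standard and correct, but the essential quasi-independence step is left as a programme rather than an argument. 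In short: you have correctly located the obstruction, but not removed it, and neither has the paper.
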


\vspace*{1ex}

Restricting our attention to $n=2$, it is shown in \cite[Theorem
13]{bhv} that the conjecture is true if given
$\bm\g=(\gamma_1,\gamma_2) \in \R^2$, either $\gamma_1=0$ or
$\gamma_2=0$. In other words, we are able to deal with the situation
in which one of the two ``approximating quantities'' is inhomogeneous
but not both. For further details see \cite[\S2.2]{bhv}.

\vspace*{1ex}

We now turn our attention to the Hausdorff theory. Given that the
Lebesgue theory is so incomplete, it would be reasonable to have low
expectations for a coherent Hausdorff theory. However, when $n=2$, we
are bizarrely in pretty good shape.  To begin with note that
\begin{equation}\label{vb45}
  \text{if $ \ s\le 1  \ $ then $  \ \cH^s(W^{\times}_{\bm\g}(2,\psi))=\infty  \ $ irrespective of approximating function $ \ \psi $.}
\end{equation}
To see this, given $\bm\g=(\gamma_1,\gamma_2) \in \R^2$, we observe
that for any $\alpha \in W_{\gamma_1}(1,\psi) $ the whole line
$x_1=\alpha$ within the unit interval is contained in
$W^{\times}_{\bm\g}(2,\psi)$. Hence,
\begin{equation}\label{vb35}
  W_{\gamma_1}(1,\psi)  \times\I\subset W^{\times}_{\bm\g}(2,\psi) \,.
\end{equation}
It is easy to verify that $W_{\gamma_1}(1,\psi)$ is an infinite set
for any approximating function $\psi$ and so \eqref{vb35} implies
\eqref{vb45}.  Thus, when considering the $s$-dimensional Hausdorff
measure of $W^{\times}_{\bm\g}(2,\psi)$, there is no loss of
generality in assuming that $s \in (1,2]$. The following inhomogeneous
multiplicative analogue of Jarn\'ik's theorem is established in
\cite[Theorem 1]{BVdani}.

\begin{thm}\label{t1dani}
  Let $\psi:\N\to \Rp$ be a monotonic function, $\bm\g\in \R^2$ and
  $s \in (1,2)$. Then
  \begin{equation}\label{e:006+}
    \cH^{s}\big(W^{\times}_{\bm\g}(2,\psi)  \big) \, = \,
    \left\{\begin{array}{ll}
             0  & {\rm if} \;\;\;
                  \textstyle{\sum_{q=1}^\infty} \; q^{2-s}\psi^{s-1}(q) \; <\infty \; ,\\[1ex]
                &
             \\
             \infty & {\rm if} \;\;\;
                      \textstyle{ \sum_{q=1}^\infty } \;  q^{2-s}\psi^{s-1}(q) \;
                      =\infty \; .
           \end{array}\right.
       \end{equation}
     \end{thm}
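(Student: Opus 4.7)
Both directions reduce to one-dimensional statements already developed in the excerpt. The plan is to establish convergence by a direct dyadic covering of the hyperbolic approximating regions, and to establish divergence via the product-set inclusion
\begin{equation*}
W_{\gamma_1}(1,\psi)\times\I \ \subset\ W^{\times}_{\bm\g}(2,\psi)
\end{equation*}
(which holds because $\|qy-\gamma_2\|\le\tfrac12$ for every $y\in\I$) combined with the inhomogeneous Khintchine-\Jarnik{} theorem (Theorem~\ref{inhomKJ}) and a standard Hausdorff product inequality. Throughout we assume without loss that $\psi(q)\to0$; otherwise both halves are easy.

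For the convergence part, fix a large $q$ and the $q^2+O(q)$ shifted rational centres $\bigl((p_1+\gamma_1)/q,(p_2+\gamma_2)/q\bigr)\in\I^2$. In local coordinates $(u,v)=(qx_1-p_1-\gamma_1,\,qx_2-p_2-\gamma_2)\in[-\tfrac12,\tfrac12]^2$, one hyperbolic region becomes $\{|uv|<\psi(q)\}$. Split by the symmetry $|u|\le|v|$ vs.\ $|v|\le|u|$; on each half decompose dyadically by $|u|\in[2^j\sqrt{\psi(q)},\,2^{j+1}\sqrt{\psi(q)}]$ for $j=0,1,\ldots,J_q$ with $2^{J_q}\asymp1/\sqrt{\psi(q)}$, which forces $|v|\le\sqrt{\psi(q)}/2^j$. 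Transferring back to $(x_1,x_2)$ and covering each dyadic rectangle by balls of radius $\sqrt{\psi(q)}/(2^jq)$ (needing $\asymp 4^j$ balls), the total $s$-cost coming from level $q$ is
\begin{equation*}
\asymp\, q^{2}\sum_{j=0}^{J_q} 4^j\!\left(\tfrac{\sqrt{\psi(q)}}{2^jq}\right)^{\!s}
\,\asymp\, q^{2-s}\psi(q)^{s/2}\sum_{j=0}^{J_q} 2^{(2-s)j}.
\end{equation*}
Since $s<2$, the geometric sum is dominated by $j=J_q$, producing a total of order $q^{2-s}\psi(q)^{s-1}$. Summing over $q\ge Q$ and letting $Q\to\infty$, the convergence hypothesis yields $\cH^s(W^{\times}_{\bm\g}(2,\psi))=0$.

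For the divergence part, set $t:=s-1\in(0,1)$. The identity $1-t=2-s$ shows that the assumption $\sum q^{2-s}\psi^{s-1}(q)=\infty$ is exactly the divergence hypothesis of Theorem~\ref{inhomKJ} applied with $n=1$ and exponent $t$; monotonicity of $\psi$ is granted. Hence $\cH^{t}(W_{\gamma_1}(1,\psi))=\cH^{t}(\I)=\infty$. Invoking the standard product inequality $\cH^{s}(A\times B)\ge c(s,t)\,\cH^{t}(A)\,\cH^{1}(B)$ for Borel $A,B\subset\R$ together with the inclusion above gives $\cH^s(W^{\times}_{\bm\g}(2,\psi))=\infty$ as required. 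The main delicate point I anticipate is the justification of the product inequality in the precise quantitative form needed, rather than the weaker dimension statement $\dim(A\times B)\ge\dim A+\dim B$; this is classical (Marstrand; see Falconer, \emph{Fractal Geometry}, Ch.~7) and applies because $W_{\gamma_1}(1,\psi)$ is Borel as a $\limsup$ of open sets.
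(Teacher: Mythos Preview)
Your proposal is correct, and in fact the paper does not supply its own proof of this theorem: it is quoted from \cite{BVdani}. That said, the paper explicitly records the key inclusion \eqref{vb35}, i.e.\ $W_{\gamma_1}(1,\psi)\times\I\subset W^{\times}_{\bm\g}(2,\psi)$, immediately before stating the result, so your divergence strategy---inhomogeneous Khintchine--\Jarnik{} (Theorem~\ref{inhomKJ}) on the first coordinate followed by a product/slicing lower bound---is exactly the route implicitly indicated and the one taken in \cite{BVdani}. Likewise your convergence argument via a dyadic decomposition of the hyperbolic region $|uv|<\psi(q)$ is the standard one (cf.\ Remark~\ref{nonmonremark} and \cite{BHV01}).

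The one point you flag does deserve a short justification, and the clean way to handle it avoids the general product inequality altogether. Rather than invoking $\cH^s(A\times B)\ge c\,\cH^t(A)\,\cH^1(B)$ (whose sharp form is genuinely subtle), argue by slicing: if $\{U_i\}$ is any cover of $A\times\I$ by sets of diameter $d_i\le\delta$, then for each $y\in\I$ the family $\{U_i:U_i\cap(\R\times\{y\})\neq\varnothing\}$ is a $\delta$-cover of $A$, and integrating $\sum_i d_i^{t}\,\mathbf 1_{\{U_i\text{ meets }\R\times\{y\}\}}$ over $y\in\I$ gives $\sum_i d_i^{t+1}\ge c\,\cH^t_\delta(A)$. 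Letting $\delta\to0$ yields $\cH^{t+1}(A\times\I)\ge c\,\cH^t(A)$, which is all you need since $\cH^t(W_{\gamma_1}(1,\psi))=\infty$. This is precisely the elementary slicing mechanism behind \cite{BV06Slicing}, and it sidesteps any appeal to Davies-type subset theorems or to quantitative Marstrand.
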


     \medskip

     \begin{rem} {\rm Recall that Gallagher's multiplicative statement
         and its conjectured inhomogeneous generalisation (Conjecture
         \ref{inhomdivgall}) have the extra `log factor' in the
         Lebesgue `volume' sum compared to Khintchine's simultaneous
         statement (Theorem \ref{inhomKJ} with $s=n=2$).  A priori, it
         is natural to expect the log factor to appear in one form or
         another when determining the Hausdorff measure $\cH^s$ of
         $W^{\times}_{\bm\g}(2,\psi) $ for $ s \in (1, 2)$.  This, as
         we see from Theorem \ref{t1dani}, is very far from the
         truth. The `log factor' completely disappears.  Thus, genuine
         `fractal' Hausdorff measures are insensitive to the
         multiplicative nature of $W^{\times}_{\bm\g}(2,\psi)$.  }
     \end{rem}

     \medskip

     \begin{rem} {\rm Note that in view of the previous remark, even
         if we had written $\cH^{s}(\I^2)$ instead of $\infty$ in the
         divergence case of Theorem \ref{t1dani} , it is still
         necessary to exclude the case $s=2$.  }
     \end{rem}

     For $n > 2$, the proof given in \cite{BVdani} of
     Theorem~\ref{t1dani} can be adapted to show that for any
     $s\in(n-1,n)$
$$
\cH^{s}\big(W^{\times}_{\bm\g}(n,\psi) \big) \, = \, 0 \qquad\text{if}
\qquad {\sum_{q=1}^\infty} \; q^{n-s}\psi^{s+1-n}(q)\log^{n-2}q \;
<\infty \; .
$$
Thus, for convergence in higher dimensions we lose a log factor from
the Lebesgue volume sum appearing in Gallagher's homogeneous result
and indeed Lemma \ref{inhomconvgall}. This of course is absolutely
consistent with the $n=2$ situation given by Theorem~\ref{t1dani}.
Regarding a divergent statement, the arguments used in proving
Theorem~\ref{t1dani} can be adapted to show that for any $s\in(n-1,n)$
$$
\cH^{s}\big(W^{\times}_{\bm\g}(n,\psi) \big) \, = \, \infty
\qquad\text{if} \qquad {\sum_{q=1}^\infty} \; q^{n-s}\psi^{s+1-n}(q)
\; =\infty \; .
$$
Thus, there is a discrepancy in the above `$s$-volume' sum conditions
for convergence and divergence when $n > 2$. In view of this, it
remains an interesting open problem to determine the necessary and
sufficient condition for
$\cH^{s}\big(W^{\times}_{\bm\g}(n,\psi) \big)$ to be zero or infinite
in higher dimensions.

\vspace*{3ex}

\subsubsection{The multiplicative theory for
  manifolds \label{multmanifolds}}

Let $\cM$ be a non-degenerate sub-manifolds of $\R^n$. In a nutshell,
as in the simultaneous case, the overarching problem is to develop a
Lebesgue and Hausdorff theory for
$ \cM \cap W^{\times}_{\bm\g}(n,\psi) $.  Given that our current
knowledge for the independent theory (i.e. when $\cM = \R^n$) is
pretty poor, we should not expect too much in terms of the dependent
(manifold) theory.  We start with describing coherent aspects of the
Lebesgue theory.  The following is the multiplicative analogue of the
statement that $\cM$ is inhomogeneously extremal.  Given $\tau >0$ and
a fixed point $\bm\g \in \R^n$, we write
$ W^{\times}_{\bm\g}(n, \tau) $ for the set
$ W^{\times}_{\bm\g}(n,\psi) $ with $ \psi(q) = q^{-\tau}$.

\begin{thm}
  Let $\cM$ be a non-degenerate sub-manifold of $\R^n$. Then
  \begin{equation*}
    \left|  \cM\cap W^{\times}_{\bm\g}(n, 1 + \ve) \right|_{\cM}   =0    \qquad  \forall   \  \ve>0   \,.
  \end{equation*}
\end{thm}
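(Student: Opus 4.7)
The plan is to deduce the theorem from its homogeneous counterpart—the Baker--Sprind\v zuk conjecture for non-degenerate manifolds, proved by Kleinbock and Margulis in \cite{KM98}—via the Inhomogeneous Transference Principle developed in \cite{Beresnevich-Velani-10:MR2734962}. Thus the starting point is
$$|\cM\cap W^{\times}(n,1+\ve)|_{\cM}=0\qquad\forall\ve>0,$$
where $W^{\times}(n,1+\ve)=W^{\times}_{\bm 0}(n,1+\ve)$; the task is to transfer this null-set statement to an arbitrary shift $\bm\g\in\R^n$.

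The high-level scheme mirrors the toy transference \eqref{amen3} carried out in \S\ref{ITPtheory}. Write $W^{\times}_{\bm\g}(n,1+\ve)$ as a limsup of shifted multiplicative regions
$$R^{\bm\g}_{\vv p,q}(\ve):=\Big\{\vx\in\I^n:\textstyle\prod_{i=1}^n|qx_i-p_i-\gamma_i|<q^{-1-\ve}\Big\},$$
group denominators dyadically by $2^t\le q<2^{t+1}$, and replace $\ve$ by $\ve/2$ to obtain ``blown-up'' regions. Within each dyadic block, declare $R^{\bm\g}_{\vv p,q}(\ve)$ \emph{disjoint} if $R^{\bm\g}_{\vv p,q}(\ve/2)\cap\cM$ meets no other such intersection from the block, and \emph{non-disjoint} otherwise. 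This splits the limsup into two pieces $D^{\bm\g}(\ve)$ and $N^{\bm\g}(\ve)$ to be treated separately.

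For $D^{\bm\g}(\ve)$ I would establish a \emph{contracting property} for the induced measure: a constant $c=c(n,\cM)>0$ with
$$|R^{\bm\g}_{\vv p,q}(\ve)\cap\cM|_{\cM}\ \le\ q^{-c\ve}\,|R^{\bm\g}_{\vv p,q}(\ve/2)\cap\cM|_{\cM}.$$
Summed over disjoint blown-up pieces in a single dyadic block this is at most $|\cM|_{\cM}=1$; summing the decay factor $2^{-tc\ve}$ over $t$ converges and convergence Borel--Cantelli kills $D^{\bm\g}(\ve)$. For $N^{\bm\g}(\ve)$ one exploits an \emph{intersection property}: two overlapping blown-up regions with denominators $q\ne q'$ in the same dyadic block force, via the coordinate-wise identity $(qx_i-p_i-\gamma_i)-(q'x_i-p'_i-\gamma_i)=q''x_i-p''_i$ with $q'':=|q-q'|$ and $\vv p'':=\vv p-\vv p'$ (note the crucial cancellation of $\gamma_i$), a homogeneous multiplicative inequality on $(q'',\vv p'')$. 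Modulo a countable set of genuine rational coincidences this places the non-disjoint contribution inside $\cM\cap W^{\times}(n,1+\ve')$ for some smaller $\ve'>0$, and Kleinbock--Margulis supplies the required null-measure conclusion.

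The main obstacle is the contracting property. Unlike the one-dimensional ball identity \eqref{e:017}, the region $R^{\bm\g}_{\vv p,q}(\ve)$ is a strongly anisotropic hyperbolic tube whose section with a curved $d$-manifold is delicate to estimate; moreover, because $\prod_i|q''x_i-p''_i|$ is not controlled by a simple triangle inequality on the factors (the product of sums fails to factor), the reduction to the homogeneous multiplicative inequality in the non-disjoint case is not a one-line manipulation but a more intricate geometric argument that ultimately requires the non-degeneracy of $\cM$. Fitting both pieces into the abstract intersection/contracting axiomatics of \cite[\S5]{Beresnevich-Velani-10:MR2734962} is what drives the proof.
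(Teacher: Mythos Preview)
Your proposal is correct and matches the paper's approach exactly: the paper does not give a self-contained argument but simply records that the homogeneous case is the Kleinbock--Margulis strong-extremality theorem \cite{KM98} and that the inhomogeneous statement is then ``established via the general Inhomogeneous Transference Principle developed in \cite{Beresnevich-Velani-10:MR2734962}.'' Your sketch of the disjoint/non-disjoint decomposition, the contracting property, and the $\bm\gamma$-cancellation trick is precisely the mechanism behind that principle (illustrated in the paper by the toy proof of \eqref{amen3}), and you correctly flag that in the multiplicative setting the contracting and intersection properties need the full abstract axiomatics of \cite[\S5]{Beresnevich-Velani-10:MR2734962} rather than the naive one-dimensional version.
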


\vspace*{1ex}

\noindent In the homogeneous case, the above theorem is due to
Kleinbock $\&$ Margulis \cite{KM98} and implies that non-degenerate
manifolds are \emph{strongly extremal} (by definition).  It is easily
seen that strongly extremal implies extremal. The inhomogeneous
statement is established via the general Inhomogeneous Transference
Principle developed in \cite{Beresnevich-Velani-10:MR2734962}.

\vspace*{2ex}

Beyond strong extremality, we have the following convergent statement
for the Lebesgue measure of $\cM \cap W^{\times}_{\bm\g}(n,\psi) $ in
the case $\cM$ is a planar curve $\cC$ .

\begin{thm}\label{t200} Let $\psi:\N\to \Rp$ be a monotonic function
  and $\bm\g\in \R^n$.  Let $\cC$ be a non-degenerate planar
  curve. Then
  \begin{equation}\label{e:00600}
    \left|  \cC \cap  W^{\times}_{\bm\g}(2,\psi)   \right|_{\cC} \, = \,
    0   \qquad\text{if} \qquad
    {\sum_{q=1}^\infty} \; \psi(q)\log q \; <\infty \; .
  \end{equation}
\end{thm}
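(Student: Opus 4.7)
My plan is to apply the convergence Borel--Cantelli lemma directly on the curve. Locally, parametrise $\cC$ as the graph $y = f(x)$ on a compact interval $J$, with $f \in C^{(2)}$ and $|f''(x)| \ge c_0 > 0$ on $J$ (permissible by non-degeneracy, covering $\cC$ by finitely many such arcs up to a $|\cdot|_{\cC}$-null set). Under this parametrisation,
\begin{equation*}
\cC \cap W^{\times}_{\bm\g}(2,\psi) \;=\; \limsup_{q \to \infty} E_q, \qquad E_q \,:=\, \bigl\{x \in J : \|qx-\gamma_1\|\cdot\|qf(x)-\gamma_2\| < \psi(q)\bigr\},
\end{equation*}
so it suffices to show $\sum_q |E_q| < \infty$, with $|\cdot|$ one-dimensional Lebesgue measure.

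The main step is the per-level estimate $|E_q| \ll \psi(q)\log q$, which combined with the hypothesis yields the required convergence. Write $E_q = \bigcup_{(p_1,p_2)\in\Z^2} A(q,p_1,p_2)$, where $A(q,p_1,p_2) := \{x \in J : |qx-p_1-\gamma_1|\cdot|qf(x)-p_2-\gamma_2| < \psi(q)\}$. After the change of variables $u = qx - p_1 - \gamma_1$, setting $x_{p_1} = (p_1+\gamma_1)/q$ and $g(u) = qf(x_{p_1} + u/q) - p_2 - \gamma_2$, we have $g(u) = g(0) + f'(x_{p_1})\,u + O(u^2/q)$. Since $|f'|$ is bounded on $J$, a direct analysis of the hyperbolic region $\{u : |u\,g(u)| < \psi(q)\}$ yields
\begin{equation*}
|A(q,p_1,p_2)| \;\ll\; \frac{\sqrt{\psi(q)}}{q} \;+\; \frac{\psi(q)}{q\,\max\bigl(|g(0,p_1,p_2)|,\,\sqrt{\psi(q)}\bigr)}.
\end{equation*}
For each $p_1 \in \{0,\dots,q\}$ only $O(1)$ choices of $p_2$ give $|g(0)| \le 1$, with the dominant one producing $|g(0,p_1)| = \|qf(x_{p_1}) - \gamma_2\|$. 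Summing,
\begin{equation*}
|E_q| \;\ll\; \sqrt{\psi(q)} \;+\; \frac{\psi(q)}{q}\sum_{p_1=0}^{q}\frac{1}{\max\bigl(\|qf(x_{p_1})-\gamma_2\|,\,\sqrt{\psi(q)}\bigr)},
\end{equation*}
and the desired bound reduces to a discrepancy estimate of the form $\#\{p_1 : \|qf(x_{p_1})-\gamma_2\| < \delta\} \ll q\delta + 1$, which upon dyadic summation produces the required $\log q$ factor.

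The main obstacle is this discrepancy estimate, uniform in $\gamma_2$. For non-degenerate $f$ it is a standard consequence of the Erd\H{o}s--Tur\'an inequality combined with van der Corput bounds on the exponential sums $\sum_{p_1} e(k q f(x_{p_1}))$; the non-vanishing of $f''$ supplies the required cancellation, and the shift by $\gamma_2$ contributes only a unimodular factor $e(-k\gamma_2)$ that does not affect absolute values. A conceptually cleaner alternative is to invoke the Inhomogeneous Transference Principle of \cite{Beresnevich-Velani-10:MR2734962}: the induced measure on a non-degenerate planar curve satisfies the requisite contracting and intersection properties, so the inhomogeneous statement follows from the corresponding homogeneous ($\bm\g = \mathbf{0}$) result, itself accessible by the methods developed in \cite{bhv, Beresnevich-Vaughan-Velani-11:MR2777039}.
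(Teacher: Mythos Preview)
Your architecture---Borel--Cantelli on the curve plus a per-level bound $|E_q|\ll\psi(q)\log q$---is exactly the route the paper indicates: it defers to \cite{BadLev} for the structure and to \cite[Theorem~2]{Beresnevich-Vaughan-Velani-11:MR2777039} for the inhomogeneous counting input. The gap lies in your justification of that counting input. The claim $\#\{p_1:\|qf(x_{p_1})-\gamma_2\|<\delta\}\ll q\delta+1$ is false uniformly in $q$: on the parabola $y=x^2$ with $\bm\gamma=\mathbf 0$ and $q=r^2$ a perfect square, there are $r=\sqrt q$ values of $p_1$ with $\eta_{p_1}=0$, so for $\delta<q^{-1/2}$ the bound collapses. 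Nor does Erd\H{o}s--Tur\'an plus van der Corput deliver anything close: the phase $\phi(p_1)=qf((p_1+\gamma_1)/q)$ has $\phi''\asymp1/q$, so one gets $\big|\sum_{p_1}e(k\phi(p_1))\big|\ll\sqrt{kq}+\sqrt{q/k}$ and discrepancy only $\ll q^{-1/3}$, i.e.\ an error $q^{2/3}$. Feeding this honest error into your dyadic sum contributes $\sqrt{\psi(q)}\,q^{-1/3}$ to $|E_q|$, and for $\psi(q)=1/(q\log^{1+\epsilon}q)$ with $\epsilon\in(0,1]$ the series $\sum_q q^{-5/6}(\log q)^{-(1+\epsilon)/2}$ diverges. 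What is actually required---and what the paper invokes---is the sharper upper bound for shifted rational points near non-degenerate planar curves in \cite[Theorem~2]{Beresnevich-Vaughan-Velani-11:MR2777039}, which is precisely the inhomogeneous analogue of the counting used in \cite{BadLev}.

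Two further points. The stray $\sqrt{\psi(q)}$ in your displayed bound for $|E_q|$ already prevents summability (take $\psi(q)=1/(q\log^2 q)$); it should be dropped, since your own case analysis in fact yields the tighter $|A|\ll\psi/(q\max(|g(0)|,\sqrt{\psi}))$. And the fallback via the Inhomogeneous Transference Principle of \cite{Beresnevich-Velani-10:MR2734962} is not available here: that mechanism (cf.\ \S\ref{ITPtheory}) relies on a power-saving contraction of the shape $m(B^\gamma_{p,q}(\ve))=q^{-\ve/2}m(B^\gamma_{p,q}(\ve/2))$, which is specific to $\psi(q)=q^{-\tau}$ and has no analogue for a general monotone $\psi$ satisfying only $\sum\psi(q)\log q<\infty$.
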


\vspace*{1ex}

\noindent The homogeneous case is established in \cite[Theorem
1]{BadLev}.  However, on making use of the upper bound counting
estimate appearing within Theorem 2 of
\cite{Beresnevich-Vaughan-Velani-11:MR2777039}, it is easy to adapt
the homogeneous proof to the inhomogeneous setup. The details are left
as an \emph{exercise}.  Just as in the homogeneous theory, obtaining
the counterpart divergent statement for the Lebesgue measure of
$\cC \cap W^{\times}_{\bm\g}(2,\psi)$ remains a stubborn problem.
However, for genuine fractal Hausdorff measures $\cH^s$ we have a
complete convergence/divergence result \cite[Theorem 2]{BVdani}.

\begin{thm}\label{t2001}
  Let $\psi:\N\to \Rp$ be a monotonic function, $\bm\g\in \R^n$ and
  $s \in (0,1)$.  Let $\cC$ be a $C^{(3)}$-planar curve with non-zero
  curvature everywhere apart from a set of $s$-dimensional Hausdorff
  measure zero.  Then
  \begin{equation*}\label{e:006001}
    \cH^{s}\big(  \cC \cap   W^{\times}_{\bm\g}(2,\psi)  \big)
    \, = \,
    \left\{\begin{array}{ll}
             0  & {\rm if} \;\;\;
                  \textstyle{\sum_{q=1}^\infty} \; q^{1-s}\psi^s(q) \; <\infty  ,\\[2ex]
             \infty & {\rm if} \;\;\;
                      \textstyle{ \sum_{q=1}^\infty } \;  q^{1-s}\psi^s(q) \;
                      =\infty  .
           \end{array}\right.
       \end{equation*}
     \end{thm}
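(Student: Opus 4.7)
The proof naturally divides into convergence and divergence, both anchored on a dyadic decomposition of the hyperbolic region
$H_{\bm\g}(q,\psi) := \{(x_1,x_2)\in\I^2 : \|qx_1-\gamma_1\|\,\|qx_2-\gamma_2\|<\psi(q)\}$
as a union, over integers $j$ with $0\le j\le \log_2 q$ and shifted rationals $(p_1,p_2)\in\Z^2$, of rectangles $R_{j,p_1,p_2}(q)$ of dimensions $2^{-j}/q \times 2^{j}\psi(q)/q$ centred at $\big((p_1+\gamma_1)/q,(p_2+\gamma_2)/q\big)$.

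For the convergence direction, the plan is to cover $\cC \cap W^{\times}_{\bm\g}(2,\psi)$ by the pieces $\cC \cap R_{j,p_1,p_2}(q)$ with $q$ large and bound the resulting $s$-sum. Non-vanishing curvature of $\cC$ ensures that each such intersection lies in a uniformly bounded number of arcs of diameter at most $\asymp \min\big(2^{-j}/q,\,(2^j\psi(q)/q)^{1/2}\big)$. The number of rectangles at each level $j$ whose centre is within distance $O(2^{-j}/q)$ of $\cC$ is controlled by an inhomogeneous counting estimate for rational points near non-degenerate planar curves, analogous to the one behind Theorem~\ref{t200} and available through \cite{Beresnevich-Vaughan-Velani-11:MR2777039}. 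Summing the $s$-th powers of the arc radii, optimising over $j$, and then summing over $q$, produces a total of order $\sum_q q^{1-s}\psi^s(q)$; the convergence Borel--Cantelli lemma then yields $\cH^s(\cC \cap W^{\times}_{\bm\g}(2,\psi)) = 0$.

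For the divergence direction, I would apply the ubiquity framework of Theorem~\ref{UL} with $\Omega = \cC$ and $\mu$ proportional to $\cH^s$ on the full-$\cH^s$-measure portion of $\cC$ where the curvature is nonzero. The resonant points are the shifted rationals $\big((p_1+\gamma_1)/q,(p_2+\gamma_2)/q\big)$ lying within a $q^{-2}$-tube of $\cC$; their sufficient density along $\cC$ is supplied by the inhomogeneous simultaneous theory on non-degenerate planar curves established in \cite{Beresnevich-Vaughan-Velani-11:MR2777039}. For each $q$ I would choose an optimal dyadic level $j(q)$ maximising the $s$-contribution of $R_{j(q),p_1,p_2}(q)$ on $\cC$, and use the fact that such a rectangle contains a ball of comparable radius centred on $\cC$. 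Ubiquity together with the divergence of $\sum q^{1-s}\psi^s(q)$, which matches exactly the driving series of Theorem~\ref{UL} for the chosen scales, yields $\cH^s(\cC \cap W^{\times}_{\bm\g}(2,\psi)) = \cH^s(\cC) = \infty$ for $s<1$.

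The main obstacle is the divergence part. As $j$ moves away from the balanced value the rectangles $R_{j,p_1,p_2}(q)$ become very anisotropic, and the inscribed-ball shortcut inside a single rectangle captures only a small portion of $H_{\bm\g}(q,\psi)$. Producing exactly the series $\sum q^{1-s}\psi^s(q)$ requires summing these contributions coherently across $j$ and $q$ while preserving the ubiquity hypothesis on the fractal $\cC$. The restriction $s<1$ is essential at this step: the $s=1$ case would collapse into the currently open Lebesgue divergence question for $\cC \cap W^{\times}_{\bm\g}(2,\psi)$ recalled immediately before the theorem.
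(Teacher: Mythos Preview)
The paper does not give a self-contained proof of this theorem; it cites \cite[Theorem~2]{BVdani}. However, the surrounding discussion in the paper, together with the remark immediately following the statement, points to a far simpler route for the divergence case than the one you propose.

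You flag divergence as the ``main obstacle'', but it is in fact the trivial half. The paper already records (see \eqref{vb35}) the inclusion $W_{\gamma_1}(1,\psi)\times\I\subset W^{\times}_{\bm\g}(2,\psi)$, which follows immediately from $\|qx_2-\gamma_2\|\le 1$. Restricting to $\cC$, parametrised locally as $x\mapsto(x,f(x))$, gives
\[
\big\{(x,f(x)):x\in W_{\gamma_1}(1,\psi)\big\}\ \subset\ \cC\cap W^{\times}_{\bm\g}(2,\psi)\,.
\]
For a $C^{(1)}$ curve the parametrisation is bi-Lipschitz on compacta, so $\cH^s$ of the left-hand side is comparable to $\cH^s\big(W_{\gamma_1}(1,\psi)\big)$, and the one-dimensional inhomogeneous Khintchine--\Jarnik\ theorem (Theorem~\ref{inhomKJ} with $n=1$) gives this equals $\infty$ exactly when $\sum q^{1-s}\psi^s(q)=\infty$. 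No curvature, no counting of shifted rationals in a $q^{-2}$-tube, no ubiquity on $\cC$, and no dyadic optimisation are needed. This is precisely why the paper remarks that $C^{(1)}$ already suffices for the divergence case. Your proposal to import the full simultaneous-on-curves apparatus from \cite{Beresnevich-Vaughan-Velani-11:MR2777039} is a substantial detour; indeed, if you trace through your ``optimal dyadic level $j(q)$'' heuristic you will find that the choice maximising the $s$-contribution is the extreme one $2^{-j}\asymp\psi(q)$, at which point the rectangles coalesce into the vertical strips $\|qx_1-\gamma_1\|<\psi(q)$ and your argument collapses to the inclusion above.

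For the convergence case your plan --- dyadic decomposition of the hyperbolic region into rectangles, curvature to control the diameter of $\cC\cap R_{j,p_1,p_2}(q)$, and the inhomogeneous counting estimates near $\cC$ from \cite{Beresnevich-Vaughan-Velani-11:MR2777039} --- is the natural one and broadly in line with the argument in \cite{BVdani}; here the curvature hypothesis genuinely enters.
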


     \noindent

     \noindent It is evident from the proof of the divergence case of
     the above theorem \cite[\S2.1.3]{BVdani}, that imposing the
     condition that $\cC$ is a $C^{(1)}$-planar curve suffices.

     \vspace*{1ex}

     Beyond planar curves, the following lower bound dimension result
     represents the current state of knowledge.

     \begin{thm} \label{allweknow} Let $\cM$ be an arbitrary Lipschitz
       manifold in $\R^n$ and $\bm\g\in \R^n$. Then, for any
       $\tau \ge 1$
       \begin{equation}\label{e:006002} \dim \big( \cM \cap
         W^{\times}_{\bm\g}(n, \tau) \big) \ \geq \ \dim \cM -1 +
         \frac{2}{1+\tau} \, .
       \end{equation}
     \end{thm}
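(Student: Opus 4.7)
The plan is to reduce the multiplicative problem to an inhomogeneous one-variable simultaneous one and then exploit a product-dimension argument on a local Lipschitz-graph piece of $\cM$. The crucial (trivial) observation is that since $\|\cdot\|\le 1/2$, whenever $\|qx_1-\g_1\|<q^{-\tau}$ holds for infinitely many $q\in\N$, the same $q$ satisfy
$$
\prod_{i=1}^n\|qx_i-\g_i\|\ \le\ \|qx_1-\g_1\|\cdot 2^{-(n-1)}\ <\ q^{-\tau},
$$
so (symmetrically with any coordinate index in place of $1$)
$$
\{\vx\in\cM\cap\I^n:\ x_1\in W_{\g_1}(1,\tau)\}\ \subseteq\ \cM\cap W^{\times}_{\bm\g}(n,\tau).
$$

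Next I would reduce to a bi-Lipschitz graph piece of $\cM$ over a coordinate $d$-subspace, where $d=\dim\cM$. Fix a Lipschitz chart $\vf:V\to\cM\cap\I^n$ with $V\subset\R^d$ open and Lipschitz inverse. By Rademacher's theorem, $\vf$ is differentiable with full-rank derivative at almost every $\vu\in V$; the tangent plane $T_{\vf(\vu)}\cM$ is then a $d$-plane, which projects isomorphically onto some coordinate subspace $\R^I$ with $I\subset\{1,\dots,n\}$, $|I|=d$. Pigeonholing over the finitely many such $I$, one choice works on a positive-measure open piece $V_0\subset V$. The Lipschitz inverse function theorem then realises $\cM$ locally as a graph $\{x_j=g_j((x_i)_{i\in I})\}_{j\notin I}$ with Lipschitz $g_j$'s. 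Relabelling so that $1\in I$, locally $\cM$ is bi-Lipschitz parametrised by an open set $W\subset\R\times\R^{d-1}$ via $(x_1,\vu)\mapsto(x_1,g(x_1,\vu))\in\cM\cap\I^n$.

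Now choose any closed product box $J_1\times K\subset W$ with $J_1\subset\R$ a closed interval and $K\subset\R^{d-1}$ of positive $(d-1)$-Lebesgue measure. Theorem~\ref{inhomKJ} with $n=1$ gives $\cH^s(W_{\g_1}(1,\tau)\cap J_1)>0$ for every $s<2/(1+\tau)$, so Frostman's lemma furnishes a probability measure $\mu$ on $W_{\g_1}(1,\tau)\cap J_1$ satisfying $\mu(B(x,r))\ll r^s$. The product of $\mu$ with the normalised restriction of $(d-1)$-Lebesgue measure to $K$ then satisfies a Frostman condition of exponent $s+(d-1)$, and the mass distribution principle yields $\dim\bigl((W_{\g_1}(1,\tau)\cap J_1)\times K\bigr)\ge s+(d-1)$. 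Letting $s\uparrow 2/(1+\tau)$ and pushing forward under the bi-Lipschitz graph map (which preserves Hausdorff dimension), the inclusion from the first paragraph gives
$$
\dim\bigl(\cM\cap W^{\times}_{\bm\g}(n,\tau)\bigr)\ \ge\ \frac{2}{1+\tau}+(d-1)\ =\ \dim\cM-1+\frac{2}{1+\tau}.
$$

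The main technical step is the graph reduction of the second paragraph: ensuring that an arbitrary Lipschitz $d$-manifold contains a bi-Lipschitz graph piece over some coordinate $d$-subspace. This combines Rademacher's theorem, a finite pigeonhole over the index subsets $I\subset\{1,\dots,n\}$ of size $d$, and the Lipschitz inverse function theorem, and requires care with measure-zero exceptional sets and the choice of relabelling. Everything else is a routine combination of the inhomogeneous \Jarnik--Besicovitch theorem on $\R$ with the standard product-dimension inequality via Frostman's lemma; the degenerate cases $d=0$ and $d=1$ require no separate treatment since the argument degenerates consistently.
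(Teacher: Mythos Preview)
Your proposal is correct and follows essentially the same route as the paper: both arguments use the trivial inclusion coming from $\|\cdot\|\le\tfrac12$ to reduce the multiplicative problem on $\cM$ to the inequality $\dim\big(\cM\cap W^{\times}_{\bm\g}(n,\tau)\big)\ge \dim\cM-1+\dim W_{\g_i}(1,\tau)$ for some coordinate $i$, and then invoke the inhomogeneous \Jarnik--Besicovitch theorem (Theorem~\ref{inhomKJ} with $n=1$). The paper simply cites \cite[\S6.2]{Beresnevich-Velani-07:MR2285737} for the slicing/product step, whereas you spell it out via a local Lipschitz-graph reduction together with a Frostman product argument; you are right to flag that graph reduction as the place where genuine care is needed in the Lipschitz (as opposed to $C^1$) category.
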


     \noindent The homogeneous case is established in \cite[Theorem
     5]{Beresnevich-Velani-07:MR2285737}.  The homogeneous proof
     \cite[\S6.2]{Beresnevich-Velani-07:MR2285737} rapidly reduces to
     the inequality
     $$ \dim \big( \cM \cap W^{\times}_{\bm 0}(n, \tau) \big) \ \geq \
     \dim \cM -1 + \dim W^{\times}_{0}(1, \tau) \, .
$$
But $ W^{\times}_{0}(1, \tau) := W(1, \tau) $ and the desired
statement follows on applying the \Jarnik-Besicovitch Theorem (Theorem
\ref{jb}).  Now, Theorem \ref{inhomKJ} implies that the inhomogeneous
generalisation of the \Jarnik-Besicovitch Theorem is valid; namely
that, for any $\g \in \R$ and $ \tau \ge 1 $
$$
\dim W_{\g}(1, \tau) \ = \ \frac{2}{1+\tau} \, .
$$
Thus, the short argument given in
\cite[\S6.2]{Beresnevich-Velani-07:MR2285737} can be adapted in the
obvious manner to establish Theorem \ref{allweknow}.

 %
%
%

\subsubsection{Cassels' problem}

A straightforward consequence of Theorem \ref{inhomKJ} with $s=2$
(inhomogeneous Khintchine), is that for any
$\bm\g=(\g_1, \g_2) \in \R^2$, the set
\begin{equation} \label{inhommultdefus} W^{\times}_{\bm\g}: = \{ \vx
  \in {\rm I}^2 \colon \liminf_{q\to \infty} q \, \| q x_1 - \gamma_1
  \| \, \| q x_2 - \gamma_2 \| \, = \, 0 \}
\end{equation}
is of full Lebesgue measure; i.e. for any $\bm\g \in \R^2$, we have
that
$$
m_2(W^{\times}_{\bm\g}) \, = \, 1 \, .
$$
Of course, one can actually deduce the stronger `fiber' statement that
for any $x \in \I $ and $\bm\g=(\g_1, \g_2) \in \R^2$, the set
$$
\{ y \in {\rm I} \colon \liminf_{q\to \infty} q \, \| q x - \gamma_1
\| \, \| q y - \gamma_2 \| \, = \, 0 \}
$$
is of full Lebesgue measure.  In a beautiful paper
\cite{Shapira-11:MR2753608}, Shapira establishes the following
statement which solves a problem of Cassels dating back to the
fifties.

\begin{thm}[U. Shapira]\label{usthm}
  \begin{equation*}
    m_2  \Big(  \bigcap_{\bm\g \in \R^2} W^{\times}_{\bm\g} \Big)   = 1   \, .
  \end{equation*}
\end{thm}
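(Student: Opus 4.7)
The natural setting is homogeneous dynamics on $X_3 = \mathrm{SL}_3(\Z)\backslash \mathrm{SL}_3(\R)$. To $\vv x=(x_1,x_2)\in\I^2$ attach the unimodular lattice $\Lambda_{\vv x} = u(\vv x)\Z^3$, where $u(\vv x)$ is the upper-triangular matrix with unit diagonal and third column $(x_1,x_2,1)^T$; let $A = \{\mathrm{diag}(e^{-s}, e^{-t}, e^{s+t}) : s,t \in \R\}$ be the full rank-two diagonal subgroup. A Dani-style correspondence translates the inequality $q\,\|qx_1-\gamma_1\|\,\|qx_2-\gamma_2\|<\varepsilon$ into a short-vector statement for the affine shift $\Lambda_{\vv x}+(\gamma_1,\gamma_2,0)$ after pushing by a suitable element of $A$. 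The guiding principle of Shapira's argument is: once the homogeneous orbit $A\cdot\Lambda_{\vv x}$ is \emph{dense} in $X_3$, every affine shift is witnessed by some return of the orbit close to $\Z^3$, so for every $\bm\g\in\R^2$ the corresponding liminf vanishes and hence $\vv x \in \bigcap_{\bm\g}W^{\times}_{\bm\g}$.

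The plan thus reduces to the purely homogeneous statement: for Lebesgue-a.e.\ $\vv x\in\I^2$, the orbit $A\cdot\Lambda_{\vv x}$ is dense in $X_3$. I would attack this by appealing to the Einsiedler--Katok--Lindenstrauss measure rigidity theorem, which asserts that any $A$-invariant, ergodic probability measure on $X_3$ with positive entropy for some element of $A$ must be the Haar measure. The family $\{\Lambda_{\vv x}\}_{\vv x\in\I^2}$ sits inside an unstable horospherical leaf for a suitable $a\in A$, and planar Lebesgue measure on $\I^2$ pushes forward to Haar measure on that leaf. One then takes a weak-$*$ limit of orbit averages for a generic $\vv x$, argues via EKL that this limit is a convex combination of Haar measure on $X_3$ and measures supported on lower-dimensional periodic $A$-orbits, and finally rules out the latter by a conditional-measure argument along the expanding leaf. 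Density then follows, and the last step converts density of $A\cdot\Lambda_{\vv x}$ into the uniform multiplicative conclusion via the Dani correspondence.

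The hard part will be bridging the gap between measure classification and individual-orbit density, as EKL in itself says nothing about specific $\vv x$. Two ingredients are essential: (i) a non-divergence estimate of Kleinbock--Margulis type, ensuring that for a.e.\ $\vv x$ a definite proportion of the $A$-orbit stays in a fixed compact set of $X_3$; and (ii) a positive-entropy lower bound for any weak-$*$ limit measure along unstable leaves, which is the delicate input and forces EKL to deliver Haar measure rather than atomic or periodic-orbit measures. Controlling the conditional measures of an invariant limit along the expanding leaf, by exploiting transverse absolute continuity inherited from planar Lebesgue measure on $\I^2$, is the technical heart of the argument; this is precisely the analysis Shapira carries out to establish the a.e.\ density of $A$-orbits and thereby resolve Cassels' problem.
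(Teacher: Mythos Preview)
The paper does not actually prove this theorem: it is stated and attributed to Shapira with a citation to \cite{Shapira-11:MR2753608}, and no argument is supplied in these notes. So there is no ``paper's own proof'' to compare your proposal against.

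For what it is worth, your outline is recognisably the strategy of Shapira's original paper: translate the inhomogeneous multiplicative statement via a Dani-type correspondence into a question about orbits of the full diagonal group $A$ on $\mathrm{SL}_3(\Z)\backslash\mathrm{SL}_3(\R)$, and then show that for Lebesgue-almost every $\vv x$ the $A$-orbit of the associated lattice is dense (indeed equidistributed). The dynamical ingredients you name --- non-divergence of Kleinbock--Margulis type, positive entropy along the unstable leaf, and the Einsiedler--Katok--Lindenstrauss measure classification --- are the right ones. One point where your sketch would need tightening is the passage from EKL to a statement about \emph{individual} orbits: you phrase this as taking weak-$*$ limits of orbit averages for a single generic $\vv x$ and ruling out periodic-orbit components, but what is actually needed is an argument that the push-forward of planar Lebesgue measure under the $A$-flow equidistributes to Haar, from which almost-sure genericity of $\vv x$ then follows. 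The ingredients are the same; the logical order is slightly different from what you wrote.
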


\noindent Thus, almost any pair of real numbers $(x_1,x_2) \in \R^2 $
satisfies
\begin{equation} \label{us34} \forall \ (\g_1, \g_2) \in \R^2 \qquad
  \liminf_{q\to \infty} q \, \| q x_1 - \gamma_1 \| \, \| q x_2 -
  \gamma_2 \| \, = \, 0 \, .
\end{equation}
In fact, Cassels asked for the existence of just one pair $(x_1,x_2)$
satisfying \eqref{us34}. Furthermore, Shapira showed that if
$1, x_1, x_2$ form a basis for a totally real cubic number field, then
$(x_1,x_2)$ satisfies \eqref{us34}.  On the other hand, if
$1, x_1, x_2$ are linearly dependent over $\Q$, then $(x_1,x_2)$
cannot satisfy \eqref{us34}.

\vspace{1ex}

Most recently, Gorodnik $\&$ Vishe \cite{Gorodnik-Vishe} have
strengthened Shapira's result in the following manner: {\em almost any
  pair of real numbers $(x_1,x_2) \in \R^2 $ satisfies
  \begin{equation*} \label{gv34} \forall \ (\g_1, \g_2) \in \R^2
    \qquad \liminf_{q\to \infty} q \, \log_5 \! q \| q x_1 - \gamma_1
    \| \, \| q x_2 - \gamma_2 \| \, = \, 0 \, ,
  \end{equation*}}
where $\log_5$ is the fifth iterate of $\log$.
This `rate' result makes a  contribution towards the following open problem.

\vspace{1ex}

\begin{conjecture} Almost any pair of real numbers
  $(x_1,x_2) \in \R^2 $ satisfies
  \begin{equation} \label{task2} \forall \ (\g_1, \g_2) \in \R^2
    \qquad \liminf_{q\to \infty} q \, \log q \, \| q x_1 - \gamma_1 \|
    \, \| q x_2 - \gamma_2 \| \, < \infty \, .
  \end{equation}
\end{conjecture}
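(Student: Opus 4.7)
The plan is to follow the dynamical framework of Shapira and Gorodnik--Vishe, aiming to push their rate of $\log_5 q$ down to $\log q$ while exploiting the fact that the conjecture only demands $\liminf < \infty$ rather than $\liminf = 0$. A pair $x = (x_1,x_2) \in \I^2$ determines a unimodular lattice $\Lambda_x \subset \R^3$ in the standard way (generated by $e_1, e_2$ and $x_1 e_1 + x_2 e_2 + e_3$, say), and the shift $\bm\gamma = (\gamma_1,\gamma_2)$ corresponds to translating $\Lambda_x$ by an inhomogeneous horospherical vector. The quantity $q\log q\,\|qx_1-\gamma_1\|\|qx_2-\gamma_2\|$ is, up to constants, the product of the small coordinates of a lattice point in $a_t(\Lambda_x + (\bm\gamma,0))$, where $a_t$ is the two-parameter positive diagonal semigroup in $SL_3(\R)$; a Kleinbock--Margulis style logarithm law for excursions into the cusp of $SL_3(\Z)\backslash SL_3(\R)$ would be the natural engine to produce the $\log q$ factor.

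First I would establish the statement for each fixed $\bm\gamma$. Applying (a form of) the inhomogeneous divergence Gallagher Conjecture \ref{inhomdivgall} with $\psi_c(q) = c/(q \log q)$ for each rational $c > 0$, together with the inhomogeneous analogue of Gallagher's zero-full law, would give $\liminf_{q\to\infty} q\log q\,\|qx_1-\gamma_1\|\|qx_2-\gamma_2\| = 0$ for $m_2$-almost every $x$ (with the null set depending on $\bm\gamma$). By Fubini on $\R^2_{\bm\gamma} \times \I^2_x$ this yields a full-measure set of pairs $(x,\bm\gamma)$, but not a single full-measure set of $x$ for which \eqref{task2} holds for every $\bm\gamma$ simultaneously, because the $\liminf$ condition is not continuous in $\bm\gamma$ and so cannot be reduced to a countable family of shifts.

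The key step is upgrading from ``for a.e.\ $\bm\gamma$, a.e.\ $x$'' to ``for a.e.\ $x$, all $\bm\gamma$.'' Dynamically, this amounts to a uniform non-divergence statement along every horospherical fibre through $\Lambda_x$. I would try to control the measure of ``bad'' $\bm\gamma$ by effective equidistribution of expanding translates of horospherical pieces on $SL_3(\Z)\backslash SL_3(\R)$, in the spirit of Gorodnik--Vishe, and then combine this with a Borel--Cantelli argument on the basepoint $x$ to upgrade the a.e.\ statement to the required uniformity. The relaxation from $\liminf = 0$ to $\liminf < \infty$ helps here, since one only needs recurrence to a fixed (rather than shrinking) target set in $SL_3(\Z)\backslash SL_3(\R)$, reducing the problem from a logarithm law to a uniform non-escape estimate.

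The main obstacle will be achieving the $\log q$ normalisation uniformly in $\bm\gamma$. The $\log_5 q$ rate in the Gorodnik--Vishe theorem reflects the loss of one logarithmic factor at each of several successive applications of variance and Cauchy--Schwarz estimates within their effective equidistribution argument. Reducing this cumulative loss to a single logarithm would require a sharper spectral-gap or polynomial-rate mixing input for horospherical translates on $SL_3(\Z)\backslash SL_3(\R)$, on par with the effective equidistribution theorems available in rank one but not yet in this setting. A genuinely different route would be a direct Fourier-analytic attack on Conjecture \ref{inhomdivgall} combined with a regularity property of the exceptional set in $\bm\gamma$, but closing the $\log q$ gap by either route appears to be the essential difficulty and is the reason the conjecture remains open.
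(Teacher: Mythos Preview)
The statement you were asked to prove is stated in the paper as an \emph{open conjecture}, not a theorem; the paper gives no proof. The surrounding text explicitly calls it ``the following open problem,'' noting that the Gorodnik--Vishe result with the $\log_5 q$ rate is the current state of the art and only ``makes a contribution towards'' it. So there is nothing to compare your proposal against.

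That said, your write-up is appropriate for what the problem actually is: a research outline rather than a proof. You correctly situate the question in the Shapira / Gorodnik--Vishe dynamical framework, you correctly identify the core difficulty (upgrading from ``a.e.\ $\bm\gamma$'' to ``all $\bm\gamma$'' uniformly, and doing so with only a single logarithmic loss rather than five iterated logarithms), and your final paragraph is honest that this gap is precisely why the conjecture remains open. One small caveat: your first step invokes Conjecture~\ref{inhomdivgall} (inhomogeneous Gallagher divergence) as an ingredient, but that too is an open conjecture in the paper, so even the ``fixed $\bm\gamma$'' part of your plan is conditional. You would need to either prove that special case directly or route around it via the dynamics from the outset.
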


\medskip

\begin{rem} {\rm It is relatively straightforward to show (\emph{exercise}) that for any $\tau > 2$
$$
\left\{ \vx \in  {\rm I}^2 \colon  \forall  \ (\g_1,  \g_2) \in \R^2    \quad    \liminf_{q\to \infty}  q \, \log^{\tau} \! \! q \,   \| q x_1  - \gamma_1 \| \, \| q x_2 - \gamma_2 \|   \, =   \, 0  \right\} = \varnothing \, .
$$
}
\end{rem}

We end this section by mentioning Cassels' problem within the context
of Diophantine approximation on manifolds. By exploiting the work of
Shah \cite{shah}, it is shown in \cite{grv} that for any
non-degenerate planar curve $\cC$
\begin{equation*}
  \left| \  \cC  \ \cap \  \textstyle{\bigcap_{\bm\g \in \R^2}} W^{\times}_{\bm\g} \ \right|_{\cC}   = 1   \, .
\end{equation*}

\section{The  badly approximable theory}

\newcommand{\diam}{\operatorname{diam}}
\newcommand{\cK}{\mathcal{K}}

We have had various discussions regarding badly approximable points in earlier sections,  in particular within  \S\ref{sec:beating-dirichlet} and  \S\ref{BadinR^n}. We mentioned that  the badly approximable set $\Bad $ and its higher dimensional generalisation  $\bad(i_1, \ldots,i_n)$  are small in the sense that they are of zero Lebesgue measure but are nevertheless large in the sense that they have  full Hausdorff dimension. In this section we outline the basic techniques used in establishing  the dimension results.  For transparency and simplicity, we shall concentrate on the one-dimensional case. We begin with the classical nearly 100 years old result due to Jarn\'ik.

\subsection{$\Bad$ is of full dimension}\label{Jar}

The key purpose of this section is to introduce a basic Cantor set construction and show how it can be  utilised to show that  $\Bad$ is of maximal dimension  -- a result first established by \Jarnik{} in \cite{Ja28}.  Towards the end we shall mention the additional ideas required in higher dimensions.

\begin{thm}[Jarn\'ik, 1928]\label{t7.1}
The Hausdorff dimension of $\Bad$ is\/ one; that is
$$ \dim \Bad = 1 \, . $$
\end{thm}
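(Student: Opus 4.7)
The bound $\dim \Bad \leq 1$ is trivial since $\Bad \subseteq \R$, so the task is to prove $\dim \Bad \geq 1$. My plan is to construct, for each large integer $R$, a Cantor-type subset $K_R \subseteq \Bad \cap [0,1]$ with $\dim K_R \geq 1 - \log 2/\log R$; letting $R \to \infty$ will then give $\dim \Bad \geq 1$. The lower bound on $\dim K_R$ will come from the mass distribution principle applied to a natural self-similar probability measure on $K_R$.

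Fix a small $c = c(R)>0$ (of order $1/\log R$, to be chosen), and recall that $\{x \in [0,1] : q\|qx\| \geq c \text{ for all } q \in \N\}$ is a subset of $\Bad$. Build a nested sequence $E_0 \supseteq E_1 \supseteq \cdots$ of closed sets, with $E_0 = [0,1]$ and each $E_n$ a finite disjoint union of closed intervals of common length $R^{-n}$. Given $E_n$, subdivide each $J \in E_n$ into $R$ equal closed subintervals of length $R^{-(n+1)}$ and discard any subinterval meeting
\[
\bigcup_{R^n \le q < R^{n+1}}\ \bigcup_{p \in \Z} B\Big(\frac{p}{q},\frac{c}{q^2}\Big).
\]
Let $E_{n+1}$ be the union of the surviving subintervals, and put $K_R := \bigcap_n E_n$. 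By construction, every $x \in K_R$ satisfies $|x - p/q| \geq c/q^2$ for all $q \geq 1$, so $K_R \subseteq \Bad$.

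The crucial counting step is to show that at least $R/2$ subintervals survive within each $J \in E_n$ once $c$ is chosen appropriately. I would bound the Lebesgue measure of the deleted set inside $J$ by
\[
\sum_{q=R^n}^{R^{n+1}-1} \big(R^{-n}q + 1\big)\cdot \frac{2c}{q^2} \; \ll \; c\, R^{-n}\log R,
\]
and crucially note that, since $q \ge R^n$, each ball $B(p/q, c/q^2)$ has radius at most $cR^{-2n}$, which is much smaller than the subinterval length $R^{-(n+1)}$ for $n$ large. Therefore each bad ball meets at most two subintervals, and so the number of deleted subintervals per $J$ is at most $O(cR\log R)$; choosing $c$ of order $1/\log R$ keeps this below $R/2$. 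Consequently $E_n$ contains at least $(R/2)^n$ intervals of length $R^{-n}$.

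Finally, equip $K_R$ with the natural probability measure $\mu$ giving equal mass $\ll (R/2)^{-n}$ to each level-$n$ interval. For any ball $B(x,r)$ with $R^{-(n+1)} \leq r < R^{-n}$, a direct check shows that $B$ meets $O(R)$ level-$(n+1)$ intervals, so $\mu(B) \ll R\cdot(R/2)^{-(n+1)} \ll r^s$ with $s := \log(R/2)/\log R = 1 - \log 2/\log R$. The mass distribution principle then yields $\dim K_R \geq s$, which tends to $1$ as $R \to \infty$. The main technical obstacle lies in the delicate balance within the counting step: one needs the bad set to have small enough measure (forcing $c$ small) while simultaneously ensuring each bad ball is short enough to damage only $O(1)$ subintervals (which is what the restriction $q \geq R^n$ buys us). This balance is what drives the whole dimension estimate.
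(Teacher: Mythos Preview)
Your overall strategy---build a Cantor set inside $\Bad$ by iteratively deleting subintervals close to rationals---matches the paper's. However, your counting step contains a genuine gap. From the measure bound $m(\text{bad set}\cap J)\ll cR^{-n}\log R$ together with the observation that each ball meets at most two subintervals, you \emph{cannot} deduce that only $O(cR\log R)$ subintervals are deleted. The bad set is a union of a great many very short balls, and small total measure does not prevent one such ball from landing in every subinterval. Concretely, already at level $n=0$ with $J=[0,1]$, take $q=R-1$: the rational $k/(R-1)$ lies in the subinterval $[k/R,(k+1)/R]$ for every $k=0,\dots,R-1$, so each of the $R$ subintervals meets some $B(p/q,c/q^2)$ with $1\le q<R$, and your $E_1$ is empty. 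More generally, at level $n$ an interval $J$ of length $R^{-n}$ contains on the order of $R^{n+2}$ rationals $p/q$ with $R^n\le q<R^{n+1}$, far more than its $R$ subintervals.

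What is missing is a \emph{separation} property for the rationals handled at each level. The paper achieves this by grouping rationals differently: at level $n$ it avoids $p/q$ with $q\in[R^{(n-3)/2},R^{(n-2)/2})$, i.e.\ $q\approx R^{n/2}$ rather than $q\approx R^n$. Any two such rationals are then at distance at least $1/(q_1q_2)>R^{-(n-2)}$, which exceeds the length $R^{-(n-1)}$ of a level-$(n-1)$ interval $I$; hence at most \emph{one} dangerous interval meets $I$, and removing just $M=2$ of the $R$ subintervals suffices, giving $\dim\cK\ge\log(R-2)/\log R$. The verification that $\cK\subset\Bad$ then uses $q<R^{(n-2)/2}$ to convert $|x-p/q|\ge\delta R^{-n}$ into $|x-p/q|\ge\delta R^{-3}q^{-2}$. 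Your choice of denominator range makes the inclusion $K_R\subset\Bad$ immediate but destroys the separation; the paper's choice of $q\approx R^{n/2}$ is precisely the balance that makes both halves of the argument work.
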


The proof utilises the following simple \textbf{Cantor set construction.}
Let $R,M\in\N$ and $M\le R-1$. Let $E_0=[0,1]$. Partition the interval $E_0$ into $R$ equal close subinterval and remove any $M$ of them. This gives $E_1$ - the union of $(R-M)$ closed intervals $\{I_{1,j}\}_{1\le j\le R-M}$ of length $|I_{1,j}|=R^{-1}$. Then repeat the procedure: partition each interval $I_{1,j}$ within $E_1$ into $R$ equal close subinterval and remove any $M$ intervals of the partitioning of each $I_{1,j}$. This procedure gives rise to  $E_2$ - the union of $(R-M)^2$ closed intervals $\{I_{2,j}\}_{1\le j\le (R-M)^2}$ of length $|I_{2,j}|=R^{-2}$. The process goes on recurrently/inductively as follows: for $ n \ge 1$, given that $E_{n-1}$ is constructed and represents the union of $(R-M)^{n-1}$ closed intervals $\{I_{n-1,j}\}_{1\le j\le (R-M)^{n-1}}$ of length $|I_{n-1,j}|=R^{-(n-1)}$, to construct $E_{n}$ we
\begin{itemize}
  \item[(i)]
  partition each interval $I_{n-1,j}$ within $E_{n-1}$ into $R$ equal closed subintervals, and \\[0.5ex]
  \item[(ii)]
  remove any $M$ of the $R$ intervals of the above partitioning of each $I_{n-1,j}$.
\end{itemize}
 Observe that $E_n$ will be the union of exactly $(R-M)^n$ closed intervals $\{I_{n,j}\}_{1\le j\le (R-M)^n}$ of length $|I_{n,j}|=R^{-n}$.
 The corresponding Cantor set is defined to be
$$
\cK:=\bigcap_{n=0}^\infty E_n\,.
$$

\medskip

\begin{rem}
Of course the Cantor set constructed above is not unique and depends on the specific choices of $M$ intervals being removed in each case. Indeed, there are continuum many possibilities for the resulting set $\cK$. For example, if $R=3$, $M=1$ and we always remove the middle interval of the partitioning, the set $\cK$ is the famous middle third Cantor set as described in  Example~\ref{eg382} of \S\ref{DB}.
\end{rem}

\medskip

Trivially, the Cantor set  $\cK$ is non-empty since it is the intersection of a nested sequence of closed intervals within $[0,1]$. Indeed, if $ 0 \leq  M \le R-2$ then we have that $\cK$ is uncountable.   The following result relates  the Hausdorff dimension of $\cK$  to the parameters $R$ and $M$ associated with $\cK$.

\medskip

\begin{lem}\label{lem9+1}
Let $\cK$ be the Cantor set constructed above.  Then
\begin{equation}\label{vb+1}
\dim \cK =\frac{\log(R-M)}{\log R}.
\end{equation}
\end{lem}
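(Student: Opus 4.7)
The plan is to mimic the two-sided argument carried out for the middle third Cantor set in Example~\ref{eg382}, adapted to the parameters $R$ and $M$. Write $s := \log(R-M)/\log R$, so that $R^s = R-M$. The upper bound $\dim \cK \le s$ is the easy half: the collection $\{I_{n,j}\}_{1\le j\le (R-M)^n}$ is, by construction, a cover of $\cK$ by $(R-M)^n$ intervals of common length $R^{-n}$, and for any $\rho > 0$ it becomes a $\rho$-cover once $n$ is chosen large enough. A direct computation then gives
\[
\cH^{t}_\rho(\cK) \ \ll \ (R-M)^n \, R^{-nt} \ = \ \left(\frac{R-M}{R^{t}}\right)^{n} \, ,
\]
which tends to $0$ as $n\to\infty$ whenever $t > s$. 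Hence $\cH^{t}(\cK) = 0$ for every $t > s$, which by definition of Hausdorff dimension yields $\dim \cK \le s$.

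For the lower bound $\dim \cK \ge s$ I plan to run the standard argument that appears for $K$ in \S\ref{DB}. Let $\{B_i\}$ be an arbitrary $\rho$-cover of $\cK$ with $\rho < 1$. Since $\cK$ is compact (a nested intersection of finite unions of closed intervals in $[0,1]$), we may take $\{B_i\}$ to be finite. For each $B_i$ let $d_i$ denote its diameter and choose the unique integer $k=k(i)$ with
\begin{equation}\label{eq:JarkK}
R^{-(k+1)} \ \le \ d_i \ < \ R^{-k} \, .
\end{equation}
Because distinct intervals of $E_k$ are separated by at least $R^{-k}$, the ball $B_i$ meets at most one interval of $E_k$. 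For any $j \ge k$, the intervals of $E_j$ contained in that unique interval of $E_k$ number exactly $(R-M)^{j-k}$; using $R^{-k} = (R^{s})^{-k/s}(R^s)^{k/s}\cdot\,\ldots$, more directly, from \eqref{eq:JarkK} and $R^s = R-M$ we obtain
\[
(R-M)^{-k} \ = \ R^{-ks} \ \le \ R^{s}\, d_i^{\,s} \ = \ (R-M)\, d_i^{\,s} \, ,
\]
so that $B_i$ intersects at most $(R-M)^{j-k} \le (R-M)^{j+1}\, d_i^{\,s}$ intervals of $E_j$.

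Finally, because $\{B_i\}$ is finite, choose $j$ so large that $R^{-j} \le d_i$ for every $i$, which in particular forces $j \ge k(i)$ for all $i$ and hence \eqref{eq:JarkK} applies throughout. Since $\{B_i\}$ covers $\cK$ it must meet every one of the $(R-M)^j$ intervals composing $E_j$, giving
\[
(R-M)^{j} \ \le \ \sum_{i} (R-M)^{j-k(i)} \ \le \ (R-M)^{j+1}\sum_{i} d_i^{\,s} \, ,
\]
so that $\sum_i d_i^{\,s} \ge 1/(R-M)$. Translating diameters back to radii, this delivers a uniform positive lower bound on $\cH^{s}_\rho(\cK)$ independent of $\rho$, hence $\cH^{s}(\cK) > 0$ and $\dim \cK \ge s$, completing the proof.

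The main obstacle, as in the middle third case, is the lower bound: the upper bound merely exploits the natural cover, whereas the lower bound must control \emph{every} cover. The key estimate is the counting bound $(R-M)^{j-k} \le (R-M)^{j+1}\, d_i^s$, which is precisely where the chosen value $s = \log(R-M)/\log R$ is forced on us by \eqref{eq:JarkK}.
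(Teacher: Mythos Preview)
Your approach is essentially identical to the paper's: natural cover for the upper bound, and for the lower bound a counting argument comparing an arbitrary finite cover against the intervals of a deep level $E_j$. However, there is a genuine slip in your separation claim. You assert that ``distinct intervals of $E_k$ are separated by at least $R^{-k}$'', and hence that each $B_i$ meets at most one interval of $E_k$. This is false for the construction at hand. Unlike the middle third Cantor set (where removing the \emph{middle} interval guarantees gaps), here the $M$ removed subintervals are chosen arbitrarily. Two surviving intervals of $E_k$ may therefore be adjacent, sharing an endpoint, so the separation can be zero.

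The correct statement, which the paper uses, is that $B_i$ meets at most \emph{two} intervals of $E_k$: since the intervals of $E_k$ have disjoint interiors and common length $R^{-k}$, any interval of diameter strictly less than $R^{-k}$ can straddle at most one boundary point and hence touch at most two of them. With this correction your counting bound becomes $2(R-M)^{j-k} \le 2(R-M)^{j+1} d_i^s$, and the final inequality reads $\sum_i d_i^s \ge \tfrac{1}{2(R-M)}$ rather than $\tfrac{1}{(R-M)}$. The extra factor of $2$ is harmless for the dimension conclusion, so the architecture of your argument survives; but the justification you gave for the ``at most one'' step does not.
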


\begin{proof}
Let $\{I_{n,j}\}_{1\le j\le (R-M)^n}$ be the collection of intervals within $E_n$  associated  with the  construction of $\cK$. Recall that this is a collection of $(R-M)^n$ closed intervals, each of length $R^{-n}$. Naturally, $\{I_{n,j}\}_{1\le j\le (R-M)^n}$ is a cover of $\cK$. Furthermore, for every $\rho>0$ there is a sufficiently large $n$ such that $\{I_{n,j}\}_{1\le j\le (R-M)^n}$ is a $\rho$-cover of $\cK$ -- simply make sure that $R^{-n}<\rho$. Observe that
$$
\sum_j\diam(I_{n,j})^s=(R-M)^{n}R^{-ns}=1\,  \qquad {\rm where  \ } \quad s:=\frac{\log(R-M)}{\log R}.
$$
Hence, by definition, $\cH^s_\rho(\cK)\le1$ for all sufficiently small $\rho>0$. Consequently,
$\cH^s(\cK)\le1$ and  it follows that
 $$\dim\cK  \, \le  \,  s \, . $$

\medskip

\noindent For the lower  bound,  let $0<\rho<1$ and $\{B_i\}$ be an arbitrary $\rho$-cover of $\cK$. We  show that
$$\sum_i\diam(B_i)^s\ge \kappa,$$ where $s$ is  as above and the constant $\kappa > 0$  is independent of the cover. Without loss of generality,  we will assume that each $B_i$ is an  open interval.
Since $\cK $ is the intersection of closed subsets of $[0,1]$, it is bounded and closed and hence compact. Therefore, $\{B_i\}$ contains a finite subcover. Thus,   without  loss of generality,  we can assume that $\{B_i\}$ is a finite $\rho$-cover of $\cK$. For each $B_i$, let $k\in\Z$ be the unique integer such that
$$
R^{-(k+1)}\le\diam(B_i)<R^{-k}\,.
$$
Then
$B_i$ intersects at most two intervals of $E_k$ as the intervals in $E_k$ are $R^{-k}$ in length.
If $j\ge k$, then $B_i$ intersects at most
\begin{equation}\label{vbmn+}
2(R-M)^{j-k}=2(R-M)^jR^{-sk}\le 2(R-M)^jR^s\diam(B_i)^s
\end{equation}
intervals within $E_j$. These are the intervals that are contained in the (at most) two intervals of $E_k$ that intersect $B_i$.  Now choose $j$ large enough so that
$$
R^{-(j+1)}  \, \le  \,  \diam(B_i)\qquad\forall\ B_i\,.
$$
This is possible since  the cover $\{B_i\}$ is finite.
Since $\{B_i\}$ is a cover of $\cK$, it must intersect every interval of $E_j$. There are $(R-M)^j$ intervals within $E_j$. Hence, by \eqref{vbmn+} it follows  that
$$
(R-M)^j\le \sum_i2(R-M)^jR^s\diam(B_i)^s\, .
$$
The upshot of this is that for any $\rho$-cover $\{B_i\}$ of $\cK$,   we have that
$$
\sum_i\diam(B_i)^s\ge \tfrac12R^{-s}=\frac1{2(R-M)}\,.
$$
Hence, by definition,  we have   that $\cH^s_\rho(\cK)\ge \tfrac1{2(R-M)}$ for all sufficiently small $\rho>0$. Therefore, $\cH^s(\cK)\ge \tfrac1{2(R-M)} > 0 $ and  it follows that
$$
\dim\cK\ge s=\frac{\log(R-M)}{\log R}
$$
as required.
\end{proof}

\bigskip

Armed with Lemma \ref{lem9+1}, it is relatively straight forward to prove \Jarnik{}'s full dimension result.

\medskip

\begin{proof}[Proof of Theorem~\ref{t7.1}]
  Let $R\ge4$ be an integer. For $n\in\Z$, $n\ge0$ let
\begin{equation}\label{Q_n}
  Q_n=\{p/q\in\Q:R^{\frac{n-3}2}\le q<R^{\frac {n-2}2}\}\subset\Q\,,
\end{equation}
where $p/q$ is a reduced fraction of integers.
  Observe that $Q_0=Q_1=Q_2=\varnothing$, that the sets $Q_n$ are disjoint and that
   \begin{equation}\label{Q}
     \Q=\bigcup_{n=3}^\infty Q_n\,.
\end{equation}
Furthermore,  note that
\begin{equation}\label{vbvb+}
  \left|\frac pq-\frac{p'}{q'}\right|\ge \frac1{q'q}> R^{-n+2}\qquad\text{for different $p/q$ and $p'/q'$ in $Q_n$}.
\end{equation}
Fix $0<\delta\le \tfrac12$. Then for $p/q\in Q_n$,  define the \emph{dangerous interval} $ \Delta(p/q)$ as follows:
\begin{equation}\label{Dang}
\Delta(p/q):=\Big\{x\in[0,1]:\left|x-\frac pq\right|<\delta R^{-n}\Big\}\,.
\end{equation}
The goal is to construct  a Cantor set
$
\cK=\bigcap_{n=0}^\infty E_n
$ such that for every $n\in\N$
  \begin{equation}\label{vbc}
    E_n\cap\Delta(p/q)=\varnothing\qquad\text{for all }~p/q\in Q_n\,.
  \end{equation}
To this end, let $E_0=[0,1]$ and suppose that  $E_{n-1}  $  has already been constructed. Let $I$ be any of the intervals $I_{n-1,j}$ within $E_{n-1}$. Then $|I|=R^{-n+1}$.
By \eqref{vbvb+} and \eqref{Dang}, there is at most one dangerous interval $\Delta(p_I/q_I)$ with $p_I/q_I\in Q_n$ that intersects $I$.  Partition $I$ into $R$ closed subintervals of length $R^{-n}=R^{-1}|I|$. Note that since $\delta\le\tfrac12$, the dangerous interval $\Delta(p_I/q_I)$, if it exists, can intersect at most $2$ intervals of the partitioning of $I$. Hence, by removing $M=2$ intervals of the partitioning of each $I$ within $E_{n-1}$ we construct $E_n$ while ensuring that \eqref{vbc} is satisfied. By Lemma~\ref{lem9+1}, it follows that for any $R\ge4$
$$
\dim\cK\ge \frac{\log(R-2)}{\log R}\,.
$$

\noindent Now take any $x\in\cK$ and any $p/q\in\Q$. Then $p/q\in Q_n$ for some $n\in\N$ and since $\cK\subset E_n$ we have that $x\in E_n$. Then, by \eqref{vbc}, we have that $x\not\in\Delta(p/q)$, which implies that
\begin{equation}\label{vbt}
\left|x-\frac pq\right|\ge \delta R^{-n}\ge \delta R^{-3}q^{-2}\,.
\end{equation}
Since $p/q\in\Q$ is arbitrary and $R$ and $\delta$ are fixed, we have that $x\in\Bad$. That is,
$\cK\subset\Bad$ and thus it follows that
$$\dim\Bad  \, \ge  \,    \dim \cK   \, \ge \,  \frac{\log(R-2)}{\log R}  \, . $$
This is true for any $R \ge 4$ and so on letting $R\to\infty$, it follows that $\dim\Bad  \ge 1$.   The complementary upper bound statement $\dim\Bad  \le 1$ is trivial since $ \Bad \subset \R$.
\end{proof}

\bigskip
\bigskip

\begin{rem}
The crucial property underpinning the  proof of Theorem~\ref{t7.1} is the separation property \eqref{vbvb+} of rationals. Indeed, without appealing to Lemma~\ref{lem9+1}, the above proof  based on \eqref{vbvb+} alone shows that $\Bad$ is uncountable.  The construction of the  Cantor set $\cK$ as well as the proof of Theorem~\ref{t7.1} can be generalised to higher dimensions in order to show that
 $$
 \dim \bad(i_1, \ldots,i_n)  = n \, .
 $$
 Regarding the higher dimensional generalisation of the proof of  Theorem~\ref{t7.1},    the  appropriate analogue of \eqref{vbvb+} is the following elegant
Simplex Lemma --  see  for example \cite[Lemma~4]{Kristensen-Thorn-Velani-06:MR2231044}.
\end{rem}

\medskip

\begin{lem}[Simplex Lemma]
  Let $m \geq 1$ be an integer and $Q > 1$ be a real number.  Let $E
  \subseteq \mathbb{R}^m$ be a convex set of $m$-dimensional Lebesgue
  measure
  \begin{equation*}
    \vert E \vert \ \leq \ (m! \, )^{-1} Q^{-(m+1)} \ .
  \end{equation*}
  Suppose that $E$ contains $m+1$ rational points $(p_i^{(1)}/q_i,
  \ldots, p_i^{(m)}/q_i)$ with $1 \leq q_i < Q$, where $0 \leq i \leq
  m$. Then these rational points lie in some hyperplane of $\R^m$.
\end{lem}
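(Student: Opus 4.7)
The plan is to argue by contradiction: assume the $m+1$ rational points are in general position, form the simplex $S$ they span, bound its volume from below using an integrality argument, and then derive a contradiction with the measure hypothesis on $E$.

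First, I would suppose that the $m+1$ given rational points $v_i = (p_i^{(1)}/q_i, \dots, p_i^{(m)}/q_i)$ (for $0 \le i \le m$) do not all lie on a common hyperplane of $\R^m$. Then they are affinely independent and span a non-degenerate $m$-dimensional simplex $S$. Since $E$ is convex and contains each vertex $v_i$, we have $S \subseteq E$, hence
\begin{equation*}
|E| \;\ge\; \vol(S) \;=\; \frac{1}{m!}\,\left|\det M\right|,
\end{equation*}
where $M$ is the $(m+1)\times(m+1)$ matrix whose $i$-th row is $(1, v_i^{(1)}, \dots, v_i^{(m)})$.

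The key step is a standard clearing-of-denominators trick. Multiplying the $i$-th row of $M$ by $q_i$ produces a matrix $\widetilde M$ with integer entries, and
\begin{equation*}
\det M \;=\; \frac{\det \widetilde M}{q_0 q_1 \cdots q_m}.
\end{equation*}
By our non-degeneracy assumption $\det M \ne 0$, so $\det \widetilde M$ is a non-zero integer and hence $|\det \widetilde M| \ge 1$. Combined with the bound $q_i < Q$ this gives
\begin{equation*}
\vol(S) \;=\; \frac{|\det \widetilde M|}{m!\, q_0 q_1 \cdots q_m} \;>\; \frac{1}{m!\, Q^{m+1}},
\end{equation*}
which together with $|E| \ge \vol(S)$ contradicts the hypothesis $|E| \le (m!)^{-1} Q^{-(m+1)}$. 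Therefore the $m+1$ points must lie on a common hyperplane.

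There is no real obstacle here; the only point that needs a moment's care is the bookkeeping that the simplex $S$ with vertices $v_0,\dots,v_m$ really lies inside $E$ (immediate from convexity) and that its volume is given by $\tfrac{1}{m!}|\det M|$ in the form above (the usual determinantal formula for the volume of a simplex). The integrality of $\det\widetilde M$ and the strict inequality $q_i < Q$ then do all the work.
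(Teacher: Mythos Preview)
Your proof is correct and is precisely the standard argument. The paper itself does not prove the Simplex Lemma; it merely states it and refers the reader to \cite[Lemma~4]{Kristensen-Thorn-Velani-06:MR2231044}, where essentially this same determinant-and-clearing-denominators argument appears.
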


\subsection{Schmidt's games\index{Schmidt's games} \label{sgrt}}

In  his pioneering work \cite{Schmidt-1966},  Wolfgang M. Schmidt introduced the notion of $(\alpha,\beta)$-games which now bear his name.  These games  are an extremely powerful tool for investigating badly approximable  sets.  The simplified account which we are about to present  is sufficient to bring out the main features of the games.
 \medskip

 Suppose that $0<\alpha<1$ and $0 <  \beta < 1$.   Consider the following game involving the two arch rivals $\cA $yesha  and $\cB $hupen --  often simply referred to as players \textbf{A} and \textbf{B}.  First, \textbf{B}  chooses a  closed ball $\cB_0 \subset \RR^m$. Next,  \textbf{A} chooses a closed ball  $\cA_0$ contained in $\cB_0$ of diameter  $ \alpha \, \rho( \cB_0 ) $ where $\rho(\ .\ )$ denotes the diameter of the ball under consideration.  Then,  \textbf{B}  chooses at will a closed ball $\cB_1$ contained in $\cA_0$ of diameter  $ \beta \, \rho( \cA_0 )$.  Alternating in this manner between the two players, generates  a nested sequence of closed balls  in $\RR^m$:
\begin{equation}\label{x33+}
\cB_0\supset \cA_0\supset \cB_1 \supset \cA_1 \supset\ldots \supset \cB_n\supset \cA_n\supset \ldots
\end{equation}
with diameters
$$
\rho(\cB_n) \, = \,   (\alpha \, \beta )^{n}  \,   \rho(\cB_0)  \text{\quad and \quad} \rho(\cA_n) \, =  \,   \alpha \,   \rho(\cB_n)      \, .
$$
A subset  $X$ of $\RR^m$ is said to be {\em $(\alpha,\beta)$-winning} if  $\cA$  can play in such a way that the unique point of the intersection
$$
\bigcap_{n=0}^{\infty}   \cB_n   \,  = \, \bigcap_{n=0}^{\infty}   \cA_n
$$
lies in $X$, regardless of how  $\cB$ plays.  The set $X$ is called {\em
$\alpha$-winning} if  it is  $(\alpha,\beta)$-winning for all
$\beta\in (0,1)$. Finally, $X$ is simply called {\em winning} if it
is $\alpha$-winning for some~$\alpha$.  Informally, player $\cB$ tries
to stay away from the `target' set $X$ whilst player $\cA$ tries to land
on $X$. As shown by Schmidt in \cite{Schmidt-1966}, the following are the key consequences of winning.
{\em \begin{itemize}
\item If $X \subset \RR^m$ is a winning set, then   $\dim X=m$.
\item The intersection of countably many  $\alpha$-winning sets is $\alpha$-winning.
\item If $X\subset\R^m$ is winning and $f$ is a $C^1$ diffeomorphism of $\R^m$ into itself, then $f(X)$ is winning.
\end{itemize}  }

\noindent Schmidt \cite{Schmidt-1966}   proved the following fundamental result for the symmetric case of the higher dimensional analogue of $\Bad$ which, given the above properties, has implications well beyond simply full dimension.

\medskip

\begin{thm}[Schmidt, 1966] \label{sch66}
For any $m\in\N$, the set $\Bad(\frac1m, \ldots,\frac1m)$ is winning.
\end{thm}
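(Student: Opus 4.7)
The plan is to show that $\Bad(\tfrac{1}{m}, \ldots, \tfrac{1}{m})$ is $\alpha$-winning for every sufficiently small $\alpha > 0$. For each reduced rational $\vv{p}/q \in \Q^m$ and a small parameter $c > 0$ to be chosen, introduce the \emph{dangerous cube}
$$\Delta(\vv{p}/q) := \{\vv{y} \in \R^m : |\vv{y} - \vv{p}/q|_\infty \le c\, q^{-1-\frac{1}{m}}\}.$$
A short argument (noting that for any non-reduced $\vv{p}/q$ with reduced form $\vv{p}'/q'$ one has $\Delta(\vv{p}/q) \subset \Delta(\vv{p}'/q')$) shows that any $\vx \in \R^m$ avoiding every $\Delta(\vv{p}/q)$ for reduced $\vv{p}/q$ satisfies $\max_i \|q x_i\| > c\, q^{-1/m}$ for every $q \in \N$, and so lies in $\Bad(\tfrac{1}{m}, \ldots, \tfrac{1}{m})$. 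The proof thus reduces to devising a strategy for $\cA$ that forces the unique limit point $\vx = \bigcap_n B_n$ of the play to lie outside every such $\Delta(\vv{p}/q)$, for some $c > 0$ depending on the game parameters $\alpha, \beta$, $\rho_0 := \rho(B_0)$ and $m$.

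The strategy is scale-by-scale. Setting $R := (\alpha\beta)^{-1}$, one has $\rho_n := \rho(B_n) = R^{-n}\rho_0$. Since $\Delta(\vv{p}/q)$ has diameter $\asymp c\, q^{-(m+1)/m}$, which is comparable to $\rho_n$ exactly when $q \asymp R^{nm/(m+1)}$, I partition $\N$ into blocks $(\mathcal{Q}_n)_{n \ge 0}$ with $\mathcal{Q}_n$ containing precisely the denominators ``active'' at round $n$ (the block $\mathcal{Q}_0$ absorbing all the smaller denominators, whose $\Delta$ can be made arbitrarily small by reducing $c$ and of which only finitely many have $\Delta$ meeting $B_0$). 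Player $\cA$'s move at round $n$ is to choose $A_n \subset B_n$ of diameter $\alpha\rho_n$ disjoint from every $\Delta(\vv{p}/q)$ with $q \in \mathcal{Q}_n$ whose cube meets $B_n$. Iterated for all $n$, this produces a limit point $\vx$ avoiding every dangerous cube, as required.

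The heart of the proof, and the principal obstacle, is verifying feasibility of the round-$n$ move. Feasibility rests on two inputs: a counting estimate for the number of reduced $\vv{p}/q$ with $q \in \mathcal{Q}_n$ whose $\Delta$ meets $B_n$, and a packing argument ensuring that the forbidden region (cubes thickened by $\alpha\rho_n/2$, so that any $A_n$ disjoint from the thickening automatically avoids the original cube) does not fill $B_n$. In dimension $m=1$ the count is elementary: the separation $|p/q - p'/q'| \ge 1/(qq')$ forces at most one dangerous rational per ball at each scale—this is precisely the argument underpinning \Jarnik's Cantor-set construction in \S\ref{Jar}, and the whole strategy reduces to an adaptive version of that scheme. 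For $m \ge 2$ the counting is the genuine difficulty: the naive $\ell^\infty$-separation alone does not yield a uniform bound (the worst-case count over positions of $B_n$ can a priori grow in $n$), and one must either combine the separation with the equidistribution of Farey fractions in convex sets (where the density heuristic $\rho_n^m \cdot Q_n^{m+1}$ with $Q_n = R^{nm/(m+1)}$ balances to $\rho_0^m$, independent of $n$), or else proceed by induction on $m$ via restrictions to hyperplanes, in order to obtain a bound $N = N(m, \rho_0)$ on the number of active dangerous rationals per ball per round. Once such a bound is secured, the forbidden volume in $B_n$ is at most $N(Kc + \alpha)^m\rho_n^m$ for some $K = K(\alpha, \beta, m)$; choosing $\alpha_0 = \alpha_0(m, \rho_0)$ so that $N(2\alpha_0)^m < 1/2$, and then $c$ so that $Kc < \alpha$, the forbidden volume is strictly less than $\vol(B_n)/2$, and a pigeonhole argument over a maximal disjoint packing of $B_n$ by sub-balls of diameter $\alpha\rho_n$ produces at least one valid choice of $A_n$, completing the verification.
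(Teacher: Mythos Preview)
For $m=1$ your outline coincides with the paper's proof: both set $R=(\alpha\beta)^{-1}$, stratify the rationals into denominator blocks, use the separation $|p/q-p'/q'|\ge 1/(qq')$ to conclude that at most one dangerous interval meets $\cB_n$ per round, and then check directly that $\cB_n\setminus\Delta(p_n/q_n)$ contains a subinterval of length $\alpha\rho(\cB_n)$ whenever $\alpha<\tfrac12$. The paper in fact only writes out this case.

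For $m\ge2$ there is a genuine gap. Your feasibility argument requires a uniform bound $N=N(m,\rho_0)$ on the number of active dangerous rationals meeting $\cB_n$, but no such bound exists. Take $m=2$ and suppose $\cB_n$ is centred at a point $(x_0,0)$ with $x_0$ irrational; then every reduced $(p,0)/q$ with $q\in\mathcal{Q}_n$ and $\|qx_0\|<q\rho_n$ lies in $\cB_n$, and by equidistribution of $(qx_0)_{q\ge1}$ modulo~$1$ there are $\asymp Q_n^{1/2}\to\infty$ such points. Nothing in your counting-based strategy prevents $\cB$hupen from steering $\cB_n$ into such a position, and the density heuristic $\rho_n^mQ_n^{m+1}\asymp\rho_0^m$ you invoke is only a spatial average --- the error term in the underlying lattice-point count is of order $Q_n^{1/m}$ and dominates the $O(1)$ main term. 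The correct higher-dimensional argument, which the paper signposts via the Simplex Lemma stated immediately after the proof of Theorem~\ref{t7.1}, sidesteps counting altogether: since $\vol(\cB_n)\asymp\rho_n^m\asymp Q_n^{-(m+1)}$, the Simplex Lemma forces \emph{all} rationals with denominator $<Q_n$ in $\cB_n$ onto a single hyperplane $H_n$, so their dangerous cubes lie in a slab of width $O(c\rho_n)$ about $H_n$, and $\cA$yesha simply places $\cA_n$ on one side of that slab --- avoiding every dangerous cube at once, regardless of their number.
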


\begin{proof}
To illustrate the main ideas involved in proving the theorem we shall  restrict our attention to when  $m=1$.  In this case, we  are able to establish the desired winning statement by naturally  modifying the proof of  Theorem~\ref{t7.1}. Without loss of generality,  we can restrict $\Bad:= \Bad(1)$ to the unit interval $[0,1]$. Let $0<\alpha<\tfrac12$ and $0<\beta<1$. Let $R=(\alpha\beta)^{-1}$ and define $Q_n$ by \eqref{Q_n}.
Again $Q_0=Q_1=Q_2=\varnothing$; the sets $Q_n$ are disjoint; \eqref{Q} and \eqref{vbvb+} are both true.
Furthermore, for $p/q\in Q_n$ the corresponding dangerous interval $\Delta(p/q)$ is defined by
\eqref{Dang}, where $0<\delta<1$ is to be specified  below and will be dependent on $\alpha$ and the first move made by $\cB $hupen .

\noindent Our goal is to show that $\cA $yesha   has a strategy to ensure that sequence \eqref{x33+} satisfies
  \begin{equation}\label{vbc+}
    \textbf{A}_n\cap\Delta(p/q)=\varnothing\qquad\text{for all }~p/q\in Q_n\,.
  \end{equation}
Then the single point $x$ corresponding to  the intersection over  all the closed and nested intervals $\textbf{A}_n$ would satisfy
\eqref{vbt}
for all $p/q\in\Q$   meaning that $x$ is badly approximable.   By definition,  this would implying that $\Bad$ is $\alpha$-winning as desired.

Let $\cB_0 \subset [0,1]$ be any closed interval. Now we set
$$
\delta:=\rho(\cB_0)(\tfrac12-\alpha).
$$
Suppose that
$$
\cB_0\supset \cA_0\supset \cB_1 \supset \cA_1 \supset\ldots \supset \cB_{n-1}\supset \cA_{n-1}
$$
are already chosen and satisfy the required properties; namely   \eqref{vbc+}. Suppose that $\cB_n\subset\cA_{n-1}$ is any closed interval of length
$$
\rho(\cB_n)=\beta\rho(\cA_{n-1})=(\alpha \, \beta )^{n}  \,   \rho(\cB_0) = R^{-n} \rho(\cB_0).
$$
Next, \textbf{A} has to choose a closed interval $\cA_n$ contained in $\cB_n$ of diameter
$$
\rho(\cA_n)=\alpha \, \rho( \cB_n )=\alpha R^{-n} \rho(\cB_0)
$$
and satisfying \eqref{vbc+}. If \eqref{vbc+} is satisfied with $\cA_n$ replaced by $\cB_n$, then choosing $\cA_n$ obviously represents no problem. Otherwise, using \eqref{vbvb+} one readily verifies that  there is exactly one point $p_n/q_n\in Q_n$ such that $\Delta(p_n/q_n)$ intersects $\cB_n$. In this case $\cB_n\setminus\Delta(p_n/q_n)$ is either
the union of two closed intervals, the larger one being of length
$$
\ge \tfrac12\Big(\rho(\cB_n)-\rho(\Delta(p_n/q_n))\Big)=
\tfrac12R^{-n}\Big(\rho(\cB_0)-2\delta \Big)= \alpha R^{-n} \rho(\cB_0)=\alpha\rho(\cB_n)
$$
or a single closed interval of even greater length.
Hence, it is possible to choose a closed interval $\cA_n\subset\cB_n\setminus\Delta(p_n/q_n)$ of length
$\rho(\cA_n)=\alpha\rho(\cB_n)$. By construction, \eqref{vbc+} is satisfied, thus proving the existence of a winning strategy for $\cA$.
\end{proof}

\bigskip

\begin{rem} For various reasons, over the last decade or so there has been an explosion of interest in Schmidt's games. This  has given rise to several ingenious  generalisations of the original game leading to stronger notions of winning, such as modified winning, absolute winning, hyperplane winning and potential winning. For details see \cite{FSU,Kleinbock-Weiss} and references within.
\end{rem}

\medskip

 The framework of Schmidt games and thus the notion of winning  is defined in terms of balls. Thus,  it  is naturally applicable when considering the symmetric case ($i_1= \ldots=i_n= 1/n$) of the badly approximable sets $\bad(i_1 \ldots,i_n) $. Recall, that in the symmetric case, points in $\Bad(\frac1n, \ldots,\frac1n)$   avoid  squares (which are essentially  balls) centred around rational points were as in the general case the points  avoiding rectangles (far from being balls).     We now turn our attention to the general case. Naturally, it would be desirable to be able to show that the general set $\bad(i_1 \ldots,i_n) $ is  winning.

\subsection{Properties of  general $\bad(i_1 \ldots,i_n) $ sets  beyond full dimension}  \label{bey}

Despite the fact that the sets $\Bad(i_1,\dots,i_n)$ have long been know to be uncountable and indeed of full dimension, see \cite{dav, Kleinbock-Weiss, Kristensen-Thorn-Velani-06:MR2231044, Pollington-Velani-02:MR1911218}, the following conjecture of Schmidt dating back to 1982 remained unresolved until reasonably recently.

\begin{thschmidt}
$$ \Bad(\tfrac13,\tfrac23)\cap\Bad(\tfrac23,\tfrac13)\neq\varnothing  \, . $$
\end{thschmidt}

\noindent As is already highlighted in Remark~\ref{4.2}, if false then it would imply that Littlewood's Conjecture is true.

 Schmidt's Conjecture was proved  in  \cite{Badziahin-Pollington-Velani-Schmidt} by establishing the following stronger statement regarding the intersection of $\Bad(i,j)$ sets  with vertical lines $ {\rm L}_{\alpha} := \{(\alpha,y):y\in\R\}\subset\R^2 $.  To some extent it represents the badly approximable analogue of the `fiber' results that appeared  in  \S\ref{KonFibers}.
 \begin{thm}\label{BMbv}
Let  $(i_k,j_k)$ be a countable sequence of  non-negative   reals  such that  $i_k+j_k=1$ and let $ i:= \sup\{i_k: k \in \N \}$.  Suppose that
\begin{equation}\label{eee02}
\liminf_{k\to\infty}\min\{i_k,j_k\}>0   \, .
\end{equation}
Then, for any $ \alpha \in \R$  such that $\liminf\limits_{q \to \infty } q ^{1/i} \| q \alpha \| > 0 $,  we have that
\begin{equation}\label{dbsv}
\textstyle\dim\bigcap_k\Bad(i_k,j_k)\cap {\rm L}_{\alpha}=1.
\end{equation}
\end{thm}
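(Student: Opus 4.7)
The plan is to generalise the Cantor set construction of \S\ref{Jar} to show that, for every $\epsilon>0$, one can build a Cantor subset $\cK\subset[0,1]$ with $\{\alpha\}\times\cK \subset \bigcap_k \Bad(i_k,j_k)$ and $\dim\cK\ge 1-\epsilon$; letting $\epsilon\to 0$ then yields \eqref{dbsv} (the upper bound is trivial since $L_\alpha$ is a line). The first step is to translate membership in $\Bad(i_k,j_k)\cap L_\alpha$ into an avoidance condition on the $y$-coordinate. A point $(\alpha,y)$ lies in $\Bad(i_k,j_k)$ iff there is $c_k>0$ with $\max\{\|q\alpha\|^{1/i_k},\|qy\|^{1/j_k}\}>c_kq^{-1}$ for every $q\in\N$, equivalently, writing $J_k(c_k):=\{q\in\N:\|q\alpha\|\le c_k^{i_k}q^{-i_k}\}$ for the set of \emph{$k$-dangerous} denominators, $y$ must avoid every interval $\Delta_k(p,q):=B\bigl(p/q,\,c_k^{j_k}q^{-(1+j_k)}\bigr)$ with $q\in J_k(c_k)$ and $p\in\Z$. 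So it suffices to find a large Cantor set of $y$'s avoiding all such $\Delta_k(p,q)$ for some sequence $(c_k)_{k\ge 1}$.

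Next, fix a large integer $R$ and a rapidly decaying sequence $c_k\to 0$. Mimicking the proof of Theorem~\ref{t7.1}, construct nested sets $E_0=[0,1]\supset E_1\supset\cdots$ where $E_n$ is a union of closed intervals of length $R^{-n}$, obtained by partitioning each surviving interval of $E_{n-1}$ into $R$ equal subintervals and removing those which meet some $\Delta_k(p,q)$ whose radius is $\asymp R^{-n}$ (for fixed $k$, this singles out $q\asymp R^{n/(1+j_k)}$). If $M$ denotes the number of subintervals removed from any given interval at any level, Lemma~\ref{lem9+1} gives $\dim\cK\ge \log(R-M)/\log R$, which tends to $1$ as $R\to\infty$ provided $M$ grows much more slowly than $R$.

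The main obstacle is bounding $M$ uniformly in $n$ and over all $k$. Two counting ingredients are required. Firstly, for fixed $k$, the rational separation \eqref{vbvb+} shows that within a single dyadic block of $q$'s only $O(1)$ rationals $p/q$ produce an interval $\Delta_k(p,q)$ meeting a given level-$(n-1)$ interval. Secondly — and this is where the Diophantine hypothesis on $\alpha$ enters — the bound $\|q\alpha\|\ge c_0 q^{-1/i}$ forces $q^{1/i-i_k}\ge c_0\,c_k^{-i_k}$ for any $q\in J_k(c_k)$, and a standard count of $q\in[Q,2Q]$ with $\|q\alpha\|\le\eta$ (of size $\ll Q\eta+1$, say via the three-distance theorem) gives an upper bound on $\#\bigl(J_k(c_k)\cap[Q,2Q]\bigr)$ which shrinks as $c_k\downarrow 0$.

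Finally, to combine contributions over all $k$ into a single bound on $M$, one uses the uniformity hypothesis $\liminf\min\{i_k,j_k\}>0$: the exponents stay bounded away from $0$ and $1$, so the per-$k$ count in the previous paragraph is uniformly controlled in $k$, and one can choose $c_k$ to decay fast enough in $k$ that $\sum_k c_k^{\min(i_k,j_k)}$ is finite and small. This forces $M$ to be bounded by a constant depending only on $R$ (and growing far slower than $R$ with $R$), so $\log(R-M)/\log R\to 1$. The delicate bookkeeping — matching $c_k$ against $R$, the dyadic ranges of $q$, and the sparsity estimate for $J_k(c_k)$ — is the technical heart of the argument and the place where both hypotheses \eqref{eee02} and $\liminf q^{1/i}\|q\alpha\|>0$ are essential.
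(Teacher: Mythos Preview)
The paper does not give a self-contained proof of this theorem; it attributes the result to \cite{Badziahin-Pollington-Velani-Schmidt} and then explains that the underlying machinery is the \emph{generalised Cantor set} framework of \cite{Badziahin-Velani-MAD}, recast in the paper as \emph{Cantor rich} sets (Definition~\ref{crs} and Lemma~\ref{qwert}). So there is no detailed argument to compare against line by line, but the paper is explicit about one thing: Lemma~\ref{lem9+1} --- the basic Cantor lemma you intend to invoke --- is \emph{not} sufficient here, and the replacement Lemma~\ref{qwert} ``is substantially more involved and requires new ideas''. Your proposal runs squarely into this obstruction.

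The specific gap is your claim that $M$ can be bounded uniformly in $n$. Fix a single pair $(i_k,j_k)$ with $i_k,j_k>0$. At level $n$ the relevant denominators lie in a block $[Q,2Q]$ with $Q\asymp c_k^{j_k/(1+j_k)}R^{n/(1+j_k)}$. Your three-distance count gives
\[
\#\bigl(J_k(c_k)\cap[Q,2Q]\bigr)\ \ll\ c_k^{i_k}Q^{j_k}+1\ \asymp\ c_k^{1/(1+j_k)}R^{\,n j_k/(1+j_k)}+1,
\]
and each such $q$ contributes $O(1)$ rationals $p/q$ to a given level-$(n-1)$ interval. The total is therefore of order $R^{\,n j_k/(1+j_k)}$, which \emph{grows polynomially in $n$} for every fixed $c_k>0$. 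So $M$ is not bounded, and Lemma~\ref{lem9+1} does not apply. Shrinking $c_k$ helps only with the constant, not the growth in $n$; and $c_k$ cannot depend on $n$ since it is the badly-approximable constant for the single set $\Bad(i_k,j_k)$.

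This is precisely the difficulty that forces the generalised framework: in Lemma~\ref{qwert} one is allowed to remove a \emph{growing} number of subintervals at level $q$, provided the removals can be partitioned and attributed to earlier levels $p<q$ with the weighted sum $\sum_p (4/R)^{q-p}\max_{I_p}\#(\wcI_{q,p}\sqcap I_p)$ staying below~$1$. Making that accounting work for $\Bad(i,j)\cap L_\alpha$ requires a further structural input that your sketch does not mention: the dangerous rationals $p/q$ with $q\in J_k(c_k)$ are not arbitrary --- the smallness of $\|q\alpha\|$ forces them to cluster along arithmetic progressions, and it is this clustering (together with the Diophantine hypothesis on $\alpha$, which controls the progression's modulus) that lets one push the removals back to an earlier level where they are cheap. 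The ``delicate bookkeeping'' you allude to in your final paragraph is not merely a matter of matching parameters; it is a genuinely new combinatorial argument, and the use of $\liminf q^{1/i}\|q\alpha\|>0$ goes well beyond the lower bound on $q$ you extract from it.
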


 \medskip

\begin{rem}
The Diophantine condition imposed on $\alpha $ associated with the vertical line ${\rm L}_{\alpha}$ is easily seen to be necessary -- see \cite[\S1.3]{Badziahin-Pollington-Velani-Schmidt}.  Note that the condition is automatically satisfied if $ \alpha \in \Bad$.  On the other hand, condition \eqref{eee02} is present for technical reason and can be removed -- see Theorem \ref{anFAB} and discussion below.   At the point, simply observe that it is automatically satisfied for any finite collection of pairs $(i_k,j_k)$ and thus Theorem~\ref{BMbv} implies Schmidt's Conjecture.  Indeed, together with a standard `slicing' result from fractal geometry one obtains the following full dimension  statement -- see  \cite[\S1.2]{Badziahin-Pollington-Velani-Schmidt} for details.

\end{rem}

\begin{cor}\label{BMbvcor}
Let  $(i_k,j_k)$ be a countable sequence of  non-negative   reals  such that  $i_k+j_k=1$ and  satisfying condition \eqref{eee02}.
Then,
\begin{equation}\label{dbsvsv}
\textstyle\dim\bigcap_k\Bad(i_k,j_k)=2.
\end{equation}
\end{cor}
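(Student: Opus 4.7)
The plan is to deduce Corollary \ref{BMbvcor} from Theorem \ref{BMbv} by combining its full-dimension statement on every admissible vertical fibre with a standard fibring principle from fractal geometry. Set $S:=\bigcap_k\Bad(i_k,j_k)\subset\R^2$; the upper bound $\dim S\le 2$ is trivial, so the substance is the lower bound $\dim S\ge 2$.

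First, with $i:=\sup_k i_k$, consider the set of admissible base points
\begin{equation*}
E\ :=\ \{\alpha\in\R : \liminf_{q\to\infty} q^{1/i}\|q\alpha\| > 0\}.
\end{equation*}
Since $i_k+j_k=1$ with $j_k\ge 0$ forces $i\le 1$, we have $1/i\ge 1$ and hence $q^{1/i}\ge q$ for every $q\in\N$. Consequently, for any $\alpha\in\Bad$ with badly approximable constant $c(\alpha)>0$, we have $q^{1/i}\|q\alpha\|\ge q\|q\alpha\|\ge c(\alpha)>0$, which shows $\Bad\subset E$. By Jarn\'ik's theorem (Theorem \ref{t7.1}), $\dim\Bad=1$, hence $\dim E\ge 1$, which in turn forces $\cH^t(E)=\infty$ for every $t<1$.

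Next, applying Theorem \ref{BMbv} directly to the full intersection, we obtain $\dim(S\cap {\rm L}_\alpha)=1$ for every $\alpha\in E$. At this point one invokes the fibring principle, a standard consequence of Marstrand's slice theorem (see \cite{FalcGFS}): if $F\subset\R^2$ is Borel, $E'\subset\R$ is Borel with $\cH^t(E')>0$, and $\dim(F\cap{\rm L}_\alpha)\ge s$ for every $\alpha\in E'$, then $\dim F\ge s+t$. The set $S$ is Borel as a countable intersection of the Borel $\limsup$ sets $\Bad(i_k,j_k)$, so taking $F=S$, $s=1$, and any $t<1$ yields $\dim S\ge 1+t$; letting $t\uparrow 1$ concludes $\dim S\ge 2$.

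The main obstacle is not conceptual but lies in citing the right clean form of the fibring principle: the usual slicing theorems come in several flavours, and one must use a lower-bound version that deduces $\dim F\ge s+t$ from hypotheses stated in terms of a Hausdorff-measure-positive parameter set $E'$ rather than a measure-theoretically generic one. Should that citation be unavailable in the desired form, one can instead derive the required instance by a direct net-cover argument: from any $\varepsilon$-cover of $S$ of total $\cH^{1+t}$-mass one produces, by grouping covering balls meeting each fibre, sliced $\varepsilon$-covers of $S\cap{\rm L}_\alpha$, and uses that $\cH^s(S\cap{\rm L}_\alpha)=\infty$ for $s<1$ and $\alpha\in E$ together with $\cH^t(E)=\infty$ to contradict finiteness. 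Beyond this slicing step, the remainder is bookkeeping.
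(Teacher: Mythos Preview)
Your proposal is correct and follows essentially the same route as the paper: obtain full dimension on vertical fibres ${\rm L}_\alpha$ for $\alpha\in\Bad$ via Theorem~\ref{BMbv} (the paper likewise notes that the Diophantine condition on $\alpha$ is automatically satisfied when $\alpha\in\Bad$), and then invoke a standard slicing lower bound from fractal geometry to pass from fibres to the plane. The paper itself does not spell out the argument, merely pointing to \cite[\S1.2]{Badziahin-Pollington-Velani-Schmidt} for the details, so your write-up is in fact more explicit than what appears in the text.
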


\medskip

\noindent At the heart of establishing   Theorem \ref{BMbv}  is the `raw' construction of the  generalised Cantor sets framework   formulated in \cite{Badziahin-Velani-MAD}.  For the purposes of these notes, we opt to  follow the framework of \emph{Cantor rich sets}\index{Cantor rich sets} introduced in  \cite{BadM} which is a variation of the   aforementioned  generalised Cantor sets.

\bigskip

Let $R\ge3$ be an integer. Given a collection $\cI$ of compact intervals in $\R$, let $\tfrac1R\cI$ denote the collection of intervals obtained by splitting each interval in $\cI$ into $R$ equal closed subintervals with disjoint interiors. Given a compact interval $I_0\subset\R$, the sequence $(\cI_q)_{q\ge0}$ such that
$$
\cI_0=\{I_0\} \qqand\cI_q\subset\tfrac1R\cI_{q-1}\quad\text{for }q\ge1
$$
is called \emph{an $R$-sequence in $I_0$}. It defines the corresponding \emph{generalised Cantor set}:
\begin{equation}\label{gCs}
\cKI\ :=\ \bigcap_{q\ge0}\hspace*{1ex} \bigcup_{I_q\in\cI_q}I_q.
\end{equation}
Given $q\in\N$ and any interval $J$, let
$$
\wcI_q\  := \ \big(\tfrac1R\cI_{q-1}\big)\setminus \cI_q
\qqand
\wcI_q\sqcap J\  := \ \{I_q\in\wcI_q:I_q\subset J\}\,.
$$
Furthermore,  define
\begin{equation}\label{defl}
   d_q(\cI_q) \ :=\ \min_{\{\wcI_{q,p}\}}\ \ \sum_{p=0}^{q-1} \ \left(\frac4R\right)^{q-p} \max_{I_p\in\cI_p}\#\big(\wcI_{q,p}\sqcap I_p\big)\ ,
\end{equation}
where the minimum is taken over all partitions $\{\wcI_{q,p}\}_{p=0}^{q-1}$ of $\wcI_q$; that is $\wcI_q=\bigcup_{p=0}^{q-1}\wcI_{q,p}$.

The following dimension statement was established in
\cite[Theorem~4]{Badziahin-Velani-MAD}, see also \cite[Theorem~5]{BadM}.

\medskip

\begin{lem}\label{qwert}
Let $\cKI$ be the Cantor set given by \eqref{gCs}. Suppose that
\begin{equation}\label{dp}
d_q(\cI_q) \le 1
\end{equation}
for all $q\in\N$. Then
$$
\dim\cKI\ge1-\frac{\log 2}{\log R}\,.
$$
\end{lem}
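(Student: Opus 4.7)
The plan is to apply the Mass Distribution Principle. Setting $s := 1 - \frac{\log 2}{\log R}$ (so that $R^s = R/2$), it suffices to construct a Borel probability measure $\mu$ supported on $\cKI$ and a constant $C>0$ such that
\begin{equation}\label{targfrost}
\mu(J)\ \le\ C\, |J|^s\qquad\text{for every interval }J\subseteq I_0;
\end{equation}
the Mass Distribution Principle then gives $\cH^s(\cKI)\ge 1/C$, hence $\dim\cKI\ge s$.

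I would construct $\mu$ as a Cantor-type measure built hierarchically on the generations $\cI_q$, starting from $\mu(I_0)=1$ and inductively distributing the mass of each $I_{q-1}\in\cI_{q-1}$ among its level-$q$ descendants in $\cI_q\sqcap I_{q-1}$; the weak-$*$ limit of the resulting sequence of measures on the approximants $E_q=\bigcup_{I_q\in\cI_q}I_q$ is supported on $\cKI$. The hypothesis $d_q(\cI_q)\le1$ enters through the optimal partition $\{\wcI_{q,p}\}_{p=0}^{q-1}$ of $\wcI_q$ witnessing it: since each term of the weighted sum is itself at most $1$, one obtains the pointwise bound
\[
\max_{I_p\in\cI_p}\#(\wcI_{q,p}\sqcap I_p)\ \le\ (R/4)^{q-p}\qquad (0\le p<q).
\]
Thus no level-$p$ survivor $I_p$ is ``charged'' via $\wcI_{q,p}$ with more than $(R/4)^{q-p}$ of its potential level-$q$ descendants. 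Since the branching rate compatible with Hausdorff dimension $s$ is $R/2$, a loss of $(R/4)^{q-p}=(1/2)^{q-p}(R/2)^{q-p}$ represents a fractional defect of at most $(1/2)^{q-p}$ relative to the ``ideal'' count $(R/2)^{q-p}$ of level-$q$ descendants of $I_p$; summing these defects geometrically across all $p\le q$ keeps the cumulative fractional deficit bounded by an absolute constant, which is the source of the required bound on $\mu(I_q)$.

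The main obstacle is to convert this heuristic into a rigorous construction of $\mu$ satisfying $\mu(I_q)\le C\,(2/R)^q|I_0|$ uniformly in $q\ge 0$ and in the choice of $I_q\in\cI_q$. The naive ``equal-distribution'' Cantor measure can fail because an individual parent $I_{q-1}$ may have very few surviving children, so the mass would concentrate too rapidly. The remedy I envisage is an adaptive redistribution scheme: whenever the partition witnessing $d_q\le 1$ charges a piece $\wcI_{q,p}$ to a high ``level of origin'' $p$, the mass of the affected ancestors $I_p\in\cI_p$ is re-pooled over a wider class of their descendants, drawing on the surplus budget of unaffected ancestors at the same level to offset the deficit; the weights $(4/R)^{q-p}$ are calibrated so that these redistributions are summable and produce a single limiting probability measure $\mu$ satisfying the required per-interval bound. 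Balancing these redistributions consistently across all levels $q\ge 1$ is the technical heart of the lemma. Once the bound on $\mu(I_q)$ is established, its extension to arbitrary intervals $J\subseteq I_0$ is routine: for $R^{-(q+1)}|I_0|<|J|\le R^{-q}|I_0|$, the interval $J$ meets at most two intervals of $\cI_q$, so $\mu(J)\le 2C(2/R)^q|I_0|\le 2CR^s\,|I_0|^{1-s}|J|^s$, giving \eqref{targfrost}.
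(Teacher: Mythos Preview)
The paper does not include a proof of this lemma; it refers to \cite[Theorem~4]{Badziahin-Velani-MAD} and \cite[Theorem~5]{BadM} and only sketches the strategy: one extracts from $\cKI$ a ``local'' Cantor subset $\cK$ of the elementary type built in \S\ref{Jar}, with removal parameter $M$ essentially $\tfrac12 R$, and then invokes Lemma~\ref{lem9+1} to obtain $\dim\cKI\ge\dim\cK\ge\log(R/2)/\log R=1-\log 2/\log R$. Your route via the Mass Distribution Principle is a different packaging of the same mechanism: a local Cantor set with exact branching $R/2$ carries the natural measure satisfying $\mu(I_q)=(2/R)^q\mu(I_0)$, so the extraction automatically manufactures your Frostman measure with $C=1$, whereas you try to build $\mu$ directly on all of $\cKI$ via redistribution. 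The paper's framing has the advantage that once the combinatorial extraction is done, the dimension bound is immediate and one sidesteps the weak-$*$ limit and the Frostman verification entirely.

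Your pointwise bound $\max_{I_p\in\cI_p}\#(\wcI_{q,p}\sqcap I_p)\le (R/4)^{q-p}$, deduced from the non-negativity of the terms in \eqref{defl}, is correct and is exactly the combinatorial input used in the cited proofs. However, the step you yourself call ``the technical heart of the lemma'' --- the adaptive redistribution scheme --- is not actually carried out, and that is where the genuine work lies. The difficulty is that the optimal partition $\{\wcI_{q,p}\}$ depends on $q$, so a fixed ancestor $I_p$ receives charges from infinitely many later generations $q>p$, and these must all be controlled simultaneously while keeping $\mu$ a probability measure. In the extraction approach of \cite{Badziahin-Velani-MAD} this is handled by reserving, at each level, a buffer of roughly $R/4$ extra removals per surviving interval to absorb future charges, and proving that this buffer is never exhausted is the substance of the argument. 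Your proposal correctly isolates the heuristic but stops short of the construction that makes it rigorous; to complete it along your lines you would need to mirror this budgeting in your weight assignment, and that step is not yet present.
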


\medskip

\noindent Although the lemma  can be viewed as a generalisation of Lemma~\ref{lem9+1}, we stress that its  proof is substantially  more involved and requires new ideas. At the heart of the proof  is  the `extraction'   of a `local' Cantor type subset $\cK$ of  $\cKI$.  By a local Cantor set we mean a set arising from a construction as described  in \S\ref{Jar}.    The  parameter  $M$ associated with the  extracted local Cantor  set $\cK$ is essentially$ \tfrac12R$.

It is self evident from Lemma \ref{qwert}, that if a given set $X\subset\R$ contains a  generalised Cantor set given by \eqref{gCs} with arbitrarily large $R$, then $\dim X=1$. The following  definition of Cantor rich \cite{BadM},  imposes a stricter requirement than \eqref{dp} in order to ensure  that the  countable intersection of generalised Cantor sets is of full dimension.
To some extent, building upon the raw construction of \cite[\S7.1]{Badziahin-Pollington-Velani-Schmidt},    the full dimension aspect for countable intersections had previously been investigated in \cite[\S7]{Badziahin-Velani-MAD}.


\medskip

\begin{df}\label{crs}
Let $M>1$, $X\subset \R$ and $I_0$ be a compact interval. The set $X$ is said to be \emph{$M$-Cantor rich in $I_0$} if for any $\ve>0$ and any integer $R\ge M$ there exists an $R$-sequence $(\cI_q)_{q\ge0}$ in $I_0$ such that $\cKI\subset X$ and $$\sup_{q\in\N}d_q(\cI_q)\le \ve  \, .  $$ The set $X$ is said to be \emph{Cantor rich in $I_0$} if it is \emph{$M$-Cantor rich in $I_0$} for some $M$, and it is said to be \emph{Cantor rich} if it is \emph{Cantor rich in $I_0$} for some compact interval $I_0$.
\end{df}

\medskip

\noindent  The following summarises  the key properties of Cantor rich sets.
{\em \begin{itemize}
\item[(i)] Any Cantor rich set $X$ in $\R$ satisfies \ $\dim X=1$.
\item[(ii)]
For any given compact interval $I_0$ and any given fixed $M\in\N$,
any countable intersection of $M$-Cantor rich sets in $I_0$ is $M$-Cantor rich in $I_0$.
\end{itemize}}

The framework of Cantor-rich sets  was utilised in the same paper \cite{BadM} to  establish  the following result concerning badly approximable points  on manifolds.

\begin{thm}\label{BM}
For any non-degenerate analytic sub-manifold $\cM\subset\R^n$ and any sequence $(i_{1,k},\dots,i_{n,k})$ of non-negative reals such that
$i_{1,k}+\dots+i_{n,k}=1$ and
\begin{equation}\label{vba}
\inf\{i_{j,k}>0:1\le j\le n,~k\in\N\}>0   \, ,
\end{equation}
one has that
\begin{equation}\label{vb}
\dim \textstyle{\bigcap_{k}} \Bad(i_{1,k},\dots,i_{n,k})   \, \cap \, \cM =\dim\cM\,.
\end{equation}
\end{thm}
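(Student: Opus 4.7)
}

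The plan is to reduce to the one-dimensional (curve) case and then to deploy the Cantor-rich machinery from Definition~\ref{crs}. The reduction to curves proceeds by fibering: at almost every point of the non-degenerate analytic submanifold $\cM$ there is a local foliation by a $(\dim\cM-1)$-parameter family of non-degenerate analytic curves $\cC_{\vv u}$ (one can obtain such a foliation, for example, by intersecting a local chart of $\cM$ with a generic pencil of affine hyperplanes). If for every fiber $\cC_{\vv u}$ in this family one shows that
\[
\dim\Big(\textstyle\bigcap_{k}\Bad(i_{1,k},\dots,i_{n,k})\cap \cC_{\vv u}\Big)=1,
\]
then a standard Marstrand-type slicing inequality yields $\dim\big(\bigcap_k\Bad(i_{1,k},\dots,i_{n,k})\cap\cM\big)\ge\dim\cM$; the reverse inequality is trivial. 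Thus the whole problem collapses to proving the full dimension result for a single non-degenerate analytic curve $\cC=\vv f(I_0)\subset\R^n$, $\vv f:I_0\to\R^n$ analytic and non-degenerate.

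The core step is then the following claim: there is an integer $M$, depending only on the infimum appearing in \eqref{vba} and on the curve $\cC$, such that for \emph{every} $(i_1,\dots,i_n)$ belonging to the sequence, the parameter set
\[
X(i_1,\dots,i_n):=\{t\in I_0:\vv f(t)\in\Bad(i_1,\dots,i_n)\}
\]
is $M$-Cantor rich in $I_0$. Once this uniformity is in hand, property (ii) of Cantor rich sets gives that the countable intersection $\bigcap_k X(i_{1,k},\dots,i_{n,k})$ is also $M$-Cantor rich in $I_0$, hence of full dimension by property (i). Pulling this back to $\cC$ via the analytic (hence bi-Lipschitz on compacts) map $\vv f$ concludes the curve case. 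I would build the required $R$-sequence $(\cI_q)_{q\ge 0}$ in $I_0$ by iterating on dyadic ranges $q\in[R^{p-1},R^{p})$: at stage $p$ one subdivides each surviving interval of $\cI_{p-1}$ into $R$ equal parts, and then removes any subinterval whose image under $\vv f$ enters the rectangle of sidelengths $c^{i_1}q^{-1-i_1}\times\cdots\times c^{i_n}q^{-1-i_n}$ around some rational point $\vv p/q$ with $q$ in this dyadic range (here $c=c(R)$ is a small constant tuned to the construction). The uniformity of the exponents guaranteed by \eqref{vba} allows the geometric parameter $c$ and the ``removal budget'' to be chosen once and for all, independently of $k$.

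The main obstacle, and the step I expect to be genuinely hard, is the counting/ubiquity estimate for rationals close to $\cC$ that is needed to verify the Cantor-rich condition $\sup_q d_q(\cI_q)\le\ve$. Concretely, one must show that the number of rationals $\vv p/q$ with $q$ in a dyadic window $[R^{p-1},R^{p})$ whose associated ``dangerous rectangle'' meets a given parent interval $I_{p'}\in\cI_{p'}$ ($p'<p$) of length $R^{-p'}$ is bounded by a constant multiple of
\[
R^{(p-p')}\cdot \text{(volume factor coming from } c),
\]
with constants strong enough that the weighted sum defining $d_q(\cI_q)$ in \eqref{defl} can be forced below $\ve$ by choosing $c$ small and $R$ large. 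This is exactly where non-degeneracy of $\vv f$ (and, for clean bookkeeping, analyticity) is used: it furnishes the sharp upper bound on rationals near $\cC$ in the spirit of the heuristic estimate \eqref{x1}, via the Kleinbock--Margulis non-divergence quantifier and the curvature-type estimates developed for the manifold theory in \S\ref{manifolds}. The condition \eqref{vba} enters at this stage to prevent any one coordinate direction from becoming too thin and thereby breaking the uniform counting bound, and it is precisely what permits the single constant $M$ in Definition~\ref{crs} to serve for every member of the sequence $(i_{1,k},\dots,i_{n,k})$.
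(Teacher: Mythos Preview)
The paper does not actually prove Theorem~\ref{BM}; it is quoted from \cite{BadM} and only accompanied by a brief description of the method: reduce to curves by fibering the manifold into non-degenerate analytic curves (``establishing the theorem for curves is very much the crux since any manifold can be `fibred' into an appropriate collection of curves''), and on each curve show that the relevant parameter sets are Cantor rich in the sense of Definition~\ref{crs}, with a uniform $M$ so that property~(ii) applies to the countable intersection. Your proposal follows exactly this outline, so at the level of strategy you are in agreement with what the paper sketches.

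Where your write-up is thinnest is precisely where the genuine difficulty lies, and the paper's commentary does not help you here because it simply points to \cite{BadM}. Your description of the removal step --- counting rationals $\vv p/q$ whose dangerous rectangle meets a parent interval and invoking ``the Kleinbock--Margulis non-divergence quantifier and the curvature-type estimates'' --- is not quite the mechanism used in \cite{BadM}. The actual argument does not rely on a raw upper bound of the type \eqref{x1} for rationals near the curve. Instead, the dangerous rationals at a given scale are shown to be \emph{structured}: those that threaten a fixed interval of the construction lie near a common hyperplane (a higher-dimensional analogue of the Simplex Lemma stated after Lemma~\ref{lem9+1}), and non-degeneracy of the curve is then used to control how long the curve can remain close to that hyperplane. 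It is this structural dichotomy --- either few dangerous rationals, or they are confined near a hyperplane which the non-degenerate curve escapes quickly --- that feeds the estimate on $d_q(\cI_q)$, and it is also where condition~\eqref{vba} is genuinely consumed (to keep the hyperplane-avoidance argument uniform over the sequence of weight vectors). Your instinct that \eqref{vba} buys a uniform $M$ is correct, but the route to it is more delicate than a single counting bound.
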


The condition of analyticity  from Theorem \ref{BM} can be omitted  in the case the sub-manifold  $\cM\subset\R^n$ is a curve.   Indeed, establishing  the theorem for curves is very much the crux since any manifold can be `fibred' into an appropriate collection of curves  -- see \cite[\S2.1]{BadM} for details.  In the case $n=2$, so that $\cM $ is a non-degenerate planar curve, the theorem  was  previously  established  in \cite{Badziahin-Velani-Dav} and provides a solution to an explicit  problem  of
Davenport dating back to the swinging sixties concerning the existence of badly approximable pairs on the parabola.  Furthermore, in \cite{Badziahin-Velani-Dav} partial results  for  lines  (degenerate curves)  with slopes satisfying  certain Diophantine constraints are also obtained. Although not optimal, they  naturally extend Theorem \ref{BMbv}  beyond vertical lines.  As already mentioned, Theorem~\ref{BM} as stated for general $n$ was established in \cite{BadM} and it settles the natural generalisations of Schmidt's  Conjecture and  Davenport's problem in arbitrary dimensions.

\bigskip

\begin{rem}
Building upon the one-dimensional, generalised Cantor sets framework formulated in \cite{Badziahin-Velani-MAD}, an abstract `metric space' framework of higher dimensional generalised Cantor sets, branded as `Cantor winning sets'\index{Cantor winning sets}, has recently been  introduced in \cite{BadHar}. Projecting this framework onto the specific one-dimensional construction of Cantor rich sets given above,  the definition of Cantor-winning sets reads as follows.
Let $\ve_0>0$, $X\subset \R$ and $I_0$ be a compact interval. Then the set $X$ is \emph{$\ve_0$-Cantor-winning in $I_0$} if for any positive $\ve<\ve_0$ there exists a positive integer $R_\ve$ such that for any integer $R\ge R_\ve$ there exists an $R$-sequence $(\cI_q)_{q\ge0}$ in $I_0$ such that $\cKI\subset X$ and
$$
\max_{I_p\in\cI_p}\#\big(\wcI_{q,p}\sqcap I_p\big)\le R^{(q-p)(1-\ve)}\,.
$$
The latter key condition implies that $d_q(\cI_q)$ is no more than $8R^{-\ve}$
provided that  $8R^{-\ve}<1$.
Most recently, David Simmons has shown that the notion of Cantor winning as defined in \cite{BadHar} is equivalent to the notion of  potential winning as defined in \cite{FSU}.
\end{rem}

\medskip

The use of Cantor rich sets in establishing  statements  such as Theorems \ref{BMbv} $\&$  \ref{BM},  comes at a cost of having to impose, seemingly for technical reasons,  conditions such as \eqref{eee02} and \eqref{vba}. Although delivering some additional benefits, unfortunately the framework of Cantor winning sets described above  does not seem to resolve this issue.  However, if for example,  we could show that $\Bad(i_1,\dots,i_n)$ is (Schmidt) winning,  then we would be able to intersect countably many such sets without imposing any technical conditions.  When $n=2$,  this has been  successfully  accomplished by  Jinpeng An in his elegant paper \cite{An-12-2}.

\begin{thm}[J. An]\label{anFAB}  For any pair of non-negative  reals $(i,j)$  such that  $i+j=1$, the two-dimensional set $\Bad(i,j)$ is winning.
\end{thm}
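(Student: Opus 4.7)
The plan is that, for each fixed $\beta\in(0,1)$, I will choose $\alpha=\alpha(i,j,\beta)\in(0,1/2)$ and $c=c(i,j,\beta)>0$ and describe a strategy for Player $\cA$ in Schmidt's $(\alpha,\beta)$-game on $\R^2$ that drives the unique intersection point $x_\infty$ into $\Bad(i,j)$. Recall that $x\in\Bad(i,j)$ iff there exists $c>0$ such that the rectangle
$$
\Delta_c(\vv p,q)\ :=\ \big[\tfrac{p_1}{q}-c^iq^{-(1+i)},\tfrac{p_1}{q}+c^iq^{-(1+i)}\big]\times\big[\tfrac{p_2}{q}-c^jq^{-(1+j)},\tfrac{p_2}{q}+c^jq^{-(1+j)}\big]
$$
fails to contain $x$ for every $(\vv p,q)\in\Z^2\times\N$. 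Without loss of generality assume $i\ge j$ (so $i\ge 1/2$); then the first-coordinate side of $\Delta_c(\vv p,q)$ is its short side and the second-coordinate side its long side (for $c\le 1$).

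The strategy is synchronised with a dyadic scale on the denominator. Let $\rho_n=(\alpha\beta)^n\rho_0$ be the radius of $\cB_n$, and define $Q_n$ by $Q_n^{-(1+i)}=\rho_n$. Partition $\N$ into threat bands $T_n:=\{q\in\N:Q_{n-1}\le q<Q_n\}$, so every $q\in\N$ belongs to exactly one $T_n$. At stage $n$, Player $\cA$ chooses $\cA_n\subset\cB_n$ of radius $\alpha\rho_n$ so as to avoid every rectangle $\Delta_c(\vv p,q)$ with $q\in T_n$ that meets $\cB_n$. Because the nested inclusions $\cA_{n+1}\subset\cA_n\subset\cdots$ preserve rectangle-avoidance --- any rectangle from an earlier band $T_m$ that happens to meet $\cB_n$ must already have met $\cB_m$ and was therefore dodged at stage $m$ --- successful play at every stage forces $x_\infty\in\Bad(i,j)$.

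The central technical ingredient is a counting lemma: there is a constant $N=N(i,j)$ such that the number of rectangles $\Delta_c(\vv p,q)$ with $q\in T_n$ meeting $\cB_n$ is at most $N$, uniformly in $n$. The proof uses the separation $\|\vv p/q-\vv p'/q'\|\gg 1/(qq')$ between distinct rationals with denominators in $T_n$, combined with the fact that a rectangle in the threat band has first-coordinate half-side $c^iq^{-(1+i)}\le c^i\rho_n/(\alpha\beta)$. Each threat rectangle obstructs $\cA$ only through its first-coordinate strip: its second-coordinate extent exceeds $\rho_n$ by a factor $\asymp(Q_n/c)^{i-j}$, so the strip crosses $\cB_n$ vertically and only the first coordinate of $\cA_n$ need be placed away from the rectangle's first-coordinate axis. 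Choosing $c$ small enough that $c^i\le(\alpha\beta)/(4N)$ bounds the total obstructed first-coordinate measure in $\cB_n$ by $\rho_n/2$; a pigeonhole then produces an unobstructed first-coordinate interval of length $\ge\rho_n/(2(N+1))$, and fixing $\alpha\le 1/(4(N+1))$ allows $\cA$ to place $\cA_n$ in this interval (with second coordinate arbitrary).

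The hard part is the uniform counting lemma. The threat rectangles are highly anisotropic --- their long (second-coordinate) extent can vastly exceed $\rho_n$ --- so a priori arbitrarily many rectangles with distinct denominators in $T_n$ and different values of $p_2$ could each cross $\cB_n$ vertically. Controlling the count requires exploiting the first-coordinate separation $\asymp 1/Q_n^2=\rho_n^{2/(1+i)}$ of distinct rationals of denominator comparable to $Q_n$ and summing carefully over denominators in $T_n$; the specific relation $i+j=1$ combined with $Q_n^{-(1+i)}=\rho_n$ is precisely what makes all the critical-scale estimates land at the correct power of $\rho_n$ and yields a scale-independent bound. This is the heart of An's argument and is the source of the restriction to the line $i+j=1$.
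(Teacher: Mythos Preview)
The paper does not actually prove this theorem; it merely states it and cites An's original paper. So there is no ``paper's own proof'' to compare against. What I can do is evaluate whether your outline constitutes a valid proof.

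Your counting lemma --- that the number of rectangles $\Delta_c(\vv p,q)$ with $q\in T_n$ meeting $\cB_n$ is bounded by an absolute constant $N=N(i,j)$ --- is false in the adversarial setting of Schmidt's game, and this is exactly the obstacle that made the asymmetric case hard for decades. The heuristic volume count does give $O(1)$ \emph{on average}, but Player~$\cB$ is not obliged to play generically. Concretely: suppose at some stage $\cB$ centres $\cB_n$ at a point whose second coordinate $y_n$ is rational (or merely very well approximable). Then for essentially every $q\in T_n$ there is a $p_2$ with $|y_n-p_2/q|\ll Q_n^{-(1+j)}$, so the second-coordinate constraint becomes vacuous. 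You are left with only the first-coordinate constraint, and there the separation $\asymp 1/Q_n^2$ against a window of width $\rho_n=Q_n^{-(1+i)}$ yields $\asymp Q_n^{1-i}=Q_n^{j}$ threat strips --- unbounded as $n\to\infty$. No fixed choice of $c$ and $\alpha$ can dodge an unbounded number of strips of width $\asymp c^i\rho_n$ inside a ball of radius $\rho_n$.

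You cannot rescue this by saying ``$\cA$'s earlier moves prevent $\cB$ from reaching such a centre'': your strategy as described places no constraint on the second coordinate of $\cA_n$ (you explicitly say ``second coordinate arbitrary''), so $\cB$ is free to steer $y_n$ wherever it likes. An's actual argument does \emph{not} assign all denominators in a geometric band to a single stage. Instead he assigns each dangerous rational $(p_1/q,p_2/q)$ to a stage determined by the \emph{individual} Diophantine quality of $p_2/q$ relative to the second coordinate of play, and shows via a delicate combinatorial/continued-fraction argument that at most one such rational becomes active at any given stage. That bookkeeping --- not a uniform band-by-band count --- is the genuine content of the theorem, and it is entirely absent from your sketch.
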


\noindent A simple consequence of this is that we can remove condition \eqref{eee02} from the statement of Corollary \ref{BMbvcor}.   Prior to \cite{An-12-2},  it is important to  note that  An in \cite{An-12} had shown that $\Bad(i,j) \cap {\rm L}_{\alpha} $ is winning, where ${\rm L}_{\alpha}$ is a vertical line  as in Theorem  \ref{BMbv}.   Of course, this implies that Theorem  \ref{BMbv} is true without imposing condition \eqref{eee02}.   On combining the ideas and techniques introduced in  the papers \cite{An-12,Badziahin-Velani-Dav,BadM}, it is shown  in \cite{An-Beresnevich-Velani} that  $\Bad(i,j) \cap \cC$ is winning,  where $\cC$ is a non-degenerate planar curve. This implies that we can  remove condition \eqref{vba} from  the $n=2$ statement of Theorem \ref{BM}.
In higher dimensions ($n>2$),  removing condition \eqref{vba} remains very much a key open problem.  The recent work of Guan and Yu \cite{GY} makes a contribution toward this problem.  Building upon the work of An \cite{An-12-2}, they show that the set $\Bad(i_1,\dots,i_n)$ is winning whenever $i_1=\dots=i_{n-1}\ge i_n$.

\bigskip

So far we have discussed the homogeneous theory of badly approximable sets. We now turn our attention to the inhomogeneous  theory.

\subsection{Inhomogeneous badly approximable points }

Given $\theta \in \R$ the natural inhomogeneous generalisation of the one-dimensional set $\bad$ is the set
$$
\bad(\theta)  := \{ x \in \R :  \exists \ c(x)>0  \ { \rm \ so \  that  \ }
 \|qx - \theta \| \ > \ c(x) \ q^{-1}  \quad \forall \  q \in \N  \}  \,.
$$
Within these  notes we shall prove the following  inhomogeneous strengthening of Theorem \ref{t7.1}.

\medskip

\begin{thm} \label{ttt}
For any $\theta\in\R$, we have that
  $$
  \dim\Bad(\theta)=1\,.
  $$
\end{thm}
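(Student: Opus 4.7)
The plan is to adapt the Cantor set construction from the proof of Theorem~\ref{t7.1} to the inhomogeneous setting, invoking the Cantor rich framework of Section~\ref{bey} to handle the additional technical difficulty. The goal is to construct, for every prescribed $\ve > 0$, a Cantor subset $\cK \subset \Bad(\theta) \cap [0,1]$ of Hausdorff dimension at least $1 - \ve$; the matching upper bound $\dim \Bad(\theta) \le 1$ is immediate.

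First, I would rewrite the Diophantine condition: $x \in \Bad(\theta)$ iff there exists $c = c(x) > 0$ with
\begin{equation*}
\left|x - \frac{p+\theta}{q}\right| \,\ge\, \frac{c}{q^2}\qquad \forall\,(p,q) \in \Z \times \N.
\end{equation*}
Fix an integer $R \ge 4$ and $\delta \in (0,1/2)$ and, in analogy with~\eqref{Q_n}, for each $n \ge 3$ let
\begin{equation*}
Q_n^\theta := \bigl\{(p,q) \in \Z \times \N : R^{(n-3)/2} \le q < R^{(n-2)/2}\bigr\},
\end{equation*}
with inhomogeneous dangerous intervals $\Delta_\theta(p,q) := \{x \in [0,1] : |x - (p+\theta)/q| < \delta R^{-n}\}$. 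Mirroring the calculation in~\eqref{vbt}, any $x$ avoiding every $\Delta_\theta(p,q)$ satisfies $|x - (p+\theta)/q| \ge \delta R^{-3}\,q^{-2}$ for all $(p,q)$, and so lies in $\Bad(\theta)$.

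The central obstacle is the loss of separation among the resonant points. In the homogeneous case, distinct elements of $Q_n$ are $R^{-(n-2)}$-separated, hence a level-$(n-1)$ Cantor interval meets at most one dangerous interval, and Lemma~\ref{lem9+1} applies with $M=2$. For the shifted rationals, $|(p+\theta)/q - (p'+\theta)/q'|$ equals $|pq' - p'q + \theta(q'-q)|/(qq')$, and the numerator admits no universal lower bound once $\theta$ is arbitrary; in the worst case a single level-$(n-1)$ interval may intersect as many as $\gg R^{(n-2)/2}$ dangerous intervals from $Q_n^\theta$. Consequently the uniform-$M$ scheme breaks down and a more delicate construction is required.

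I would resolve this by building an $R$-sequence $(\cI_n)_{n \ge 0}$ in $[0,1]$ such that $\cKI \subset \Bad(\theta)$ and $\sup_n d_n(\cI_n) \le \ve$, with $d_n$ as in~\eqref{defl}; Definition~\ref{crs} together with property (i) of Cantor rich sets then yields $\dim \Bad(\theta) \ge \dim \cKI = 1$. The attribution of removed subintervals across parent levels exploits the fact that, even in the worst case, shifted rationals sharing a fixed denominator $q$ remain $1/q$-separated: removals stemming from a given denominator $q$ are attributed to the parent level $p$ satisfying $R^{-p} \asymp 1/q$, where each parent interval contains $O(1)$ shifted rationals for that denominator. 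The hard part will be the weighted bookkeeping in~\eqref{defl}: pooling the contributions from all denominators in the range defining $Q_n^\theta$ so that the sum telescopes to a geometric series in $4/R$, uniformly dominated by $\ve$ once $R$ is sufficiently large. An alternative route, which sidesteps the combinatorial bookkeeping, is to establish directly that $\Bad(\theta)$ is $\alpha$-winning in the sense of Schmidt (Section~\ref{sgrt}), whence the dimension conclusion follows from the standard properties of winning sets recalled earlier.
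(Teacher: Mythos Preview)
Your diagnosis of the difficulty is correct --- the shifted rationals $(p+\theta)/q$ are not well separated in general --- but the remedy you propose does not close, and you have missed the central idea that makes the proof elementary.

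The paper builds a Cantor set that \emph{simultaneously} avoids the homogeneous dangerous intervals $\Delta(p/q)$ (exactly as in Theorem~\ref{t7.1}) and the inhomogeneous ones $\Delta((p+\theta)/q)$, the latter indexed by a set $Q_n(\theta)$ with a slightly shifted range of denominators. The homogeneous avoidance, already in place at every earlier level, is what forces the inhomogeneous dangerous intervals to be separated at the current level. Indeed, if two distinct points $(p_1+\theta)/q_1$ and $(p_2+\theta)/q_2$ from $Q_n(\theta)$ were both close to a surviving interval $I\subset E_{n-1}$, then subtracting the two inequalities gives a \emph{homogeneous} near-approximation $|(q_1-q_2)x-(p_1-p_2)|$ small enough to place the reduced fraction $p/q$ of $(p_1-p_2)/(q_1-q_2)$ in some $Q_m$ with $m\le n-2$; but then $I\subset E_m$ meets $\Delta(p/q)$, contradicting the homogeneous avoidance already achieved. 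Hence at most one inhomogeneous dangerous interval meets each $I$, and Lemma~\ref{lem9+1} applies with $M=4$ (two removals for the homogeneous constraint, two for the inhomogeneous one). This is the ``$\bm\theta-\bm\theta=\bm0$'' philosophy flagged before the proof; no Cantor-rich machinery is needed.

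Your attribution scheme, by contrast, cannot be completed as stated. Take $\theta=0$ (which your setup must cover, since you impose no coprimality condition): every $q$ in the range $[R^{(n-3)/2},R^{(n-2)/2})$ contributes the point $0/q=0$, so the level-$(n-1)$ interval containing $0$ meets $\asymp R^{n/2}$ dangerous intervals from $Q_n^\theta$. Attributing these to a parent level $p$ with $R^{-p}\asymp 1/q$ forces $p\approx n/2$, and the corresponding term in~\eqref{defl} is of order $(4/R)^{n/2}\cdot R^{n/2}=2^{n}$, which diverges rather than being bounded by $\ve$. No reshuffling of the partition rescues this without some a priori control on clustering --- and that control is precisely what the homogeneous avoidance delivers. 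Your alternative route via Schmidt games is legitimate (the paper remarks on it after the proof), but the winning strategy for $\cA$ would again rest on the same subtraction trick.
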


The basic philosophy behind the proof is simple and exploits the already discussed homogeneous `intervals construction'; namely
$$
\mbox{(homogeneous construction)} \quad   +  \quad   (\bm\theta - \bm\theta = \bm0)    \quad  \Longrightarrow    \quad \mbox{(inhomogeneous statement).}
$$

\medskip

\begin{rem}
Recall that we have already made use of this type of philosophy in establishing the inhomogeneous extremality conjecture stated in \S\ref{ITPtheory}, where the proof very much relies on the fact that we already know that any non-degenerate manifold is (homogeneously) extremal.
\end{rem}

\begin{proof}[Proof of Theorem~\ref{ttt}]
Let $R\ge4$ be an integer and $\delta=\tfrac12$.
For $n\in\Z$, $n\ge0$, define the sets $Q_n$ by \eqref{Q_n} and additionally
define the following sets of `shifted' rational points
\begin{equation}\label{Q_n+x2}
  Q_n(\theta)=\{(p+\theta)/q\in\R:p,q\in\Z,~R^{\frac{n-5}2}\le q<R^{\frac {n-4}2}\}\,.
\end{equation}
Clearly, $Q_0(\theta)=\dots=Q_4(\theta)=\varnothing$ and the union
$Q(\theta):=\bigcup_{n=5}^\infty Q_n(\theta)$
contains all the possible points $(p+\theta)/q$ with $p,q\in\Z$, $q>0$.

Next,
for $p/q\in Q_n$ define the dangerous interval
$\Delta(p/q)$ by \eqref{Dang} and additionally define the inhomogeneous family of dangerous intervals given by
\begin{equation}\label{Dang+}
\Delta((p+\theta)/q):=\Big\{x\in[0,1]:\left|x-\frac {p+\theta}q\right|<\delta R^{-n}\Big\}\,,
\end{equation}
where $(p+\theta)/q\in Q(\theta)$.
With reference to the Cantor construction of \S\ref{Jar}, our goal is to construct
a Cantor set  $
\cK=\bigcap_{n=0}^\infty E_n
$ such that for every $n\in\N$
  \begin{equation}\label{vbc+x}
    E_n\cap\Delta(p/q)=\varnothing\qquad\text{for all }~p/q\in Q_n\qquad
  \end{equation}
  and simultaneously
  \begin{equation}\label{vbc+x2}
    E_n\cap\Delta((p+\theta)/q)=\varnothing\qquad\text{for all }~(p+\theta)/q\in Q_n(\theta)\,.
  \end{equation}
To this end, let $E_0=[0,1]$ and suppose that $E_{n-1}$ has been constructed as required. Let $I$ be any interval within $E_{n-1}$. Then $|I|=R^{-n+1}$. When constructing $E_n$, $I$ is partitioned into $R$ subintervals. We need to decide how many of these subintervals have to be removed in order to satisfy \eqref{vbc+x} and \eqref{vbc+x2}. As was argued in the proof of Theorem~\ref{t7.1}, removing $2$ intervals of the partitioning of $I$ ensures that \eqref{vbc+x} is satisfied.
We claim that the same applies to \eqref{vbc+x2}, that is removing $2$ intervals of the partitioning of $I$ ensures \eqref{vbc+x2}. Indeed, since the length of $\Delta((p+\theta)/q)$ is no more that $R^{-n}$, to verify this claim it suffices to show that there is only one point $(p+\theta)/q\in Q_n(\theta)$ such that
$$
\Delta((p+\theta)/q)\cap I\neq\varnothing.
$$
This condition implies that
\begin{equation}\label{zzz}
|qx-p-\theta|<R^{\frac {n-4}2}(\delta R^{-n}+R^{-n+1})\qquad\text{for any }x\in I\,.
\end{equation}
For a contradiction, suppose there are two distinct points $(p_1+\theta)/q_1$ and $(p_2+\theta)/q_2$ in $Q_n(\theta)$ satisfying \eqref{zzz}. Then, by \eqref{zzz} and the triangle inequality, we get that
\begin{equation}\label{zzz+}
|(q_1-q_2)x-(p_1-p_2)|<2R^{\frac {n-4}2}(\delta R^{-n}+R^{-n+1})\qquad\text{for any }x\in I\,.
\end{equation}
Clearly $q_1\neq q_2$ as otherwise we would have that $|p_1-p_2|<2R^{\frac {n-4}2}(\delta R^{-n}+R^{-n+1})<1$, implying that $p_1=p_2$ and contradicting to the fact that $(p_1+\theta)/q_1$ and $(p_2+\theta)/q_2$ are distinct. In the above we have used that $n\ge5$. Also without loss of generality we assume that $q_1>q_2$.
Then define $d=\gcd(q_1-q_2,p_1-p_2)$, $q=(q_1-q_2)/d$, $p=(p_1-p_2)/d$ and let $m$ be the unique integer such that
$$
p/q\in Q_m\,.
$$
Thus, $R^{\frac{m-3}2}\le q<R^{\frac{m-2}2}$.
Since $q<q_1<R^{\frac {n-4}2}$ we have that $m\le n-2$. Then, by \eqref{zzz+},
\begin{equation}\label{zzz++}
\left|x-\frac pq\right|<R^{-\frac {m-3}2}2R^{\frac {n-4}2}(\delta R^{-n}+R^{-n+1})\le \delta R^{-m}
\end{equation}
for any $x\in I$ provided that $R\ge36$ (recall that $\delta=\tfrac12$). It means that $\Delta(p/q)\cap I\neq\varnothing$. But this is impossible since \eqref{vbc+x} is valid with $n$ replaced by $m$ and $I\subset E_{n-1}\subset E_m$. This proves our above claim. The upshot is that by removing $M=4$ intervals  of the partitioning of each $I$ within $E_{n-1}$  we construct $E_n$ while ensuring that the  desired  conditions \eqref{vbc+x} and \eqref{vbc+x2} are satisfied.   The finale  of the proof makes use of Lemma \ref{lem9+1} and is almost identical to that of the proof of Theorem~\ref{t7.1}.  We leave the details to the  reader.
\end{proof}

\bigskip

\begin{rem}
Note that in the above proof of Theorem~\ref{ttt}, we actually show that
$$
\dim\bad\cap\bad(\theta)=1\,.
$$
It seems that proving this stronger  statement  is   simpler  than any potential `direct' proof of the implied fact that  $\dim\bad(\theta)=1 $.
\end{rem}

\medskip

\begin{rem}
In the same way that the proof of Theorem \ref{t7.1} can be modified to show that $\Bad$ is winning (see the proof of Theorem \ref{sch66} for the details),  the proof of Theorem~\ref{ttt} can be adapted to show that
$
\bad(\theta)
$
is winning.
\end{rem}

\medskip

  In higher dimensions,  the natural generalisation of the one-dimensional set $\bad(\theta)$  is the set  $\bad(i_1, \ldots,i_n; \bm\theta)$  defined in the following manner. For any $\bm\theta = (\theta_1, \ldots, \theta_n)  \in \R^n$ and  $n$-tuple of real numbers $i_1, ...,i_n \geq 0 $ such
that $i_1+\dots+i_n = 1 $, we let $\bad(i_1, \ldots,i_n; \bm\theta)$ to be the set of
points $(x_1, ...,x_n) \in \R^n $ for which  there exists a
positive constant $ c(x_1, ...,x_n)$ such that   $$ \max \{ \;
||qx_1  - \theta_1||^{1/i_1} \; , ..., \ ||qx_n   -   \theta_n||^{1/i_n} \,  \} \ > \
c(x_1, ...,x_n) \ q^{-1} \ \ \ \forall \ \ \ q \in \N . $$
The ideas used in  the proof of Theorem~\ref{ttt} can be naturally generalised to show that
$$
\dim \bad(i_1 \ldots,i_n;\bm\theta)  = n  \, .
$$
In the case $n=2$, the details of the proof are explicitly given in \cite[\S3]{BVdani}.    Indeed, as mentioned in  \cite[Remark 3.4]{BVdani},  in the symmetric case $ i_1=\ldots=i_n=1/n$,  we actually have that $\Bad(\frac1n, \ldots,\frac1n;\bm\theta
)$ is winning; i.e. the inhomogeneous strengthening  of Theorem \ref{sch66}.

\bigskip

\begin{rem}
The basic philosophy exploited in proving Theorem~\ref{ttt} has been successfully  incorporated within the context of Schmidt games to establish the inhomogeneous generalisation of the homogeneous winning statements discussed at the end of \S\ref{bey}. In particular, let $ \bm\theta \in \R^2$ and $(i,j)$ be  a pair  of non-negative  real numbers such that  $i+j=1$.  Then,  it is shown  in \cite{An-Beresnevich-Velani} that (i) the set $ \bad(i,j;\bm\theta)  $  is winning and (ii) for any non-degenerate planar curve  $\cC$, the set $ \bad(i,j;\bm\theta) \cap \cC $  is winning.   Also, in \cite{An-Beresnevich-Velani} the following almost optimal winning result for the intersection of $\Bad(i,j)$ sets with arbitrary lines (degenerate curves) is obtained.  It substantially extends and generalises the previous `line' result  obtained  in \cite{Badziahin-Velani-Dav}.
\end{rem}

\begin{thm}\label{jpanvb}
Let  $(i,j)$ be a pair of non-negative real numbers such that  $i+j=1$ and
given $a,b\in \RR$ with $a \neq 0$, let ${\rm L}_{a,b} $ denote the  line defined
by the equation $y = a x +b $.   Suppose there exists
$\epsilon>0$ such that
\begin{equation} \label{diocondbetter}
\liminf_{q \to \infty}   q^{\frac{1}{\sigma}-\epsilon}  \max \{\|q a
\|, \|q b
\|  \}
> 0 \, \qquad{\rm where } \ \  \sigma := \min\{i,j\} \, .
\end{equation}
Then, for any $\bm\theta \in\RR^2$ we have that   $ \Bad_{\bm\theta} (i,j) \cap  {\rm L}_{a,b} $ is  winning. Moreover, if $ a \in \QQ$ the statement is true with $ \epsilon=0$ in  (\ref{diocondbetter}).
\end{thm}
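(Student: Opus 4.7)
The plan is to construct a winning strategy for Ayesha in Schmidt's $(\alpha,\beta)$-game after parametrising $L_{a,b}$ by $\phi: x \mapsto (x, ax+b)$. Since $\phi$ is bi-Lipschitz, the task reduces to showing that
$$S_{\bm\theta} := \{x \in \R : (x, ax+b) \in \Bad_{\bm\theta}(i,j)\}$$
is $\alpha$-winning on $\R$ for some absolute $\alpha \in (0,\tfrac12)$. With $R := (\alpha\beta)^{-1}$, Bhupen's $n$-th ball $\cB_n$ has diameter $\rho_0 R^{-n}$; the objective is to prescribe Ayesha's response $\cA_n \subset \cB_n$ so that the limit point lies in $S_{\bm\theta}$. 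The guiding principle is the one used to pass from Theorem~\ref{t7.1} to Theorem~\ref{ttt}: formulate ``inhomogeneous dangerous regions'' in tandem with homogeneous ones and control them through a differencing trick in which $\bm\theta$ cancels.

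At stage $n$ one declares a triple $(p_1, p_2, q)$ \emph{$n$-dangerous} if $q$ lies in a geometric band adapted to the game scale and both inhomogeneous approximation inequalities
$$|qx - p_1 - \theta_1| \le c^i q^{-i}, \qquad |q(ax+b) - p_2 - \theta_2| \le c^j q^{-j}$$
hold simultaneously for some $x \in \cB_n$, where $c$ is a small absolute constant to be chosen. The locus $\Delta_n(p_1,p_2,q)$ of such $x$ is an interval of length $\ll c^\sigma q^{-1-\sigma}$, much smaller than $\cB_n$ once $c$ is small and the $q$-band is chosen appropriately. The crux is a uniform counting bound: the number of $n$-dangerous triples with $\Delta_n \cap \cB_n \neq \varnothing$ is at most some constant $K = K(a,b,\epsilon)$ independent of $n$ and of Bhupen's choices. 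This is what the Diophantine hypothesis \eqref{diocondbetter} is there to provide.

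To derive the counting bound, suppose $(p_1,p_2,q)$ and $(p_1',p_2',q')$ are distinct $n$-dangerous triples both meeting $\cB_n$. Evaluating their defining inequalities at any common $x \in \cB_n$ and subtracting eliminates $\theta_1, \theta_2$; writing $\tilde p_i := p_i - p_i'$ and $\tilde q := q - q'$ we obtain
$$|\tilde q\, x - \tilde p_1| \ll c^i R^{-in}, \qquad |\tilde q(ax+b) - \tilde p_2| \ll c^j R^{-jn}.$$
Multiplying the first by $a$ and subtracting from the second eliminates $x$ and yields the homogeneous linear-form estimate
$$|a\tilde p_1 + b\tilde q - \tilde p_2| \ll c^\sigma R^{-\sigma n},$$
with $\max(|\tilde p_1|, |\tilde q|) \le 2R^n$. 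Converting hypothesis \eqref{diocondbetter} into a matching dual lower bound on $|aP + bQ - P'|$ via Khintchine's Transference Principle (Theorem~\ref{KTPP}), the slack $\epsilon>0$ provides exactly the quantitative saving needed to rule out all but boundedly many integer triples $(\tilde p_1, \tilde p_2, \tilde q)$. This furnishes the bound $K$, and choosing $\alpha$ below $1/(2K)$ lets Ayesha fit $\cA_n$ inside $\cB_n$ disjoint from every $\Delta_n$ at each stage; the unique limit point then lies in $S_{\bm\theta}$.

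The principal obstacle is reconciling the natural scale of $\Delta_n$ with Bhupen's scale $R^{-n}$, selecting the correct geometric band for $q$, and carrying out the transference from \eqref{diocondbetter} to the dual linear-form estimate so that the margin provided by $\epsilon$ is not lost. When $a \in \Q$, writing $a = r/s$ in lowest terms, Khintchine's Transference becomes unnecessary: after the substitution $q \mapsto sq$ the two defining inequalities of $\Delta_n(p_1,p_2,q)$ collapse into a single one-dimensional inhomogeneous Diophantine condition on $x$ with shift built from $r, s, b, \theta_1, \theta_2$. The problem then reduces to the inhomogeneous one-dimensional winning statement for $\bad(\theta)$ noted after the proof of Theorem~\ref{ttt}, and no Diophantine assumption on $b$ is required, which accounts for the final assertion with $\epsilon = 0$.
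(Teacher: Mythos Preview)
First, a framing point: the paper does not actually prove Theorem~\ref{jpanvb}; it merely cites the result from \cite{An-Beresnevich-Velani}. So there is no ``paper's own proof'' to compare against, and your sketch must stand on its own merits.

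Your reduction to the homogeneous problem via differencing is sound and is exactly the philosophy advertised in \S7.4: subtracting two inhomogeneous relations with the same shift $\bm\theta$ produces a homogeneous relation for $(\tilde p_1,\tilde p_2,\tilde q)$. The genuine gap is in the next step, where you invoke Khintchine's Transference Principle (Theorem~\ref{KTPP}) to convert the simultaneous hypothesis~\eqref{diocondbetter} into a dual lower bound on $|a\tilde p_1 + b\tilde q - \tilde p_2|$. The exponents do not cooperate. Condition~\eqref{diocondbetter} says, roughly, that the simultaneous exponent satisfies $\tau(a,b)\le 1/\sigma - \epsilon$, i.e.\ $\omega(a,b)\le 2/\sigma -1 - 2\epsilon$. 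Since $\sigma=\min(i,j)\le\tfrac12$, this upper bound is at least $3-2\epsilon$, well above $1$. But the transference inequality $\omega^*/(4+\omega^*)\le\omega$ yields a finite bound on $\omega^*$ only when $\omega<1$; for $\omega\ge1$ it gives no control whatsoever on the dual exponent. Consequently you cannot conclude any usable lower bound of the form $|a\tilde p_1+b\tilde q-\tilde p_2|\gg H^{-\tau^*}$, and even the trivial $\tau^*\ge2$ from Dirichlet forces $H^{-\tau^*}\ll H^{-2}$, which is far too small compared with the $q^{-\sigma}$ you derived. The uniform counting bound $K$ therefore does not follow. The actual argument in \cite{An-Beresnevich-Velani} does not route through transference at all; it uses the hypothesis~\eqref{diocondbetter} \emph{directly} to control the distribution of rational points near $L_{a,b}$, organising the dangerous intervals by height and handling them across many game stages rather than at a single stage---much closer in spirit to the generalised Cantor/Cantor-rich machinery of \S\ref{bey} than to the elementary one-dimensional winning proof you are modelling.

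Your treatment of the case $a\in\Q$ is also incorrect. The two defining inequalities do not ``collapse'' into a single one-dimensional condition on $x$ after substituting $q\mapsto sq$: the integers $p_1$ and $p_2$ remain independent, and the second inequality still carries the real number $b$ in an essential way. Moreover, your conclusion that ``no Diophantine assumption on $b$ is required'' contradicts the theorem as stated: with $\epsilon=0$ and $a=r/s$, condition~\eqref{diocondbetter} along multiples of $s$ reads $\liminf_{k\to\infty}(sk)^{1/\sigma}\|skb\|>0$, a genuine restriction on $b$ which the paper notes (just after the theorem) is in fact necessary.
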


The condition \eqref{diocondbetter} is optimal up to the $\epsilon$ -- see \cite[Remark~4]{An-Beresnevich-Velani}. It is indeed, both necessary and sufficient in the case $a\in\Q$. Note that the argument presented in \cite[Remark~4]{An-Beresnevich-Velani} showing the necessity of \eqref{diocondbetter} with $\epsilon=0$ only makes use of the assumption that $\Bad(i,j) \cap  {\rm L}_{a,b} \not=\varnothing$. It is plausible to suggest that this latter assumption  is a necessary and sufficient condition for the conclusion of Theorem~\ref{jpanvb} to hold.

\bigskip

\begin{conjecture}
Let  $(i,j)$ be a pair of non-negative real numbers such that  $i+j=1$ and
given $a,b\in \RR$ with $a \neq 0$, let ${\rm L}_{a,b} $ denote the  line defined
by the equation $y = a x +b $.  Then
$$
\Bad(i,j) \cap  {\rm L}_{a,b} \not=\varnothing
$$
if and only if
$$
\forall\ \bm\theta \in\RR^2\qquad \Bad_{\bm\theta} (i,j) \cap  {\rm L}_{a,b} \quad\text{is  winning.}
$$
\end{conjecture}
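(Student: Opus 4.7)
The ``if'' direction is immediate: since $\Bad(i,j)\cap{\rm L}_{a,b}=\Bad_{\mathbf 0}(i,j)\cap{\rm L}_{a,b}$, specialising the winning hypothesis to $\bm\theta=\mathbf 0$ shows this set is winning, hence of full dimension in ${\rm L}_{a,b}$, and in particular non-empty. All the content is in the ``only if'' direction, which I now sketch.

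\emph{Step 1: Diophantine extraction.} Assume $\Bad(i,j)\cap{\rm L}_{a,b}\neq\varnothing$ and fix an arbitrary $\bm\theta=(\theta_1,\theta_2)\in\R^2$. The first step is to extract from the mere non-emptiness of the homogeneous intersection the boundary Diophantine condition
\begin{equation*}
\liminf_{q\to\infty}\; q^{1/\sigma}\max\{\|qa\|,\|qb\|\}>0,
\end{equation*}
i.e.\ \eqref{diocondbetter} with $\epsilon=0$. This is exactly the implication recorded in \cite[Remark~4]{An-Beresnevich-Velani}: if the liminf vanished along some sequence $q_n\to\infty$ then every point $(t,at+b)\in {\rm L}_{a,b}$ would admit arbitrarily good simultaneous $(i,j)$--approximations by the rationals $(p_n^{(1)}/q_n,p_n^{(2)}/q_n)$ arising from the best integer approximations of $(q_na,q_nb)$, contradicting the existence of a single point of $\Bad(i,j)$ on ${\rm L}_{a,b}$.

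\emph{Step 2: Inhomogeneous game via the difference trick.} Parametrise ${\rm L}_{a,b}$ by $t\mapsto(t,at+b)$ and play Schmidt's $(\alpha,\beta)$--game on the line with target $\Bad_{\bm\theta}(i,j)\cap{\rm L}_{a,b}$. At stage $n$, the player $\cA$ must steer away from the ``shifted'' dangerous rectangles centred at points $\bigl((p_1+\theta_1)/q,(p_2+\theta_2)/q\bigr)$ with $q$ in a fixed dyadic range, whose sidelengths along the coordinate axes are proportional to $q^{-(1+i)}$ and $q^{-(1+j)}$. The key algebraic observation---exactly the one powering the proof of Theorem~\ref{ttt}---is that the difference of two such shifted rationals is a \emph{genuine} rational, since the $\theta_k$ cancel. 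Consequently, if at some scale more than a uniformly bounded number of these shifted rectangles met a given small interval of the game on ${\rm L}_{a,b}$, subtracting any two of them would produce a homogeneous rational lying abnormally close to ${\rm L}_{a,b}$, violating the liminf bound of Step~1. This reduces the inhomogeneous counting to a homogeneous counting controlled by Step~1, so $\cA$ can always find a legal response, independently of $\beta$. Choosing the winning parameter $\alpha$ in terms of the constants of Step~1 and of $(i,j)$ alone (and in particular independent of $\bm\theta$), one obtains that $\Bad_{\bm\theta}(i,j)\cap{\rm L}_{a,b}$ is winning, as required.

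\emph{Main obstacle.} The delicate point is making Step~2 work at the boundary $\epsilon=0$. In the proof of Theorem~\ref{jpanvb} the strict gap $\epsilon>0$ yields a power saving that translates into a \emph{scale-independent} bound on the number of dangerous rectangles to be removed at each round. With only $\epsilon=0$ this gap may degenerate, and crude triangle-inequality estimates are insufficient. The difference trick therefore has to be quantified uniformly across dyadic scales: if infinitely many stages saw too many shifted obstacles, a pigeonhole across such stages should manufacture an infinite sequence of genuine rationals contradicting the liminf in Step~1. Carrying this out in a way compatible with the game structure---in particular producing an $\alpha$ depending only on $(i,j,a,b)$ and not on $\bm\theta$ or on the particular play by $\cB$---is the heart of the matter. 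The case $a\in\Q$ of Theorem~\ref{jpanvb}, which already handles $\epsilon=0$ via periodicity of the configuration of resonant rationals, should serve as the template for the general argument.
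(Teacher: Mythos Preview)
The statement you are attempting to prove is a \emph{conjecture}, not a theorem: the paper does not prove it and explicitly presents it as open. The paper only observes that the conjecture holds when $a\in\Q$ (where Theorem~\ref{jpanvb} already covers $\epsilon=0$) and for horizontal or vertical lines in the homogeneous case. So there is no ``paper's own proof'' to compare against.

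Your ``if'' direction is fine, and your Step~1 is correct and is precisely what the paper records: the argument of \cite[Remark~4]{An-Beresnevich-Velani} shows that $\Bad(i,j)\cap{\rm L}_{a,b}\neq\varnothing$ forces \eqref{diocondbetter} with $\epsilon=0$. The genuine gap is in Step~2. You have accurately diagnosed the obstacle---at $\epsilon=0$ the power saving that drives the proof of Theorem~\ref{jpanvb} disappears, so the number of dangerous shifted rectangles meeting a game interval need not be bounded uniformly in the scale---but you have not supplied an idea that overcomes it. The difference trick $(\bm\theta-\bm\theta=\mathbf 0)$ only tells you that \emph{pairs} of nearby inhomogeneous obstacles produce a homogeneous rational close to ${\rm L}_{a,b}$; at $\epsilon=0$ the liminf condition from Step~1 is exactly borderline and does not preclude infinitely many such homogeneous near-misses, so your proposed pigeonhole-across-scales contradiction does not close. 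Your appeal to the $a\in\Q$ case as a ``template'' is misleading: that case works because periodicity gives an exact arithmetic structure to the resonant rationals on ${\rm L}_{a,b}$, not merely a liminf bound, and there is no obvious substitute for this when $a\notin\Q$. In short, your sketch reproduces the known partial results and correctly isolates the difficulty, but it does not contain a new mechanism to bridge the $\epsilon>0$ to $\epsilon=0$ gap, which is precisely why the statement remains a conjecture.
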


\medskip

\noindent Observe that the conjecture is true  in the case  $ a \in \QQ$ and when the line $ {\rm L}_{a,b}$ is
horizontal or vertical in the homogenous case.

\vspace*{8ex}

\noindent{\bf Acknowledgements.}    SV would like to thank the organisers of the 2014 Durham Easter School ``Dynamics and Analytic Number Theory'' for  giving him the opportunity to give a mini-course  --  it was a stimulating and enjoyable experience.  Subsequently, the subject matter of that mini-course formed the foundations for a MAGIC graduate lecture course on metric number theory  given jointly by VB and SV at the University of York in Spring 2015.    We would like to thank the participants  of these  courses for providing valuable feedback on both the lectures and the accompanying notes.   In particular, we thank Demi Allen and  Henna Koivusalo for their detailed comments (well beyond the call of duty) on earlier drafts of this end product.  For certain their input has improved the clarity and the accuracy of the exposition.  Of course, any remaining typos and mathematical errors are absolutely their fault!

\newpage

\vspace{0ex}

\

{\small

\noindent Victor V. Beresnevich:\\
 Department of Mathematics,
 University of York,\\
 Heslington, York, YO10 5DD,
 England\\
 E-mail: {\tt victor.beresnevich@york.ac.uk}

\bigskip

\noindent Felipe A. Ram\'{\i}rez:\\
 Department of Mathematics and Computer Science,
 Wesleyan University,\\
265 Church Street
Middletown, CT 06459\\
 E-mail: {\tt framirez@wesleyan.edu}

\bigskip

\noindent Sanju L. Velani:\\
 Department of Mathematics,
 University of York,\\
 Heslington, York, YO10 5DD,
 England\\
 E-mail: {\tt sanju.velani@york.ac.uk}

}

\end{document}